\documentclass[11pt]{article}
\usepackage[margin=0.75in]{geometry}

\usepackage{amsthm}
\usepackage{amssymb}
\usepackage{amsmath}
\usepackage{comment}
\usepackage{thm-restate}
\usepackage{url} 
\usepackage{hyperref}
\usepackage[noabbrev,capitalise]{cleveref}

\usepackage[utf8]{inputenc}

\usepackage{color}
\usepackage[normalem]{ulem}

%\usepackage[margin=1in]{geometry}
%\usepackage{blindtext}
%\linespread{1}

%\renewcommand{\deg}{{\rm deg}}

\newcounter{i}
\setcounter{i}{1}
%\thei
%\addtocounter{i}{1}

\theoremstyle{plain}
\newtheorem{thm}{Theorem}[section]
\newtheorem{lem}[thm]{Lemma}
\newtheorem{claim}{Claim}[thm]
\newtheorem{proposition}[thm]{Proposition}
\newtheorem{cor}[thm]{Corollary}

\newtheorem{conj}[thm]{Conjecture}
\newtheorem{question}[thm]{Question}

%Luke's macros

\newenvironment{proofclaim}[1][]%
{\noindent \emph{Proof.} {}{#1}{}}{\hfill
	$\Diamond$\vspace{1em}}

% for specifying a name
\theoremstyle{plain} % just in case the style had changed
\newcommand{\thistheoremname}{}
\newtheorem{genericthm}{\thistheoremname}

\theoremstyle{definition}
\newtheorem{definition}[thm]{Definition}

\newcommand{\Prob}[1]{\ensuremath{%
\mathbb P\left[#1\right]
}}

\newcommand{\Expect}[1]{\ensuremath{%
\mathbb E\left[#1\right]
}}

\DeclareMathOperator{\Med}{Med}

\newcommand{\change}{\ensuremath{d}}

\title{Finding an almost perfect matching in a hypergraph avoiding forbidden submatchings}

\author{
Michelle Delcourt
\thanks{Department of Mathematics, Toronto Metropolitan University (formerly named Ryerson University),
Toronto, Ontario M5B 2K3, Canada {\tt mdelcourt@ryerson.ca}. Research supported by NSERC under Discovery Grant No. 2019-04269.}
\and
Luke Postle
\thanks{Combinatorics and Optimization Department,
University of Waterloo, Waterloo, Ontario N2L 3G1, Canada {\tt lpostle@uwaterloo.ca}. Partially supported by NSERC
under Discovery Grant No. 2019-04304.}}
\date{\today}

\begin{document}

\maketitle

\begin{abstract} 
In 1973, Erd\H{o}s conjectured the existence of high girth $(n,3,2)$-Steiner systems. Recently, Glock, K\"{u}hn, Lo, and Osthus and independently Bohman and Warnke proved the approximate version of Erd\H{o}s' conjecture. Just this year, Kwan, Sah, Sawhney, and Simkin proved Erd\H{o}s' conjecture. As for Steiner systems with more general parameters, Glock, K\"{u}hn, Lo, and Osthus conjectured the existence of high girth $(n,q,r)$-Steiner systems. We prove the approximate version of their conjecture.

This result follows from our general main results which concern finding perfect or almost perfect matchings in a hypergraph $G$ avoiding a given set of submatchings (which we view as a hypergraph $H$ where $V(H)=E(G)$). Our first main result is a common generalization of the classical theorems of Pippenger (for finding an almost perfect matching) and Ajtai, Koml\'os, Pintz, Spencer, and Szemer\'edi (for finding an independent set in girth five hypergraphs). More generally, we prove this for coloring and even list coloring, %thus yielding a common generalization of the celebrated theorems of Kahn (for list edge coloring) and Kim (for list coloring girth five graphs) 
and also generalize this further to when $H$ is a hypergraph with small codegrees (for which high girth designs is a specific instance). Indeed, the coloring version of our result even yields an almost partition of $K_n^r$ into approximate high girth $(n,q,r)$-Steiner systems. 

Our main results also imply the existence of a perfect matching in a bipartite hypergraph where the parts have slightly unbalanced degrees. This has a number of other applications; for example, it proves the existence of $\Delta$ pairwise disjoint list colorings in the setting of Kahn's theorem for list coloring the edges of a hypergraph; it also proves asymptotic versions of various rainbow matching results in the sparse setting (where the number of times a color appears could be much smaller than the number of colors) and even the existence of many pairwise disjoint rainbow matchings in such circumstances. 
\end{abstract}

\section{Introduction}

\subsection{High Girth Steiner Systems}

A \emph{(multi)-hypergraph} consists of a pair $(V, E)$ where $V$ is a set whose elements are called \emph{vertices} and $E$ is a
(multi)-set of subsets of $V$ called \emph{edges}. For an integer $r\ge 1$, a hypergraph is \emph{$r$-bounded} if every edge has size at most $r$ and is \emph{$r$-uniform} if every edge has size exactly $r$. 

Given $n\ge q > r\ge 2$, a \emph{partial $(n,q,r)$-Steiner system} is a set $S$ of $q$-element subsets of an $n$-element set $V$ such that every $r$-element subset of $V$ is contained in at most one $q$-element set in $S$. %or equivalently is a $q$-uniform hypergraph that is \emph{$r$-linear}, that is every edges intersect in at most $r-1$ vertices.
An \emph{$(n,q,r)$-Steiner system} is a partial $(n,q,r)$-Steiner system $S$ with $|S| = \binom{n}{r}/\binom{q}{r}$, i.e.~every $r$-element subset of $V$ is in exactly one $q$-element set of $S$. For fixed $q$ and $r$ we call $n$ \emph{admissible} if $\binom{q-i}{r-i}~|~\binom{n-i}{r-i}$ for all $0\le i \le r-1$. 

The notorious Existence Conjecture dating from the mid-1800's asserts that for sufficiently large $n$, there exists an $(n,q,r)$-Steiner system whenever $n$ is admissible.  In 1847, Kirkman~\cite{K47} proved this when $q=3$ and $r=2$. In the 1970s, Wilson~\cite{W75} proved the Existence Conjecture for graphs, i.e.~when $r=2$. In 1985, R\"{o}dl~\cite{R85} introduced his celebrated ``nibble method'' to prove that there exists a partial $(n,q,r)$-Steiner system $S$ with $|S|=(1-o(1))\binom{n}{r}/\binom{q}{r}$, settling the approximate version of the Existence Conjecture (known as the Erd\H{o}s-Hanani conjecture~\cite{EH63}). Recently, Keevash~\cite{K14} proved the Existence Conjecture using randomized algebraic constructions. Thereafter, Glock, K\"{u}hn, Lo, and Osthus~\cite{GKLO16} gave a purely combinatorial proof of the Existence Conjecture via iterative absorption.

In a partial $(n,q,r)$-Steiner system, a \emph{$(j, i)$-configuration} is a set of $i$ $q$-element subsets spanning at most $j$ vertices. It is easy to see that an $(n,3,2)$-Steiner system contains an $(i+3,i)$-configuration for every fixed $i$.  In 1973, Erd\H{o}s~\cite{E73} conjectured the following. 
%``locally sparse'' or ``high girth''.

\begin{conj}[Erd\H{o}s~\cite{E73}]\label{conj:Erdos}
For every integer $g\geq2$, there exists $n_g$ such that for all admissible
$n \geq n_g$, there exists an $(n,3,2)$-Steiner system with no
$(i+2,i)$-configuration for all $2 \leq i\leq g$.
\end{conj}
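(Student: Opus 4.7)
The plan is to combine the paper's approximate main theorem with an iterative absorption argument in the style of Kwan, Sah, Sawhney, and Simkin. Model the problem as matching avoidance. Let $G$ be the $3$-uniform hypergraph on vertex set $\binom{[n]}{2}$ whose edges are the triples $\bigl\{\{i,j\},\{j,k\},\{i,k\}\bigr\}$ for $\{i,j,k\}\in\binom{[n]}{3}$; a perfect matching in $G$ is exactly an $(n,3,2)$-Steiner system. Let $H$ be the conflict hypergraph on $V(H)=E(G)$ whose edges are the $(i+2,i)$-configurations for $2\le i\le g$. The goal is a perfect matching in $G$ that is independent in $H$.

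Before anything else, reserve (at random) an absorber $\cA\subseteq E(G)$ of density roughly $n^{-\alpha}$ for small $\alpha=\alpha(g)>0$, structured as many flexible gadgets on pairwise disjoint vertex subsets of $[n]$ of size $O_g(1)$. Each gadget is a partial Steiner triple system that admits many alternative triangle decompositions of its support, designed so that for any small graph $L$ on its vertex set (representing uncovered pairs from the nibble step), some alternative decomposition of the gadget together with a triangulation of $L$ replaces the gadget in a way that covers exactly the needed pairs. Each such switch must be \emph{girth-compatible}: after switching, the global Steiner system still avoids every $(i+2,i)$-configuration for $2\le i\le g$. Constructing such gadgets for arbitrary $g$ requires iterative absorption at roughly $g$ nested scales and is the technical heart of the argument.

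Next, delete from $G$ the triples that conflict with $\cA$ and apply the paper's main theorem to the residual hypergraph $G'$ with forbidden conflict hypergraph $H$. The codegree hypothesis is met because any two distinct $(i+2,i)$-configurations share very few triples (they are combinatorially rigid), so common-pair and common-triple counts in $H$ are of the correct polynomially-lower order. The output is a partial Steiner system $M$ that covers all but $o(n^2)$ pairs and is independent in $H$; moreover the main theorem delivers quasi-random distribution of $M$, so the uncovered graph $L$ on $[n]$ is spread out and falls into the absorbable regime of $\cA$. Finally, use $\cA$ to triangulate $L$: each edge of $L$ is assigned to a gadget of $\cA$ whose vertex set contains both its endpoints, and each gadget switches to the alternative decomposition that covers the pairs it has been assigned. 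By girth-compatibility the resulting complete Steiner system contains no $(i+2,i)$-configuration.

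The main obstacle is constructing the gadgets so that every switch preserves independence in $H$: because a single $(i+2,i)$-configuration may involve triples spread throughout the Steiner system, purely local girth-compatibility of a gadget does not suffice, and one must control interactions between each gadget and the bulk output of the nibble step. Handling this requires a careful probabilistic construction of $\cA$ (so that with positive probability no nascent $(i+2,i)$-configuration uses triples from more than one gadget, and no gadget completes a bad configuration with the bulk), combined with nested absorbers across $O(g)$ scales that add extra flexibility whenever the previous scale fails. A secondary obstacle is verifying the quantitative codegree condition on $H$ needed to invoke the paper's main theorem; this reduces to counting pairs of $(i+2,i)$-configurations sharing at least two triples, which is small by rigidity.
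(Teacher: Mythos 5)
This statement is a \emph{conjecture} in the paper, not a theorem the paper proves: the authors attribute it to Erd\H{o}s, note that it was settled in full only recently by Kwan, Sah, Sawhney, and Simkin~\cite{KSSS22}, and themselves prove only the approximate version (Theorem~\ref{thm:HighGirthSteiner}, via Corollary~\ref{cor:SmallCodegreeColoring}), which produces a \emph{partial} $(n,3,2)$-Steiner system covering all but $n^{2-\beta}$ pairs with no forbidden configurations. So there is no proof in the paper to match your argument against, and your proposal must stand on its own as a proof of the exact statement.

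It does not. What you have written is a research plan whose decisive steps are named but not executed. The construction of girth-compatible absorbers --- gadgets admitting alternative triangle decompositions such that every switch, interacting with the bulk output of the nibble, creates no $(i+2,i)$-configuration --- is exactly the content of the Kwan--Sah--Sawhney--Simkin paper, and you explicitly defer it ("the technical heart," "the main obstacle"). Since a single $(i+2,i)$-configuration can use triples from the bulk, from several gadgets, and from the triangulation of the leave simultaneously, local girth-compatibility of a gadget is not a well-defined property one can just posit; making it precise and achievable is the whole difficulty. A second concrete gap is at the interface: you claim the paper's main theorem "delivers quasi-random distribution of $M$, so the uncovered graph $L$ is spread out and falls into the absorbable regime." Theorem~\ref{thm:HighGirthSteiner} gives only an upper bound on the \emph{number} of uncovered pairs; it provides no spreadness or boundedness guarantee on the leave, so even granting the absorber its claimed properties, you have not shown the leave is absorbable. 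As written, the proposal correctly identifies the known strategy for Conjecture~\ref{conj:Erdos} but proves neither the absorber construction nor the compatibility of the nibble output with it.
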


In 1993, Lefmann, Phelps, and R\"odl~\cite{LPR93} proved that for every integer $g\geq2$, there exists a constant $c_g>0$ such that for all $n\geq 3$ there exists a partial $(n,3,2)$-Steiner system $S$ with $|S|\ge c_g \cdot n^2$ and no $(i+2,i)$-configuration for all $2 \leq i\leq g$. Recently, Glock, K\"{u}hn, Lo, and Osthus~\cite{GKLO20} and independently Bohman and Warnke~\cite{BW19} proved there exists a partial $(n,3,2)$-Steiner system $S$ with $|S|=(1-o(1))n^2/6$ and no $(i+2,i)$-configuration for all $2 \leq i\leq g$, thus settling the approximate version of Conjecture~\ref{conj:Erdos}. Just this year, Kwan, Sah, Sawhney, and Simkin~\cite{KSSS22}, building on these approximate versions while incorporating iterative absorption, impressively proved Conjecture~\ref{conj:Erdos} in full. 

Glock, K\"{u}hn, Lo, and Osthus~\cite{GKLO20} observed that every $(n,q,r)$-Steiner system contains an $(i(q-r)+r+1,i)$-configuration for every fixed $i\ge 2$. They also gave a short proof that there exists a partial $(n,q,r)$-Steiner system $S$ with $|S|=(1-o(1))\binom{n}{r}/\binom{q}{r}$ with no $(i(q-r)+2r-q, i)$-configurations for all $2\le i \le g$. Thus they made the following common generalization of Erd\H{o}s' conjecture and the Existence Conjecture.

\begin{conj}[Glock, K\"{u}hn, Lo, and Osthus~\cite{GKLO20}]\label{conj:HighGirthAllUniformities}
For all integers $q > r \geq 2$ and every integer $g\ge 2$, there exists $n_0$ such that for all admissible $n\ge n_0$, there exists an $(n,q,r)$-Steiner system with no $(i(q-r)+r,i)$-configurations for all $2\le i \le g$.
\end{conj}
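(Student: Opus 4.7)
The plan is to combine the approximate high-girth theorem that this paper establishes (as a consequence of its main forbidden-submatchings results) with an iterative absorption scheme along the lines of Glock--K\"uhn--Lo--Osthus~\cite{GKLO16} and Keevash~\cite{K14}, augmented with the ``high-girth absorbers'' recently introduced by Kwan--Sah--Sawhney--Simkin~\cite{KSSS22} for the case $q=3$, $r=2$. For admissible $n$ I would first build a vortex
\[
[n] = V_0 \supset V_1 \supset \cdots \supset V_t,
\]
with each $|V_{k+1}|/|V_k|$ a fixed small constant and $|V_t|$ bounded in terms of $g$, $q$, $r$. Before the iteration begins I would fix a random family $\cA$ of $q$-blocks in $[n]$ --- the \emph{absorber} --- with the property that for every admissible partial Steiner system on $V_t$, some controlled modification of $\cA$ completes it to an $(n,q,r)$-Steiner system on all of $[n]$.

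At vortex step $k$ I would apply the approximate theorem of this paper to the link hypergraph whose vertices are the $r$-subsets of $V_k$ not yet covered and not contained in $V_{k+1}$, and whose edges are the admissible $q$-blocks in $V_k$; the forbidden submatchings are exactly those sets of new blocks that, together with blocks chosen in earlier layers and in $\cA$, would close up to an $(i(q-r)+r,i)$-configuration for some $2\le i\le g$. Because each configuration type has bounded complexity, the codegree hypotheses demanded by the paper's main theorems translate into quasirandomness conditions on the residual hypergraph, which must be maintained from layer to layer by a boosted pseudorandomness analysis as in~\cite{K14,GKLO16}. After $t$ layers one is left with a partial high-girth Steiner system covering every $r$-subset of $[n]$ except those contained in $V_t$, together with the blocks of $\cA$, and divisibility inside $V_t$ is preserved throughout.

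The heart of the proof is the construction of $\cA$. It must be \emph{flexible}, meaning that for every admissible leftover on $V_t$ some modification of $\cA$ completes the system, and it must be \emph{high girth}, meaning that no modification of $\cA$ together with the nibble blocks from all $t$ layers creates any $(i(q-r)+r,i)$-configuration for $2\le i \le g$. For flexibility I would use a distributive absorption tree built from the generic $(q,r)$-absorbing gadgets of~\cite{GKLO16}, whose existence is shown by a random-greedy construction. The genuinely new ingredient is the girth property: by drawing each gadget from a large random template and distributing the gadgets independently through $[n]$, any short configuration meeting the gadget blocks would require several independent random coincidences, so a union bound or $g$-th moment calculation over the finitely many configuration shapes of order at most $g$ drives their expected number to $o(1)$.

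The main obstacle --- and the reason the full conjecture lies genuinely beyond the approximate version established in the body of the paper --- is achieving flexibility and high girth simultaneously in $\cA$. For $q=3$, $r=2$ this was accomplished in~\cite{KSSS22} via explicit algebraic triangle-cover gadgets whose short-configuration content can be bounded by hand; for arbitrary $q>r\ge 2$ the cleanest route seems to be to build the absorber from random templates inside a slightly enlarged vertex set, use this paper's own approximate high-girth theorem to certify that the template avoids short configurations with high probability, and then use a local switching argument to show that every ``repair'' operation on $\cA$ preserves this event. Verifying the switching step for all configuration shapes of complexity at most $g$, and certifying that no repair introduces a forbidden configuration together with any earlier-layer block, will be the most delicate part of the argument.
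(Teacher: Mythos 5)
The statement you are trying to prove is stated in the paper as a \emph{conjecture} (due to Glock, K\"uhn, Lo, and Osthus), not as a theorem: the paper proves only its approximate version, Theorem~\ref{thm:HighGirthSteiner}, which produces a partial $(n,q,r)$-Steiner system of size $(1-n^{-\beta})\binom{n}{r}/\binom{q}{r}$ with no $(i(q-r)+r,i)$-configurations, by applying Corollary~\ref{cor:SmallCodegreeColoring} to the auxiliary hypergraph whose vertices are the $r$-subsets of $[n]$ and whose edges are the $q$-cliques. There is no proof of the full conjecture in the paper to compare your proposal against, and the conjecture was open at the time of writing (known only for $q=3$, $r=2$ by Kwan--Sah--Sawhney--Simkin).

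Your proposal is a reasonable research program, but it is not a proof: the decisive step is exactly the one you defer. The construction of an absorber $\cA$ that is simultaneously \emph{flexible} (able to complete every admissible leftover on $V_t$) and \emph{high girth} (no repair, together with blocks from any nibble layer, closes an $(i(q-r)+r,i)$-configuration) is the open problem, not a routine extension. The union-bound/moment argument you sketch certifies high girth only for the template as initially sampled; the repairs are chosen adversarially as a function of the leftover, so the conditioning can destroy precisely the randomness your first-moment calculation relies on. This is why even the $q=3$, $r=2$ case required the bespoke algebraic gadgets of~\cite{KSSS22}, and no analogue is known for general $q>r\ge 2$ (nor for $r\ge 3$, where the divisibility constraints are themselves substantially more intricate). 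You also need to maintain the codegree hypotheses of Corollary~\ref{cor:SmallCodegreeColoring} on the residual hypergraph after each vortex layer \emph{and} after deleting everything that would close a short configuration with previously chosen blocks; verifying that these deletions do not degrade the degree regularity beyond what the theorem tolerates is itself nontrivial and is not addressed. As written, the proposal establishes nothing beyond what Theorem~\ref{thm:HighGirthSteiner} already gives.
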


For $r=2$ and all $q\geq 3$, this was asked by F\"uredi and Ruszink\'o~\cite{FR13} in 2013 while Conjecture~\ref{conj:HighGirthAllUniformities} was reiterated by Keevash and Long~\cite{KL20} in 2020. % in their language, it says that there exists a $q$-uniform $r$-linear hypergraph with $\binom{n}{r}/\binom{q}{r}$ $q$-element subsets with no $(i(q-r)+r,i)$ configuration.

Our first main result of this paper is that we prove the approximate version of this conjecture as follows.

\begin{thm}\label{thm:HighGirthSteiner}
For all integers $q > r \geq 2$ and every integer $g\ge 2$, there exists $n_0$ and $\beta\in (0,1)$ such that for all $n\ge n_0$, there exists a partial $(n,q,r)$-Steiner system $S$ with $|S|\ge (1-n^{-\beta})\binom{n}{r}/\binom{q}{r}$ and no $(i(q-r)+r,i)$-configurations for all $2\le i \le g$.
\end{thm}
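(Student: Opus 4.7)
\medskip
\noindent\emph{Proof plan.} The plan is to deduce \Cref{thm:HighGirthSteiner} directly from the general ``matching with forbidden submatchings'' theorem promised in the introduction. Define the auxiliary hypergraph $G$ with vertex set $V(G) := \binom{[n]}{r}$ and edge set $E(G) := \{\binom{Q}{r} : Q \in \binom{[n]}{q}\}$. Then $G$ is $\binom{q}{r}$-uniform and $D$-regular with $D = \binom{n-r}{q-r} = \Theta(n^{q-r})$, and any two distinct $r$-sets lie together in at most $\binom{n-r-1}{q-r-1} = O(D/n)$ common edges of $G$, so $G$ has small codegrees. Crucially, matchings of $G$ are in bijection with partial $(n,q,r)$-Steiner systems via $\{\binom{Q}{r} : Q \in S\} \leftrightarrow S$.

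Next, encode the forbidden configurations by a conflict hypergraph $H$ with $V(H) = E(G)$: for each $2 \le i \le g$, put into $H$ all $i$-subsets $\{Q_1,\dots,Q_i\} \subseteq \binom{[n]}{q}$ with $|Q_1 \cup \cdots \cup Q_i| \le i(q-r)+r$. An $H$-independent matching of $G$ is then precisely a partial Steiner system avoiding every $(i(q-r)+r,i)$-configuration for all $2 \le i \le g$, so it suffices to verify that $(G,H)$ satisfies the hypotheses of the main matching theorem and invoke it.

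The degree and codegree bounds on $H$ should follow from a greedy ``vertex budget'' count: extending a single $q$-set to an $i$-configuration amounts to choosing $i-1$ further $q$-sets, each contributing at most $q-r$ new vertices to the union, so the number of $i$-edges of $H$ through a given $q$-set is $O(n^{(i-1)(q-r)}) = O(D^{i-1})$. For the codegree, requiring two prescribed $q$-sets to both appear forces part of the vertex budget to be spent on their overlap with the remaining choices, which should yield at least one extra constraint beyond the naive count and hence a polynomial-in-$n$ saving over $D^{i-2}$. I expect this codegree bookkeeping to be the principal technical obstacle, especially when $q-r$ is small (for instance $q = r+1$): configurations are then quite rigid, and one must carefully argue that \emph{every} pair of distinct $q$-subsets appearing in a common forbidden configuration imposes a nontrivial restriction on the remaining $i-2$ sets. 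Once these estimates are established, the main matching theorem delivers an $H$-independent matching covering all but an $n^{-\beta}$ fraction of $V(G)$ for some $\beta > 0$, which translates immediately to a partial $(n,q,r)$-Steiner system of size at least $(1 - n^{-\beta})\binom{n}{r}/\binom{q}{r}$ avoiding every $(i(q-r)+r,i)$-configuration for $2 \le i \le g$.
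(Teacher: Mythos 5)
Your reduction is the same one the paper uses: the identical auxiliary hypergraph $G$ on $V(G)=\binom{[n]}{r}$ with edges $\binom{Q}{r}$, the identical translation between $H$-avoiding matchings and high-girth partial Steiner systems, and an appeal to the general small-codegree theorem (Corollary~\ref{cor:SmallCodegreeColoring}). However, there is a genuine gap in how you define $H$ and verify its codegrees.

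First, the relevant general theorem requires $H$ to have \emph{no edges of size two}, so you cannot put the $i=2$ configurations into $H$; you must instead observe that a $(2(q-r)+r,2)$-configuration consists of two $q$-sets sharing at least $r$ vertices, whose corresponding edges of $G$ intersect (in a common $r$-subset) and hence are already excluded by the matching condition. This is easily repaired, but your write-up does not notice it. Second, and more seriously, you take $E(H)$ to be \emph{all} $i$-subsets spanning at most $i(q-r)+r$ vertices, and you only discuss the codegree saving for a \emph{pair} of prescribed $q$-sets. The hypothesis $\Delta_{k,\ell}(H)\le D^{k-\ell-\beta}$ must hold for all $2\le \ell<k\le g$, and for $\ell\ge 3$ your definition breaks it: if the $\ell$ prescribed $q$-sets themselves form a tight $(\ell(q-r)+r,\ell)$-configuration that is still a matching of $G$ (possible for $\ell\ge 3$), then extending to a $k$-configuration only requires the remaining $k-\ell$ sets to add at most $(k-\ell)(q-r)$ new vertices, so the count is $\Theta(n^{(k-\ell)(q-r)})=\Theta(D^{k-\ell})$ with no polynomial saving. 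The paper avoids this by putting into $E(H)$ only those $(i(q-r)+r,i)$-configurations that contain no $(i'(q-r)+r,i')$-configuration for $2\le i'<i$; this minimality forces any $\ell$ of the $q$-sets in a $k$-edge of $H$ to span at least $\ell(q-r)+r+1$ vertices, which is exactly what yields $\Delta_{k,\ell}(H)=O(D^{k-\ell-1/(q-r)})$ (and restricting to minimal configurations loses nothing, since avoiding them avoids all of them). You also omit checking the maximum $i$-codegree of $G$ with $H$, which is a separate hypothesis of the theorem; it is verified by the same union-bound count once minimality is imposed.
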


Indeed, our original motivation for this paper was to prove Theorem~\ref{thm:HighGirthSteiner}; however, during the course of our research, we discovered a surprising yet much more general phenomenon that unites two prominent streams of research: matchings in hypergraphs and independent sets in hypergraphs of girth at least five. Our core main result, Theorem~\ref{thm:GirthFive}, unites the seminal, yet classical theorems of Pippenger, Theorem~\ref{thm:Pippenger} below (albeit with stronger codegree assumptions), and that of Ajtai, Koml\'os, Pintz, Spencer, and Szemer\'edi~\cite{AKPSS82}, Theorem~\ref{thm:AKPSS} below. 

Crucially we prove our core main result, Theorem~\ref{thm:GirthFive}, via the nibble method and so we are able to add a number of bells and whistles to the proofs, that is various tweaks or additional clever reductions. This allows us to prove coloring and list coloring versions while also allowing much less restrictive codegree assumptions. Theorem~\ref{thm:HighGirthSteiner} then follows easily from our most general theorem, Theorem~\ref{thm:SmallCodegree}; additionally, its coloring version even implies an almost partition of $K_n^r$ into high girth partial $(n,q,r)$-Steiner systems, each with $(1-n^{-\beta})\binom{n}{r}/\binom{q}{r}$ $q$-element subsets.

We believe that Theorem~\ref{thm:SmallCodegree} (or variations of it) has potential for even more applications; already, it immediately yields many similar consequences for other high girth design problems. Namely, for problems where R\"odl's nibble approach works to yield approximate decompositions, we are able to derive high girth versions. We detail a selection of these applications in Section~\ref{ss:Designs}. Before stating our main results, we briefly discuss these two streams of research.

\vskip.15in
\noindent {\bf Matchings in Hypergraphs.} In 1985, Frankl and R\"{o}dl~\cite{FR85} generalized R\"{o}dl's result to matchings in hypergraphs, namely to regular hypergraphs of small codegree. Recall that in a hypergraph $G$, the \emph{codegree} of two distinct vertices $u,v$ of $G$ is the number of edges of $G$ containing both $u$ and $v$. The classical result of Pippenger (unpublished) generalizes their result further as follows, where we say a hypergraph is \emph{$(1\pm \gamma)D$-regular} if every vertex has degree at least $(1-\gamma)D$ and at most $(1+\gamma)D$.

\begin{thm}[Pippenger]\label{thm:Pippenger}
For every $r, \varepsilon > 0$, there exists $\gamma > 0$ such that the following holds. If $G$ is an $r$-uniform $(1 \pm \gamma)D$-regular hypergraph on $n$ vertices with codegrees at most $\gamma D$, then there is a matching in $G$ covering all but at most $\varepsilon n$ vertices.
\end{thm}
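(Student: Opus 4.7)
The plan is to prove Theorem~\ref{thm:Pippenger} via R\"odl's nibble (semi-random) method, iterating a random edge-selection procedure $T = \Theta(\log(1/\varepsilon))$ times until the uncovered vertex set has size at most $\varepsilon n$.

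I first analyze a single nibble step. Fix $\delta > 0$ small in terms of $r$ and $\varepsilon$, and activate each edge of $G$ independently with probability $p = \delta/D$. Call an activated edge $f$ \emph{successful} if no other activated edge shares a vertex with $f$; the set $M$ of successful edges is automatically a matching. A fixed edge $f$ meets at most $r(1+\gamma)D$ other edges, and the codegree bound $\gamma D$ makes inclusion--exclusion overcounting negligible, so
\[
\Pr\bigl[f \in M \,\big|\, f \text{ activated}\bigr] = (1-p)^{(1\pm O(\gamma))rD} = e^{-r\delta}\bigl(1 \pm O(\gamma + \delta^2)\bigr).
\]
Hence for each vertex $v$,
\[
\Pr\bigl[v \text{ uncovered by } M\bigr] = \prod_{f\ni v}\bigl(1 - p \cdot e^{-r\delta}(1\pm o(1))\bigr) = \exp\bigl(-\delta e^{-r\delta}\bigr)\bigl(1 \pm O(\gamma + \delta^2)\bigr) =: q,
\]
and the expected number of edges through any surviving pair $\{u,w\}$ whose other vertices all survive is at most $\gamma D \cdot q^{r-2}$.

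Next I upgrade these expectations to high-probability bounds. The indicator that $v$ is uncovered depends only on edge activations within distance two of $v$ (at most $O(D^2)$ edges) and is 1-Lipschitz in each coordinate, so a bounded-differences/Talagrand argument yields concentration within $D^{1/2+o(1)}$ with failure probability $e^{-D^{\Omega(1)}}$. The same applies to the ``residual degree'' counting edges through $v$ whose other $r-1$ vertices all survive (whose expectation is $(1 \pm O(\gamma+\delta))\,q^{r-1}D$) and to the pair-codegree statistics. A union bound over the $O(n^2)$ relevant quantities shows that with positive probability, after deleting covered vertices, the residual hypergraph $G'$ on the surviving vertex set $V'$ satisfies $|V'| = (q \pm o(1))n$, is $(1 \pm \gamma')D'$-regular with $D' = q^{r-1}D$, and has codegrees at most $\gamma' D'$, where $\gamma'$ is only slightly larger than $\gamma$.

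I then iterate. Setting $G_0 = G$ and defining $G_{i+1}$ by applying one nibble to $G_i$, induction gives $n_i = \Theta(q^i)\, n$ and $D_i = \Theta(q^{(r-1)i})\,D$, with slack $\gamma_i$ inflated by at most a constant factor per round. Choosing the initial $\gamma$ small enough in terms of $r$, $\varepsilon$, $\delta$, and taking $T = \Theta(\log(1/\varepsilon)/\delta)$ rounds, ensures $n_T \le \varepsilon n$ while the inductive hypothesis persists at every step; the union $M_1 \cup \cdots \cup M_T$ is the desired matching. The main obstacle is the concentration step combined with bookkeeping the slack parameters over the $T$ rounds: the codegree quantities are the most delicate, since flipping one activation can affect the fate of $\Theta(D)$ neighbors of a pair, so one leans on the initial codegree bound $\gamma D$ to control the Lipschitz constant, and on Talagrand's certifiability to obtain a sub-Gaussian tail. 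Equally, showing that $\gamma_i$ stays bounded across iterations is what forces the final $\gamma$ in the theorem to be taken very small relative to $\varepsilon$.
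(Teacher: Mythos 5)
The paper does not actually prove Theorem~\ref{thm:Pippenger}: it is cited as a classical, unpublished result of Pippenger, and the paper's Theorem~\ref{thm:GirthFive} generalizes it only under the much stronger codegree hypothesis ``at most $\log^2 D$'' rather than ``at most $\gamma D$,'' so Pippenger's theorem is neither proved nor subsumed in the paper. Your nibble sketch is the textbook route to Pippenger's theorem and is consistent in spirit with the iterative scheme the paper does use for Lemma~\ref{lem:Iterative}, but two of your steps have genuine gaps.

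First, you finish the concentration step with a union bound over $O(n^2)$ events, each failing with probability $e^{-D^{\Omega(1)}}$. This is only valid when $n < e^{D^{\Omega(1)}}$, but the theorem imposes no upper bound on $n$ in terms of $D$, so the union bound can fail. The standard repair, which is exactly what the paper does in Section~\ref{ss:LocalLemma}, is the Lov\'asz Local Lemma: each bad event is determined by activations of edges within bounded distance in an auxiliary graph of maximum degree $D^{O(1)}$, hence is mutually independent of all but $D^{O(1)}$ other bad events, and $D^{O(1)}e^{-D^{\Omega(1)}} < 1$ once $D$ is large. The paper even partitions $V(G)$ into blocks of size $\Theta(D^{1/r})$ specifically so that the ``fraction of vertices covered'' becomes a local statistic to which LLL applies, rather than a global one whose bounded-differences constant blows up.

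Second, the residual degree $d_{G'}(v)$ is not $1$-Lipschitz in single edge activations. Toggling one edge $f$ can change whether each $u\in f$ survives, and each such $u$ may lie in up to $\gamma D$ edges through $v$ by the codegree bound, so one coordinate change can move $d_{G'}(v)$ by $\Theta(\gamma D)$. Bounded differences over the $O(D^2)$ relevant coordinates then only gives a deviation window of order $\gamma D^2$, which is useless when one needs $o(D)$. Moreover, you cannot certify $d_{G'}(v)\ge s$ directly because ``$u$ survives'' is a negative event (``no covering edge at $u$ was activated''), so Talagrand's certificate machinery does not apply to $d_{G'}(v)$ as written. The fix, which is what the paper carries out in Section~\ref{ss:Conc}, is to decompose $d_{G'}(v)$ as a signed combination of random variables that are each positively certifiable (see the decomposition of $|D_{0,v}'|$ into $d_G(v)$, $|D_{1,v}|$, $|N'_G(v)\cap F_0|$, etc.), pass to exceptional outcomes to bound the observation degree, and apply the linear Talagrand inequality with exceptional outcomes, Theorem~\ref{exceptional talagrand's observations}. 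Your sketch gestures at ``Talagrand's certifiability'' but the variable in the form you have written it cannot be certified.
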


In 1989, Pippenger and Spencer~\cite{PS89} generalized Theorem~\ref{thm:Pippenger} further by proving that $G$ has chromatic index $(1+o(1))D$. In 1996, Kahn~\cite{K96} generalized this even further by proving $G$ has list chromatic index $(1+o(1))D$.

\vskip.15in
\noindent {\bf Independent Sets in Girth Five Hypergraphs.} An easy application of the Lov\'asz Local Lemma yields a bound of $O\left(\Delta^{1/(r-1)}\right)$ for the chromatic number of $r$-uniform hypergraphs of maximum degree $\Delta$. However, in the case of forbidding small cycles better bounds are known as follows.

Let $i\ge 2$ be an integer. An \emph{$i$-cycle} in a hypergraph $H$ is an alternating sequence of distinct edges and vertices $e_1,v_1,\ldots, e_i, v_i$ where $v_j \in e_j\cap e_{(j+1)\mod i}$ for all $j\in [i]$. A hypergraph is called \emph{linear} if it has no $2$-cycles. The \emph{girth} of a hypergraph $H$ is the minimum $i$ such that $H$ has an $i$-cycle. 

The classical result of Ajtai, Koml\'os, Pintz, Spencer, and Szemer\'edi~\cite{AKPSS82} from 1982 improves the trivial lower bound for independence number by a $\log \Delta$ factor for hypergraphs of girth at least five.

\begin{thm}[Ajtai, Koml\'os, Pintz, Spencer, Szemer\'edi~\cite{AKPSS82}]\label{thm:AKPSS}
If $H$ is an $r$-uniform hypergraph on $n$ vertices of girth at least five and maximum degree $\Delta$, then $H$ has an independent set of size at least $\Omega\left(n\cdot \frac{\log \Delta}{\Delta^{1/(r-1)}}\right)$.
\end{thm}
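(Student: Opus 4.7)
The plan is to run a semi-random (``nibble'') process on $H$, growing a partial independent set $I_t$ while shrinking a residual vertex set $V_t$ (initially $I_0=\emptyset$, $V_0=V(H)$). At step $t$, mark each vertex of $V_t$ independently with probability $p_t = c/D_t^{1/(r-1)}$, where $D_t$ bounds the current maximum degree of $H[V_t]$ and $c>0$ is a small absolute constant. Let $T_t$ be the marked set; prune $T_t$ to an independent set $J_t$ by deleting one vertex from each entirely-marked edge; set $I_{t+1}:=I_t \cup J_t$; and define $V_{t+1}$ to consist of the unmarked vertices of $V_t$ that do not complete an edge of $H$ whose remaining vertices all lie in $I_{t+1}$. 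Iterate until $D_t = O(1)$ and then finish greedily on the remaining $V_t$, which contributes only lower-order terms.

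The heart of the argument is showing that girth $\geq 5$ forces $D_t$ to contract by a constant multiplicative factor per step (not merely by a factor $1-O(p_t)$, as a naive nibble analysis would give). Conditioning on a vertex $v \in V_{t+1}$ surviving the step propagates to strong information about each neighbor $u \in N(v)$ via the girth hypothesis: linearity (no $2$-cycles) makes the link $\{e \setminus \{v\} : e \ni v\}$ a matching on $(r-1)$-tuples, and the absence of $3$- and $4$-cycles controls the second-order overlaps, making the survival events at $u$ and at $v$ asymptotically independent. A short conditional computation then yields $\mathbb{E}[\deg_{H[V_{t+1}]}(v) \mid v \in V_{t+1}] \leq (1-\Omega(c))\,d_v$, and Azuma--Hoeffding or Talagrand concentration, together with a union bound over $v$, boosts this to a high-probability bound $D_{t+1} \leq (1-\Omega(c))\,D_t$.

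With $D_t$ decaying geometrically the process runs for $T=\Theta(\log\Delta)$ steps. A parallel tracking of $|V_t|$ (which also contracts at a rate compatible with $D_t$) shows that $p_t|V_t|$, summed over the $T$ steps, telescopes to
\[
|I_T| \;\geq\; \sum_{t=0}^{T-1} p_t|V_t| \;=\; \Omega\!\left(\frac{n\log\Delta}{\Delta^{1/(r-1)}}\right),
\]
which is the desired bound.

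The principal technical obstacle is joint concentration over all $\Theta(\log\Delta)$ steps: one must simultaneously track $D_t$, $|V_t|$, the relevant codegrees in the residual hypergraph, and the count of ``nearly covered'' vertices, and show none deviates meaningfully from its expected trajectory. This is the standard differential-equation method iterated via bounded-differences inequalities with a union bound. Without girth $\geq 5$, the per-step degree decrement collapses to a factor $1-O(p_t)$, eliminating the logarithmic improvement and recovering only the Pippenger-type bound $\Omega(n/\Delta^{1/(r-1)})$.
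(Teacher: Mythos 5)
The paper does not actually reprove this classical result (it is quoted from~\cite{AKPSS82}); the closest thing to a proof here is the machinery of Lemma~\ref{lem:Iterative}, which generalizes it, and measuring your sketch against that machinery exposes two genuine gaps. The first is that your central lemma --- $\mathbb{E}[\deg_{H[V_{t+1}]}(v)\mid v\in V_{t+1}]\le(1-\Omega(c))\,d_v$, hence $D_{t+1}\le(1-\Omega(c))D_t$ --- is false for irregular $H$. A neighbour $u$ of $v$ is covered with probability roughly $d_u p_t^{r-1}=c^{r-1}d_u/D_t$, so an edge at $v$ dies with constant probability only if the \emph{other} vertices of that edge themselves have degree $\Omega(D_t)$. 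Take $H$ to be a sunflower with core $\{v\}$: it has no cycles at all, $v$ has degree $D_t$, every other vertex has degree $1$, and $v$ loses only an $O(rp_t)=o(1)$ fraction of its edges per round. Girth five buys the near-independence needed to multiply the survival probabilities (and to concentrate), but not the lower bound on each factor. The standard and necessary repair is the equalizing/wastage device --- remove \emph{every} surviving vertex with probability exactly $p_t+D_tp_t^{r-1}$, topping up its true coverage probability with an artificial Bernoulli --- which is precisely the ``equalizing coinflip'' the paper builds into its procedure in Section~\ref{ss:Procedure}. Without it the decay rates of $D_t$ and $|V_t|$ do not track each other and your telescoping identity $p_t|V_t|=\Theta(n\Delta^{-1/(r-1)})$ fails; this is a missing idea, not a routine detail.

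The second gap is that for $r\ge3$ your process need not even output an independent set. You evict a vertex from $V_{t+1}$ only once some edge through it has \emph{all} of its remaining vertices in $I_{t+1}$, and you prune only edges marked entirely within a single round; an edge with two or more vertices already in $I_t$ and two or more survivors in $V_t$ passes both filters and can be completed when its survivors are all marked together in a later round. (Even if you intend the pruning to catch such edges, bounding the resulting wastage requires controlling the number of partially covered edges of each size at each vertex.) Repairing this forces you to treat the traces of partially covered edges as forbidden configurations of every uniformity $2,\dots,r$ --- exactly the mixed-uniformity degrees $d_{H,i}(v)$ whose weighted sum $w_D(H,v)=\sum_i d_{H,i}(v)\cdot(i-1)/D^{i-1}$ is what Lemma~\ref{lem:Iterative} tracks in place of a single maximum degree, and exactly why the paper remarks that the AKPSS induction is inherently a mixed-uniformity statement. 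Your list of tracked quantities omits these, and they are where the $\log\Delta$ gain actually lives. The overall shape (nibble, $\Theta(\log\Delta)$ effective rounds, telescoping) is right, but both points above are substantive omissions rather than concentration bookkeeping.
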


We note that in fact, implicit in the proof of Theorem~\ref{thm:AKPSS} by Ajtai et al.~\cite{AKPSS82} due to its inductive nature is a mixed uniformity version of their result as follows (see~\cite{LL20} for a formal proof). The \emph{$i$-degree} of a vertex $v$ of $H$, denoted $d_{H,i}(v)$, is the number of edges of $H$ of size $i$ containing $v$. The \emph{maximum $i$-degree} of $H$, denoted $\Delta_i(H)$, is the maximum of $d_{H,i}(v)$ over all vertices $v$ of $H$. Their proof yields that if $H$ is $r$-bounded and for some $D>0$, we have $\Delta_i(H) \le O(D^{i-1}\log D)$ for all $2\le i\le r$, then $H$ has an independent set of size at least $n/D$. Our results also generalize this mixed uniformity version and indeed mixed uniformities are necessary for the applications to high girth Steiner systems.

Returning to the uniform case, Theorem~\ref{thm:AKPSS} was generalized by Duke, Lefmann, and R\"{o}dl~\cite{DLR95} to linear hypergraphs (i.e.~girth at least three). Frieze and Mubayi~\cite{FM13} generalized this further to coloring by proving that linear hypergraphs of maximum degree $\Delta$ have chromatic number at most $O\left(\left(\Delta/\log \Delta\right)^{\frac{1}{r-1}}\right)$. Cooper and Mubayi~\cite{CM16} generalized this even further by relaxing the linearity condition to a bounded codegree condition (see Theorem~\ref{thm:CopperMubayi} for a precise statement). Very recently, Li and the second author~\cite{LP22} proved the mixed uniformity version of their codegree result.

\vskip.15in
\noindent {\bf So now we ask a far-reaching question: Can we combine these two streams of research?}\\ This paper answers this question in the affirmative.

\subsection{Main Results: Combining Two Streams of Research}

To that end, we introduce the following key definition.

\begin{definition}[Configuration Hypergraph]
Let $G$ be a (multi)-hypergraph. We say a hypergraph $H$ is a \emph{configuration hypergraph} for $G$ if $V(H)=E(G)$ and $E(H)$ consists of a set of matchings of $G$ of size at least two. We say a matching of $G$ is \emph{$H$-avoiding} if it spans no edge of $H$.
\end{definition}

Note that when attempting to find an $H$-avoiding matching of $G$, one may assume without loss of generality that no edge of $H$ contains another edge of $H$. Recall that the \emph{line graph} of a hypergraph $G$, denoted $L(G)$, is the graph where $V(L(G)) = E(G)$ and $E(L(G)) = \{uv: u,v \in E(G), u\cap v\ne \emptyset\}$. Thus we ask the following question.

\begin{question}[Edge+Vertex Phenomenon]
Let $G$ be a hypergraph, $H$ a configuration hypergraph of $G$ and $J=L(G)\cup H$. Under what conditions does it hold that: $J$ has independence number (resp.~chromatic number) of size almost the minimum (resp.~maximum) of the independence numbers (resp.~chromatic numbers) of $H$ and $L(G)$?
\end{question}

The core main result of this paper then is that we can find a common generalization of Theorem~\ref{thm:Pippenger} (albeit with stronger codegree assumptions) for a hypergraph $G$ and the mixed uniformity version of Theorem~\ref{thm:AKPSS} for its configuration hypergraph $H$. To that end, it is natural to make an additional assumption on the `codegree' of $G$ with $H$, namely to assume it is small compared to $D$. Hence we make the following definition.

\begin{definition}
Let $G$ be a hypergraph and $H$ be a configuration hypergraph of $G$. We define the \emph{codegree} of a vertex $v\in V(G)$ and $e\in E(G)=V(H)$ with $v\notin e$ as the number of edges of $H$ who contain $e$ and an edge incident with $v$. We then define the \emph{maximum codegree} of $G$ with $H$ as the maximum codegree over vertices $v\in V(G)$ and edges $e\in E(G)=V(H)$ with $v\notin e$. 
\end{definition}

Here is our core main result.

\begin{thm}\label{thm:GirthFive}
For all integers $r,g \ge 2$ and real $\beta \in \left(0,\frac{1}{8r}\right]$, there exists an integer $D_{\beta}\ge 0$ such that following holds for all $D\ge D_{\beta}$: 
\vskip.05in
Let $G$ be an $r$-uniform $(1\pm D^{-\beta})D$-regular (multi)-hypergraph with codegrees at most $\log^2 D$ and let $H$ be a $g$-bounded configuration hypergraph of $G$ of girth at least five with $\Delta_i(H) \le \frac{\beta}{4g^2} \cdot D^{i-1}\log D$ for all $2\le i\le g$. If the maximum codegree of $G$ with $H$ is at most $\log^{2(g-1)} D$, then there exists an $H$-avoiding matching of $G$ of size at least $\frac{v(G)}{r} \cdot (1-D^{-\frac{\beta}{16r}})$.
\end{thm}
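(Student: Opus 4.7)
The plan is to apply the R\"odl nibble (semi-random) method iteratively, constructing the desired matching $M$ across $T = \Theta(\log D)$ rounds while tracking a residual hypergraph pair $(G_t, H_t)$ together with the accumulating matching $M_t$. Here $G_t$ is the sub-hypergraph of $G$ on the vertices not covered by $M_t$ with any edge removed that has already become ``forbidden'' (i.e.\ would complete some $f \in E(H)$ together with $M_t$), and $H_t$ is the residual configuration hypergraph obtained from $H$ by restriction to $V(H_t) = E(G_t)$ combined with the projection of each remaining $f \in E(H)$ onto $f \setminus M_t$. One nibble is: select each $e \in E(G_t)$ independently with probability $p_t$ chosen appropriately (of order $1/(D_t \log D)$ up to small constants, so as to balance the conflict-deletion and forbidden-deletion rates), form $S_t$, and retain those $e \in S_t$ which neither conflict with another selected edge nor complete any $f \in E(H)$ via $M_t \cup S_t$.

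The core of the analysis is the one-round calculation. In expectation, the uncovered vertex count contracts by a factor $1 - \Theta(p_t D_t)$, the $i$-degrees of $H_t$ contract proportionally, and the forbidden-deletion step removes only a small fraction of the selected edges because $\sum_i \Delta_i(H_t) \cdot p_t^{i-1}$ can be made $O(\beta)$ by the choice of $p_t$ together with the hypothesis $\Delta_i(H) \le (\beta/(4g^2)) D^{i-1} \log D$. Talagrand's inequality then concentrates each tracked quantity around its expectation; the Lipschitz constants are bounded using the codegree hypotheses on $G$ (at most $\log^2 D$), on $H$ (via $\Delta_i$ bounds), and on the joint codegree between $G$ and $H$ (at most $\log^{2(g-1)} D$). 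The girth-five hypothesis on $H$ is used decisively in these calculations: two distinct $i$-edges of $H$ sharing an edge of $G$ cannot further overlap, so the Bernoulli events ``$f \subseteq S_t \cup M_t$'' are effectively independent across distinct $f$, keeping variances and Talagrand slack manageable. Girth-five also passes to $H_{t+1}$ under the projection, since projecting onto $V(G_t) \setminus M_t$ only shortens edges and breaks existing cycles, never creating new ones.

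Iterating for $T = \Theta(\log D)$ rounds drives the uncovered vertex fraction down to $D^{-\beta}$. The principal obstacle is sustaining all invariants --- $G_t$ being $(1 \pm \eta_t)D_t$-regular, small codegree in $G_t$, the scaled $\Delta_i$-bound on $H_t$, and joint codegrees --- simultaneously across $\Theta(\log D)$ rounds, with an error budget that compounds geometrically to the final $D^{-\beta/(16r)}$. The tight point is the per-round slack in the $\Delta_i(H_t)$ invariant: the $\log D$ factor in the hypothesis, which is exactly what the girth-five assumption on $H$ permits us to absorb, is precisely what allows the per-round errors to fit within the compounded final budget.
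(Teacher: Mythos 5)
Your architecture matches the paper's (an iterative nibble tracking a residual pair $(G_t,H_t)$, with the projected configuration hypergraph, Talagrand for concentration, and girth five preserved under projection), but the quantitative heart of the argument is missing, and the step you do assert in its place would fail. The crux is your claim that ``the $i$-degrees of $H_t$ contract proportionally.'' To drive the uncovered fraction down to $D^{-\beta/(16r)}$ you must run the nibble for a total bite $\sum_t p_tD_t=\Theta(\log D)$ (incidentally, $T=\Theta(\log D)$ rounds of bite $\Theta(1/\log D)$ gives total bite $\Theta(1)$ and only a constant uncovered fraction, so your round count is also off by a $\log D$ factor). In each round a residual edge $e$ is deleted because some configuration through it gets completed with probability about $\varepsilon_t\cdot w_{D_t}(H_t,e)$, where $w_{D}(H,e)=\sum_i d_{H,i}(e)\tfrac{i-1}{D^{i-1}}$ is initially $\Theta(\beta\log D)$. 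If the $\Delta_i(H_t)$ merely rescale with $D_t$, this weighted degree stays at $\Theta(\log D)$ throughout, the cumulative deletion probability is $\Theta(\log D)\cdot\sum_t\varepsilon_t=\Theta(\log^2 D)$, and the residual degrees collapse by a factor $e^{-\Theta(\log^2D)}$ --- the graph is destroyed long before the covering completes. The paper's proof hinges on showing that $w_{D_t}(H_t)$ contracts by an \emph{extra} factor $(1-\varepsilon_t/2)$ per round beyond the natural rescaling: a configuration survives only if \emph{all} of its edges survive, and each edge's survival costs an extra $e^{-\varepsilon_t}$ relative to the degree normalization. This makes $\sum_t \varepsilon_t w_t=O(\log D)$, so only a $D^{-O(\beta)}$ fraction of edges is lost, which the process tolerates. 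That geometric decay of the weighted configuration-degree --- not the girth-five hypothesis --- is what absorbs the $\log D$ factor; girth five is used only to decouple the survival events of distinct configurations at a fixed edge and to bound observation degrees in the Talagrand step.

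A second, related gap: even granting the correct decay of $w_t$, the forbidden-deletion probability of an edge is $\varepsilon_t w_{D_t}(H_t,e)$, which varies from $0$ to $\varepsilon_t w_t$ across edges. Accumulated over total bite $\Theta(\log D)$ this skews vertex degrees by a polynomial factor $D^{\Theta(\beta)}$ and destroys the $(1\pm\gamma_t)D_t$-regularity invariant you need to keep nibbling. The paper neutralizes this with an equalizing coin flip that deletes every edge with probability exactly $\varepsilon_t w_t$ regardless of its actual configuration-degree; some such device is essential and absent from your sketch.
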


The proof of Theorem~\ref{thm:GirthFive} uses R\"odl's nibble approach while the proofs of the approximate and exact versions of Conjecture~\ref{conj:Erdos} analyzed a ``high-girth" process (which is equivalent to the random greedy independent set process on $L(G)\cup H$). Thus our proof provides independent proofs of the approximate versions of Conjecture~\ref{conj:Erdos} while also providing a new perspective on why exactly Conjecture~\ref{conj:Erdos} is true.

We should mention here that Glock, Joos, Kim, K\"{u}hn, and Lichev~\cite{GJKKL22} have independently obtained some results of a similar flavor. First, they also prove Conjecture~\ref{conj:HighGirthAllUniformities}. Second, they also develop a general theory of finding almost perfect matchings avoiding forbidden submatchings (what they call \emph{conflict-free hypergraph matchings}). Their general main result is to prove the independent set version of our Theorem~\ref{thm:SmallCodegree} (by independent set version we mean in the sense of Theorem~\ref{thm:GirthFive} but with the less restrictive codegree assumptions of Theorem~\ref{thm:SmallCodegree} below which we discuss next). See Section~\ref{ss:OtherWork} for more discussion on the differences in the results and proofs.

Theorem~\ref{thm:GirthFive} is too restrictive to prove Theorem~\ref{thm:HighGirthSteiner} given that Theorem~\ref{thm:GirthFive} assumes the codegrees of $G$ and the codegrees of $G$ with $H$ are quite small and that $H$ has girth at least five. We will remedy this deficiency in due course by allowing much weaker codegree assumptions, but first we discuss other avenues of generalizing Theorem~\ref{thm:GirthFive}. For example, it is quite natural given the previous work in the two research streams to wonder if the coloring or even the list coloring version of Theorem~\ref{thm:GirthFive} holds. Both of these will actually follow from our first major bell, which is to generalize Theorem~\ref{thm:GirthFive} further to finding a perfect matching in bipartite hypergraphs as follows, but first a definition.

\begin{definition}
We say a hypergraph $G=(A,B)$ is \emph{bipartite with parts $A$ and $B$} if $V(G)=A\cup B$ and  every edge of $G$ contains exactly one vertex from $A$. We say a matching of $G$ is \emph{$A$-perfect} if every vertex of $A$ is in an edge of the matching.
\end{definition}

The study of matchings in bipartite hypergraphs has also garnered much attention over the years. Whereas in graphs the classical Hall's theorem characterizes when a bipartite graph $G=(A,B)$ has an $A$-perfect matching, even in $3$-uniform bipartite hypergraphs it is NP-complete to determine if there exists an $A$-perfect matching. Nevertheless, there are many results establishing sufficient conditions on a bipartite hypergraph $G=(A,B)$ to guarantee the existence of an $A$-perfect matching, for example Aharoni and Kessler~\cite{AK90}, Haxell~\cite{H95}, Aharoni and Haxell~\cite{AH00}, and Meshulam~\cite{M01} to name a few.

Our next main result thus finds an $H$-avoiding $A$-perfect matching in a bipartite hypergraph provided that the degrees of $A$ are slightly larger than the degrees of $B$ as follows. 

\begin{thm}\label{thm:GirthFiveBipartite}
For all integers $r,g \ge 2$, there exists an integer $D_{r,g}\ge 0$ such that following holds for all $D\ge D_{r,g}$ where we let $\alpha = \frac{1}{20r} > 0$: 
\vskip.05in
Let $G=(A,B)$ be a bipartite $r$-bounded (multi)-hypergraph with codegrees at most $\log^2 D$ such that every vertex in $A$ has degree at least $(1+D^{-\alpha})D$ and every vertex in $B$ has degree at most $D$. Let $H$ be a $g$-bounded configuration hypergraph of $G$ of girth at least five with $\Delta_i(H) \le \frac{\alpha}{g^2} \cdot D^{i-1}\log D$ for all $2\le i\le g$. If the maximum codegree of $G$ with $H$ is at most $\log^{2(g-1)} D$, then there exists an $H$-avoiding $A$-perfect matching of $G$.
\end{thm}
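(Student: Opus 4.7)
The plan is to derive Theorem~\ref{thm:GirthFiveBipartite} from Theorem~\ref{thm:GirthFive} via a regularization-plus-iteration scheme, where the $D^{-\alpha}$ slack in the degrees of $A$ is exactly what is needed to convert the almost-perfect conclusion of Theorem~\ref{thm:GirthFive} into a fully $A$-perfect one.

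For the regularization step, I would enlarge $G$ to an $r$-uniform multi-hypergraph $G^*$ on vertex set $V(G) \cup B^*$, where $B^*$ is a fresh pool of $O(|A|)$ dummy vertices. For every vertex $v \in V(G)\cup B^*$ whose current degree lies strictly below the common target $D^* := (1+D^{-\alpha})D$, I add dummy edges of the form $\{v,b_1^*,\ldots,b_{r-1}^*\}$ with the $b_i^*$ drawn from $B^*$ by a simple greedy/random process, also padding any edge of $G$ of size less than $r$ with dummies from $B^*$. A routine concentration argument shows that $G^*$ can be made $(1\pm D^{-\alpha/2})D^*$-regular with codegrees at most $\log^2 D$. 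The critical point is that no dummy edge lies in $V(H) = E(G)$, so $H$ remains a configuration hypergraph of $G^*$ with exactly the same girth, maximum $i$-degree, and maximum codegree with $G^*$ as it had with $G$. Applying Theorem~\ref{thm:GirthFive} to $(G^*,H)$ with parameter $\beta := \alpha/2$ yields an $H$-avoiding matching $M^*$ of $G^*$ whose complement has size at most $v(G^*)\cdot D^{-\alpha/(32r)}/r = O(|A|\cdot D^{-\alpha/(32r)})$, since $v(G^*) = O(|A|)$ by construction. Setting $M_1 := M^*\cap E(G)$ produces an $H$-avoiding matching of $G$ that leaves only a $D^{-\gamma}$-fraction of $A$ uncovered, for some explicit constant $\gamma>0$ depending on $r$ and $\alpha$.

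To upgrade $M_1$ to an $A$-perfect matching I iterate. At round $t+1$ I restrict to the residual bipartite hypergraph on the uncovered $A$-set $U_t$ and the unused $B$-vertices, additionally deleting any edge of $G$ already occurring in a surviving partially-used edge of $H$. A Talagrand/Azuma-type concentration estimate applied to the randomness inside the proof of Theorem~\ref{thm:GirthFive} shows that every $a\in U_t$ still has degree at least $(1+D^{-\alpha/2})D$ in the residual while every $B$-vertex retains degree at most $D$; the girth-five and $\Delta_i(H)$ conditions can only improve under edge deletion. Re-running the regularization and applying Theorem~\ref{thm:GirthFive} on this residual shrinks $|U_{t+1}| \le D^{-\gamma}|U_t|$, so after $O(\log|A|/\log D)$ rounds $U_t = \emptyset$ and $M_1\cup\cdots\cup M_t$ is the desired matching. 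The choice $\alpha := 1/(20r)$ gives enough slack to absorb both the regularization overhead and the cumulative iteration loss.

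The main obstacle is tracking the erosion of the $D^{-\alpha}$ margin across iterations: at each round, the nibble-induced degree loss at a given $a\in U_t$ is small in expectation but random, and one must use sharp concentration to certify that the degree at $a$ stays comfortably above $D$ throughout all $O(\log|A|/\log D)$ rounds without the slack collapsing. Verifying the dummy-edge construction in the regularization step (keeping codegrees controlled and avoiding any accidental configuration) and checking that the $H$-hypotheses survive deletion are by contrast essentially routine.
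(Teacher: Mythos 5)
Your first step (regularize, apply Theorem~\ref{thm:GirthFive} once, cover all but a $D^{-\gamma}$-fraction of $A$) is fine, but the iteration that is supposed to upgrade this to an $A$-perfect matching contains a fatal gap. After the first round, the matching $M_1$ covers a $(1-D^{-\gamma})$-fraction of \emph{all} vertices of $G^*$, hence in particular almost all of $B$. The residual degree of an uncovered vertex $a\in U_1$ counts only those edges at $a$ whose $B$-vertices are \emph{all} unmatched, and since only a $D^{-\gamma}$-fraction of $B$ survives, this residual degree is on the order of $D\cdot D^{-\gamma(r-1)}$ in the best (pseudorandom) case --- far below $D$, and possibly zero. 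The claim that ``every $a\in U_t$ still has degree at least $(1+D^{-\alpha/2})D$ in the residual'' therefore cannot be established by any concentration argument; it is simply false. Theorem~\ref{thm:GirthFive} gives no control over which vertices are left uncovered or over the edge structure among them, so the leftover hypergraph need not satisfy the hypotheses needed to re-apply it. This is precisely the obstruction that forces perfect-matching arguments to use absorption, reserves, or a degree-imbalance mechanism rather than black-box iteration of an almost-perfect matching theorem.

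The paper's route is different and avoids this. It proves a bipartite variant of the key iterative nibble lemma (Lemma~\ref{lem:IterativeBip}) in which, at every round of the nibble, lower bounds on the $A$-degrees and upper bounds on the $B$-degrees are maintained \emph{simultaneously}: because $A$-vertices are only removed when matched while $B$-vertices are also consumed as secondary endpoints of matching edges, the $B$-degrees decay strictly faster, and the ratio $D_A/D$ grows by a factor $(1+\varepsilon/2)$ per round. After $T$ rounds with $(1+\varepsilon/2)^T = 8rD^{\alpha}$ the $A$-degrees exceed the $B$-degrees by a factor $8r$ (and $w_D(H)$ has dropped below $1$), at which point a separate finishing lemma (Lemma~\ref{lem:Finish}) --- each surviving $a\in A$ picks a uniformly random incident edge, and the Lov\'asz Local Lemma shows these choices can be made into an $H$-avoiding matching --- completes the $A$-perfect matching. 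If you want to salvage your outline, you would need to replace the ``re-apply the black box on the residual'' step with an argument of this type that tracks the $A$-versus-$B$ degree imbalance inside the nibble itself.
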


The proof of Theorem~\ref{thm:GirthFiveBipartite} is the same nibble process as in the proof of Theorem~\ref{thm:GirthFive} with a few minor tweaks coupled with a finishing lemma to extend the matching to an $A$-perfect matching. While at first glance, Theorem~\ref{thm:GirthFiveBipartite} may not appear to be a generalization of Theorem~\ref{thm:GirthFive}, it in fact implies the coloring and even list coloring generalization of Theorem~\ref{thm:GirthFive}. 

Before that though, we add our first whistle which is the existence of many disjoint matchings as follows. While one may think that this would require much more work than proving Theorem~\ref{thm:GirthFiveBipartite}, it is actually a direct consequence of Theorem~\ref{thm:GirthFiveBipartite}; namely by applying Theorem~\ref{thm:GirthFiveBipartite} not to $G$ itself but to an auxiliary hypergraph of $G$.

\begin{thm}\label{thm:GirthFiveBipartiteMany}
Under the assumptions of Theorem~\ref{thm:GirthFiveBipartite} there exists a set of $D_A-D^{1-\alpha}$ (which is $\ge D$) disjoint $H$-avoiding $A$-perfect matchings of $G$.
\end{thm}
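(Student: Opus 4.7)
The plan is to apply Theorem~\ref{thm:GirthFiveBipartite} to an auxiliary bipartite hypergraph whose $A'$-perfect matchings encode $k$-tuples of pairwise edge-disjoint $H$-avoiding $A$-perfect matchings of $G$. Set $k := D_A - D^{1-\alpha}$; by hypothesis $k \ge D$.

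Define $G' = (A', B')$ with $A' := A \times [k]$ and $B' := (B \times [k]) \cup E(G)$, and with edge set
\[
E(G') := \{\, e_j := \{(a_e, j)\} \cup \{(b, j) : b \in e \cap B\} \cup \{e\} \;:\; e \in E(G),\, j \in [k] \,\},
\]
where $a_e$ denotes the unique $A$-vertex of $e$. Define the lifted configuration hypergraph $H'$ of $G'$ by including, for each $f \in E(H)$ and each $j \in [k]$, the edge $f_j := \{e_j : e \in f\}$. Given an $H'$-avoiding $A'$-perfect matching $M'$ of $G'$, the projections $M_j := \{e \in E(G) : e_j \in M'\}$, $j \in [k]$, form $k$ pairwise edge-disjoint $H$-avoiding $A$-perfect matchings of $G$: the covering of $(a,j)$ in $M'$ makes $M_j$ an $A$-perfect, the bounded covering of each $(b,j)$ makes $M_j$ a matching of $G$, the bounded covering of each $e \in E(G) \subseteq B'$ enforces edge-disjointness across colors, and $H'$-avoidance translates color-by-color into $H$-avoidance of each $M_j$.

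Next I would verify the hypotheses of Theorem~\ref{thm:GirthFiveBipartite} for $(G', H')$ with effective degree parameter $D^* := k$. The hypergraph $G'$ is $(r+1)$-bounded with codegrees inherited directly from $G$; the $A'$-min-degree equals $\min_a d_G(a) \ge D_A$ while the $B'$-max-degree is $\max(D, k) = k$, providing slack $D^{1-\alpha}$. The lifted $H'$ is $g$-bounded with $\Delta_i(H') \le \Delta_i(H)$, since a vertex $e_j \in V(H')$ lies in a lifted edge $f_{j'}$ only when $j' = j$ and $e \in f$. The girth of $H'$ is at least five: any cycle in $H'$ forces the color index to be constant along the cycle (because $f_{j}$ and $f'_{j'}$ share a vertex of $H'$ only if $j = j'$), and so projects to a cycle of the same length in $H$, of which there are none of length below five. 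Finally, the codegree of $G'$ with $H'$ is controlled by that of $G$ with $H$.

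The principal obstacle is a constant-tracking issue: $G'$ has uniformity $r+1$, so Theorem~\ref{thm:GirthFiveBipartite} applied to $G'$ demands the smaller parameter $\alpha^* := 1/(20(r+1))$ and hence wants slack at least $k^{1-\alpha^*}$ between the $A'$-min-degree and the $B'$-max-degree. The verification therefore requires tracking constants carefully so that the available slack $D^{1-\alpha}$ suffices---for instance by choosing the $\alpha$ in the outer hypothesis small enough a priori, or by shaving $k$ by a subpolynomial factor and absorbing the loss into the claimed count $D_A - D^{1-\alpha}$. Once this bookkeeping is handled, Theorem~\ref{thm:GirthFiveBipartite} supplies the desired matching $M'$ of $G'$ and thus the $k$ pairwise edge-disjoint $H$-avoiding $A$-perfect matchings of $G$.
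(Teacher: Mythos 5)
Your construction is exactly the paper's: the same auxiliary bipartite hypergraph $G'=(A\times[k],\,(B\times[k])\cup E(G))$ with edges $e_j=\{(v,j):v\in e\}\cup\{e\}$, the same lifted $H'$, and the same observation that an $H'$-avoiding $A'$-perfect matching of $G'$ decodes into $k$ disjoint $H$-avoiding $A$-perfect matchings of $G$; the paper's proof is a one-paragraph version of yours. The exponent mismatch you flag is real and is not addressed by the paper either (it simply asserts the hypotheses hold "since $D_A\ge m+D^{1-\alpha}$"): since $G'$ is $(r+1)$-bounded, the reduction needs slack $m^{1-\frac{1}{20(r+1)}}$, which exceeds the available $D^{1-\frac{1}{20r}}$ when $m\ge D$. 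Your first proposed remedy (taking the $\alpha$ in the hypothesis of Theorem~\ref{thm:GirthFiveBipartite} to be at most $\frac{1}{20(r+1)}$, which its proof permits) is the right fix; note, though, that the alternative of shaving $k$ must remove a polynomial amount on the order of $D^{1-\frac{1}{20(r+1)}}$, not merely a subpolynomial factor.
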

\begin{proof}[Proof (assuming Theorem~\ref{thm:GirthFiveBipartite}).]
Let $m:=D_A - D^{1-\alpha}$. Note $m\ge D$ by assumption. For each edge $e$ of $G$ and $i\in [m]$, define a set $e_{i} := \{(v,i):v\in e\}\cup \{e\}$. We define an auxiliary hypergraph $G' = (A', B')$ as:
$$A' := A \times [m],~~B' := (B \times [m]) \cup E(G)$$
and
$$E(G') := \{ e_i: e\in E(G),~i\in [m]\}.$$
Define a configuration hypergraph $H'$ of $G'$ where
$$E(H') := \left\{ \{e_{i}: e\in S\}: S\in E(H),~i\in [m]\right\}.$$

Now an $H'$-avoiding $A'$-perfect matching of $G'$ is precisely a set of $m$ disjoint $H$-avoiding $A$-perfect matchings of $G$. Moreover, $G'$ and $H'$ satisfy the hypotheses of Theorem~\ref{thm:GirthFiveBipartite} (since $D_A \ge m+D^{1-\alpha}$). Hence by Theorem~\ref{thm:GirthFiveBipartite}, there exists an $H'$-avoiding $A'$-perfect matching of $G'$ as desired.
\end{proof}

Similarly, a short argument yields the following corollary which shows that the coloring, list coloring, and even the many disjoint list coloring versions of Theorem~\ref{thm:GirthFive} hold; namely, we apply Theorem~\ref{thm:GirthFiveBipartiteMany} to an auxiliary hypergraph of the graph in Theorem~\ref{thm:GirthFive}. 

\begin{cor}\label{cor:GirthFiveList}
Under the assumptions of Theorem~\ref{thm:GirthFive}, we have that
$$\chi(L(G)\cup H) \le \chi_{\ell}(L(G)\cup H) \le D(1+D^{-\alpha}).$$
Furthermore if $L$ is a list assignment of the edges of $G$ such that $|L(e)|\ge L_0 \ge D(1+D^{-\alpha})$ for each edge $e$ of $G$, then there exists a set of $L_0-D^{1-\alpha}$ (which is $\ge D$) pairwise disjoint $L$-colorings of $L(G)\cup H$.
\end{cor}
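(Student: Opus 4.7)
The plan is to reduce both statements to Theorem~\ref{thm:GirthFiveBipartiteMany} by encoding (list) colorings of $L(G)\cup H$ as $A'$-perfect matchings in an auxiliary bipartite hypergraph. First I would construct $G'=(A',B')$ with $A' := E(G)$ and $B' := V(G)\times C$, where $C$ is a common universe containing $\bigcup_{e} L(e)$ (in the non-list case, just $C=[L_0]$ with $L(e)=[L_0]$ for all $e$). For each $e\in E(G)$ and each $c\in L(e)$, I add the hyperedge
\[
e_c := \{e\}\cup\{(v,c) : v\in e\}
\]
to $G'$, and I define a configuration hypergraph $H'$ of $G'$ by $E(H') := \{\{e_c : e\in S\} : S\in E(H),\ c\in C,\ c\in L(e)\text{ for every } e\in S\}$. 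An $A'$-perfect $H'$-avoiding matching of $G'$ is precisely a proper edge coloring of $G$ from the lists $L$ that leaves no $S\in E(H)$ monochromatic, i.e.\ an $L$-coloring of $L(G)\cup H$; disjoint such matchings correspond to disjoint $L$-colorings.

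Next I would verify the hypotheses of Theorem~\ref{thm:GirthFiveBipartiteMany} for $G'$, $H'$ with uniformity $r+1$ and parameter $D'' := \lceil(1+D^{-\beta})D\rceil$. Each $e\in A'$ has degree $|L(e)|\ge L_0$, and each $(v,c)\in B'$ has degree at most $d_G(v)\le D''$. Two vertices of $A'$ never share an edge of $G'$; two vertices $(u,c),(v,c')\in B'$ share an edge only if $c=c'$, in which case the codegree is bounded by that of $G$, while the $A'$-$B'$ codegree is at most $1$; so all codegrees in $G'$ are at most $\log^2 D''$. For $H'$, the key observation is that if $e_c$ and $e'_{c'}$ both lie in a common edge $\{f_d : f\in S\}$ of $H'$, then $c=d=c'$; consequently any $i$-cycle in $H'$ uses a single color and projects to an $i$-cycle in $H$. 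This gives girth$(H')\ge 5$, $\Delta_i(H')=\Delta_i(H)$, and, together with the girth-$5$ assumption (which implies codegree at most $1$ inside $H$), the codegree of $G'$ with $H'$ is at most $\max(1,\log^{2(g-1)} D)\le \log^{2(g-1)} D''$.

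Finally, I would pick the corollary's $\alpha$ as a small fixed fraction of $\min(\beta,\alpha_{r+1})$, where $\alpha_{r+1}=\tfrac{1}{20(r+1)}$ is the $\alpha$ supplied by Theorem~\ref{thm:GirthFiveBipartite} in uniformity $r+1$. For such $\alpha$ and $D$ large, $L_0\ge D(1+D^{-\alpha}) \ge D''(1+(D'')^{-\alpha_{r+1}})$, so Theorem~\ref{thm:GirthFiveBipartiteMany} applied to $G',H'$ produces $L_0-(D'')^{1-\alpha_{r+1}}\ge L_0 - D^{1-\alpha}$ pairwise disjoint $H'$-avoiding $A'$-perfect matchings of $G'$, which unwind to the desired disjoint $L$-colorings. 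Specializing to $L_0=\lceil D(1+D^{-\alpha})\rceil$ (with any list assignment of that size) yields at least one $L$-coloring, proving $\chi_\ell(L(G)\cup H)\le D(1+D^{-\alpha})$; the bound on $\chi$ is immediate from $\chi\le\chi_\ell$.

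The main obstacle is the verification in the second paragraph, especially showing that short cycles in $H'$ are forced to be monochromatic in color (so all the degree, girth, and codegree bounds on $H$ transfer cleanly to $H'$) and that the codegree of $G'$ with $H'$ is controlled by combining the codegree-of-$G$-with-$H$ hypothesis with the girth-$\ge 5$ codegree bound inside $H$. The parameter juggling between $D$, $D''$, $L_0$, $\alpha$, and $\alpha_{r+1}$ is somewhat fiddly but otherwise routine.
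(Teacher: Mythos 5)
Your proposal is correct and follows essentially the same route as the paper: the paper's proof constructs exactly this auxiliary $(r+1)$-uniform bipartite hypergraph $G_L=(A_L,B_L)$ with edges $e_c=\{(v,c):v\in e\}\cup\{e\}$ and configuration hypergraph $H_L$, observes that $H_L$-avoiding $A_L$-perfect matchings are precisely the desired $L$-colorings, and applies Theorem~\ref{thm:GirthFiveBipartiteMany}. Your verification of the hypotheses (monochromatic edges of $H'$ forcing cycles to project to $H$, the codegree transfers, and the parameter bookkeeping) is in fact more detailed than what the paper records.
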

\begin{proof}[Proof (assuming Theorem~\ref{thm:GirthFiveBipartiteMany}).]
Let $L$ be an $L_0$-list-assignment of $E(G)$ and let $C:=\bigcup_{e\in E(G)} L(e)$. For each edge $e$ of $G$ and $c\in L(e)$, define a set $e_c := \{(v,c):v\in e\}\cup \{e\}$. Define an auxiliary $(r+1)$-uniform bipartite hypergraph $G_L=(A_L,B_L)$ where 
$$A_L := E(G),~~B_L := \bigcup\{(v,c):v\in V(G),~c\in C\}$$ and
$$E(G_L) := \{ e_c : e\in E(G),~c\in L(e)\}.$$
Define a configuration hypergraph $H_L$ of $G_L$ where
$$E(H_L) := \left\{ \{e_c: e\in S\}: S\in E(H),~c\in \bigcap_{e\in S} L(e)\right\}.$$

Now an $H_L$-avoiding $A_L$-perfect matching of $G_L$ is precisely an $L$-coloring of $E(G)$ where each color class is an $H$-avoiding matching of $G$. Moreover, $G_L$ and $H_L$ satisfy the hypotheses of Theorem~\ref{thm:GirthFiveBipartiteMany} since $L_0 \ge D(1+D^{-\alpha})$. Hence by Theorem~\ref{thm:GirthFiveBipartiteMany}, there exists $L_0-D^{1-\alpha}$ disjoint $H_L$-avoiding $A_L$-perfect matchings of $G_L$ as desired.
\end{proof}

For our final bell, we generalize all of the above results to where each of the various codegree conditions are allowed to be much less restrictive, but first some definitions.

\begin{definition}
Let $H$ be a hypergraph. The \emph{maximum $(k,\ell)$-codegree} of $H$ is $$\Delta_{k,\ell}(H) := \max_{S\in \binom{V(H)}{\ell}} |\{e\in E(H): S\subseteq e, |e|=k\}|.$$
We define the \emph{common $2$-degree} of distinct vertices $u, v\in V(H)$ as $|\{w\in V(H): uw, vw\in E(H)\}|$. Similarly, we define the \emph{maximum common $2$-degree} of $H$ is the maximum of the common $2$-degree of $u$ and $v$ over all distinct pairs of vertices $u,v$ of $H$ where $u$ and $v$ are vertex-disjoint in $G$.
\end{definition}

\begin{definition}
Let $G$ be a hypergraph and let $H$ be a configuration hypergraph of $G$. We define the \emph{$i$-codegree} of a vertex $v\in V(G)$ and $e\in E(G)=V(H)$ with $v\notin e$ as the number of edges of $H$ of size $i$ who contain $e$ and an edge incident with $v$. We then define the \emph{maximum $i$-codegree} of $G$ with $H$ as the maximum $i$-codegree over vertices $v\in V(G)$ and edges $e\in E(G)=V(H)$ with $v\notin e$. 
\end{definition}

Here is our most general result which generalizes Theorem~\ref{thm:GirthFiveBipartiteMany} by allowing the codegrees of $G$ as well as the maximum codegree of $G$ with $H$ to be much larger, while $H$ no longer need be girth five or even linear but rather have small codegrees and small common $2$-degrees.

% $\alpha = \frac{\beta}{81rg}$ should work once adjusted for $\Delta_i(H)$ as well
\begin{thm}\label{thm:SmallCodegree} 
For all integers $r,g \ge 2$ and real $\beta \in (0,1)$, there exist an integer $D_{\beta}\ge 0$ and real $\alpha > 0$ such that following holds for all $D\ge D_{\beta}$: 
\vskip.05in
Let $G=(A,B)$ be a bipartite $r$-bounded (multi)-hypergraph with codegrees at most $D^{1-\beta}$ such that every vertex in $A$ has degree at least $(1+D^{-\alpha})D$ and every vertex in $B$ has degree at most $D$. Let $H$ be a $g$-bounded configuration hypergraph of $G$ with $\Delta_i(H) \le \alpha \cdot D^{i-1}\log D$ for all $2\le i\le g$ and $\Delta_{k,\ell}(H) \le D^{k-\ell-\beta}$ for all $2\le \ell< k\le g$. If the maximum $2$-codegree of $G$ with $H$ and the maximum common $2$-degree of $H$ are both at most $D^{1-\beta}$, then there exists an $H$-avoiding $A$-perfect matching of $G$ and indeed even a set of $D_A-D^{1-\alpha}$ ($\ge D$) disjoint $H$-avoiding $A$-perfect matchings of $G$.
\end{thm}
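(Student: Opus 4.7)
The plan is to run an iterative semi-random nibble mirroring the proof of Theorem~\ref{thm:GirthFiveBipartite}, but adapted to the weaker hypotheses of Theorem~\ref{thm:SmallCodegree}: codegrees of $G$ as large as $D^{1-\beta}$ rather than $\log^2 D$, $H$ no longer required to be linear or girth-$5$ but only to satisfy $\Delta_{k,\ell}(H) \le D^{k-\ell-\beta}$, and $i$-codegrees of $G$ with $H$ up to $D^{i-1-\beta}$. In round $t$, activate each edge of the current residual bipartite hypergraph $G_t$ independently with probability $p = \varepsilon/D_t$ for a small constant $\varepsilon = \varepsilon(\beta,g)$ (any $\varepsilon$ with $\varepsilon^2 \log D \ll 1$ works). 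Discard an activated edge if it shares a $B$-vertex with another activated edge, or if together with other activated edges it completes some $S\in E(H)$; the remaining activated edges form an $H$-avoiding matching $M_t$. Remove $V(M_t)$ from $A$ and $B$, update $H$ by deleting any configuration that has lost a constituent edge, and iterate.

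The heart of the argument is verifying, via Talagrand's inequality applied to the independent edge-activation process, that the following quantities remain concentrated within a $(1 \pm D^{-\gamma})$ factor of their expected trajectories: (a) the $A$- and $B$-degrees of each surviving vertex; (b) the residual codegrees of $G$; (c) each $\Delta_i(H)$ of a surviving $H$-vertex; (d) each $(k,\ell)$-codegree of $H$; and (e) each $i$-codegree of $G$ with $H$. Each activation flip affects (b) by at most the codegree bound $D^{1-\beta}$, (c) by at most the current $\Delta_i(H)$, and (d)--(e) by at most $D^{k-\ell-\beta}$ and $D^{i-1-\beta}$ respectively; the $\beta$ slack in each hypothesis is precisely what produces a $D^{-\gamma}$ Talagrand error for some small $\gamma = \gamma(\beta) > 0$. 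Summed over $T \approx (\alpha/\varepsilon)\log D$ rounds, this error remains $o(1)$. The expected number of activated edges completing a size-$i$ edge of $H$ through a fixed edge is $\Delta_i(H)\cdot p^{i-1} \le \alpha \varepsilon^{i-1} \log D$, which for $i \ge 3$ is small; the no-size-$2$ assumption on $H$ is essential here, since size-$2$ edges of $H$ would interact with the ordinary codegrees of $G$ and could not be controlled at the scale $D^{1-\beta}$.

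To finish, I would run the nibble until the uncovered fraction of $A$ drops to $D^{-\alpha}$; the slack $(1+D^{-\alpha})D$ versus $D$ in the initial $A$- and $B$-degrees guarantees that the residual $A$-degrees stay strictly above the $B$-degrees throughout, giving the residual hypergraph exactly the structure needed for either a direct completion step or one more application of Theorem~\ref{thm:GirthFiveBipartite} at the shrunken scale (the codegrees of $G$ and the $(k,\ell)$-codegrees of $H$ contract faster than ordinary degrees, so at the scale of the residual $D_t$ they become polylogarithmic and $H$ becomes effectively girth-$5$). The many-disjoint-matchings conclusion then follows verbatim from the auxiliary-hypergraph construction used to prove Theorem~\ref{thm:GirthFiveBipartiteMany}. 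The main obstacle will be the concentration analysis for (b)--(e): a direct Talagrand application is not always Lipschitz enough when $H$-edges of multiple sizes contribute to the same count, so one likely needs a truncation or bad-event-deletion preprocessing to reduce to a Lipschitz function before applying concentration. Simultaneously tracking all five families of quantities along consistent trajectories across $T$ rounds, while ensuring that the Talagrand errors from each family remain absorbable into the $D^{-\alpha}$ slack, is the technical core of the argument.
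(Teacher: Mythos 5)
Your proposal diverges from the paper's route in a way that creates a genuine gap. The paper does \emph{not} run the nibble directly on $G$ under the weak hypotheses; it first applies a one-shot random sparsification (Lemma~\ref{lem:Sparsifying}): every edge of $G$ is kept independently with probability $p=D^{\beta/4g-1}$, and then every surviving edge lying in a $2$-cycle, loose triangle, or loose $4$-cycle of $H$ is deleted. Because $p\cdot D^{1-\beta}\ll 1$ and $p^{k-\ell}\Delta_{k,\ell}(H)\ll 1$, after subsampling the codegrees of $G$, the codegrees of $G$ with $H$, and the number of short cycles of $H$ through any edge all concentrate at polylogarithmic (or negligible) values, so the cleaned-up pair $(G',H')$ satisfies the strong hypotheses of Theorem~\ref{thm:GirthFiveBipartiteMany} at scale $pD$, which is then invoked as a black box. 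The entire difficulty is pushed into verifying properties (1)--(5) of Lemma~\ref{lem:Sparsifying}, which requires the cycle-counting Lemma~\ref{lem:NumberOfCycles} and layered applications of the exceptional-outcomes Talagrand inequality.

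The specific step of your plan that fails is the claim that ``the codegrees of $G$ and the $(k,\ell)$-codegrees of $H$ contract faster than ordinary degrees, so at the scale of the residual $D_t$ they become polylogarithmic and $H$ becomes effectively girth-$5$.'' Under the nibble, an edge through a fixed pair $u,v$ survives a round when its other $r-2$ vertices survive, whereas an edge through a single vertex needs $r-1$ other survivors; so in expectation the codegree of a pair contracts by roughly $e^{-(r-2+w)\varepsilon}$ per round versus $e^{-(r-1+w)\varepsilon}$ for the degree, i.e.\ the ratio $\mathrm{codeg}/D_t$ \emph{grows} by a factor $e^{\varepsilon}$ per round rather than shrinking. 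Since codegrees start at $D^{1-\beta}$ and $D_t$ never exceeds $D$, the ratio never drops below $D^{-\beta}$, let alone to $\log^2 D_t/D_t$; likewise two configurations of $H$ sharing two edges of $G$ both survive a round with probability close to $1$, so $H$ never becomes girth-$5$. This matters because the iterative lemma's expectation calculations genuinely use the girth hypotheses (e.g.\ the product formula $\prod_{S\in N_H'(e)}(1-p^{|S|-1})$ defining the equalizing coin flip requires linearity of $H$, and the observability bounds for $w'(e)$ require girth five), and its concentration bounds use the polylogarithmic codegrees as Lipschitz constants. Without the up-front sparsification and short-cycle deletion, neither the expectation nor the concentration analysis of your rounds goes through, and no finishing step at ``the shrunken scale'' rescues this since the offending ratios do not improve.
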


Theorem~\ref{thm:SmallCodegree} is derived from Theorem~\ref{thm:GirthFiveBipartiteMany} via a random sparsification trick. Namely, we choose each edge of $G$ independently with some small probability $p$ and delete any edges of $G$ that still lie in cycles of $H$ of length at most four. While the concept is straightforward, the execution is quite technical and intricate, requiring layering many applications of the exceptional outcomes version of our linear Talagrand's inequality, Theorem~\ref{exceptional talagrand's observations}.

Identical to how Corollary~\ref{cor:GirthFiveList} follows from Theorem~\ref{thm:GirthFiveBipartiteMany}, we have that Theorem~\ref{thm:SmallCodegree} yields the following corollary which is a generalization of Theorems~\ref{thm:GirthFive} and~\ref{cor:GirthFiveList} to much weaker codegree conditions.

\begin{cor}\label{cor:SmallCodegreeColoring} 
For all integers $r,g \ge 2$ and real $\beta > 0$, there exist an integer $D_{\beta}\ge 0$ and real $\alpha > 0$ such that following holds for all $D\ge D_{\beta}$: 
\vskip.05in
Let $G$ be an $r$-bounded (multi)-hypergraph with $\Delta(G)\le D$ and codegrees at most $D^{1-\beta}$. Let $H$ be a $g$-bounded configuration hypergraph of $G$, $\Delta_i(H) \le \alpha \cdot D^{i-1}\log D$ for all $2\le i\le g$ and $\Delta_{k,\ell}(H) \le D^{k-\ell-\beta}$ for all $2\le \ell< k\le g$. 
\vskip.05in
If the maximum $2$-codegree of $G$ with $H$ and the maximum common $2$-degree of $H$ are both at most $D^{1-\beta}$, then $$\chi(L(G)\cup H) \le \chi_{\ell}(L(G)\cup H) \le D(1+D^{-\alpha}),$$
and hence there exists an $H$-avoiding matching of $G$ of size at least $\frac{e(G)}{D}\cdot (1-D^{-\alpha})$.
\vskip.05in
Furthermore if $L$ is a list assignment of the edges of $G$ such that $|L(e)|\ge L_0 \ge D(1+D^{-\alpha})$ for each edge $e$ of $G$, then there exists a set of $L_0-D^{1-\alpha}$ (which is $\ge D$) pairwise disjoint $L$-colorings of $L(G)\cup H$.
\end{cor}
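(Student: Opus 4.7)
The plan is to mimic the reduction used in Corollary~\ref{cor:GirthFiveList}, but to feed the resulting auxiliary hypergraph into Theorem~\ref{thm:SmallCodegree} rather than Theorem~\ref{thm:GirthFiveBipartiteMany}. Let $L$ be an $L_0$-list assignment of $E(G)$ with $L_0\ge D(1+D^{-\alpha})$, set $C:=\bigcup_{e\in E(G)} L(e)$, and for each $e\in E(G)$ and $c\in L(e)$ put $e_c:=\{(v,c):v\in e\}\cup\{e\}$. Build the $(r+1)$-bounded bipartite hypergraph $G_L=(A_L,B_L)$ with $A_L:=E(G)$, $B_L:=V(G)\times C$, and $E(G_L):=\{e_c : e\in E(G),\ c\in L(e)\}$. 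Define the configuration hypergraph $H_L$ of $G_L$ by
\[ E(H_L):=\bigl\{\{e_c : e\in S\} : S\in E(H),\ c\in \textstyle\bigcap_{e\in S} L(e)\bigr\}. \]
Exactly as in the proof of Corollary~\ref{cor:GirthFiveList}, an $H_L$-avoiding $A_L$-perfect matching of $G_L$ is precisely a proper $L$-coloring of $L(G)\cup H$, and a collection of $L_0-D^{1-\alpha}$ disjoint such matchings yields the asserted family of disjoint colorings.

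Essentially all the work lies in verifying that $(G_L,H_L)$ satisfy the hypotheses of Theorem~\ref{thm:SmallCodegree} (applied with $r+1$ in place of $r$). The degree bounds are immediate: each $e\in A_L$ has degree $|L(e)|\ge L_0\ge D(1+D^{-\alpha})$, while each $(v,c)\in B_L$ has degree at most $\Delta(G)\le D$. Two distinct vertices of $G_L$ have nonzero codegree only when both lie in $B_L$ and share the same color coordinate, in which case the codegree equals that of the corresponding two vertices in $G$ and is thus at most $D^{1-\beta}$. The hypergraph $H_L$ inherits ``no edges of size two'' from $H$, and since fixing any $\ell$ vertices of $H_L$ forces all the underlying $e_c$ to share a common color coordinate, we have $\Delta_i(H_L)\le \Delta_i(H)$ and $\Delta_{k,\ell}(H_L)\le \Delta_{k,\ell}(H)$, so the required bounds transfer.

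The least routine check (and the step I expect to require the most care) is bounding the $i$-codegree of $G_L$ with $H_L$. Given an edge $f=f'_{c'}\in E(G_L)$ and a vertex $w\in V(G_L)$ with $w\notin f$, every size-$i$ edge of $H_L$ containing $f$ is forced to use color $c'$. If $w=(v,c)\in B_L$, this forces $c=c'$ and $v\notin f'$, reducing the count to the $i$-codegree of the pair $(v,f')$ in $G$ with $H$, which is at most $D^{i-1-\beta}$ by hypothesis. If $w=e\in A_L$ with $e\ne f'$, the count is bounded by the number of $S\in E(H)$ of size $i$ containing both $e$ and $f'$, i.e.\ by $\Delta_{i,2}(H)\le D^{i-2-\beta}$. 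Either way the $i$-codegree is at most $D^{i-1-\beta}$.

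With every hypothesis verified, Theorem~\ref{thm:SmallCodegree} produces $L_0-D^{1-\alpha}$ pairwise disjoint $H_L$-avoiding $A_L$-perfect matchings of $G_L$, giving the claimed disjoint $L$-colorings of $L(G)\cup H$. Specializing to the constant list $L(e)=[\lceil D(1+D^{-\alpha})\rceil]$ yields $\chi_\ell(L(G)\cup H)\le D(1+D^{-\alpha})$, and the asserted lower bound on the maximum $H$-avoiding matching of $G$ then follows by a pigeonhole among the resulting at most $D(1+D^{-\alpha})$ color classes (after absorbing a constant factor by a mild shrinking of $\alpha$).
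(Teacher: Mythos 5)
Your proof is correct and takes essentially the same approach as the paper, which derives Corollary~\ref{cor:SmallCodegreeColoring} from Theorem~\ref{thm:SmallCodegree} exactly as Corollary~\ref{cor:GirthFiveList} is derived from Theorem~\ref{thm:GirthFiveBipartiteMany}. (One negligible imprecision: a vertex of $A_L$ and a vertex of $B_L$ can have codegree $1$ in $G_L$, not only pairs in $B_L$ sharing a color coordinate, but this still trivially satisfies the codegree bound $D^{1-\beta}$.)
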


In the case where $G$ is uniform and mostly regular, we can also find many disjoint large matchings as follows.

\begin{cor}\label{cor:SmallCodegreeMatchings} 
Under the assumptions of Corollary~\ref{cor:SmallCodegreeColoring}, if $G$ is $r$-uniform and has minimum degree at least $D(1-D^{-\beta})$, then there exists a set of $D(1-D^{-\alpha/2}-D^{-\beta/2})$ disjoint $H$-avoiding matchings of $G$ each of size at least $\frac{v(G)}{r}\cdot (1-D^{-\alpha/2}-D^{-\beta/2})$.
\end{cor}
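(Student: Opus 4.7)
The plan is to extract the desired matchings as the large color classes of a single proper coloring provided by Corollary~\ref{cor:SmallCodegreeColoring}. First, I would invoke Corollary~\ref{cor:SmallCodegreeColoring} to obtain a proper coloring of $L(G) \cup H$ using at most $L_0 \le D(1+D^{-\alpha})$ colors. Let $M_1, \ldots, M_{L_0}$ be the resulting color classes. Since the coloring is proper on $L(G)$, each $M_i$ is a matching of $G$ (no two edges adjacent in $L(G)$ share a color), and since the coloring is proper on $H$ (all of whose edges have size at least three), each $M_i$ is also $H$-avoiding (no edge of $H$ is monochromatic). The $M_i$ are pairwise disjoint by construction.

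Next, I would count the large color classes by a straightforward averaging argument. Because $G$ is $r$-uniform with minimum degree at least $D(1 - D^{-\beta})$,
$$e(G) \;=\; \frac{1}{r}\sum_{v \in V(G)} d_G(v) \;\ge\; \frac{v(G)}{r}\cdot D(1 - D^{-\beta}),$$
while trivially $|M_i| \le v(G)/r$ for every $i$. Set $\epsilon := D^{-\alpha/2} + D^{-\beta/2}$ and $t := \frac{v(G)}{r}(1 - \epsilon)$, and let $m$ be the number of indices $i$ with $|M_i| \ge t$. Splitting $\sum_i |M_i| = e(G)$ according to whether $|M_i| \ge t$ gives
$$e(G) \;\le\; m \cdot \frac{v(G)}{r} + (L_0 - m)\cdot t \;=\; L_0 \, t + m\cdot \frac{v(G)}{r}\cdot \epsilon,$$
so after rearranging and applying the lower bound on $e(G)$ and the upper bound $L_0 \le D(1 + D^{-\alpha})$, a short calculation yields
$$m \;\ge\; D\left(1 - \frac{D^{-\beta} + D^{-\alpha}}{\epsilon}\right) \;\ge\; D\bigl(1 - D^{-\beta/2} - D^{-\alpha/2}\bigr),$$
where the last step uses $\epsilon \ge D^{-\alpha/2}$ and $\epsilon \ge D^{-\beta/2}$. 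Taking the $m$ color classes of size at least $t$ produces the required pairwise disjoint $H$-avoiding matchings.

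Since Corollary~\ref{cor:SmallCodegreeColoring} is already in hand, there is no real obstacle: the proof is a one-shot application of the coloring result followed by averaging. The choice $\epsilon = D^{-\alpha/2} + D^{-\beta/2}$ is precisely the geometric-mean balance between the density deficit $D^{-\beta}$ in $e(G)$ and the color excess $D^{-\alpha}$ in $L_0$, which explains why the exponents $\alpha/2$ and $\beta/2$ appear symmetrically in both the number of matchings and their size in the conclusion.
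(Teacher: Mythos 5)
Your proposal is correct and follows essentially the same route as the paper: apply Corollary~\ref{cor:SmallCodegreeColoring} to get a proper coloring of $L(G)\cup H$ with at most $D(1+D^{-\alpha})$ color classes (each a disjoint $H$-avoiding matching), then run a Markov-type averaging argument against the edge count $e(G)\ge \frac{v(G)}{r}D(1-D^{-\beta})$ to show that at most about $D^{1-\alpha/2}+D^{1-\beta/2}$ classes can be small. The paper phrases the averaging in terms of uncovered vertices per color rather than splitting $\sum_i|M_i|$, but the computation is the same.
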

\begin{proof}
By Corollary~\ref{cor:SmallCodegreeColoring}, we have that $\chi(L(G)\cup H) \le D(1+D^{-\alpha})$. Since $G$ is $r$-uniform, each color class has size at most $\frac{v(G)}{r}$. Since $G$ has minimum degree at least $D(1-D^{-\beta})$, we have that $e(G) \ge \frac{D(1-D^{-\beta}) \cdot v(G)}{r}$. Thus the average number of vertices not covered by a color is at most $\frac{D^{-\beta}+D^{-\alpha}}{1+D^{-\alpha}}\cdot v(G)$. It follows that there exists at most $D^{1-\beta/2}+D^{1-\alpha/2}$ colors with at least $(D^{-\beta/2}+D^{-\alpha/2})\cdot v(G)$ uncovered vertices, and hence a set of at least $D(1-D^{-\alpha/2}-D^{-\beta/2})$ disjoint $H$-avoiding matchings of $G$, each of size at least $\frac{v(G)}{r}\cdot (1-D^{-\alpha/2}-D^{-\beta/2})$ as desired.
\end{proof}

Returning to designs, we now derive our approximate solution of Conjecture~\ref{conj:HighGirthAllUniformities}, Theorem~\ref{thm:HighGirthSteiner}, from Corollary~\ref{cor:SmallCodegreeColoring}.

\begin{proof}[Proof of Theorem~\ref{thm:HighGirthSteiner}]
Let $r' = \binom{q}{r}$. We let $G$ be the $r'$-uniform hypergraph where $V(G) :=\binom{[n]}{r}$ and $E(G) := \{ \binom{S}{r} : S\in \binom{[n]}{q}\}$. We let $H$ be the configuration hypergraph of $G$ where $E(H)$ consists of the $(i(q-r)+r,i)$-configurations for all $3\le i \le g$ that do not contain any $(i'(q-r)+r,i')$-configuration for $2\le i' < i$.

Let $D:= \binom{n-r}{q-r}$. Note $D\le n^{q-r}$ and $D = \Omega(n^{q-r})$. Note that $G$ is $D$-regular and has codegrees at most $\binom{n-r-1}{q-r-1} = O(n^{q-r-1}) = O\left(D^{1- \frac{1}{q-r}}\right)$. Moreover for each $i$ with $3\le i \le g$, we have $\Delta_i(H) \le O\left(n^{(i-1)(q-r)}\right) = O(D^{i-1})$. 

Meanwhile, for $2\le \ell < k \le g$, we have that
$$\Delta_{k,\ell}(H) \le O\left(n^{(k-\ell)(q-r)-1}\right) = O\left(D^{k-\ell - \frac{1}{q-r}}\right),$$
since $\ell$ edges of $G$ contained in an edge of $H$ of size $k$ span at least $\ell(q-r)+r+1$ vertices of $G$ by construction. Note that since $H$ has no edges of size two, we have that the maximum $2$-codegree of $G$ with $H$ and the maximum common $2$-degree of $H$ are $0 \le D^{1-\beta}$.

Thus if we fix $\beta < \frac{1}{q-r}$ (say $\beta = \frac{1}{2(q-r)}$), we find that $G$ and $H$ satisfy the conditions of Corollary~\ref{cor:SmallCodegreeColoring} provided that $D$ is large enough (or equivalently that $n$ is large enough).

Hence by Corollary~\ref{cor:SmallCodegreeColoring}, there exists an $H$-avoiding matching $M$ of $G$ of size at least $$\frac{e(G)}{D}\cdot (1-D^{-\alpha}) = \frac{\binom{n}{q}}{\binom{n-r}{q-r}} \cdot (1-D^{-\alpha}) = \frac{\binom{n}{r}}{\binom{q}{r}}\cdot \left(1-O\left(n^{-\alpha(q-r)}\right)\right).$$ 
But then $M$ is a partial $(n,q,r)$-Steiner system $S$ with $$|S|\ge \frac{\binom{n}{r}}{\binom{q}{r}}\cdot \left(1-O\left(n^{-\alpha(q-r)}\right)\right)$$ and no $(i(q-r)+r,i)$-configurations for all $2\le i \le g$ as desired.
\end{proof}

In fact using Corollary~\ref{cor:SmallCodegreeMatchings} in the proof above yields that for some $\gamma > 0$ and $n$ large enough, there exists a set of at least $\binom{n-r}{q-r} \cdot \left(1-n^{-\gamma}\right)$ disjoint partial $(n,q,r)$-Steiner systems each of size at least $\frac{\binom{n}{r}}{\binom{q}{r}}\cdot \left(1-n^{-\gamma}\right)$ with no $(i(q-r)+r,i)$-configurations for all $2\le i \le g$.

%Make a note about quasirandom properties towards the exact and pseudorandom applications a la Ehard, Glock, and Joos~\cite{EGJ20}. 

\subsection{Relation to Other Work}\label{ss:OtherWork}

Here we comment on the differences of our work to that of Glock, Joos, Kim, K\"{u}hn, and Lichev~\cite{GJKKL22}. Their proof uses a random greedy process instead of the nibble method and so is complementary to ours. However, this has the drawback of requiring that the degree is poly-logarithmic in the number of vertices; that is, it does not apply to sparse hypergraphs. Similarly, their proof does not seem to immediately carry over to the coloring/list coloring/bipartite $A$-perfect settings as our proof does (in particular, Theorem~\ref{thm:SmallCodegree} and Corollary~\ref{cor:SmallCodegreeColoring});  hence their results do not imply our new results in Sections~\ref{ss:ListSmall},~\ref{ss:Matchings},~\ref{ss:RainbowMatchings} and~\ref{ss:RainbowMatchingApplications}. 

On the other hand, their proof has the benefit of yielding rather exact counting results while our nibble is a bit wasteful; so even if our nibble could be optimally converted to a counting argument, it would not yield as precise of a count as their random greedy process without tightening up the nibble procedure. They also prove their theorem with the additional gadgetry of test systems (that is a system of various other variables that one desires to track and concentrate throughout the process). These quasi-random properties can be quite useful for future applications. That said, given the robustness of our Linear Talagrand's Inequality, we believe our proof also adapts to this setting and beyond; namely we could add to our nibble any set of variables that could be concentrated via Linear Talagrand's as long as the Local Lemma still applies. 

%Mention the value in defining $H$ as its own hypergraph?!

\subsection{Notation}
 Following convention, for example as in Glock, K\"{u}hn, Lo Osthus,~\cite{GKLO20}, equations containing $\pm$ should always be read from left to right.  For example, $\alpha = \beta \pm \gamma$ if $\beta-\gamma\leq\alpha\leq \beta+\alpha$ and $\beta_1 \pm \gamma_1 = \beta_2 \pm \gamma_2$ means that $\beta_1 - \gamma_1 \geq \beta_2 - \gamma_2$ and $\beta_1 + \gamma_1 \leq \beta_2 + \gamma_2$.  
Throughout, if $n$ is a positive integer, we let $[n]:=\left\{1,2,\ldots,n\right\}$. Similarly if $S$ is a set and $r$ is a nonnegative integer, we let $\binom{S}{r} := \{T\subseteq S: |T|=r\}$.

If $J$ is a hypergraph 
and $v\in V(J)$, we let $N_J'(v):= \{e\in E(J): v\in e\}$. Throughout this paper, $G$ will usually be an $r$-uniform (multi)-hypergraph and $H$ will be a $g$-bounded configuration hypergraph of $G$. 

For a hypergraph $J$, a \emph{coloring} of $J$ is an assignment of colors to the vertices of $J$ so that no edge contains only vertices of the same color. The \emph{chromatic number} of $J$, denoted $\chi(J)$ is the minimum number of colors needed to color the vertices of $J$. A \emph{list assignment} $L$ of the vertices of $J$ is simply a collection of lists $(L(v): v\in V(J))$. An \emph{$L$-coloring} of $J$ is a coloring where each vertex $v$ receives a color from its list $L(v)$. The \emph{list chromatic number} of $J$ is the minimum integer $k$ such that for every list assignment $L$ with $|L(v)|\ge k$ for all $v\in V(J)$, there exists an $L$-coloring of $J$.

Additionally, in this paper, all logarithms are natural. Also, we assume familiarity with the Lov\'asz Local Lemma~\cite{EL73} (see Alon and Spencer~\cite{AS16} for more discussion and applications).  

\subsection{Outline of Paper}

We first proceed in Section~\ref{s:Applications} to describe more applications of our main results to designs, list coloring graphs of small codegree, edge colorings, perfect matchings in bipartite hypergraphs and rainbow matchings. 

In Section~\ref{s:Outline}, we outline the proofs of our main results at a high level.  We state our key iterative lemma, Lemma~\ref{lem:Iterative}, overview its proof and derive Theorem~\ref{thm:GirthFive} from it; similarly we state our key iterative lemma for the bipartite result, Lemma~\ref{lem:IterativeBip}, as well as a finishing lemma, Lemma~\ref{lem:Finish}, and derive Theorem~\ref{thm:GirthFiveBipartite} from them. We also prove Theorem~\ref{thm:SmallCodegree} assuming a random sparsification lemma, Lemma~\ref{lem:Sparsifying}.

In Section~\ref{s:Talagrand}, we state our linear version of Talagrand's inequality. In Section~\ref{s:ProofIter}, we prove our key iterative lemma, Lemma~\ref{lem:Iterative}. In Section~\ref{s:Split}, we prove the auxiliary results for the bipartite result, namely a regularization lemma, Lemma~\ref{lem:reg}, as well as Lemmas~\ref{lem:IterativeBip} and~\ref{lem:Finish}. 

In Section~\ref{s:Generalizations}, we prove Lemma~\ref{lem:Sparsifying}. In Section~\ref{s:Rainbow}, we derive our other rainbow matching results, Theorems~\ref{thm:SparseGrinblat},~\ref{thm:CKAverage} and~\ref{thm:CKBipartite},  from Theorem~\ref{thm:KahnBipartiteRainbow}.

\section{History and Applications}\label{s:Applications}

Here we mention a number of applications of our main results while also discussing more related history.

\subsection{More Applications to Designs}\label{ss:Designs}

Our main results have more applications to designs. Here, we mention just a few notable examples. First, we can generalize Theorem~\ref{thm:HighGirthSteiner} to random graphs. To that end, let $H_r(n,p)$ consist of the random $r$-uniform hypergraph on $n$ vertices where each $r$-element set is selected independently with probability $p$. Note when $r=2$, $H_r(n,p)$ is just the Erd\H{o}s-R\'enyi random graph model $G(n,p)$. Applying Corollary~\ref{cor:SmallCodegreeColoring} to the appropriate auxiliary hypergraph now yields the following.
 
\begin{thm}\label{thm:HighGirthSteinerRandom}
For all integers $q > r \geq 2$, every integer $g\ge 2$ and constant $p\in (0,1]$, there exists $n_0$ and $\beta\in (0,1)$ such that for all $n\ge n_0$, $H_r(n,p)$ contains a partial $(n,q,r)$-Steiner system $S$ with $|S|\ge (1-n^{-\beta})\cdot p\cdot \binom{n}{r}/\binom{q}{r}$ and no $(i(q-r)+r,i)$-configurations for all $2\le i \le g$.
\end{thm}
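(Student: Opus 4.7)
The plan is to mirror the proof of Theorem~\ref{thm:HighGirthSteiner} essentially verbatim, except that the auxiliary hypergraph $G$ is now built from the random hypergraph instead of the complete one. Concretely, I would take $V(G) := E(H_r(n,p))$ and
\[
E(G) := \left\{ \binom{S}{r}: S \in \binom{[n]}{q},\ \binom{S}{r} \subseteq E(H_r(n,p)) \right\},
\]
and take $H$ to be the configuration hypergraph whose edges are the $(i(q-r)+r,i)$-configurations, for $3 \le i \le g$, that do not contain any smaller such configuration and all of whose $q$-sets have their $r$-subsets lying in $H_r(n,p)$. An $H$-avoiding matching of $G$ is then exactly a partial $(n,q,r)$-Steiner system contained in $H_r(n,p)$ with no $(i(q-r)+r,i)$-configuration for any $2 \le i \le g$, so it suffices to produce such a matching of size at least $(1-n^{-\beta})\binom{n}{r}/\binom{q}{r}$.

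Set $D := p^{\binom{q}{r}-1}\binom{n-r}{q-r} = \Theta(n^{q-r})$, which is the conditional expected degree of a fixed vertex of $G$. Each hypothesis needed to apply Corollary~\ref{cor:SmallCodegreeColoring} --- approximate $D$-regularity of $G$, codegrees of $G$ at most $D^{1-\beta'}$ for some $\beta' \in (0, 1/(q-r))$, and the $\Delta_i(H)$, $\Delta_{k,\ell}(H)$, and $i$-codegree bounds --- is a concentration statement for a random variable that is a polynomial of bounded degree in the independent Bernoulli indicators $\{\mathbf{1}_{R \in H_r(n,p)}\}_{R \in \binom{[n]}{r}}$. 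Moreover each such random variable has expectation exactly equal to its deterministic counterpart from the proof of Theorem~\ref{thm:HighGirthSteiner}, multiplied by an appropriate constant power of $p$. Standard polynomial concentration (Kim--Vu, or Janson's inequality for lower tails combined with a bounded-differences argument for upper tails) gives concentration within a $(1 \pm n^{-\varepsilon})$ multiplicative factor with failure probability $\exp(-n^{\Omega(1)})$. A union bound over the $O(n^{rg})$ vertices and small subsets involved then delivers all the required bounds simultaneously with high probability.

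The main obstacle is the degree regularity of $G$: unlike the other bounds, which are one-sided, regularity requires a two-sided tail, and the degree of a vertex $R$ is a sum of indicators that are not independent because two distinct $q$-sets through $R$ may share further $r$-subsets. However, the total contribution of such overlapping pairs to the variance is of strictly lower order ($n^{2(q-r)-1}$ versus $D^2 = \Theta(n^{2(q-r)})$), so the degree is tightly concentrated around $D$. Once all the parameter bounds are verified, Corollary~\ref{cor:SmallCodegreeColoring} applied to $G$ and $H$ produces, from this typical outcome, an $H$-avoiding matching of size at least $\frac{e(G)}{D}(1 - D^{-\alpha}) \ge (1 - n^{-\beta})\binom{n}{r}/\binom{q}{r}$ for some suitable $\beta > 0$, where we use that $e(G)$ concentrates at $(1 \pm o(1)) p^{\binom{q}{r}} \binom{n}{q}$ by the same polynomial-concentration argument. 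This completes the proof.
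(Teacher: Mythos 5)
Your approach is exactly the paper's: the authors explicitly omit this proof, stating only that it "follows similarly from Corollary~\ref{cor:SmallCodegreeColoring} as Theorem~\ref{thm:HighGirthSteiner} did," and your auxiliary hypergraph, your choice of $D$, and your observation that only the two-sided degree regularity of $G$ (and $e(G)$) genuinely requires concentration --- the remaining hypotheses being one-sided counts already bounded by their deterministic counterparts up to constants absorbed by $\beta<1/(q-r)$ --- are the right way to fill in the omitted details.

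There is, however, one concrete problem with your final inequality. By your own estimates, $e(G)=(1\pm o(1))\,p^{\binom{q}{r}}\binom{n}{q}$ and $D=p^{\binom{q}{r}-1}\binom{n-r}{q-r}$, so
$$\frac{e(G)}{D}=(1\pm o(1))\,p\cdot\frac{\binom{n}{r}}{\binom{q}{r}},$$
which is a factor of $p$ short of the claimed lower bound $(1-n^{-\beta})\binom{n}{r}/\binom{q}{r}$ whenever $p<1$. This is not something your method can repair: any packing of copies of $K_q^r$ inside $H_r(n,p)$ covers at most $e(H_r(n,p))\approx p\binom{n}{r}$ of the $r$-sets, so its size is at most roughly $p\binom{n}{r}/\binom{q}{r}$. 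The bound as printed in the theorem is therefore unattainable for $p<1$ and should be read with a factor of $p$ (equivalently, with $e(H_r(n,p))$ in place of $\binom{n}{r}$); your argument proves that corrected statement, but the chain of inequalities you wrote in the last step is false as stated, and you should have caught the discrepancy rather than asserting it.
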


Let $G$ and $F$ be hypergraphs. An \emph{$F$ packing} of $G$ is a set of edge-disjoint copies of $F$ in $G$. An \emph{$F$ decomposition} of $G$ is a partition of $E(G)$ into copies of $F$.  We let $K_{q*n}^r$ denote the $r$-uniform $q$-partite hypergraph with each part of size $n$.  When $r=2$, a $K_q$ decomposition of $K_{q*n}$ is referred to as a high-dimensional permutation (where the dimension is $q-1$).  As a special case, when $r=2$ and $q=3$ this is referred to as a \emph{Latin square}.  

In a recent follow-up paper, Kwan, Sah, Sawhney, and Simkin~\cite{KSSS22B} proved that high girth Latin squares exist. Our main result vastly generalizes this in the approximate sense, for example it proves the existence of high girth approximate high dimensional permutations.  Applying Corollary~\ref{cor:SmallCodegreeColoring} to the appropriate auxiliary hypergraph now yields the existence of approximate high girth $K_q^r$ decompositions of $K_{q*n}^r$ as follows.

\begin{thm}\label{thm:HighGirthLatinSquares}
For all $g > q > r \geq 2$, there exist $n_0$ and $\beta \in (0,1)$ such that for all $n\ge n_0$, there exists a $K_q^r$-packing $S$ of $K_{q*n}^r$ with $|S|\ge (1 - n^{-\beta})n^r/\binom{q}{r}$ and no $(i(q-r)+r,i)$-configurations for all $2\le i \le g$.
\end{thm}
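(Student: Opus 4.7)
The plan is to adapt the proof of Theorem~\ref{thm:HighGirthSteiner} to the $q$-partite setting and then apply Corollary~\ref{cor:SmallCodegreeColoring}. Let $r':=\binom{q}{r}$ and define an $r'$-uniform auxiliary hypergraph $G$ with $V(G):=E(K_{q*n}^r)$ and $E(G):=\{\binom{Q}{r} : Q \subseteq V(K_{q*n}^r) \text{ is a transversal } q\text{-subset}\}$, where \emph{transversal} means a $q$-subset containing exactly one vertex from each of the $q$ parts. Matchings of $G$ are in bijection with $K_q^r$-packings of $K_{q*n}^r$. Let $H$ be the configuration hypergraph of $G$ whose edges are the $(i(q-r)+r,i)$-configurations (that is, $i$ edge-disjoint copies of $K_q^r$ spanning at most $i(q-r)+r$ vertices of $K_{q*n}^r$) for $3\le i\le g$ that contain no smaller $(i'(q-r)+r,i')$-configuration with $2\le i'<i$.

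Set $D:=n^{q-r}$. Then $G$ is $D$-regular, since each $r$-edge $f$ extends to a transversal by choosing one vertex from each of the $q-r$ parts disjoint from $f$. For two distinct $r$-edges $f_1,f_2$ with $|f_1\cap f_2|=s$, the set $f_1\cup f_2$ meets $2r-s$ of the $q$ parts, so the number of transversals containing both is $n^{q-(2r-s)}\le n^{q-r-1}=O(D^{1-\frac{1}{q-r}})$. For $3\le i\le g$, a standard counting argument gives $\Delta_i(H)\le O(n^{(i-1)(q-r)})=O(D^{i-1})$. For $2\le \ell<k\le g$, the minimality clause in the definition of $H$ forces $\ell$ edges of $G$ lying inside an $H$-edge of size $k$ to span at least $\ell(q-r)+r+1$ vertices of $K_{q*n}^r$; the remaining $k-\ell$ copies of $K_q^r$ then contribute at most $(k-\ell)(q-r)-1$ new vertices, yielding $\Delta_{k,\ell}(H)=O(n^{(k-\ell)(q-r)-1})=O(D^{k-\ell-\frac{1}{q-r}})$. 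By the same token, the maximum $i$-codegree of $G$ with $H$ is at most $O(n^{(i-1)(q-r)-1})=O(D^{i-1-\frac{1}{q-r}})$.

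Fixing any $\beta<\frac{1}{q-r}$ (say $\beta:=\frac{1}{2(q-r)}$), all hypotheses of Corollary~\ref{cor:SmallCodegreeColoring} are satisfied for $n$ sufficiently large. That corollary produces an $H$-avoiding matching $M$ of $G$ of size at least $\frac{e(G)}{D}(1-D^{-\alpha})=n^r(1-O(n^{-\alpha(q-r)}))$, and by construction $M$ is a $K_q^r$-packing of $K_{q*n}^r$ containing no $(i(q-r)+r,i)$-configuration for any $2\le i\le g$; the $i=2$ case is precisely the edge-disjointness condition, which is built into the definition of a matching of $G$. Shrinking $\beta$ as needed yields the stated bound. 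The only step requiring care is verifying the $\Delta_{k,\ell}(H)$ and $i$-codegree bounds in the $q$-partite setting, but this is a direct transcription of the minimality-based spanning argument from the Steiner case; no conceptual ingredient beyond Corollary~\ref{cor:SmallCodegreeColoring} is needed.
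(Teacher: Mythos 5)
Your proposal is correct and is essentially the proof the paper intends: the paper explicitly omits this argument, stating that Theorem~\ref{thm:HighGirthLatinSquares} follows from Corollary~\ref{cor:SmallCodegreeColoring} in the same way Theorem~\ref{thm:HighGirthSteiner} did, and your auxiliary hypergraph, codegree estimates, and the minimality-based bound on $\Delta_{k,\ell}(H)$ are a faithful transcription of that argument to the $q$-partite setting. Note only that your count $\frac{e(G)}{D}(1-D^{-\alpha})=n^r(1-O(n^{-\alpha(q-r)}))$ is in fact stronger than the stated bound $(1-n^{-\beta})n^r/\binom{q}{r}$, so the conclusion follows a fortiori.
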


We omit the proofs of Theorems~\ref{thm:HighGirthSteinerRandom} and~\ref{thm:HighGirthLatinSquares} as they follow similarly from Corollary~\ref{cor:SmallCodegreeColoring} as Theorem~\ref{thm:HighGirthSteiner} did. As in the case of $(n,q,r)$-Steiner systems, Theorem~\ref{thm:HighGirthLatinSquares} also generalizes to the random hypergraph setting, we omit its obvious statement. Also, in all cases, Corollary~\ref{cor:SmallCodegreeMatchings} implies the existence of many disjoint such objects covering nearly all the $q$-element sets. 

%One may naturally wonder if Conjecture~\ref{conj:HighGirthAllUniformities} could hold for $F$-packings of $K_n^r$ for all hypergraphs $F$. However for every $F$ that is not a hyperclique $K_q^r$, this is in fact false. Namely if $q = v(F)$, then since $F$ is not a hyperclique, for every copy $F'$ of $F$ in $K_q^r$, there would exist an $r$-subset of $V(F')$ not in $F'$ that belongs to another copy of $F$ and hence a $(2q-r,2)$-configuration of the packing. 

%[Doesn't show approximate is impossible?? Only that we miss $\frac{1}{n^{q-r}}$ fraction of $r$-tuples?? Or maybe $1/n$ fraction with a more sophisticated argument...]

%Corollary~\ref{cor:SmallCodegreeColoring} nevertheless shows that we may avoid $(i(q-r)+r,i)$-configurations where every pair of $r$-element sets in the configuration intersect in at most $r-1$ vertices.

%Similarly one may wonder if high girth $(n,q,r,\lambda)$-Steiner systems, where an $(n,q,r,\lambda)$-Steiner system $S$ is a subset of $\binom{[n]}{q}$ such that every $r$-set is in exactly $\lambda$ $q$-sets of $S$. Once more, this fails to exist for $\lambda>1$, since such systems would necessarily contain $(2q-r,2)$-configurations. Yet once again Corollary~\ref{cor:SmallCodegreeColoring} shows that we may avoid $(i(q-r)+r,i)$-configurations where every pair of $r$-element sets in the configuration intersect in at most $r-1$ vertices.

\subsection{List Coloring Hypergraphs of Small Codegree}\label{ss:ListSmall}

Here is Cooper and Mubayi's precise codegree theorem.

\begin{thm}[Cooper and Mubayi~\cite{CM16}]\label{thm:CopperMubayi}
Fix $k\ge 3$. Let $H$ be a $k$-uniform hypergraph with maximum degree $\Delta$. If for all $2\le \ell < k$, we have
%$$\Delta_{k,\ell}(H) \le\frac{\Delta^{\frac{k-\ell}{k-1}}}{f},$$
$\Delta_{k,\ell}(H) \le {\Delta^{\frac{k-\ell}{k-1}}}/{f},$
then 
%$$\chi(H) = O\left(\left(\frac{\Delta}{\logf}\right)^{\frac{1}{k-1}}\right).$$
$\chi(H) = O\left(\left({\Delta}/{\log f}\right)^{\frac{1}{k-1}}\right).$
\end{thm}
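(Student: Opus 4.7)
The plan is to derive the theorem directly from Corollary~\ref{cor:SmallCodegreeColoring} via an auxiliary encoding that turns vertex coloring of the $k$-uniform hypergraph $H$ into an $H^*$-avoiding edge coloring of a trivially structured $2$-uniform hypergraph $G^*$. In this way, everything non-trivial in the proof will be absorbed by the corollary, and all that remains is bookkeeping of exponents.

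First I would construct $G^*$ as a perfect matching on $2|V(H)|$ vertices: for each $v\in V(H)$ introduce a fresh partner $u_v$, and let $E(G^*):=\{\{v,u_v\}:v\in V(H)\}$, yielding a natural bijection between $E(G^*)$ and $V(H)$. Then I would define $H^*$ to be the configuration hypergraph on $V(H^*)=E(G^*)$ obtained by pushing each edge of $H$ across this bijection; since edges of $H$ have size $k\ge 3$ and the corresponding $k$ edges of $G^*$ are pairwise disjoint, this is a legal configuration hypergraph. Since $G^*$ is a matching, $L(G^*)$ is edgeless, so a proper coloring of $L(G^*)\cup H^*$ is literally a proper vertex coloring of $H$ (and the list-coloring version is inherited). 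Next, I would set $D:=c(\Delta/\log f)^{1/(k-1)}$ for a sufficiently large constant $c=c(k,\alpha)$ and apply Corollary~\ref{cor:SmallCodegreeColoring} with $r=2$, $g=k$, and some fixed small $\beta>0$. The hypotheses read cleanly: $\Delta(G^*)=1\le D$ and all $G^*$-codegrees are at most $1\le D^{1-\beta}$; $H^*$ is $k$-uniform with no edges of size $2$ since $k\ge 3$; the only non-vanishing $\Delta_i(H^*)$ is $\Delta_k(H^*)=\Delta$, which satisfies $\Delta\le \alpha D^{k-1}\log D$ by choice of $D$; the Cooper–Mubayi codegree hypothesis converts, after substituting the value of $D$, to $\Delta_{k,\ell}(H^*)\le D^{k-\ell-\beta}$ for each $2\le \ell<k$; and all $i$-codegrees of $G^*$ with $H^*$ are bounded by the corresponding $\Delta_{k,\ell}(H^*)$. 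The corollary then delivers $\chi(H)\le \chi_\ell(L(G^*)\cup H^*)\le D(1+D^{-\alpha})=O\bigl((\Delta/\log f)^{1/(k-1)}\bigr)$.

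The main obstacle will be the translation $\Delta^{(k-\ell)/(k-1)}/f\le D^{k-\ell-\beta}$: obtaining a uniform $\beta>0$ from the Cooper–Mubayi hypothesis alone requires $f$ to be at least a small polynomial power of $\Delta$. When $f$ is smaller than this threshold, however, the Cooper–Mubayi bound $O((\Delta/\log f)^{1/(k-1)})$ coincides (up to constants) with the standard $O(\Delta^{1/(k-1)})$ bound obtained by applying the symmetric Lov\'asz Local Lemma to a uniformly random $C$-coloring with $C=\Theta(\Delta^{1/(k-1)})$, so the theorem follows from the Local Lemma in that range and the reduction only has to be invoked in the remaining, non-trivial regime where $f$ is a sufficiently large power of $\log \Delta$ (or of $\Delta$ itself). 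Handling this split into two parameter regimes, together with a careful choice of the constant $c$ so that the $\Delta\le \alpha D^{k-1}\log D$ check survives for all admissible $f$, is essentially all the work beyond setting up the encoding.
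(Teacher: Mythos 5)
First, a point of orientation: the paper does not prove Theorem~\ref{thm:CopperMubayi} at all --- it is quoted from Cooper and Mubayi~\cite{CM16}, who prove it by their own (nibble-type) argument. What the paper derives from Corollary~\ref{cor:SmallCodegreeColoring}, via exactly the matching encoding you describe (``taking $G$ to be a matching whose edges correspond to the vertices of $H$''), is Theorem~\ref{thm:ListCooperMubayi}, which the authors explicitly flag as holding only ``with stronger codegree assumptions,'' namely $\Delta_{k,\ell}(H)\le D^{k-\ell-\beta}$. Your construction of $G^*$, $H^*$ and the verification of the hypotheses of the corollary are correct and coincide with the paper's intended reduction; the problem is that this reduction proves Theorem~\ref{thm:ListCooperMubayi}, not Theorem~\ref{thm:CopperMubayi}.

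The genuine gap is in your two-regime patch. As you correctly compute, the translation $\Delta^{(k-\ell)/(k-1)}/f\le D^{k-\ell-\beta}$ with $D=\Theta\bigl((\Delta/\log f)^{1/(k-1)}\bigr)$ forces $f\gtrsim \Delta^{\beta/(k-1)}$, so the corollary only applies when $f$ is at least a fixed polynomial power of $\Delta$. But your claim that below this threshold the Cooper--Mubayi bound ``coincides (up to constants) with the standard $O(\Delta^{1/(k-1)})$ bound'' is false except when $f=O(1)$: for any $f\to\infty$ with $f\le \Delta^{o(1)}$ (say $f=\log\Delta$), the target bound $O\bigl((\Delta/\log f)^{1/(k-1)}\bigr)$ beats the Local Lemma bound $O(\Delta^{1/(k-1)})$ by an unbounded factor $(\log f)^{1/(k-1)}$. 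Indeed, even right at your threshold $f=\Delta^{c\beta}$ the target is $O\bigl((\Delta/\log\Delta)^{1/(k-1)}\bigr)$, already a $\log^{1/(k-1)}\Delta$ improvement over the LLL --- beating the LLL by such factors is precisely the content of the AKPSS/Cooper--Mubayi line of results, so it cannot be dismissed as a constant. Consequently the entire intermediate regime $\omega(1)\le f\le \Delta^{o(1)}$ is covered by neither of your two cases, and the proposal does not prove the stated theorem. This is consistent with the paper's own caveat that its corollary recovers Cooper--Mubayi only under stronger codegree hypotheses.
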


Li and the second author~\cite{LP22} proved a non-uniform generalization of Theorem~\ref{thm:CopperMubayi}. Both theorems however only hold for ordinary coloring.

Our Corollary~\ref{cor:SmallCodegreeColoring} implies a generalization of Theorem~\ref{thm:CopperMubayi} to non-uniform hypergraphs as well as to list coloring (albeit with stronger codegree assumptions), namely by taking $G$ to be a matching whose edges correspond to the vertices of $H$. Here is that statement as follows.

\begin{thm}\label{thm:ListCooperMubayi}
Let $g\ge 3$ be an integer and $\beta > 0$ be real. There exists $D_{\beta}$ such that the following holds for all $D\ge D_{\beta}$:

Let $H$ be a $g$-bounded hypergraph whose edges have size at least two such that $\Delta_i(H) \le O(D^{i-1}\log D)$ for all $2\le i \le g$. If the maximum common $2$-degree of $H$ is at most $D^{1-\beta}$ and for all $2\le \ell < k \le g$, we have that
$\Delta_{k,\ell}(H) \le D^{k-\ell-\beta}$,
then 
$$\chi(H) \le \chi_{\ell}(H) \le D.$$
\end{thm}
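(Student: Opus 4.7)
The plan is to deduce Theorem~\ref{thm:ListCooperMubayi} directly from Corollary~\ref{cor:SmallCodegreeColoring} by means of an auxiliary matching, exactly as hinted in the paragraph preceding the theorem. The construction is: set $r=2$, let $G$ be the $2$-uniform perfect matching on $V(H)\times\{0,1\}$ with edges $\{(v,0),(v,1)\}$ for each $v\in V(H)$, and write $\phi(v):=\{(v,0),(v,1)\}$ for the natural bijection from $V(H)$ to $E(G)$. Define a configuration hypergraph $H'$ of $G$ by $E(H'):=\{\phi(e):e\in E(H)\}$. Since $G$ is a matching, $L(G)$ is edgeless, so an ($L$-)coloring of $L(G)\cup H'$ is the same as an ($L$-)coloring of $H'$, which via $\phi^{-1}$ is the same as an ($L$-)coloring of $H$.

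Next I would verify the hypotheses of Corollary~\ref{cor:SmallCodegreeColoring}. The hypergraph $G$ is $2$-bounded with $\Delta(G)=1$ and codegrees at most $1$, so the $G$-side conditions hold trivially. On the $H'$-side: $H'$ is $g$-bounded with no edges of size two (because edges of $H$ have size at least three), $\Delta_i(H')=\Delta_i(H)\le O(D^{i-1}\log D)$ for all $3\le i\le g$, and $\Delta_{k,\ell}(H')=\Delta_{k,\ell}(H)\le D^{k-\ell-\beta}$ for all $2\le\ell<k\le g$. The maximum $i$-codegree of $G$ with $H'$, i.e.~the maximum over a vertex $x\in V(G)$ and an edge $e\in E(G)$ with $x\notin e$ of the number of size-$i$ edges of $H'$ containing $e$ together with the unique edge of $G$ incident to $x$, is bounded above by $\Delta_{i,2}(H')\le D^{i-2-\beta}\le D^{i-1-\beta}$ as required.

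Applying the list-coloring statement of Corollary~\ref{cor:SmallCodegreeColoring} with the list assignment pulled back through $\phi$ then yields $\chi_{\ell}(H)\le D(1+D^{-\alpha})$; to sharpen this to $\le D$ one simply reapplies the corollary at $D':=\lfloor D/(1+D^{-\alpha})\rfloor$, which tightens each of the codegree bounds by at most a $(1+D^{-\alpha})$-factor and is therefore absorbed for $D\ge D_\beta$ with $D_\beta$ chosen sufficiently large. Since the deduction is purely structural there is no serious obstacle: the only bookkeeping worth noting is that the implicit constant in the hypothesis $\Delta_i(H)\le O(D^{i-1}\log D)$ must be at most the $\alpha$ supplied by Corollary~\ref{cor:SmallCodegreeColoring}, which is precisely why the threshold $D_\beta$ in Theorem~\ref{thm:ListCooperMubayi} is allowed to depend on $\beta$ (and on these implicit constants).
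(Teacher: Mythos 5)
Your construction is exactly the one the paper alludes to when it says ``by taking $G$ to be a matching whose edges correspond to the vertices of $H$,'' and the verification of the hypotheses of Corollary~\ref{cor:SmallCodegreeColoring} is correct: in particular, you correctly observe that the maximum $i$-codegree of $G$ with $H'$ is governed by $\Delta_{i,2}(H')\le D^{i-2-\beta}\ll D^{i-1-\beta}$ because the matching makes each vertex of $G$ incident to a unique edge, and that $L(G)$ being edgeless collapses coloring $L(G)\cup H'$ to coloring $H'\cong H$. So the core of the proposal matches the paper.

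The only place where you are slightly imprecise is the final sharpening to $\chi(H)\le D$. You reapply the corollary at $D':=\lfloor D/(1+D^{-\alpha})\rfloor$ and claim that the resulting tightening of each hypothesis ``is absorbed for $D\ge D_\beta$ sufficiently large.'' That is correct for the conditions with built-in slack ($\Delta(G)$, codegrees of $G$, the $\Delta_i(H)\le O(D^{i-1}\log D)$ bound, and the $i$-codegree of $G$ with $H'$, which has a whole factor of $D$ to spare), but it is \emph{not} correct for $\Delta_{k,\ell}(H)\le D^{k-\ell-\beta}$: this hypothesis has no slack, and $(D')^{k-\ell-\beta}<D^{k-\ell-\beta}$, a strict loss that no choice of $D_\beta$ repairs. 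The standard remedy is to invoke the corollary with a smaller exponent, say $\beta':=\beta/2$, with the corresponding $\alpha'$; then $D^{k-\ell-\beta}\le(D')^{k-\ell-\beta'}$ for all $D$ sufficiently large because the gap $\beta-\beta'>0$ in the exponent dominates the constant-factor loss from $D\mapsto D'$. With that one-line adjustment your argument is complete and is the intended one.
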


\subsection{Matchings and Edge Colorings of Hypergraphs}\label{ss:Matchings}

The \emph{nibble} or \emph{semi-random method}, in which a combinatorial object is constructed via a series of small random steps, has had tremendous influence on combinatorics in recent decades. The results by R\"odl~\cite{R85}, Frankl and R\"odl~\cite{FR85}, Pippenger (unpublished), Pippenger and Spencer~\cite{PS89}, and Kahn~\cite{K96} were all proved using variations of the nibble method. We refer the reader to the recent survey by Kang, Kelly, K\"{u}hn, Methuku and Osthus~\cite{KKKAD21} for more information on the nibble method, its influence and uses.

Finding perfect or almost perfect matchings in hypergraphs has many applications. For example, R\"{o}dl's result on approximate designs follows from Theorem~\ref{thm:Pippenger} by applying it to an auxiliary hypergraph whose vertices are the edges of $K_n^r$ and whose edges are the copies of $K_q^r$ in $K_n^r$. For more background on finding matchings in hypergraphs and their applications, we refer the reader to the survey by Keevash~\cite{K18}. 

Indeed, Corollary~\ref{thm:SmallCodegree} implies (by setting $H$ to be empty) the following generalization of Kahn's theorem (albeit with stronger codegree assumptions) for finding many disjoint list colorings.

\begin{thm}\label{thm:KahnKahn}
For all integers $r \ge 2$ and real $\beta > 0$, there exist an integer $D_{r,\beta}\ge 0$ and real $\alpha > 0$ such that following holds for all $D\ge D_{r,\beta}$: 

If $G$ is a $r$-bounded hypergraph of maximum degree at most $D$ and maximum codegree at most $D^{1-\beta}$ and $L$ is a list assignment to the edges of $G$ such that $|L(e)|\ge D(1+D^{-\alpha})$ for each edge $e$ of $G$, then $G$ has $D$ pairwise disjoint $L$-colorings, that is there exist $L$-colorings $\phi_1,\ldots, \phi_D$ such that $\phi_i(e)\ne \phi_j(e)$ for every $i\ne j \in [D]$ and $e\in E(G)$.
\end{thm}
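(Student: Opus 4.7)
The plan is to obtain Theorem~\ref{thm:KahnKahn} as a direct application of Corollary~\ref{cor:SmallCodegreeColoring} by taking the configuration hypergraph $H$ to be empty. Concretely: fix $r \geq 2$ and $\beta > 0$, set $g := 2$, and invoke Corollary~\ref{cor:SmallCodegreeColoring} with parameters $r, g, \beta$ to obtain the integer $D_\beta \geq 0$ and real $\alpha > 0$ promised there. Then set $D_{r,\beta} := D_\beta$ (the same constant works) and, given $G$, define the configuration hypergraph $H$ of $G$ by $V(H) := E(G)$ and $E(H) := \emptyset$.

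Next, I would verify the hypotheses of Corollary~\ref{cor:SmallCodegreeColoring} for this choice of $G$ and $H$. The assumptions on $G$ (i.e., that $G$ is $r$-bounded with $\Delta(G) \leq D$ and codegrees at most $D^{1-\beta}$) come for free from the hypotheses of Theorem~\ref{thm:KahnKahn}. For $H$, the fact that it is $g$-bounded with no edges of size two is trivial since $H$ has no edges at all. Moreover, since $g = 2$, the ranges $3 \leq i \leq g$ and $2 \leq \ell < k \leq g$ are empty, so the bounds on $\Delta_i(H)$, on $\Delta_{k,\ell}(H)$, and on the $i$-codegrees of $G$ with $H$ are all vacuously satisfied. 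Thus Corollary~\ref{cor:SmallCodegreeColoring} applies.

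Finally, given a list assignment $L$ with $|L(e)| \geq L_0 := D(1 + D^{-\alpha})$ for every edge $e$ of $G$, the last conclusion of Corollary~\ref{cor:SmallCodegreeColoring} yields a family of $L_0 - D^{1-\alpha}$ pairwise disjoint $L$-colorings of $L(G) \cup H = L(G)$. Since
\[
L_0 - D^{1-\alpha} \;=\; D(1+D^{-\alpha}) - D^{1-\alpha} \;=\; D,
\]
this is exactly the desired set of $D$ pairwise disjoint $L$-colorings $\phi_1, \ldots, \phi_D$ of $E(G)$.

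There is no real obstacle here: all of the technical work has been packed into Corollary~\ref{cor:SmallCodegreeColoring} (and, behind it, Theorem~\ref{thm:SmallCodegree} together with the random sparsification lemma). The content of this proposal is essentially the observation that once $H$ is trivialized, Corollary~\ref{cor:SmallCodegreeColoring} is already a many-disjoint-list-coloring strengthening of Kahn's theorem, subject to the slightly stronger codegree hypothesis $D^{1-\beta}$ (in place of $o(D)$) that is inherent to the sparsification step.
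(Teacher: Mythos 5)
Your proposal is correct and is essentially the paper's own derivation: the paper obtains Theorem~\ref{thm:KahnKahn} precisely by setting $H$ to be empty in Corollary~\ref{cor:SmallCodegreeColoring} (equivalently, in Theorem~\ref{thm:SmallCodegree}), so that all hypotheses involving $H$ are vacuous and the count $L_0 - D^{1-\alpha} = D$ gives the $D$ disjoint $L$-colorings. Your verification that the ranges $3 \le i \le g$ and $2 \le \ell < k \le g$ are empty for $g=2$ is the right justification for the vacuity.
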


Similarly, setting $H$ to be empty in Theorem~\ref{thm:SmallCodegree} yields a generalization of Theorem~\ref{thm:KahnKahn} about finding $A$-perfect matchings in bipartite hypergraphs. Namely, the theorem guarantees the existence of an $A$-perfect matching provided that the vertices of $A$ have slightly larger degree than the vertices in $B$. As far as we know, this theorem does not appear in the literature to date.

\begin{thm}\label{thm:KahnBipartite}
For all integers $r \ge 2$ and real $\beta > 0$, there exist an integer $D_{r,\beta}\ge 0$ and real $\alpha > 0$ such that following holds for all $D\ge D_{r,\beta}$:
\vskip.1in
\noindent Let $G=(A,B)$ be a bipartite $r$-bounded (multi)-hypergraph satisfying
\vskip.1in
\begin{enumerate}
\item[(G1)] every vertex in $A$ has degree at least $(1+D^{-\alpha})D$ and every vertex in $B$ has degree at most $D$, and
\item[(G2)] every pair of vertices of $G$ has codegree at most $D^{1-\beta}$,
\end{enumerate}
then there exists an $A$-perfect matching of $G$ and indeed even a set of $D$ disjoint $A$-perfect matchings of $G$.
\end{thm}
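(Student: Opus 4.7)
The plan is to deduce Theorem~\ref{thm:KahnBipartite} as an essentially immediate corollary of Theorem~\ref{thm:SmallCodegree} by taking the configuration hypergraph $H$ to be trivial. This is the reduction the authors allude to in the paragraph preceding the statement (``setting $H$ to be empty in Theorem~\ref{thm:SmallCodegree} yields\ldots''), so no new combinatorial content is needed beyond what Theorem~\ref{thm:SmallCodegree} already provides.

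Concretely, given $r \ge 2$ and $\beta > 0$, I would first set $\beta' := \min\{\beta, 1/2\} \in (0,1)$ (so that we fit into the hypothesis $\beta \in (0,1)$ of Theorem~\ref{thm:SmallCodegree}) and fix some integer $g \ge 2$, say $g = 2$. Apply Theorem~\ref{thm:SmallCodegree} with parameters $r$, $g$, $\beta'$ to obtain $D_{\beta'}$ and $\alpha > 0$; take $D_{r,\beta} := D_{\beta'}$ and use this $\alpha$ as the constant promised by Theorem~\ref{thm:KahnBipartite}. Then, given $G = (A,B)$ satisfying (G1) and (G2), define the configuration hypergraph $H$ by $V(H) = E(G)$ and $E(H) = \emptyset$. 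This is a valid $g$-bounded configuration hypergraph: it vacuously contains no edges of size two, every $\Delta_i(H)$ and $\Delta_{k,\ell}(H)$ equals $0$, and the maximum $i$-codegree of $G$ with $H$ is $0$, so all the $H$-side hypotheses of Theorem~\ref{thm:SmallCodegree} hold trivially. The $G$-side hypotheses (codegrees at most $D^{1-\beta'}$, degrees in $A$ at least $(1+D^{-\alpha})D$, degrees in $B$ at most $D$) are precisely (G1) and (G2) (using $D^{1-\beta} \le D^{1-\beta'}$).

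Applying Theorem~\ref{thm:SmallCodegree} then produces a collection of $D_A - D^{1-\alpha}$ pairwise disjoint $H$-avoiding $A$-perfect matchings of $G$. Since $E(H) = \emptyset$, every matching of $G$ is $H$-avoiding; and since (G1) gives $D_A \ge (1+D^{-\alpha})D$, we have
\[
D_A - D^{1-\alpha} \ \ge\ (1+D^{-\alpha})D - D^{1-\alpha} \ =\ D,
\]
so we obtain at least $D$ pairwise disjoint $A$-perfect matchings of $G$, as required.

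The entire technical difficulty is absorbed into Theorem~\ref{thm:SmallCodegree}, whose proof (as outlined elsewhere in the paper) relies on the bipartite nibble in Lemma~\ref{lem:IterativeBip}, the finishing Lemma~\ref{lem:Finish}, and the random sparsification Lemma~\ref{lem:Sparsifying}; there is no additional obstacle in the reduction itself. The only minor bookkeeping is choosing $\beta' = \min\{\beta,1/2\}$ to comply with the $\beta \in (0,1)$ constraint of Theorem~\ref{thm:SmallCodegree}, and verifying the arithmetic $D_A - D^{1-\alpha} \ge D$ to upgrade from the many-matchings conclusion to the clean ``at least $D$ disjoint $A$-perfect matchings'' statement.
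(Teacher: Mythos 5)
Your proposal is correct and is exactly the paper's intended derivation: the authors obtain Theorem~\ref{thm:KahnBipartite} by setting $H$ to be empty in Theorem~\ref{thm:SmallCodegree}, which is the reduction you carry out. Your added bookkeeping (clamping $\beta$ into $(0,1)$, noting every matching is vacuously $H$-avoiding, and checking $D_A - D^{1-\alpha} \ge D$) is all sound and merely makes explicit what the paper leaves implicit.
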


Note that Theorem~\ref{thm:KahnKahn} is in fact a corollary of Theorem~\ref{thm:KahnBipartite} in the same manner that Corollary~\ref{cor:SmallCodegreeColoring} is a corollary of Theorem~\ref{thm:SmallCodegree}. Moreover, Theorem~\ref{thm:KahnBipartite} has applications to rainbow matchings which we discuss in the next two subsections.

We also note that our proof of Theorem~\ref{thm:SmallCodegree} and hence of Theorem~\ref{thm:KahnBipartite} uses that the gap between the degrees of $A$ and the degrees of $B$ widens as the nibble process iterates. Kahn's proof instead uses his trick of ``reserve" colors; namely before the nibble process, every vertex reserves each color independently with some small probability $p$ and then the colors of an edge that are reserved at both ends are used at the end to complete the coloring. 

We note then that Theorem~\ref{thm:KahnBipartite} admits an alternate proof wherein each vertex of $B$ is `reserved' independently with probability $p$, the reserved vertices are deleted, and edges whose $B$ vertices are all reserved are used to complete the matching to an $A$-perfect matching at the end of the nibble process. Thus Kahn's ``reserve colors" are a special case of this more general ``reserve trick" when working with the specific list-coloring based bipartite hypergraphs from the proof of Corollary~\ref{cor:GirthFiveList}. We opted not to use the ``reserve trick" for the proof of Theorem~\ref{thm:SmallCodegree} since the complications created by configuration hypergraphs seemed needlessly difficult.

%Allude to quasi-random results or correspondence results?\

\subsection{Rainbow Matchings in Hypergraphs}\label{ss:RainbowMatchings}

Rainbow matchings in graphs and hypergraphs have also received much attention from researchers over the decades with well over a hundred papers on the subject. Recall that a matching $M$ of a (not necessarily properly) edge colored hypergraph $G$ is \emph{rainbow} if every edge of $M$ is colored differently and a rainbow matching is \emph{full} if every color of the coloring appears on some edge of $M$.

A typical example of a rainbow matching conjecture is the Aharoni-Berger Conjecture as follows.

\begin{conj}[Aharoni and Berger~\cite{AB09}]\label{conj:AB}
If $G$ is a bipartite multigraph properly edge colored with $q$ colors where every color appears at least $q+1$ times, then there exists a full rainbow matching.
\end{conj}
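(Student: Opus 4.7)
The plan is to reduce Conjecture~\ref{conj:AB} to an application of Theorem~\ref{thm:KahnBipartite} via a natural auxiliary bipartite hypergraph. I would build a $3$-uniform bipartite hypergraph $\mathcal{G}=(A,B)$ whose $A$-side is the color set (of size $q$) and whose $B$-side is $V(G)$; for every edge $uv$ of $G$ with color $c$, include the triple $\{c,u,v\}$ in $E(\mathcal{G})$. An $A$-perfect matching of $\mathcal{G}$ selects exactly one edge per color with no two chosen edges sharing a $G$-vertex, which is precisely a full rainbow matching of $G$.

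Next I would verify the hypotheses of Theorem~\ref{thm:KahnBipartite} with $D\approx q$. Each color $c\in A$ has degree in $\mathcal{G}$ equal to the number of edges of $G$ of color $c$, which is at least $q+1$ by hypothesis. Each vertex $v\in B$ has degree $\deg_G(v)\le q$ in $\mathcal{G}$, since at most one edge of each of the $q$ colors is incident to $v$ by properness. Two colors in $A$ have codegree $0$ in $\mathcal{G}$; a color and a $G$-vertex have codegree at most $1$ by properness; and two $G$-vertices have codegree equal to the multiplicity of the edge between them, which we may assume to be small (say at most $D^{1-\beta}$) after a brief preprocessing step that handles any color class with a heavy parallel bundle by matching inside it directly.

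The main obstacle is quantitative. Theorem~\ref{thm:KahnBipartite} requires the $A$-degrees to exceed the $B$-degrees by a factor of $(1+D^{-\alpha})$ for some small $\alpha>0$, i.e.~an $A$-side excess of $q^{1-\alpha}$, whereas Conjecture~\ref{conj:AB} provides an excess of only $1$. Hence a direct application yields only an asymptotic version: if every color appears at least $(1+q^{-\alpha})q$ times, a full rainbow matching exists. To bridge the remaining gap down to the conjectured $q+1$ copies one would have to combine the nibble with an absorption step, in the spirit of how Kwan, Sah, Sawhney and Simkin~\cite{KSSS22} lifted the approximate Erd\H{o}s conjecture to the exact one. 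Concretely, I would set aside a small random reservoir of edges before invoking Theorem~\ref{thm:KahnBipartite}, apply the theorem on the remainder to cover all but a tiny set $U$ of colors, and then rainbow-swap into the reservoir to finish the match on $U$, exploiting the abundance of rainbow alternating paths guaranteed by properness. I expect this absorption step, rather than the nibble, to be the dominant technical difficulty.
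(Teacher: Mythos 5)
This statement is a \emph{conjecture} in the paper (due to Aharoni and Berger), not a theorem: the paper does not prove it, and it remains open. What the paper's machinery (Theorem~\ref{thm:KahnBipartite}, equivalently Theorem~\ref{thm:KahnBipartiteRainbow}) delivers is precisely the \emph{strong asymptotic} version in the sparse setting --- a full rainbow matching when every color appears at least $(1+D^{-\alpha})D$ times with $D \ge \Delta(G)$ --- and your reduction via the auxiliary $3$-uniform bipartite hypergraph is exactly the ${\rm Rainbow}(G,\phi)$ correspondence the paper describes. Up to that point your analysis of degrees and codegrees is sound (modulo the multiplicity caveat, which the paper also flags: the conjecture is false for $3$-uniform hypergraphs without a codegree assumption, and even here one needs multiplicity at most $D^{1-\beta}$).

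The genuine gap is the final absorption step, which is not an incidental technicality but the entire content of the open problem. You propose to reserve a random set of edges, cover all but a small set $U$ of colors by the nibble, and then finish $U$ by ``rainbow-swapping'' along alternating paths into the reservoir. Nothing in the proposal makes this precise: you do not specify what structure the reservoir must have to absorb an \emph{arbitrary} leftover set $U$, why properness guarantees enough disjoint rainbow augmenting paths when the excess per color is only $+1$, or how to avoid the reserved edges themselves colliding in colors or vertices with the partial matching. The best known results via such augmentation/absorption arguments (Pokrovskiy's strong asymptotic, and the Munh\'a Correia--Pokrovskiy--Sudakov sampling trick) still require each color to appear $q+o(q)$ times, not $q+1$. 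So your write-up correctly reproves the asymptotic statement using the paper's Theorem~\ref{thm:KahnBipartite}, but the claimed bridge to the exact conjecture is a missing idea, not a proof.
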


Here is a version for non-bipartite graphs (see~\cite{CPS21} and~\cite{GRWW21}).

\begin{conj}\label{conj:GRWW}
If $G$ is a multigraph properly edge colored with $q$ colors where every color appears at least $q+2$ times, then there exists a full rainbow matching.
\end{conj}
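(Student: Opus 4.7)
The plan is to approach Conjecture~\ref{conj:GRWW} by reducing it to the bipartite hypergraph matching framework of this paper via the following auxiliary construction. Let $C$ be the set of $q$ colors; build a $3$-uniform bipartite hypergraph $G'=(A,B)$ with $A:=C$, $B:=V(G)$, and with hyperedge $\{c,u,v\}$ for each edge $uv$ of $G$ colored $c$. A full rainbow matching of $G$ is precisely an $A$-perfect matching of $G'$: saturating each $c\in A$ exactly once uses every color, and distinctness of the $B$-vertices across matching edges forces the selected graph edges to form a matching. This reduces the conjecture to finding an $A$-perfect matching of $G'$.

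Next I would check the hypotheses of Theorem~\ref{thm:KahnBipartite}. Since $G$ is properly colored with $q$ colors, $\Delta(G)\le q$, so every $v\in B$ has $d_{G'}(v)\le q$. Every color class has at least $q+2$ edges, so every $c\in A$ satisfies $d_{G'}(c)\ge q+2$. Mixed codegrees $\{c,v\}$ are at most $1$ by properness of the coloring. The only potentially large codegrees are those of pairs $\{u,v\}\subseteq B$, which equal the multiplicity of the edge $uv$ in $G$; I would preprocess by capping multiplicities, either by removing excess parallel copies in a balanced way across colors or by handling ``heavy'' pairs with a greedy pre-selection step that fixes them in the matching before invoking the nibble on the residual hypergraph.

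The main obstacle is that setting $D=q$ in Theorem~\ref{thm:KahnBipartite} yields an $A$-perfect matching only when every color appears at least $q(1+q^{-\alpha})=q+q^{1-\alpha}$ times, which for large $q$ is much larger than the conjectural $q+2$. Closing this gap appears to lie beyond the nibble method on its own and requires tools not developed in this paper. A natural route is to use Theorem~\ref{thm:SmallCodegree} to produce a rainbow matching missing only a sublinear number of colors, and then to absorb the missing colors via an augmenting-path or local-switching argument tailored to proper edge colorings. The central technical difficulty is that with only two extra edges per color, the slack available for constructing a reservoir that both survives the nibble and suffices to repair the resulting deficiency is essentially zero. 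Consequently, while the bipartite framework reduces the conjecture cleanly to a matching problem in $G'$, the tight $q+2$ threshold demands combining the nibble with a delicate absorption or augmentation step whose design is the genuinely new ingredient and which would presumably exploit structural properties specific to proper edge colorings (such as the existence of short alternating walks between under-represented and over-represented colors).
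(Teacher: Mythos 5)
The statement you were asked to prove is Conjecture~\ref{conj:GRWW}, which the paper states as an \emph{open conjecture} and does not prove; it only records that the weak and strong asymptotic versions were established by Munh\'{a} Correia, Pokrovskiy, and Sudakov, and that the paper's own machinery (Theorem~\ref{thm:KahnBipartiteRainbow}) yields asymptotic variants in the sparse setting. Your proposal is likewise not a proof: you set up the standard reduction to an $A$-perfect matching in the bipartite $3$-uniform hypergraph ${\rm Rainbow}(G,\phi)$ --- which is exactly the correspondence the paper describes in Section~\ref{ss:RainbowMatchings} --- and then you correctly and explicitly concede that Theorem~\ref{thm:KahnBipartite} needs degree slack of order $q^{1-\alpha}$ on the $A$-side, far more than the conjectured $+2$. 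That concession is the gap, and it is the genuine open difficulty: no nibble-type argument in this paper (or elsewhere, to date) closes the additive-constant threshold, and the absorption/augmentation step you gesture at is precisely the missing ingredient, not something you supply.

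Two further points on the parts you do sketch. First, your preprocessing of high-multiplicity pairs is not a detail to be waved away: the conjecture allows arbitrary multigraphs, so a pair $\{u,v\}\subseteq B$ can have codegree up to $q$, and any ``capping'' of parallel edges must not destroy the hypothesis that every color appears at least $q+2$ times; you give no argument that such a balanced deletion exists, and the bounded-multiplicity results cited in Section~\ref{ss:RainbowMatchingApplications} suggest this restriction genuinely changes the problem. Second, even the asymptotic statement you could extract from Theorem~\ref{thm:KahnBipartiteRainbow} (every color appearing at least $(1+D^{-\alpha})D$ times with $\Delta(G)\le D$ and bounded codegrees) is the paper's Theorem~\ref{thm:KahnBipartiteRainbow} itself, not Conjecture~\ref{conj:GRWW}. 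So the correct assessment is: there is no proof to compare against, your reduction matches the paper's standard translation, and the proof as proposed does not establish the statement.
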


There are three natural ways to weaken these conjectures. The {\bf first} is to {\bf find a slightly smaller rainbow matching}  (i.e.~$(1-o(1))q$ in the conjectures above), what Munh\'{a} Correia, Pokrovskiy, and Sudakov~\cite{CPS21} called the \emph{`weak asymptotic'}. The {\bf second}, which implies the first, is to instead {\bf assume that each color appears slightly more times}, (i.e.~$(1+o(1))q$ times above), what Munh\'{a} Correia, Pokrovskiy, and Sudakov~\cite{CPS21} called the \emph{`strong asymptotic'}. There is a {\bf third} direction, also implying the first, which is to instead {\bf assume the number of colors is slightly more}, (i.e.~$(1+o(1))q$ colors above). Since many of the conjectures on rainbow matchings are notoriously difficult, these three directions have garnered much attention over the years.

For Conjecture~\ref{conj:AB}, the weak asymptotic was proved by Bar\'at, Gy\'arf\'as and S\'ark\"ozy~\cite{BGS17}; Pokrovskiy~\cite{P18} proved the strong asymptotic and subsequently Munh\'{a} Correia, Pokrovskiy, and Sudakov~\cite{CPS21} gave an incredibly short proof via their sampling trick which they used to reduce the problem to the weak asymptotic. For Conjecture~\ref{conj:GRWW}, the weak and strong asymptotic were both proved by Munh\'{a} Correia, Pokrovskiy, and Sudakov~\cite{CPS21}.

However, in many rainbow conjectures and their various weakenings (such as Conjecture~\ref{conj:AB} and~\ref{conj:GRWW}), the number of colors and the desired size of a rainbow matching are assumed to be on the order of the number of times a color appears; we refer to this as the \emph{`dense setting'}. 

A natural generalization then is to study all these conjectures and weakenings in what we call the \emph{`sparse setting'}, where the number of colors can be much larger than the number of times a color appears and rather the number of times a color appears is related to the degree of the graph. So for example, here are sparse setting versions of Conjectures~\ref{conj:AB} and~\ref{conj:GRWW}.

\begin{conj}\label{conj:SparseAB}
If $G$ is a bipartite multigraph properly edge colored where every color appears at least $\Delta(G)+1$ times, then there exists a full rainbow matching.
\end{conj}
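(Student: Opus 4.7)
My plan is to reduce Conjecture~\ref{conj:SparseAB} to an $A$-perfect matching problem in an auxiliary bipartite hypergraph and then attempt to invoke Theorem~\ref{thm:KahnBipartite}. Let $C$ be the color set and define a $3$-uniform bipartite hypergraph $G'=(A,B)$ with $A := C$, $B := V(G)$, and one hyperedge $\{c,u,v\}$ for each edge $uv$ of $G$ colored $c$. Because the coloring is proper, each color class of $G$ is itself a matching, so an $A$-perfect matching of $G'$ corresponds exactly to a full rainbow matching of $G$: the matching condition on $B$ enforces vertex-disjointness in $G$, and $A$-perfectness enforces that every color is used.

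Writing $D := \Delta(G)$, every vertex in $A$ has degree at least $D+1$ (by hypothesis) and every vertex in $B$ has degree at most $D$. Proper coloring yields codegree at most $1$ between a color in $A$ and a vertex in $B$, and codegree $0$ between two colors in $A$; the codegree between two vertices of $B$ equals the edge multiplicity between them, which is at most $1$ for simple graphs and must be assumed to be at most $D^{1-\beta}$ in the multigraph case in order to invoke Theorem~\ref{thm:KahnBipartite}. Thus the codegree hypotheses are satisfied, and only the degree hypothesis is in question.

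The fundamental obstacle is that Theorem~\ref{thm:KahnBipartite} demands the degree gap between $A$ and $B$ to be a polynomial $(1+D^{-\alpha})$-factor, i.e., an additive gap of order $D^{1-\alpha}$, whereas the conjecture provides only an additive gap of $1$. A direct application therefore proves only the ``strong asymptotic'' relaxation in which each color appears at least $(1+D^{-\alpha})\Delta(G)$ times. To close the remaining gap, my plan is to combine the nibble-based output of Theorem~\ref{thm:KahnBipartite} with an absorption structure: set aside, before running the nibble, a random reservoir $R\subseteq E(G)$ with density roughly $D^{-\alpha/2}$ per color and per vertex, designed so that for any rainbow matching $M_0$ in $G\setminus R$ missing at most $D^{1-\alpha}$ colors, each missing color can be inserted via short augmenting swaps that exchange edges with $R$. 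Running Theorem~\ref{thm:KahnBipartite} on $G\setminus R$ (after verifying the codegree conditions survive the sparsification) produces such an $M_0$, and the absorber then completes a full rainbow matching.

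The hard part — and the reason Conjecture~\ref{conj:SparseAB} remains open despite the nibble machinery of this paper — is designing an absorber that functions under the extraordinarily tight $+1$ slack. Standard absorption schemes (random templates, switchings, augmenting paths) themselves consume a polynomial amount of slack per color, which the conjecture's hypothesis simply does not supply: reserving any positive fraction of a color's $\Delta(G)+1$ edges destroys the $A$-degree surplus needed to run Theorem~\ref{thm:KahnBipartite} on the remainder. Overcoming this tension, perhaps via a two-sided randomized reservation that simultaneously preserves a slight $A$-degree advantage \emph{and} provides enough augmenting flexibility, or via an entirely different route such as an algebraic/topological Hall-type argument tailored to proper colorings, is where substantial new ideas would be needed.
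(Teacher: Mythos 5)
The statement you were asked to prove is Conjecture~\ref{conj:SparseAB}, which the paper states as an \emph{open conjecture} and does not prove; there is no proof in the paper to compare against, and your proposal does not close the gap either. Your reduction to the auxiliary $3$-uniform bipartite hypergraph ${\rm Rainbow}(G,\phi)$ and the attempted invocation of Theorem~\ref{thm:KahnBipartite} is exactly the correspondence the paper uses (Section~\ref{ss:RainbowMatchings}), and your diagnosis of why it falls short is accurate: Theorem~\ref{thm:KahnBipartite} needs an $A$-degree surplus of order $D^{1-\alpha}$, so it only yields the strong asymptotic relaxation in which each color appears at least $(1+D^{-\alpha})\Delta(G)$ times. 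The absorption scheme you sketch to bridge the remaining gap is entirely hypothetical --- you do not construct the reservoir, verify that the sparsified graph still meets the hypotheses, or exhibit the augmenting swaps --- and you yourself concede that every standard absorber consumes polynomial slack that the $+1$ hypothesis does not provide. So this is a discussion of the obstruction, not a proof.

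One further gap even in the asymptotic reduction: Theorem~\ref{thm:KahnBipartite} requires all pairwise codegrees in $G'=(A,B)$ to be at most $D^{1-\beta}$, and for two vertices of $B$ this codegree is the edge multiplicity of $G$. For a general multigraph this can be as large as $\Delta(G)$, so the hypothesis fails; this is precisely why the paper says Theorem~\ref{thm:KahnBipartiteRainbow} only ``almost proves'' the related Aharoni--Berger conjecture ``with an additional multiplicity assumption.'' You flag this in passing (``must be assumed''), but it means that even the weakened statement you can actually derive is weaker than the asymptotic form of Conjecture~\ref{conj:SparseAB} in two independent ways: the degree surplus must be $(1+D^{-\alpha})$ rather than $+1$, and the multiplicity must be at most $D^{1-\beta}$. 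Your assessment that substantial new ideas are needed is consistent with the paper leaving this as a conjecture.
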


\begin{conj}\label{conj:SparseGRWW}
If $G$ is a multigraph properly edge colored where every color appears at least $\Delta(G)+2$ times, then there exists a full rainbow matching.
\end{conj}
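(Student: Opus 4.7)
The plan is to encode the conjecture as a bipartite hypergraph matching problem, apply Theorem~\ref{thm:KahnBipartite}, and then close the asymptotic-to-exact gap by absorption. Build the auxiliary bipartite $3$-uniform hypergraph $G^\star=(A,B)$ where $A$ is the set of colors, $B=V(G)$, and each edge $e=uv$ of $G$ of color $c$ contributes a hyperedge $\{c,u,v\}$. A full rainbow matching of $G$ is precisely an $A$-perfect matching of $G^\star$. Since the coloring is proper, the codegree of a color--vertex pair is at most $1$; the codegree of two colors is $0$; and the codegree of two vertices equals the edge-multiplicity in $G$, which we may assume is at most $D^{1-\beta}$ after a preliminary reduction handling high-multiplicity $uv$ pairs separately. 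Setting $D:=\Delta(G)$, the hypergraph $G^\star$ lies within the codegree regime of Theorem~\ref{thm:KahnBipartite}.

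In $G^\star$ every $a\in A$ has degree at least $D+2$, while every $b\in B$ has degree at most $D$. The key difficulty is that Theorem~\ref{thm:KahnBipartite} wants a \emph{multiplicative} surplus $D^{-\alpha}$ on the $A$-side, whereas the hypothesis gives only an \emph{additive} surplus of $2$. To convert additive into multiplicative, reserve a random sub-hypergraph: sample each edge of $G$ independently with probability $p=D^{-1/3}$ into a reserve $\mc R$, and let $\mc W := E(G)\setminus \mc R$. Talagrand-style concentration (Section~\ref{s:Talagrand}) shows that in $\mc W$ every color class has size $(1\pm o(p))(1-p)(D+2)$ and every vertex has degree at most $(1\pm o(p))(1-p)D$; rescaling $D':=(1-p)D$, the $A$-side surplus in the $\mc W$-encoding is of genuine multiplicative order $p\gg (D')^{-\alpha}$. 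A routine regularization (cf.\ Lemma~\ref{lem:reg}) then lets me apply Theorem~\ref{thm:KahnBipartite} to the bipartite hypergraph built from $\mc W$, producing a rainbow matching $M$ of $\mc W$ covering all colors except a leftover set $U$ with $|U|\le q/D^{\alpha/2}$.

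The absorption step uses $\mc R$ to cover $U$. For each $c\in U$ I would look inside $\mc R \cup M$ for a short rainbow \emph{augmenting walk} starting with a reserve edge of color $c$, alternating $\mc R$/$M$, using each color once, and swapping in an edge of color $c$ while evicting a matching edge whose color either lies in $U$ or is itself absorbed by a subsequent augmenter. The critical $+2$ (rather than $+1$) in the bound is precisely the parity slack guaranteeing that such augmenters exist in non-bipartite graphs, in the spirit of the Drisko and Aharoni--Berger theorems; in bipartite instances the analogous argument only requires $+1$, but in a non-bipartite properly edge-colored graph a single "extra" edge cannot be rerouted through odd rainbow cycles, whereas two extras suffice. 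To execute the absorption simultaneously over all $c\in U$, construct an auxiliary bipartite hypergraph $\Gamma$ whose $A$-side is $U$ and whose edges are short disjoint rainbow augmenters in $\mc R$, then apply Theorem~\ref{thm:KahnBipartite} once more to $\Gamma$ to pick a vertex-disjoint family of absorbers. The hard part will be the third-stage analysis: proving that $\mc R$ contains polynomially many (in $D$) disjoint short rainbow augmenters for every color pair, which requires a second-moment count in $\mc R$ together with the deletion of edges already used by $M$ --- the latter introducing dependencies between $\mc R$ and $M$ that must be tamed by a coupling or deferred-decision argument.
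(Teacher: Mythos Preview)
This statement is \emph{Conjecture}~\ref{conj:SparseGRWW} in the paper; it is posed as an open problem and the paper contains no proof of it (indeed, even the dense version, Conjecture~\ref{conj:GRWW}, is open in exact form). So there is no ``paper's own proof'' to compare against; what you have written is a proposed attack on an open conjecture, and it has real gaps.

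First, your multiplicative-surplus claim is false as stated. After deleting the reserve $\mc R$ with $p=D^{-1/3}$, a color class in $\mc W$ has $(1-p)(D+2)\pm o(pD)$ edges and a vertex has degree at most $(1-p)D + o(pD)$; the surplus is still only about $2(1-p)$, i.e.\ additive of order $1$, not multiplicative of order $p$. Theorem~\ref{thm:KahnBipartite} therefore does not apply to $\mc W$ to give an $A$-perfect matching. You can get an \emph{almost}-$A$-perfect matching in $\mc W$ by Pippenger/Corollary~\ref{cor:SmallCodegreeColoring} plus pigeonhole, but then the entire burden falls on the absorption step. Second, the multiplicity reduction (``we may assume edge-multiplicity at most $D^{1-\beta}$'') is unjustified: the hypothesis is for arbitrary multigraphs, and pairs $u,v$ of high multiplicity cannot simply be ``handled separately'' without a concrete argument --- a single such pair could carry $\Theta(D)$ colors.

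Third and most importantly, the absorption step is exactly where the conjecture is hard, and your sketch does not supply the missing idea. Asserting that ``$+2$ is precisely the parity slack'' for non-bipartite rainbow augmenting walks is a statement of hope, not a lemma; neither Drisko nor Aharoni--Berger gives this in the non-bipartite setting, and the existence (let alone abundance) of short disjoint rainbow augmenters rooted at each leftover color in $\mc R$ is not established. The dependencies you flag between $M$ and $\mc R$ are genuine, and ``coupling or deferred-decision'' is not a proof. In short: the nibble/reserve scaffolding you outline is the known route to the \emph{asymptotic} versions (cf.\ Theorem~\ref{thm:KahnBipartiteRainbow} and the discussion around it), but closing the gap from $(1+o(1))\Delta(G)$ down to $\Delta(G)+2$ is the whole problem, and your proposal does not close it.
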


These sparse setting variants also admit the three natural weakenings; {\bf first} to require {\bf $(1-o(1))$ the desired size of a matching}; {\bf second} to assume that {\bf each color appears at least $(1+o(1))$ the number of conjectured times}; and {\bf third}, that {\bf a matching of size $(1-o(1))$ exists even if the colors only appear $(1-o(1))$ the conjectured number of times}.

There are also other avenues of generalization such as: analogous results in hypergraphs, allowing not necessarily proper edge colorings, or finding many disjoint full rainbow matchings. On the other hand, another natural weakening is to assume the graph is simple or of bounded multiplicity (or bounded codegrees in the hypergraph case).

As noted by various researchers (e.g., Aharoni and Berger~\cite{AB09} as well as Gao, Ramadurai, Wanless, and Wormald~\cite{GRWW21}), matchings of bipartite hypergraphs correspond to rainbow matching of hypergraphs as follows. 

For a hypergraph $G$ whose edges are colored by a not necessarily proper coloring $\phi$, define a new hypergraph, the \emph{rainbow hypergraph of $G$ with respect to $\phi$} as Rainbow$(G,\phi):=(A,B)$ where $A=\bigcup_{e\in E(G)} \phi(e)$ is the set of colors and $B=V(G)$ is the set of vertices and then extend every edge $e$ of $G$ to include its color $\phi(e)$, that is $E({\rm Rainbow}(G,\phi)) = \{ e\cup \phi(e): e\in E(G)\}$. Then a rainbow matching of $G$ is precisely a matching in ${\rm Rainbow}(G,\phi)$. Vice versa, declaring the vertices of $A$ in a bipartite hypergraph to be colors of the projected edges in $B$, the reverse correspondence also holds. 

Thus Theorem~\ref{thm:KahnBipartite} has the following equivalent formulation in terms of rainbow matchings. 
\begin{thm}\label{thm:KahnBipartiteRainbow}
For all integers $r \ge 2$ and real $\beta > 0$, there exist an integer $D_{\beta}\ge 0$ and real $\alpha > 0$ such that following holds for all $D\ge D_{\beta}$: 
\vskip.1in
\noindent Let $G$ be a $r$-bounded (multi)-hypergraph that is (not necessarily properly) edge colored satisfying
\vskip.1in
\begin{enumerate}
\item[(G1)] $\Delta(G)\le D$ and every color appears at least $(1+D^{-\alpha})D$ times, and
\item[(G2)] every pair of vertices of $G$ has codegree at most $D^{1-\beta}$ and every color appears at most $D^{1-\beta}$ times around a vertex,
\end{enumerate}
then there exists a full rainbow matching of $G$ and indeed even a set of $D$ disjoint full rainbow matchings of $G$.
\end{thm}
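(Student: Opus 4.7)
The plan is to reduce directly to Theorem~\ref{thm:KahnBipartite} via the rainbow hypergraph correspondence already spelled out in the paragraph preceding the statement. Given $(G,\phi)$, form the bipartite hypergraph $G' := {\rm Rainbow}(G,\phi) = (A,B)$, where $A = \bigcup_{e\in E(G)}\phi(e)$ is the set of colors, $B = V(G)$, and $E(G') = \{e\cup\{\phi(e)\} : e\in E(G)\}$. Since $G$ is $r$-bounded, $G'$ is $(r+1)$-bounded. The key observation is that a matching $M'$ of $G'$ corresponds to a set of edges of $G$ with pairwise distinct colors (so that no two edges $e_1\cup\{\phi(e_1)\}, e_2\cup\{\phi(e_2)\}\in M'$ share the color vertex) and pairwise disjoint in $V(G)$; thus $M'$ pulls back to a rainbow matching of $G$, and $M'$ being $A$-perfect translates to fullness.

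The next step is to verify the hypotheses (G1) and (G2) of Theorem~\ref{thm:KahnBipartite} for $G'$ with uniformity $r+1$. For (G1): the degree in $G'$ of a color $c\in A$ is exactly the number of edges of $G$ colored $c$, which is at least $(1+D^{-\alpha})D$ by hypothesis (G1) of the current statement; the degree in $G'$ of a vertex $v\in B$ is its degree in $G$, which is at most $D$. For (G2), consider any two vertices of $G'$: two color vertices share no edge (codegree zero); a color vertex $c$ and a ground vertex $v$ share $|\{e\in E(G) : v\in e, \phi(e)=c\}|$ edges, which is bounded by $D^{1-\beta}$ since each color appears at most $D^{1-\beta}$ times around $v$; two ground vertices $u,v$ have the same codegree as in $G$, at most $D^{1-\beta}$. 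Hence both (G1) and (G2) hold for $G'$.

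Finally, Theorem~\ref{thm:KahnBipartite} (invoked with uniformity parameter $r+1$ and the same $\beta$) supplies an integer $D_{r+1,\beta}$ and a real $\alpha'>0$ such that for $D$ above this threshold there exist $D$ pairwise disjoint $A$-perfect matchings $M'_1,\dots,M'_D$ of $G'$; setting $D_\beta := D_{r+1,\beta}$ and $\alpha := \min(\alpha,\alpha')$ and pulling each $M'_i$ back to $G$ yields $D$ full rainbow matchings of $G$, pairwise disjoint because disjointness of edges in $G'$ is preserved under the bijection $e\cup\{\phi(e)\}\mapsto e$. There is essentially no obstacle here beyond bookkeeping the constants: the construction is direct, and all work is off-loaded onto Theorem~\ref{thm:KahnBipartite}. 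The only mild subtlety to track is that the uniformity increases by one (from $r$ to $r+1$), which merely shifts the threshold $D_\beta$.
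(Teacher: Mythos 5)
Your proof is correct and is exactly the approach the paper intends: the paper presents Theorem~\ref{thm:KahnBipartiteRainbow} as an ``equivalent formulation'' of Theorem~\ref{thm:KahnBipartite} via the rainbow hypergraph correspondence ${\rm Rainbow}(G,\phi)$, leaving the verification of (G1), (G2), and the uniformity shift from $r$ to $r+1$ implicit; you have spelled these out correctly. (One small notational slip: ``$\alpha := \min(\alpha,\alpha')$'' has $\alpha$ on both sides; you simply want to take $\alpha := \alpha'$ from the invocation of Theorem~\ref{thm:KahnBipartite}.)
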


As one might imagine, Theorem~\ref{thm:KahnBipartiteRainbow} has applications to rainbow matching results, especially in the sparse setting where the number of times a color appears is much smaller than the number of colors. We detail some of these applications in the next subsection.

Already though, Theorem~\ref{thm:KahnBipartiteRainbow} generalizes the main results of Gao, Ramadurai, Wanless, and Wormald~\cite{GRWW21} (their Theorems 1.4 and 1.7) to allowing any number of colors (the sparse setting) while also allowing larger multiplicity; it thus almost proves a conjecture of Aharoni and Berger~\cite{AB09} (their Conjecture 2.5) for bipartite $3$-uniform hypergraphs but with an additional multiplicity assumption (the conjecture is actually not true without some additional assumption) and also generalizes it to all uniformities. Similarly, Theorem~\ref{thm:KahnBipartiteRainbow} generalizes two of the main results of Charkaborti and Loh~\cite{CL20} (their Theorems 1.10 and 1.12) to the sparse setting, hypergraphs and with better multiplicity assumptions. 

We also note that the ``reserve trick" mentioned in the previous subsection has an equivalent formulation for rainbow matchings in hypergraphs, namely reserving every vertex of $B$ in ${\rm Rainbow}(G,\phi)=(A,B)$ independently with probability $p$ is equivalent to reserving every vertex of $G$ independently with probability $p$. For graphs, this is precisely the ``sampling trick" of Munh\'{a} Correia, Pokrovskiy, and Sudakov~\cite{CPS21} which they recently introduced and used to great effect to provide short proofs of various rainbow matching results. Hence Kahn's ``reserve colors" and their ``sampling trick" can both be viewed as shades of the same more general ``reserve trick" in bipartite hypergraphs where one reserves vertices of $B$ independently with some small probability $p$.

\subsection{More Applications to Rainbow Matchings}\label{ss:RainbowMatchingApplications}

Here is another well-studied rainbow matching conjecture.

\begin{conj}[Alspach~\cite{A88}]\label{conj:Alspach}
If $G$ is a simple $2d$-regular graph that is edge colored such that each color class is a $2$-factor, then $G$ has a full rainbow matching. 
\end{conj}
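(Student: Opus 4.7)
The plan is to convert Alspach's conjecture into a rainbow matching problem for which Theorem~\ref{thm:KahnBipartiteRainbow} applies directly whenever the vertex count offers any slack, and to handle the extremal regime with an absorption scaffold built from the cycle structure of the $2$-factors. View $G$ as a $2$-uniform hypergraph edge colored by $\phi$. Because each of the $d$ color classes is a $2$-factor on $n$ vertices, every color appears exactly $n$ times and appears at exactly $2$ edges around every vertex; moreover $\Delta(G)=2d$, and every pair of vertices has codegree at most $1$ since $G$ is simple. A full rainbow matching consists of $d$ edges (one per color) and covers $2d$ vertices, so necessarily $n \ge 2d+1$.

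Set $D:=2d$ and fix $\beta := 1/2$, and let $\alpha$ be the constant produced by Theorem~\ref{thm:KahnBipartiteRainbow} for $r=2$ and this $\beta$. In the \emph{non-tight} regime $n \ge 2d + (2d)^{1-\alpha}$, hypotheses (G1) and (G2) of Theorem~\ref{thm:KahnBipartiteRainbow} are satisfied for $d$ sufficiently large, and the conjecture follows immediately; the theorem in fact yields many disjoint full rainbow matchings. This already handles almost every instance of the conjecture.

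The remaining \emph{tight} regime $2d+1 \le n < 2d + (2d)^{1-\alpha}$ is the substantive part. Here the slack between color frequency and maximum degree has evaporated, and the nibble cannot be run directly to cover all but one color. The plan is absorption. Before the nibble, reserve a collection $\mathcal{R}=\bigcup_c \mathcal{R}_c$ of ``$c$-absorbers'' for each color $c$, where a $c$-absorber is a short alternating structure along the cycles of the $2$-factor $\phi^{-1}(c)$ which allows swapping a used edge of another color out of a partial rainbow matching in favour of an unused edge of color $c$. Apply Theorem~\ref{thm:KahnBipartiteRainbow} to $G\setminus \mathcal{R}$ to extract a rainbow matching $M'$ missing at most $O(d^{1-\alpha})$ colors; then resolve the missing colors one at a time using elements of $\mathcal{R}$, exploiting the fact that $\mathcal{R}$ has been distributed around the cycles so that each absorption step disturbs only $O(1)$ existing matching edges and never touches a color already absorbed.

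The main obstacle is constructing the reservoir $\mathcal{R}$ in the genuinely extremal case $n=2d+1$, where a full rainbow matching is forced to be near-perfect and the $2$-factors are as rigid as possible. Existence and dispersion of enough $c$-absorbers requires a structural understanding of how the cycle decompositions of different $2$-factors interact on a near-perfect vertex set --- precisely the combinatorial mystery that has kept Alspach's conjecture open. This proposal should therefore be read as reducing Alspach's conjecture to a self-contained absorption lemma for intersecting $2$-factor decompositions in the extremal regime, in the spirit of Pokrovskiy's switchings for the Aharoni--Berger conjecture; the nibble ingredient from Theorem~\ref{thm:KahnBipartiteRainbow} disposes of everything away from this extremal sliver.
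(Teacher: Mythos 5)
The statement you are trying to prove is a \emph{conjecture} (Alspach's conjecture), and the paper does not prove it; the paper only remarks that Theorem~\ref{thm:KahnBipartiteRainbow} ``immediately implies the strong asymptotic version of Conjecture~\ref{conj:Alspach} in the sparse setting.'' Your first paragraph and your ``non-tight regime'' correctly reproduce exactly that derivation: with $D=2d$, a simple graph has codegrees $1\le D^{1-\beta}$, each color appears exactly twice at each vertex, and each $2$-factor contributes $n$ edges per color, so once $n\ge 2d+(2d)^{1-\alpha}$ and $d\ge D_\beta$ the hypotheses (G1) and (G2) hold and the theorem applies. That part is fine and matches what the paper claims.

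The gap is that this does not prove the conjecture, and your proposal does not close the distance. The entire content of Alspach's conjecture lives in the regime $2d+1\le n< 2d+(2d)^{1-\alpha}$ (and in particular at $n=2d+1$, where a full rainbow matching must be near-perfect), and there your argument consists of an absorption ``plan'' whose key ingredient --- the existence and dispersion of $c$-absorbers compatible with the interaction of the $d$ cycle decompositions --- is precisely the open problem, as you yourself acknowledge. Reducing a conjecture to an unstated and unproven absorption lemma is not a proof; moreover, even the non-tight regime is only covered for $d\ge D_\beta$, whereas the conjecture is asserted for all $d$. So what you have is a correct derivation of the strong asymptotic, sparse-setting version (which is all the paper itself asserts), together with an honest but fatal gap for the conjecture as stated.
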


Munh\'{a} Correia, Pokrovskiy, and Sudakov~\cite{CPS21} proved the strong asymptotic version of Conjecture~\ref{conj:Alspach}. We note though that our Theorem~\ref{thm:KahnBipartiteRainbow} immediately implies the strong asymptotic version of Conjecture~\ref{conj:Alspach} in the sparse setting.

Here is another rainbow matching conjecture, originally motivated by equivalence classes in algebras, that has received much attention.

\begin{conj}[Grinblat~\cite{G02}]\label{conj:Grinblat}
If $G$ is a multigraph that is (not necessarily properly) edge colored with $n$ colors where each color class is the disjoint union of non-trivial cliques and spans at least $3n-2$ vertices, then $G$ has a rainbow matching of size $n$.
\end{conj}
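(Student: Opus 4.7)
The plan is to apply Theorem~\ref{thm:KahnBipartiteRainbow} to the rainbow hypergraph ${\rm Rainbow}(G,\phi)$ after a preprocessing step on the edge-colored multigraph $G$. The key structural fact I would exploit is that any color class that is a disjoint union of non-trivial cliques spanning $v \ge 3n-2$ vertices contains a matching of size at least $n$: writing $t$ for the number of odd cliques, the maximum matching in such a union has size $(v-t)/2$; since every odd clique uses at least three vertices we have $t \le v/3$, so the maximum matching has size at least $v/3 \ge n-2/3$, which, being an integer, is at least $n$.

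First, inside each color class $c$, I would select a matching $M_c$ of maximum size (hence $|M_c| \ge n$) and discard all remaining edges. In the reduced multigraph $G'$ every color class is a matching and every vertex has degree at most $n$ since each color can be incident with each vertex at most once. Then I would form the bipartite hypergraph ${\rm Rainbow}(G',\phi) = (A,B)$ with $A = [n]$ and $B = V(G)$; an $A$-perfect matching here is exactly a full rainbow matching of $G$. The codegree hypotheses in Theorem~\ref{thm:KahnBipartiteRainbow} are trivially satisfied because no color repeats at any vertex in $G'$.

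The main obstacle is that Grinblat's bound is sharp: in the extremal configuration where a color class is $n-2$ triangles together with two disjoint edges, the color's maximum matching has size exactly $n$, and vertex degrees can simultaneously reach $n$. Theorem~\ref{thm:KahnBipartiteRainbow} demands a multiplicative slack of $(1+D^{-\alpha})$ between the $A$-side degrees and the $B$-side degrees, which the sharp $3n-2$ hypothesis simply does not supply. A direct application therefore yields only a rainbow matching of size $n - o(n)$, falling short of the conjectured full matching by a lower-order term.

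To close this final $o(n)$ gap I would couple the near-matching produced by Theorem~\ref{thm:KahnBipartiteRainbow} with an absorption step in the style of Kwan--Sah--Sawhney--Simkin~\cite{KSSS22}: first reserve a small random vertex subset as an absorber; second, apply Theorem~\ref{thm:KahnBipartiteRainbow} on the remainder to find a rainbow matching covering all but $o(n)$ of the colors; third, exploit the disjoint-clique structure of each unused color class to extend through the absorber. The hardest step will be the absorber construction, because in the extremal near-triangle configuration the color classes are rigid and offer few rerouting options. This is where the sharp $3n-2$ bound appears to genuinely outstrip pure nibble methods, and indeed a complete proof of Grinblat's conjecture in its original form will likely require substantial structural input specific to disjoint-clique color classes on top of Theorem~\ref{thm:KahnBipartiteRainbow}.
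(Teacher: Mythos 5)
Your proposal is not a complete proof, and you essentially say so yourself: the absorption step that would close the final $o(n)$ gap is left as a plan whose ``hardest step'' you explicitly do not know how to carry out. To be clear about the status of the statement: this is a \emph{conjecture} in the paper (due to Grinblat, resolved in full by Munh\'a Correia and Sudakov~\cite{CS21}), and the paper does not prove it. What the paper does prove is Theorem~\ref{thm:SparseGrinblat}, a sparse-setting, bounded-codegree, strong-asymptotic hypergraph analogue, and there the hypothesis hands you exactly the slack you are missing: each color class spans at least $rD(1+D^{-\alpha})$ vertices while each vertex meets at most $D$ colors, so after reducing each color class to a large matching (the paper does this via fractional weights $1/\binom{q-1}{r-1}$ and the random sparsification of Proposition~\ref{prop:RandomDecrease}, rather than your deterministic maximum-matching selection, but the effect is the same) the degree gap required by Theorem~\ref{thm:KahnBipartite}/\ref{thm:KahnBipartiteRainbow} is present. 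Your diagnosis of the obstruction is exactly right: Grinblat's bound $3n-2$ is tight, your reduction produces color classes of size exactly $n$ and vertex degrees up to exactly $n$, and a nibble-type theorem with a $(1+D^{-\alpha})$ requirement cannot bridge that. Your preliminary computation that a disjoint union of non-trivial cliques on $v\ge 3n-2$ vertices contains a matching of size $\ge n$ is correct.

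The genuine gap is the absorption argument. Reserving a random vertex set and hoping to reroute the $o(n)$ uncovered colors through it does not obviously work here, precisely because in the extremal configurations (color classes consisting mostly of triangles) each leftover color has very few edges disjoint from the partial matching, and the standard switching/augmenting arguments used in the actual proofs of Grinblat's conjecture (Clemens--Ehrenm\"uller--Pokrovskiy for the asymptotic version, Munh\'a Correia--Sudakov for the exact one) rely on delicate structural case analysis of disjoint-clique color classes rather than on a reserved absorber. So your write-up correctly reduces the problem to the known asymptotic statement but does not supply the new ingredient needed for the exact bound; as a proof of the conjecture as stated it is incomplete.
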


The strong asymptotic version of Conjecture~\ref{conj:Grinblat} was proved by Clemens, Ehrenm\"{u}ller, and Pokrovskiy~\cite{CEP17} (see Munh\'{a} Correia, Pokrovskiy, and Sudakov~\cite{CPS21} for a short proof). Conjecture~\ref{conj:Grinblat} was then fully proved by Munh\'{a} Correia and Sudakov~\cite{CS21}. 

Better bounds are known though when assuming the graph has bounded multiplicity. For example, Munh\'{a} Correia and Yepremyan~\cite{CY20} proved the version where if $G$ has multiplicity at most $\sqrt{n}/\log^2 n$ and each color class spans at least $2n+o(n)$ vertices, then $G$ has full rainbow matching. Munh\'{a} Correia, Pokrovskiy, and Sudakov~\cite{CPS21} improved this further by showing that for multiplicity $m$, each color class spanning $2n+2m+O\left((n/\log n)^{1/4}\right)$ suffices. 

In this paper, we use a simple deletion trick combined with Theorem~\ref{thm:KahnBipartiteRainbow} to prove a bounded multiplicity strong asymptotic version of Conjecture~\ref{conj:Grinblat} for hypergraphs in the sparse setting that finds many disjoint full rainbow matchings as follows. See Section~\ref{s:Rainbow} for its proof.

\begin{thm}\label{thm:SparseGrinblat}
For all integers $r\ge 2$ and real numbers $\beta > 0$, there exist an integer $D_{\beta}$ and real $\alpha > 0$ such that the following holds for all $D\ge D_{\beta}$:

If $G$ is an  $r$-uniform (multi)-hypergraph with codegrees at most $D^{1-\beta}$ that is (not necessarily properly) edge colored where each color class is the disjoint union of non-trivial cliques and spans at least $rD(1+D^{-\alpha})$ vertices and each vertex is incident with at most $D$ colors, then $G$ has a full rainbow matching and indeed even a set of $D$ disjoint full rainbow matchings.
\end{thm}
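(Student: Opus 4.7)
The plan is to derive Theorem~\ref{thm:SparseGrinblat} from Theorem~\ref{thm:KahnBipartiteRainbow} via a simple deletion trick that produces a subhypergraph $G' \subseteq G$ (with the induced edge-coloring) in which each color class is a matching (in the hypergraph sense). For each color $c$, whose color class in $G$ is the disjoint union of non-trivial cliques $K^{(c)}_1, \ldots, K^{(c)}_{t_c}$ of sizes $k^{(c)}_i \ge r$ summing to at least $rD(1+D^{-\alpha})$, we arbitrarily partition the vertex set of each $K^{(c)}_i$ into $\lfloor k^{(c)}_i/r \rfloor$ disjoint $r$-blocks (discarding up to $r-1$ leftover vertices and all other edges of the clique). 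These blocks become the edges of $G'$ colored $c$. Since cliques of the same color are vertex-disjoint in $G$, each color class of $G'$ is a matching. Moreover $G' \subseteq G$ with colors preserved, so any collection of pairwise disjoint full rainbow matchings of $G'$ is also a collection of pairwise disjoint full rainbow matchings of $G$.

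The next step is to verify that $G'$ satisfies the hypotheses of Theorem~\ref{thm:KahnBipartiteRainbow} with parameter $D$: (i) $\Delta(G') \le D$, since each vertex lies in at most one edge per color of $G'$ and is incident with at most $D$ colors by assumption; (ii) the vertex-vertex codegree of $G'$ is bounded by the codegree of $G$, hence is at most $D^{1-\beta}$; (iii) the color-vertex codegree of $G'$ is at most $1$; and (iv) each color $c$ appears $\sum_i \lfloor k^{(c)}_i/r \rfloor$ times in $G'$, which must be at least $(1+D^{-\alpha'})D$ for some $\alpha' > 0$. Assuming these conditions hold, Theorem~\ref{thm:KahnBipartiteRainbow} applied to $G'$ immediately produces $D$ pairwise disjoint full rainbow matchings of $G'$, and thus of $G$, as required.

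The principal technical obstacle is establishing the color-degree bound~(iv). Using $\lfloor k/r \rfloor \ge (k-r+1)/r$ together with the span hypothesis, one obtains
\[
    \sum_i \lfloor k^{(c)}_i/r \rfloor \;\ge\; D(1+D^{-\alpha}) \,-\, \tfrac{r-1}{r}\, t_c,
\]
which comfortably exceeds $(1+D^{-\alpha/2})D$ whenever $t_c = o(D^{1-\alpha/2})$, i.e.\ when the color class has few cliques (forcing at least one to be large). When $t_c$ is substantially larger than this threshold, many cliques are small and the floor-function losses accumulate; in this regime the deletion trick should be refined by additionally retaining certain edges from cliques of intermediate size (sizes in $[r+1,2r-1]$, where keeping all $\binom{k^{(c)}_i}{r}$ edges introduces only a bounded overhead in vertex-degree) and then invoking Theorem~\ref{thm:KahnBipartiteRainbow} with a mildly enlarged parameter $D' = c_r D$, so that the $1+D^{-\alpha}$ slack in the span hypothesis translates to the required $1+(D')^{-\alpha'}$ slack on color-degree after rescaling. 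Carefully balancing the floor-function loss against this slack, uniformly over the possible clique-size distributions of a color class, is the heart of the argument; the codegree assumption $D^{1-\beta}$ on pairs of vertices of $G$ is used to guarantee that the color-vertex codegree and pairwise codegree conditions of Theorem~\ref{thm:KahnBipartiteRainbow} are preserved under this refined reduction.
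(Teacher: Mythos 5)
Your reduction to Theorem~\ref{thm:KahnBipartiteRainbow} via a deletion trick is the right high-level strategy, and your instinct that the span hypothesis must be converted into a color-appearance bound is also correct. But the specific implementation has a genuine gap that is not merely a matter of careful bookkeeping.

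The basic version (one disjoint $r$-block per $r$ vertices of a clique) fails whenever cliques have size strictly between $r$ and $2r$: already for $r=2$ a color class consisting of $t_c$ disjoint triangles spans $3t_c \ge 2D(1+D^{-\alpha})$ vertices, i.e.\ $t_c \ge \tfrac{2}{3}D(1+D^{-\alpha})$, and your reduction keeps only one edge per triangle, so the color appears only about $\tfrac{2}{3}D$ times, well short of $D$. You noticed this and proposed keeping all $\binom{k_i}{r}$ edges of cliques of size in $[r+1,2r-1]$ and enlarging the parameter to $D'=c_r D$. The difficulty is that this rescaling is not uniform across colors: a color whose cliques all have size $2r-1$ then appears about $\binom{2r-2}{r-1}D$ times (matching $D'=\binom{2r-2}{r-1}D$), but a color whose cliques all have size exactly $r$ still appears only about $D$ times, far below $D'$. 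Any fixed $c_r>1$ underserves the size-$r$ colors, while $c_r=1$ underserves the larger cliques. Mixing strategies per color does not help either, since the theorem requires a single degree parameter governing both vertex-degree in $G$ and color-appearance counts simultaneously.

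The missing idea is to assign fractional weights: normalize clique sizes to $[r,2r-1]$, set $w(e)=1/\binom{q-1}{r-1}$ for each edge $e$ of a $q$-clique, and observe that the weighted degree of a color is exactly $(\text{span})/r \ge D(1+D^{-\alpha})$ while the weighted degree of a vertex is exactly the number of incident colors $\le D$, independently of the clique-size distribution. This fractional weighting is then converted into an honest subhypergraph with the prescribed degrees (up to $D^{3/4}$ error) via a randomized realization lemma (Proposition~\ref{prop:RandomDecrease} in the paper, a straightforward Talagrand plus Lov\'asz Local Lemma argument), after which Theorem~\ref{thm:KahnBipartite} applies. Your proposal essentially rediscovers the need for this fractional weighting but tries to realize it integrally per-color, which cannot be made consistent; the random sparsification step is what makes the fractional argument go through.
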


Finally, we turn to the third weakening direction where the number of colors is allowed to be slightly larger than as conjectured. Aharoni and Berger~\cite{AB09} and in a slightly weaker form Drisko~\cite{D98} proved that if $G$ is a bipartite multigraph that is properly edge colored with $2n-1$ colors each appearing exactly $n$ times, then $G$ has a rainbow matching of size $n$. Bar\'at, Gy\'arf\'as, and S\'ark\"ozy~\cite{BGS17} conjectured this should hold even for non-bipartite multigraphs when $n$ is odd (and with $2n$ colors when $n$ is even). Aharoni, Berger, Chudnovsky, Howard, and Seymour~\cite{ABCSP2019} proved $3n-2$ colors suffice, and this was improved to $3n-3$ by Aharoni, Briggs, Kim, and Kim~\cite{ABKK19}. 

What about when $G$ is simple though? Chakraborti and Loh~\cite{CL20} proved the following.

\begin{thm}[Chakraborti and Loh~\cite{CL20}]\label{thm:CK}
For every real $\varepsilon > 0$, there exists $q_0$ such that for all integers $q\ge q_0$ the following holds: If $G$ is a simple graph that is properly edge colored with $(2+\varepsilon)q$ colors such that each color appears exactly $q$ times, then $G$ has a rainbow matching of size $q$. 
\end{thm}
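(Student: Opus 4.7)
The plan is to derive Theorem~\ref{thm:CK} from Theorem~\ref{thm:KahnBipartiteRainbow} via a sampling reduction combined with an extension argument that closes the residual gap. The fundamental obstruction is that $G$ has $\Delta(G)\le(2+\varepsilon)q$ while every color appears exactly $q$ times, so the hypothesis ``color frequency at least $(1+D^{-\alpha})D$ with $D\ge\Delta(G)$'' of Theorem~\ref{thm:KahnBipartiteRainbow} fails by a factor of $2+\varepsilon$. In particular, uniform sampling of colors (or of edges) scales the color frequency and the maximum degree proportionally and therefore cannot by itself produce the required strict gap, so some mechanism is needed to break the symmetry.

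To break the symmetry and at the same time set up an absorber for the finishing step, I would first reserve a small random subset $R\subseteq V(G)$ by including each vertex independently with probability $p_R:=q^{-\gamma_1}$ for a small $\gamma_1>0$, in the spirit of the ``reserve trick'' discussed in Section~\ref{ss:Matchings}. Then I would independently include each color in a sampled set $C'$ with probability $p_C:=(1-p_R)(1-q^{-\gamma_2})/(2+\varepsilon)$ where $0<\gamma_1<\gamma_2$, and let $G'$ be the subgraph of $G[V(G)\setminus R]$ consisting of edges with colors in $C'$. By Chernoff and a union bound, with high probability $|C'|\approx(1-p_R)(1-q^{-\gamma_2})q$, every color in $C'$ has approximately $(1-p_R)^2q$ edges in $G'$, and $\Delta(G')\le(1-p_R)^2(1-q^{-\gamma_2})q(1+o(1))$. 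Setting $D:=\Delta(G')$, the color frequency in $G'$ exceeds $D$ by a factor of $1+\Omega(q^{-\gamma_2})$, so together with the trivial codegree bound $\le 1$ (from $G$ being simple) and the one-color-per-vertex property (from proper coloring), Theorem~\ref{thm:KahnBipartiteRainbow} applies and yields a rainbow matching $M_0$ of $G'$ using every color in $C'$.

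To upgrade $M_0$ to a rainbow matching of size $q$ I would use the reserved set $R$. There are $(2+\varepsilon)q-|C'|=\Omega(q)$ colors unused by $M_0$, and by construction each such color has roughly $2p_Rq=2q^{1-\gamma_1}$ edges with an endpoint in $R$, nearly all disjoint from $V(M_0)\subseteq V(G)\setminus R$. Since $s:=q-|M_0|=O(q^{1-\gamma_1}+q^{1-\gamma_2})$ is dominated by the available absorber pool per color when $\gamma_2>\gamma_1$, I would either greedily extend $M_0$ using these absorber edges, or, for cleaner bookkeeping, apply Theorem~\ref{thm:KahnBipartiteRainbow} a second time to the residual bipartite hypergraph between $R$ and $V(G)\setminus(R\cup V(M_0))$ (restricted to unused colors) to obtain the final $s$ edges.

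The hard part will be the finishing step, especially in the extremal regime where $|V(G)|$ is close to its minimum of $2q$. In that regime $V(G)\setminus V(M_0)$ shrinks to roughly $|R|$ alone, so the absorber pool per color degenerates from $\Theta(q^{1-\gamma_1})$ to $\Theta(q^{1-2\gamma_1})$ (both endpoints now being forced into $R$), and the naive second application of Theorem~\ref{thm:KahnBipartiteRainbow} to the residual graph again fails its color-frequency-versus-degree condition. I expect resolving this will require either multiple rounds of reservation-and-absorption with iteratively shrinking parameters, a specialized augmenting-path/rotation argument exploiting the proper-edge-coloring structure, or an ad hoc finishing lemma in the spirit of Lemma~\ref{lem:Finish} from this paper.
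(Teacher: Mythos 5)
Your reduction to Theorem~\ref{thm:KahnBipartiteRainbow} via reservation-plus-sampling is sound as far as it goes, but it only goes as far as the strong/weak asymptotic: it produces a rainbow matching of size $|C'| = q - \Theta(q^{1-\gamma_1} + q^{1-\gamma_2})$, and the passage from $(1-o(1))q$ to exactly $q$ --- which is the entire content of Theorem~\ref{thm:CK} beyond what was already known asymptotically --- is left unresolved. You correctly diagnose why your own finishing step fails: in the residual bipartite hypergraph between $R$ and the uncovered vertices, each of the $\Omega(q)$ unused colors contributes only $O(q^{1-\gamma_1})$ edges while uncovered vertices can meet $\Omega(q)$ unused colors, so the color-frequency-versus-degree hypothesis of Theorem~\ref{thm:KahnBipartiteRainbow} is violated by a polynomial factor, and a greedy extension is at best borderline since the per-color absorber pool ($\approx 2p_Rq$) is comparable to the deficit $s$ before one even accounts for collisions with $V(M_0)$. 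Gesturing at ``multiple rounds,'' ``augmenting paths,'' or ``an ad hoc finishing lemma'' does not close this; as written, the proof is incomplete precisely at the hard step.

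For comparison, the paper avoids sampling entirely and creates the needed degree imbalance structurally rather than randomly. It proves the stronger Theorem~\ref{thm:CKAverage} by induction on the target size $r$, splitting on the set $X$ of vertices of degree at least $(2+\varepsilon/2)q$. If many edges avoid $X$, an edge-multiplication trick (repeating edges with two different multiplicities) equalizes degrees to a common $D$, and Theorem~\ref{thm:KahnKahn} applied to the rainbow hypergraph yields a color class, i.e.\ a rainbow matching, of size at least $r$ outright. Otherwise induction gives a rainbow matching $M$ in $G\setminus X$, one checks $|X|\ge r-e(M)$, and the deficit is covered by applying Theorem~\ref{thm:KahnBipartite} to the bipartite rainbow hypergraph with $A=X'$ a set of $r-e(M)$ high-degree vertices: the $A$-side degrees are at least $(1+\varepsilon/4)q$ after deletions, while the $B$-side degrees are automatically at most $q$ because $|X'|\le q$ (simplicity bounds the degree of any vertex into $X'$) and no color appears more than $q$ times. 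That observation --- that the high-degree vertices themselves furnish the required $A$-versus-$B$ degree gap with $D=q$ --- is the idea your proposal is missing; with it, no reservation, no second randomized round, and no extremal-case analysis of $|V(G)|$ are needed.
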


We can derive Theorem~\ref{thm:CK} from Theorem~\ref{thm:KahnBipartiteRainbow} rather straightforwardly; indeed we prove the following stronger statement. See Section~\ref{s:Rainbow} for its proof.

%Note: when trying to prove a result to find a rainbow matching but allowing some vertices to have large degree, a nice trick is to slightly sparsify the edges incident with vertices of large degree - as long as there are enough other edges in the graph, the max degree will have decreased while the total number of edges will decrease at a slightly slower rate, allowing one then to find a matching of the desired size...

\begin{thm}\label{thm:CKAverage}
For every real $\varepsilon > 0$, there exists $q_0$ such that for all integers $q\ge q_0$ the following holds: If $G$ is a simple graph that is properly edge colored with $(2+\varepsilon)q$ colors and with at least $(2+\varepsilon)qr$ edges for some integer $r\le q$ such that no color appears more than $q$ times, then $G$ has a rainbow matching of size $r$. 
\end{thm}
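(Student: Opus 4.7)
The plan is to reduce Theorem~\ref{thm:CKAverage} to Theorem~\ref{thm:KahnBipartiteRainbow} via an auxiliary bipartite $4$-uniform hypergraph construction. Construct $G^* = (A, B)$ where $A := \{a_1, \ldots, a_r\}$ is a set of ``slot'' vertices and $B := V(G) \cup C$ with $C$ the color set of $G$. For each edge $uv \in E(G)$ of color $c$ and each $i \in [r]$, include the hyperedge $\{a_i, u, v, c\}$ in $G^*$. An $A$-perfect matching of $G^*$ is then precisely a rainbow matching of size $r$ in $G$: the slots are automatically distinct, distinct colors give rainbow-ness, and distinct $G$-vertices give the matching condition.

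Next I would verify the hypotheses of Theorem~\ref{thm:KahnBipartiteRainbow}. Every $a_i \in A$ has degree $|E(G)| \ge (2+\varepsilon)qr$; every $v \in V(G) \subseteq B$ has degree $r \deg_G(v)$; and every $c \in C \subseteq B$ has degree $r t_c \le rq$. The codegrees in $G^*$ are all at most $(2+\varepsilon)q$, with the binding case being a slot paired with a $G$-vertex $v$, which yields codegree $\deg_G(v) \le (2+\varepsilon)q$; all other pairs have codegree at most $r$. The codegree condition $D^{1-\beta}$ is thus comfortably satisfied for suitable $\beta$ whenever $r$ is not vanishingly small compared to $q$.

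The key step is to ensure the slack $A$-degree $\ge (1+D^{-\alpha})D$. I would set $D := r(2+\varepsilon/2)q$. Then the ratio ($A$-degree)$/D \ge (2+\varepsilon)/(2+\varepsilon/2) = 1 + \Omega(\varepsilon)$, which dominates $D^{-\alpha}$ once $q$ is sufficiently large in terms of $\varepsilon$ and $\alpha$. For this choice of $D$ to bound the max $B$-degree, we need $\Delta(G) \le (2+\varepsilon/2)q$. To arrange this, let $V_H := \{v \in V(G) : \deg_G(v) > (2+\varepsilon/2)q\}$; a double-counting argument (using WLOG $|E(G)| = (2+\varepsilon)qr$ exactly) shows $|V_H| \le 2r(1+O(\varepsilon))$. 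The plan is to first build a rainbow matching $M_H$ covering all of $V_H$ and then apply the auxiliary-hypergraph argument above with $r - |M_H|$ slots to the residual graph $G - V(M_H)$, thereby extending $M_H$ to a rainbow matching of size $r$.

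The hard part will be constructing and accounting for $M_H$. Although each $v \in V_H$ has more than $(2+\varepsilon/2)q$ incident edges of distinct colors---so $M_H$ plausibly exists by a Hall-type argument or a small-scale application of Theorem~\ref{thm:KahnBipartiteRainbow}---removing $V(M_H)$ can destroy many edges of the residual graph, potentially invalidating the reduced hypotheses. The main obstacle is therefore to carefully verify that $G - V(M_H)$ still satisfies the conditions of Theorem~\ref{thm:CKAverage} with $r$ replaced by $r - |M_H|$. Exploiting the proper edge coloring is crucial here: each color is a matching, so any color loses at most $|V(M_H)|$ edges upon deletion, enabling a clean inductive bookkeeping on the residual edge count. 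Edge cases where $r$ is extremely small compared to $q$ can be dispatched by a trivial greedy or Lov\'asz Local Lemma argument.
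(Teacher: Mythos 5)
Your slot construction is a genuinely different reduction from the one in the paper, and in the regime where it applies it is sound; but as written the proposal has two concrete gaps. First, the codegree obstruction you flag for small $r$ is not an edge case that can be dispatched trivially: the codegree of a slot $a_i$ with a graph vertex $v$ equals $\deg_G(v)$, which is of order $q=D/r$, so the hypothesis that codegrees be at most $D^{1-\beta}$ in Theorem~\ref{thm:KahnBipartite} forces $r\ge D^{\beta}\ge q^{\beta}$; for $r$ below every fixed power of $q$ the reduction is simply unavailable. The greedy fallback provably falls short there: a maximal rainbow matching $M$ satisfies $e(G)\le 2|M|\Delta(G)+|M|q\le |M|(5+2\varepsilon)q$ (using $\Delta(G)\le(2+\varepsilon)q$), which only yields $|M|\ge\frac{2+\varepsilon}{5+2\varepsilon}\,r\approx 0.4r$, and the proposal does not supply a local-lemma argument that closes this constant factor. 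Second, the bookkeeping after removing $M_H$ fails by a constant factor: deleting $V(M_H)$ and the colors of $M_H$ can destroy up to $2|M_H|(2+\varepsilon)q+|M_H|q=(5+2\varepsilon)q\,|M_H|$ edges (your observation that each color class is a matching gives exactly this bound, not a better one), whereas the hypothesis only budgets $(2+\varepsilon)q$ edges per unit decrease of $r$; hence $G-V(M_H)$ need not satisfy the conditions of the theorem with $r$ replaced by $r-|M_H|$, and the existence of a rainbow matching covering all of $V_H$ is itself unproven (note also that edges inside $V_H$ break the bipartiteness of the rainbow hypergraph with $A=V_H$).

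The paper's proof routes around both problems by inverting the order of operations and dispensing with slots. It inducts on $r$, splits off $X=\{v:\deg_G(v)\ge(2+\varepsilon/2)q\}$, and either (i) if $G\setminus X$ is edge-rich, replicates edges by carefully chosen constant multiplicities so that the resulting $3$-uniform rainbow hypergraph has maximum degree $D=\Theta_\varepsilon(q)$ \emph{independent of $r$} and extracts a size-$r$ rainbow matching from the chromatic bound of Theorem~\ref{thm:KahnKahn} via an edge count --- this is precisely what handles all $r$, including small $r$; or (ii) if $G\setminus X$ is edge-poor, it first finds a rainbow matching $M$ in $G\setminus X$ by applying the induction hypothesis to $G\setminus X$ with its own smaller parameter (rather than asking a residual graph to retain the original hypotheses), and only then completes $M$ through a set $X'$ of $r-e(M)$ high-degree vertices using Theorem~\ref{thm:KahnBipartite} with $A=X'$ and $D=q$, where the surplus degree of $X'$ absorbs all losses and the codegrees are at most $1$ because $G$ is simple and $\phi$ is proper. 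If you wish to salvage the slot construction, you would still need an argument of type (i) for small $r$ and a restructured induction of type (ii) in place of the $M_H$-first step.
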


 %where we only assume the average number of times a color appears is at least $r$; when $r=q$, this implies their theorem (even just assuming average color degree $q$ instead of minimum color degree $q$). 
 
Chakraborti and Loh also conjectured the following.

\begin{conj}[Chakraborti and Loh~\cite{CL20}]
For every real $\varepsilon > 0$, there exists $q_0$ such that for all integers $q\ge q_0$ the following holds: If $G$ is a bipartite simple graph that is properly edge colored with $(1+\varepsilon)q$ colors such that each color appears exactly $q$ times, then $G$ has a rainbow matching of size $q$. 
\end{conj}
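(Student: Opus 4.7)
The plan is to apply Theorem~\ref{thm:KahnBipartiteRainbow} to a padded auxiliary bipartite hypergraph in order to produce many pairwise disjoint almost-full rainbow matchings of $G$, and then close the small gap to exactly $q$ via short rainbow augmenting paths that exploit the surplus of unused colors. First I would form the rainbow hypergraph $\tilde H=(A,B)$ of $G$ with respect to $\phi$, where $A$ is the set of $(1+\varepsilon)q$ colors, $B=V(G)$, and each edge $xy$ of color $c$ corresponds to the triple $\{c,x,y\}$; matchings in $\tilde H$ are exactly rainbow matchings of $G$, and all codegrees in $\tilde H$ are at most $1$ since $G$ is simple and $\phi$ is proper. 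Fix $\beta:=1/2$, let $\alpha>0$ be the resulting constant from Theorem~\ref{thm:KahnBipartiteRainbow}, and set $D:=(1+\varepsilon)q$. For each color $c$ I would attach $T:=\lceil\varepsilon q+D^{1-\alpha}\rceil$ fresh dummy triples $\{c,u_c^{(j)},v_c^{(j)}\}$ whose two $B$-vertices are private to that triple; in the padded hypergraph $\tilde H^{*}$, every $A$-vertex has degree $q+T\ge (1+D^{-\alpha})D$, every $B$-vertex has degree at most $D$ (dummies have degree $1$), and all codegrees remain at most $1$. Applying Theorem~\ref{thm:KahnBipartiteRainbow} then yields $D$ pairwise edge-disjoint $A$-perfect matchings $M_1,\ldots,M_D$ of $\tilde H^{*}$, each of size $(1+\varepsilon)q$.

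Writing $r_i$ for the number of real (non-dummy) triples in $M_i$, the fact that each real triple is used at most once across the disjoint $M_i$ and that the total number of dummies is $T(1+\varepsilon)q=\varepsilon(1+\varepsilon)q^2+O(q^{2-\alpha})$ implies $\sum_i r_i\ge D(1+\varepsilon)q-T(1+\varepsilon)q=(1+\varepsilon)q^2-O(q^{2-\alpha})$. Averaging gives some index $i^{*}$ with $r_{i^{*}}\ge q-O(q^{1-\alpha})$, and the real part of $M_{i^{*}}$ is a rainbow matching $M^{\circ}$ of $G$ of size $m\ge q-O(q^{1-\alpha})$.

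The remaining task is to extend $M^{\circ}$ to a rainbow matching of size exactly $q$, a deficit of at most $O(q^{1-\alpha})$, using the $(1+\varepsilon)q-m\ge \varepsilon q$ colors currently unused by $M^{\circ}$. Because $|V(M^{\circ})|\le 2m\le 2q$, it is entirely possible that every edge of every unused color meets $V(M^{\circ})$, so naive greedy extension may fail; the plan is instead to perform, at each step, a short rainbow alternating augmentation of length at most three, swapping a single edge of $M^{\circ}$ for two edges of two distinct unused colors. Existence of such a swap at each step should be provable by a Hall-type double-counting argument pitting the $\Omega(\varepsilon q^{2})$ edges lying in unused colors against the mere $O(q^{1-\alpha})$ augmentations demanded, since the latter is vanishingly small compared to the former. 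The main obstacle is executing this argument uniformly across all $q-m$ iterations — that is, showing that the abundance of unused colors and their edges persists as the matching grows toward size $q$, rather than merely on average — which will likely require choosing $M_{i^{*}}$ in the pigeonhole step with some care (favoring a matching whose uncovered $B$-vertices have many edges of unused colors between them, which one can guarantee by averaging a second time over the many $M_i$ produced by Theorem~\ref{thm:KahnBipartiteRainbow}) and an exchange argument in the spirit of the Aharoni-Berger-Chudnovsky-Howard-Seymour approach to rainbow matchings.
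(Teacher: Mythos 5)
Your padding-plus-averaging step is sound and does produce a rainbow matching of size $q-O(q^{1-\alpha})$, but that much is essentially the known weak asymptotic; the entire difficulty of the statement lives in the final gap-closing step, and that step is not actually proved. You acknowledge the obstacle yourself: it is not clear that a length-three rainbow alternating swap exists at every one of the up to $O(q^{1-\alpha})$ iterations (and $\alpha$ here is a small fixed constant, so the deficit can be as large as $q^{0.99}$). The worry is real: all $\varepsilon q^{2}$ edges of unused colors may have both endpoints inside $V(M^{\circ})$, and they may be distributed so that no single swap of one matching edge for two unused-color edges is available; a second averaging over the $M_i$ does not obviously control this, since the disjoint matchings constrain edge usage, not the structure of unused colors relative to a particular $M_{i^*}$. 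As written, the proof is incomplete at exactly the point where the conjecture differs from its asymptotic version.

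The paper closes the gap by a genuinely different mechanism that you may want to compare against. It proves the stronger Theorem~\ref{thm:CKBipartite} by induction on the target size $r$, splitting into cases according to the set $X$ of vertices of degree at least $(1+3\varepsilon/4)q$ and the subset $Y\subseteq X$ of such vertices with few neighbours in $X$. When $e(G\setminus X)$ or $|X|$ is large, a careful edge-duplication makes the rainbow hypergraph nearly regular and Theorem~\ref{thm:KahnKahn} already yields a color class of size $r$. Otherwise $e(G\setminus Y)$ is tiny, induction gives a small rainbow matching $M$ there, and --- this is the key idea your proposal lacks --- the remaining $r-e(M)$ edges are found by applying Theorem~\ref{thm:KahnBipartite} with the part $A$ taken to be the \emph{vertex} set $Y$ rather than the color set: each $y\in Y$ retains degree at least $(1+\varepsilon/4)q$ in the residual graph while each color still appears at most $q$ times, so the degree imbalance required by Theorem~\ref{thm:KahnBipartite} holds and a $Y$-perfect rainbow matching exists. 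That swap of which side plays the role of $A$ is what converts ``almost full'' into ``exactly $q$'', and some device of this kind (or a worked-out augmentation lemma) is needed to complete your argument.
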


Once more we can use Theorem~\ref{thm:KahnBipartiteRainbow} to prove their conjecture with the following stronger statement as follows. See Section~\ref{s:Rainbow} for its proof.

\begin{thm}\label{thm:CKBipartite}
For every real $\varepsilon > 0$, there exists $q_0$ such that for all integers $q\ge q_0$ the following holds: If $G$ is a bipartite simple graph that is properly edge colored with $(1+\varepsilon)q$ colors and with at least $(1+\varepsilon)qr$ edges for some integer $r\le q$ such that no color appears more than $q$ times, then $G$ has a rainbow matching of size $r$. 
\end{thm}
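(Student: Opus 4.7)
The plan is to reduce Theorem~\ref{thm:CKBipartite} to Theorem~\ref{thm:KahnBipartiteRainbow} applied to the rainbow hypergraph of a well-chosen restriction of $G$. Set $\beta := \varepsilon/10$ and let $\alpha > 0$ and $D_\beta$ be as supplied by Theorem~\ref{thm:KahnBipartiteRainbow} with this $\beta$ and uniformity $3$; take $q_0$ large enough.

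For any subset of colors $C' \subseteq C$, let $H[C']$ denote the bipartite $3$-uniform hypergraph with $A := C'$, $B := V(G)$, and hyperedges $\{c, u, v\}$ for each edge $uv \in E(G)$ of color $c \in C'$. Crucially, by the properness of $\phi$, each $B$-vertex has $H[C']$-degree at most $|C'|$: a vertex $v \in V(G)$ is incident to at most one edge of each color, so its $H[C']$-degree equals the number of colors of $C'$ that appear at $v$. Moreover, by simplicity of $G$ and properness of $\phi$, all codegrees in $H[C']$ are at most $1$, so the codegree hypotheses of Theorem~\ref{thm:KahnBipartiteRainbow} are trivially met.

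The key step is to choose $C' \subseteq C$ of size $r$ such that every $c \in C'$ satisfies $|c| \ge (1+r^{-\alpha})r$; then applying Theorem~\ref{thm:KahnBipartiteRainbow} to $H[C']$ with $D := r$ furnishes an $A$-perfect matching, which corresponds directly to a rainbow matching of $G$ of size $r$ (distinct $B$-vertices force distinct endpoints and distinct colors). A double-counting argument based on $e(G) \ge (1+\varepsilon)qr$, $|c| \le q$, and $|C| = (1+\varepsilon)q$ shows that the number $M$ of colors with $|c| \ge r$ satisfies $M \ge (1+\varepsilon)q/(q-r+1)$, which is at least $r$ in the ``tight'' regime where $r$ is close to $q$ (in particular, when $r = q$ this forces $M = (1+\varepsilon)q$, i.e., every color has $|c| = q$); in this regime one may select $C'$ directly.

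The main obstacle is the regime where $r$ is small relative to $q$: here $M$ may be much smaller than $r$, so the direct selection above fails. To handle this, I would relax the construction by taking $|C'| = D := r - r^{1-\alpha/2}$, for which the $A$-degree requirement $|c| \ge (1+D^{-\alpha})D$ weakens to $|c| \ge r$, a condition that can be met by $r - r^{1-\alpha/2}$ colors via a far easier counting estimate. This gives a rainbow matching of size $r - r^{1-\alpha/2}$, which I then extend to size $r$ by a greedy argument using the $\varepsilon q + r^{1-\alpha/2}$ unused colors on the unmatched vertices. The technical heart of the argument is this extension step, since in the worst case an unused color could have all its edges incident to matched vertices; I would resolve this by invoking the ``$D$ disjoint matchings'' clause of Theorem~\ref{thm:KahnBipartiteRainbow} and averaging over the resulting matchings to select one whose matched vertex set leaves each unused color with a sufficient supply of free edges for the greedy completion.
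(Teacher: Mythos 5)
Your reduction has a fatal counting gap: the hypothesis of Theorem~\ref{thm:KahnBipartiteRainbow} requires every selected color to appear at least $(1+D^{-\alpha})D$ times, but the hypotheses of Theorem~\ref{thm:CKBipartite} only guarantee a large \emph{total} number of edges, and this cannot be converted into ``many colors each appearing at least (roughly) $r$ times.'' Concretely, take $G$ to be a disjoint union of matchings: two of size $q$ and $(1+\varepsilon)q-2$ of size $r-1$, each matching its own color. For $r\le (1-\varepsilon)q/2$ this is a simple bipartite properly edge-colored graph with $(1+\varepsilon)q$ colors, no color appearing more than $q$ times, and $e(G)\ge(1+\varepsilon)qr$; yet only two colors appear at least $r$ times, so neither your ``key step'' (which needs $r$ colors of multiplicity $\ge(1+r^{-\alpha})r$) nor your relaxed version (which needs $r-r^{1-\alpha/2}$ colors of multiplicity $\ge r$) can even get started. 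Your own bound $M\ge(1+\varepsilon)q/(q-r+1)$ already signals this: it exceeds $r$ only when $r=q-O(1)$, and even at $r=q$ the unrelaxed requirement $|c|\ge(1+q^{-\alpha})q$ contradicts the hypothesis $|c|\le q$. The subsequent greedy completion is also not salvageable as described, since the unused colors may each appear only $r-1$ times (or fewer) and all of their edges can meet $V(M)$.

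The paper's proof works precisely because it has a mechanism for turning edge count (rather than per-color multiplicity) into matching size: it builds an auxiliary \emph{multigraph} $G_0$ with edge multiplicities chosen to equalize degrees at $D$, applies the chromatic-index bound $\chi({\rm Rainbow}(G_0,\phi))\le D(1+D^{-\alpha})$ (Theorem~\ref{thm:KahnKahn}), and extracts a color class of size $e(G_0)/\chi\ge r$. This handles the regime where many edges avoid the high-degree vertex set $X$; the complementary regimes are handled by a second multigraph construction, an induction on $r$, and a finishing step that does use Theorem~\ref{thm:KahnBipartite} — but only on the bipartite hypergraph $(Y,(V(G')\setminus Y)\cup C)$ whose $A$-side is a set of high-degree \emph{vertices} of $G$, not a set of colors, which is why the degree hypothesis can be verified there. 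If you want to keep your color-based reduction, you would need to replace the color-selection step with something like the paper's multiplicity-boosting trick; as written, the argument does not go through.
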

%So this assume average color degree $r$; when $r=q$, this implies the above conjecture (even just assuming average color degree $q$ instead of minimum color degree $q$). 

Lastly, we mention one more application. Let $G$ be a simple graph of minimum degree $d$. In 2011, Wang~\cite{W11} asked the natural question of how large does $v(G)$ need to be as a function of $d$ to guarantee that $G$ has a rainbow matching of size $d$? Independently, Gy\'{a}rf\'{a}s and S\'{a}rk\"{o}zy~\cite{GS14} as well as Lo and Tan~\cite{LT14} showed that having $4d-3$ vertices suffices.  The best known result for general graphs is by Lo~\cite{L15} who showed that having $3.5\cdot d+2$ vertices suffices. As for when $G$ is a bipartite graph, Lo~\cite{L15} showed that having $(3+o(1))d$ vertices suffices. We note that at least $2d$ vertices are needed to guarantee a (rainbow) matching of size $d$. As a corollary of Theorem~\ref{thm:CKBipartite}, we achieve this asymptotically in the bipartite case as follows. See Section~\ref{s:Rainbow} for its proof.

\begin{thm}\label{thm:MCCBipartite}
For every real $\varepsilon > 0$, there exists $d_0$ such that for all integers $d\ge d_0$ the following holds: If $G$ is a bipartite simple graph with minimum degree $d$ and at least $(2+\varepsilon)d$ vertices  that is properly edge colored, then $G$ has a rainbow matching of size $d$. 
\end{thm}

\section{Outline of Proof}\label{s:Outline}

In this section, we overview the proofs of Theorems~\ref{thm:GirthFive},~\ref{thm:GirthFiveBipartite} and~\ref{thm:SmallCodegree}.

To prove Theorem~\ref{thm:GirthFive}, we will prove the following iterative lemma, but first a definition.

\begin{definition}
Let $H$ be a hypergraph and let $D\geq 1$ be a real number. We define the \emph{$D$-weighted degree} of a vertex $v\in V(H)$, denoted 
$$w_{D}(H,v):=\sum_{i\ge 2} d_{H,i}(v) \cdot \frac{i-1}{D^{i-1}}.$$ 
We define the \emph{maximum $D$-weighted degree} of $H$, denoted 
$$w_D(H) := \max_{v\in V(H)} w_D(H,v).$$
\end{definition}

Here then is our key iterative lemma.

%{\bf Conditions needed:} (all variables nonnegative of course) 

%{\bf definitions, basic calculations:} $\varepsilon w\le 1$, $\gamma \le 1$, $\varepsilon(5r+2w) \le \gamma \le \frac{1}{8}$ (implies the first two)

%{\bf expectation of degree:} $\gamma \ge 4r^2 \frac{\log^2 D}{D}$, $\varepsilon(w^2+2(r-1)) \le \gamma \le 1$, $\gamma \ge (r-1)(4r+2w)\varepsilon$, $r\gamma \varepsilon \le 1$,

%$D(1+\gamma) \le 2D'$ (equivalently $1 + \gamma \le 2 e^{-\varepsilon(r-1)+w}$ which should follow from $\varepsilon(r-1 + w) \le \gamma \le 1/8$ say)

%{\bf expectation of weighted degree:} $4g^2 \log^2 D \le \gamma D$, $3\cdot 2^{2(g-1)}\cdot rg^2 \log^{2(g-1)} D \le \gamma D$ (which implies the first since $g\ge 2$, $r\ge 1$), $\gamma \varepsilon \le 1$, $\varepsilon w \le 1/2$, $\gamma \ge rg(4r+2w)\varepsilon$, 

%$\left( \frac{D'}{D} e^{-\varepsilon}\right)^{i-1} \ge 1/2$ (which is equivalent to $e^{-(i-1)\varepsilon(r+w)} \ge 1/2$ which is true if $g\varepsilon(r+w) \le 1/2$ say which is true if $g\varepsilon(r+w) \le \gamma \le 1/2$ from the other conditions)

%$3rg\gamma \le 1/8$, $\varepsilon \le 1/8$

%{\bf concentrations:} $\gamma \le 1$, $\varepsilon \le 1/2$, $\varepsilon w \le 1$, $\varepsilon \ge D^{-1/3r}$, $\varepsilon(r-1+w)\le 1/2$, $\gamma \ge \varepsilon \ge D^{-1/6}$, $w\ge D^{-1/6}$

\begin{lem}\label{lem:Iterative}
For all integers $r,g \ge 2$, there exists an integer $D_{r,g}\ge 0$ such that following holds for all $D\ge D_{r,g}$ and for all $\varepsilon \ge D^{-1/6r}$, $\gamma \le \frac{1}{24rg}$, $w \ge D^{-1/6}$ such that $\gamma \ge rg(5r+2w+w^2)\varepsilon$.
\vskip.1in
Let $G$ be a $r$-uniform (multi)-hypergraph on $n$ vertices and let $H$ be a $g$-bounded configuration hypergraph of $G$.
\vskip.1in
\noindent If $G$ satisfies
\vskip.1in
\begin{enumerate}
\item[(G1)] $G$ is $D(1\pm \gamma)$-regular,
\item[(G2)] every pair of vertices of $G$ has codegree at most $4\log^2 D$,
\end{enumerate}
and $H$ satisfies
\vskip.1in
\begin{enumerate}
\item[(H1)] $w_D(H) \le w$,
\item[(H2)] $H$ has girth at least five,
\end{enumerate}
and $G$ and $H$ satisfy
\begin{enumerate}
\item[(G+H)] the maximum codegree of $G$ with $H$ is at most $2^{2(g-1)}\cdot \log^{2(g-1)} D$,
\end{enumerate}
then there exists an $H$-avoiding matching $E'$ of $G$ and a subhypergraph $G'$ of $G$ such that $V(G')= V(G)-\bigcup_{e\in E'} e$ and $E(G')\subseteq E(G[V(G')])$, and a configuration hypergraph $H'$ of $G'$ of girth at least five where $E(H'):= \{S\setminus E': S\in E(H),~S\setminus E' \subseteq E(G')\}$ that satisfy the all of the following, where we let
$$D':= De^{-\varepsilon(r-1 + w)}:$$

\begin{itemize}
\item[(1)] if $M$ is an $H'$-avoiding matching of $G'$, then $M\cup E'$ is an $H$-avoiding matching of $G$,
\item[(2)] $v(G\setminus V') \ge \frac{\varepsilon}{2} \cdot v(G)$,
\item[(3)] $G'$ is $D'(1\pm \gamma(1+6r\varepsilon))$-regular,
\item[(4)]$w_{D'}(H') \le w \left(1-\frac{\varepsilon}{2}\right)$.
\end{itemize}
\end{lem}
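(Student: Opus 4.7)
The plan is to implement R\"odl's nibble method while handling the configuration hypergraph $H$ simultaneously. Sample each edge $e \in E(G)$ independently with probability $p := \varepsilon/D$ to form a random set $X \subseteq E(G)$. Let $E'$ be the set of $e \in X$ such that (i) no other edge of $X$ intersects $e$, and (ii) for every $H$-edge $S \ni e$, some $f \in S \setminus \{e\}$ is absent from $X$. Condition~(i) makes $E'$ a matching, while condition~(ii) makes $E'$ an $H$-avoiding set: if $T \subseteq E'$ were an $H$-edge, then for any $e \in T$ condition~(ii) applied with $S := T$ would force some $f \in T \setminus \{e\} \subseteq E' \subseteq X$ to lie outside $X$, a contradiction. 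Set $V' := V(G) \setminus \bigcup_{e \in E'} e$, and let $E(G')$ consist of those $e \in E(G) \setminus E'$ with $e \subseteq V'$ such that no $S \in E(H)$ has $S \setminus \{e\} \subseteq E'$ (i.e.\ $e$ is not ``$H$-blocked'' by the matching already chosen). Define $H'$ by the formula in the statement. Then~(1) is immediate, and $H'$ inherits girth at least five since any short cycle in $H'$ would lift, via the correspondence $S \mapsto S \setminus E'$, to a cycle of the same length in $H$.

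The expectation calculations are standard. Fix a vertex $v$ and an edge $e \ni v$. Up to lower-order errors controlled by hypotheses~(G2) and~(G+H), the probability that $e$ survives to $E(G')$ factors as the probability that no edge of $X$ covers any vertex of $e$ (contributing roughly $(1-p)^{rD} \approx e^{-r\varepsilon}$) times the probability that no $H$-configuration through $e$ is completed in $X$ (contributing roughly $\exp(-\sum_{i \ge 2} d_{H,i}(e)\,p^{i-1})$). Summing over $e \ni v$ and using $d_G(v) = (1 \pm \gamma)D$ together with $w_D(H,e) \le w$, one obtains
$$\mathbb{E}[d_{G'}(v)] = D \cdot \exp\!\bigl(-\varepsilon(r-1 + w_D(H, v))\bigr)\cdot (1 \pm O(\varepsilon^2 (r+w))),$$
which matches the target $D'$ to within the slack $\gamma(1 + 6r\varepsilon)$ permitted in~(3). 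A parallel computation shows $\mathbb{E}[r\,|E'|] \ge \varepsilon\, v(G)(1 - O(\varepsilon(r+w))) \ge \varepsilon\, v(G)/2$ under the numerical hypothesis $\gamma \ge rg(5r+2w+w^2)\varepsilon$, giving~(2); and for each $u \in V(H')$, an analogous calculation yields $\mathbb{E}[w_{D'}(H', u)] \le w(1 - \varepsilon/2)$, because each $H$-edge $S \ni u$ loses at least one edge to $E'$ with total probability essentially $(|S|-1)p$, which, balanced against the rescaling of weights from $D$ to $D'$, produces the claimed damping factor.

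The main obstacle is concentration. The quantities $d_{G'}(v)$ and $w_{D'}(H', u)$ are not Lipschitz functions of $X$: flipping a single coordinate $\mathbf{1}\{f \in X\}$ can simultaneously unblock many $H$-configurations incident to $v$ or $u$. However, the girth-at-least-five hypothesis~(H2) ensures that the $H$-configurations contributing to a given vertex's degree are essentially pairwise disjoint outside of the vertex itself, so each coordinate of $X$ participates in only a bounded number of the relevant certificates. This, together with the codegree bounds~(G2) and~(G+H), places the random variables squarely in the regime covered by the linear version of Talagrand's inequality developed in Section~\ref{s:Talagrand}, yielding exponentially strong concentration. A Lov\'asz Local Lemma application over the $v(G)$ vertices of $G$ and the edges of $G$ (as vertices of $H'$) then produces a single realization of $X$ for which~(2),~(3), and~(4) hold simultaneously.
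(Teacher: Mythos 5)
Your overall framework—a nibble step with independent edge sampling, Talagrand-type concentration with exceptional outcomes, and a Lov\'asz Local Lemma wrap-up—is the same as the paper's. But there is a genuine gap in the construction: you omit the paper's \emph{equalizing coin flip}, and without it conclusion~(3) does not follow.

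In your construction, an edge $e$ fails to survive into $E(G')$ (beyond the isolation constraint) only when a configuration through $e$ is completed. The probability of that event is governed by $w_D(H,e)$, which varies from edge to edge and is only bounded \emph{above} by $w$; it can be much smaller (even zero) for some edges incident to a given vertex $v$. Consequently your calculation $\Expect{d_{G'}(v)} \approx D\, e^{-\varepsilon(r-1+w_D(H,v))}$ only gives a \emph{lower} bound matching $D'=De^{-\varepsilon(r-1+w)}$; for a vertex $v$ whose incident edges all have $w_D(H,e)\ll w$, the expected degree in $G'$ is roughly $De^{-\varepsilon(r-1)} = D' e^{\varepsilon w}$, which exceeds $D'(1+\gamma(1+6r\varepsilon))$ whenever $\varepsilon w \gg \gamma\varepsilon$, i.e.\ whenever $w\gg\gamma$ — precisely the regime the lemma is used in (the iteration begins with $w_0\approx\frac{\beta}{4}\log D$ while $\gamma\le \frac{1}{24rg}$). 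The paper resolves this by introducing, for each edge $e$, an auxiliary Bernoulli variable $\gamma_e$ calibrated so that $\Prob{e\in F_0}=\varepsilon w$ \emph{exactly} for every edge; this makes the configuration-deletion loss uniform and is what allows both upper and lower bounds on $d_{G'}(v)$ to land at $D'$.

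A secondary deviation: you define the blocking set for $G'$ using $S\setminus\{e\}\subseteq E'$ (completion by the \emph{matching}), whereas the paper uses $S\setminus e\subseteq E_0$ (completion by the raw sampled set $E_0$). Your condition is the minimal one needed for $H'$ to be a legitimate configuration hypergraph, but it entangles the survival of $e$ with the global structure of $E'$ and is harder to analyze than the paper's choice; more importantly, the $E_0$-based blocking is what makes the equalizing device implementable at all, since $\Prob{\exists S\in N'_H(e): S\setminus e\subseteq E_0}=1-\prod_{S\in N'_H(e)}(1-p^{|S|-1})$ can be computed exactly (using girth $\ge 3$) and then equalized via the coin flip. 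You would need to restore both the $E_0$-based blocking and the equalizer to make the degree computation go through.

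Your argument that $E'$ is $H$-avoiding and that $H'$ inherits girth five is correct, and the concentration plan (linear Talagrand with exceptional outcomes + LLL) is the right one; the missing equalizer is the piece that breaks the proof as written.
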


Assuming Lemma~\ref{lem:Iterative}, we are now prepared to prove Theorem~\ref{thm:GirthFive} as follows.

\begin{proof}[Proof of Theorem~\ref{thm:GirthFive}]
We choose $D_{\beta}$ to be equal to $D_{r,g}^{1/(1-\beta)}$ where $D_{r,g}$ is as in Lemma~\ref{lem:Iterative}. Recall $D\ge D_{\beta}$ by assumption.

Let $\varepsilon := D_0^{-(7/6)\beta}$ and let $T:= \frac{\beta\log D_0}{8r\varepsilon}$. Let $D_0:= D$, $w_0 := \frac{\beta\log D_0}{4}$, and $\gamma_0:= D_0^{-\beta}$. 

As $\Delta_i(H) \le \frac{\beta}{4g^2} \cdot D^{i-1}\log D$ for all $i\in \{2,\ldots, g\}$ by assumption, we find that $w_D(H) \le \frac{\beta \log D_0}{4} = w_0$. Note that $w_0\ge 1 \ge D_0^{-1/6}$ since $D$ is large enough.

For each integer $i$ with $1\le i\le T+1$, define the following parameters:
$$D_i := D_0\cdot e^{-\varepsilon\left(i(r-1)+w_0\cdot \sum_{j=0}^{i-1} \left(1-\frac{\varepsilon}{2}\right)^j\right)},$$
$$\gamma_i := \gamma_0\cdot (1+6r\varepsilon)^i,$$
and
$$w_i := w_0 \cdot \left(1-\frac{\varepsilon}{2}\right)^i,$$

\begin{claim}\label{claim:iterative}
For each integer $0\le i \le T+1$, there exists an $H$-avoiding matching $E_i$ of $G$, a subhypergraph $G_i$ of $G$ with $V(G_i)=V(G)\setminus V(E_i)$ and a configuration hypergraph $H_i$ of $G_i$ of girth at least five where $E(H_i):= \{S\setminus E_i: S\in E(H),~S\setminus E_i \subseteq E(G_i)\}$ that satisfy all of the following:
\begin{enumerate}
\item[(1)] if $M$ is an $H_i$-avoiding matching of $G_i$, then $M_i\cup E_i$ is an $H$-avoiding matching of $G$, 
\item[(2)] $v(G_i)\le v(G) \cdot e^{-i\varepsilon/2}$,
\item[(3)] $G_i$ is $D_i(1\pm \gamma_i)$-regular,
\item[(4)] $w_{D_i}(H_i)\le w_i$.
\end{enumerate}
\end{claim}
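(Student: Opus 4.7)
I would prove the claim by induction on $i$. For the base case $i = 0$, set $E_0 := \emptyset$, $G_0 := G$, and $H_0 := H$. Properties (1) and (2) are immediate, (3) is the hypothesis that $G$ is $D(1\pm D^{-\beta})$-regular, and (4) follows directly from $\Delta_i(H) \le \frac{\beta}{4g^2}D^{i-1}\log D$ via the definition of $w_D(H)$.

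For the inductive step, assuming the claim holds for some $i \le T$, the plan is to apply Lemma~\ref{lem:Iterative} to $G_i$ and $H_i$ with $\gamma := \gamma_i$, $w := w_i$, and the fixed $\varepsilon$. This yields a matching $E'$, a subhypergraph $G'$, and a configuration hypergraph $H'$ of girth $\ge 5$; I set $E_{i+1} := E_i \cup E'$, $G_{i+1} := G'$, and $H_{i+1} := H'$. Substituting an edge of the form $S \setminus E_i$ for $S \in E(H)$ into the lemma's defining formula for $E(H')$ verifies that the claim's definition of $E(H_{i+1})$ coincides with $E(H')$ (the side condition $S \setminus E_i \subseteq E(G_i)$ is implied by $S \setminus E_{i+1} \subseteq E(G_{i+1})$ together with $E' \subseteq E(G_i)$). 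Properties (3) and (4) are then immediate from lemma items (3) and (4) together with the defining recurrences for $D_{i+1}$, $\gamma_{i+1}$, and $w_{i+1}$; property (2) follows from $v(G_{i+1}) \le (1-\varepsilon/2)v(G_i) \le e^{-\varepsilon/2}v(G_i)$ combined with the inductive bound on $v(G_i)$; and property (1) follows by chaining, since any $S \in E(H)$ contained in $M \cup E_{i+1}$ would force the (nonempty) trace $S \setminus E_i \in E(H_i)$ to lie inside $M \cup E'$, contradicting either that $E'$ is $H_i$-avoiding or that $M$ is $H_{i+1}$-avoiding.

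The only nontrivial task is verifying that the hypotheses of Lemma~\ref{lem:Iterative} actually hold for $G_i$ and $H_i$ at each stage. Hypotheses (G1) and (H1) are exactly the inductive hypothesis. Hypothesis (H2) is preserved under tracing: any $j$-cycle in $H_i$ lifts to a $j$-cycle of the same length in $H$, using the linearity implied by girth $\ge 5$ to ensure that distinct trace-edges come from distinct edges of $H$. For (G2) and (G+H), codegrees only decrease under vertex deletion, so they remain bounded by the original $\log^2 D_0$ and $\log^{2(g-1)} D_0$; the lemma permits $4\log^2 D_i$ and $2^{2(g-1)}\log^{2(g-1)} D_i$, both of which exceed the original bounds provided $D_i \ge \sqrt{D_0}$.

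The remaining obstacle, and what I expect to be the technical heart of the argument, is verifying the parameter inequalities $\varepsilon \ge D_i^{-1/(6r)}$, $w_i \ge D_i^{-1/6}$, $\gamma_i \le 1/(24rg)$, $\gamma_i \ge rg(5r+2w_i+w_i^2)\varepsilon$, and $D_i \ge \sqrt{D_0}$ for all $i \le T$. The key estimates will be $\varepsilon T \le \frac{\beta}{8r}\log D_0$; the total drop $D_0/D_i \le \exp(\varepsilon T(r-1) + 2w_0) \le D_0^{5\beta/8}$ using the geometric-series bound $\sum_j(1-\varepsilon/2)^j \le 2/\varepsilon$; the growth $\gamma_i/\gamma_0 \le (1+6r\varepsilon)^T \le e^{6r\varepsilon T} = D_0^{3\beta/4}$; and the lower bound $w_i \ge w_0 D_0^{-\beta/(8r)}$ coming from $(1-\varepsilon/2)^T \ge e^{-\varepsilon T}$. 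Combined with $\beta \le 1/(8r)$, each inequality then reduces to a polynomial-in-$D_0$ relation that holds for $D_0$ larger than a threshold depending only on $r$ and $g$, which is absorbed into the choice of $D_\beta$ in the theorem.
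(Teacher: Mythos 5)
Your proposal follows the same induction-on-$i$ structure as the paper's own proof, invokes Lemma~\ref{lem:Iterative} at each step with the same choice of parameters, and verifies the hypothesis inequalities via the same estimates ($\varepsilon T r = \tfrac{\beta}{8}\log D_0$, $2w_0 = \tfrac{\beta}{2}\log D_0$, hence $D_i \ge D_0^{1-5\beta/8}$; $\gamma_i \le \gamma_0 e^{6r\varepsilon T} = D_0^{-\beta/4}$; $w_i \ge e^{-\varepsilon T} = D_0^{-\beta/(8r)}$; codegrees inherited because $D_i \ge \sqrt{D_0}$). The calculations are correct and the approach matches the paper's; your extra detail on why the recursive definitions of $E(H_{i})$ and $H'$ from the lemma coincide, and why (1) chains through the matchings, is sound but left implicit in the paper.
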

\begin{proof}
We proceed by induction on $i$. When $i=0$, then $G_0:=G$ and $H_0:= H$ satisfy the conclusion as desired. So we may assume that $i \ge 1$. By induction, there exists $G_{i-1}$ and $H_{i-1}$ satisfying (1)-(4).

Note that $D_i = D_{i-1}\cdot e^{-\varepsilon(r-1+w_{i-1})}$, $\gamma_i = \gamma_{i-1}(1+6r\varepsilon)$ and $w_i = w_{i-1} \cdot \left(1-\frac{\varepsilon}{2}\right)$. Also note that 
$$D_{i-1} \ge D_0 \cdot e^{-(i-1)(r-1)\varepsilon-2w_0} \ge D_0 \cdot e^{-Tr\varepsilon-2w_0} \ge  D_0^{1-(5/8)\beta}$$
since $Tr\varepsilon = \frac{\beta \log D_0}{8}$ and $w_0 = \frac{\beta \log D_0}{4}$.
Hence 
$$\varepsilon \ge D_0^{-(7/6)\beta} \ge D_{i-1}^{ \frac{-(7/6)\beta}{1-(5/8)\beta}} \ge D_{i-1}^{-1/6r}$$ 
since $\frac{(7/6)\beta}{1-(5/8)\beta} \le \frac{1}{6r}$ as $\beta \le \frac{1}{8r}$.

On the other hand, 
$$\gamma_{i-1} \le \gamma_0\cdot e^{6r\varepsilon (i-1)} \le D_0^{-\beta} \cdot e^{6r\varepsilon T} \le D_0^{-\beta/4} \le \frac{1}{24rg}$$ 
since $D_0$ is large enough. Since $w_{i-1}\le w_0$, we also have that 
$$\gamma_{i-1}\ge \gamma_0 \ge rg(5r+2w_0+w_0^2)\cdot \varepsilon \ge rg(5r+2w_{i-1}+w_{i-1}^2)\cdot \varepsilon,$$
where we used that $w_0 = \frac{\beta \log D_0}{4}$, $\gamma_0 = D_0^{\beta/6}\cdot \varepsilon$ and that $D=D_0$ is large enough.
Moreover, using that $1-x \ge e^{-2x}$ for all $x\in [0,1]$, we find that
$$w_{i-1} = w_0 \cdot \left(1-\frac{\varepsilon}{2}\right)^{i-1} \ge e^{-T\varepsilon} = D_0^{-\frac{\beta}{8r}} \ge D_{i-1}^{-1/6},$$
where we used that $w_0\ge 1$, $i-1\le T$ and $D_{i-1} \ge D_0^{1-\beta} \ge D_0^{\frac{6\beta}{8r}}$ as $\beta \le \frac{1}{2}$ by assumption.

The codegree of pairs of vertices of $G_{i-1}$ is at most the codegree of pairs of vertices of $G_0$ which is at most $\log^2 D_0 \le 4\log^2 D_{i-1}$ since $D_{i-1} \ge D_0^{1/2}$.  Similarly the codegree of $G_i$ with $H_i$ is at most the codegree of $G_0$ with $H_0$ which is at most $\log^{2(g-1)} D_0 \le 2^{2(g-1)}\cdot \log^{2(g-1)} D_{i-1}$ since $D_{i-1} \ge D_0^{1/2}$. 

 Thus we have that $G_{i-1}, H_{i-1}, D_{i-1}, \gamma_{i-1}, w_{i-1}$ and $\varepsilon$ satisfy the the conditions of Lemma~\ref{lem:Iterative}. Hence by Lemma~\ref{lem:Iterative} applied to $G_{i-1}$ and $H_{i-1}$, there exists $G_i$, $H_i$ and $E_i'$ satisfying (1)-(4) of Lemma~\ref{lem:Iterative} and hence they also satisfy (1)-(4) of this claim as desired where we let $E_i = E_i'\cup E_{i-1}$.
\end{proof}

Let $T^* = \lfloor T \rfloor + 1$. By Claim~\ref{claim:iterative}, there exists $G_{T^*}, H_{T^*}$ satisfying (1)-(4) of Claim~\ref{claim:iterative}.  By Claim~\ref{claim:iterative}(2), we have that 
$$v(G_{T^*}) \le v(G) \cdot e^{-T\varepsilon/2} \le v(G) \cdot D^{-\frac{\beta}{16r}}.$$
Hence $E_{T^*}$ is an $H$-avoiding matching of $G$ with size at least
$$\frac{v(G)-v(G_{T^*})}{r} \ge \frac{v(G)}{r} \cdot (1-D^{-\frac{\beta}{16r}}),$$
as desired.
\end{proof}

In the next subsection, we provide a rough overview of the proof of Lemma~\ref{lem:Iterative}; the detailed proof may be found in Section~\ref{s:ProofIter}.

\subsection{Rough Overview of Proof of Lemma~\ref{lem:Iterative}}\label{ss:IterOverview}

Our random procedure chooses each edge of $G$ independently with probability $\varepsilon/D$. The matching $E'$ then consists of chosen edges which are not incident with any other chosen edges nor in a configuration (i.e.~an edge of $H$) where all edges are chosen. Note this means $E'$ is an $H$-avoiding matching of $G$ by construction.

We let the hypergraph $G'$ consist of the edges on the unmatched vertices, except we delete any edge in a nearly completed configuration; however, since $w_D(H,e)$ may not equal $w$, we also perform an equalizing coinflip to ensure each edge is lost with probability exactly $\varepsilon w$ due to this configuration loss. See Section~\ref{ss:Procedure} for the formal description of the random procedure. 

What follows is a rough guide to the behavior of the variables in the proof. Ignoring lower order terms from codegree, concentration, etc., we have $\Prob{v\in V(G')}\approx e^{-\varepsilon}$ and hence $\Prob{V(e)\subseteq V(G')~|~ v\in V(G')} \approx e^{-(r-1)\varepsilon}$. Since whether an edge is in a completed configuration is nearly independent of the previous event, we find that $\Prob{e\in E(G')~|~v\in V(G')} \approx e^{-(r-1+w)\varepsilon}$ and hence $\Expect{d_{G'}(v)} \approx D'$.

Similarly, for an edge $f$ of $G$, $\Prob{f\in E(G')}\approx e^{-(r+w)\varepsilon}$. Hence, for an edge $S$ of $H$ containing an edge $e$ of $G$, 
$$\Prob{S\in E(H')~|~e\in E(G')} \approx e^{-(|S|-1)(r+w)\varepsilon} = \left(\frac{D'}{D}\right)^{|S|-1}e^{-\varepsilon(|S|-1)},$$ 
since these events are nearly independent as $H$ has girth at least four. This event meanwhile is independent of whether the edges are chosen (and hence end up in $E'$). So essentially the term $d_i(H,e) \cdot (i-1)/D^{i-1}$ in $w_D(H,e)$ is replaced in $w_{D'}(H',e)$ by the following two terms: a first term that counts the edges of size $i$ of $H$ that remain size $i$ in $H'$ and a second term for those that shrink to size $i-1$ (further shrinkings are negligible as they have an $\varepsilon^2$ factor) 
\begin{align*}
&d_i(H,e) \cdot \frac{i-1}{(D')^{i-1}} \cdot \left(\frac{D'}{D}\right)^{i-1}e^{-\varepsilon(i-1)} + d_i(H,e) \cdot \frac{i-2}{(D')^{i-2}} \cdot \left(\frac{D'}{D}\right)^{i-1}e^{-\varepsilon(i-1)} \cdot \frac{\varepsilon}{D}\cdot (i-1)\\
&\approx d_i(H,e) \cdot \frac{i-1}{D^{i-1}}\cdot (1-(i-1)\varepsilon+(i-2)\varepsilon)\\
&= d_i(H,e) \cdot \frac{i-1}{D^{i-1}}\cdot (1-\varepsilon).
\end{align*}
Since we have this additional $(1-\varepsilon)$ factor for every $i$, we find that
$$\Expect{w_{D'}(H',e)} \approx w(1-\varepsilon).$$ 
Note the girth five assumption on $H$ is used to concentrate the above variable. Of course, to formally prove the key iterative lemma, Lemma~\ref{lem:Iterative}, we have to be more careful with the probabilities due to lower-order terms, then use Talagrand's inequality to concentrate the variables and finally use the Lov\'asz Local Lemma to show all of these concentrations can happen simultaneously. Hence the proof grows much longer and fills all of Section~\ref{s:ProofIter}.

\subsection{Proof Overview for Bipartite Result}\label{ss:BipOverview}

To prove Theorem~\ref{thm:GirthFiveBipartite}, we instead use the following iterative lemma.

\begin{lem}\label{lem:IterativeBip}
Under the assumptions of Lemma~\ref{lem:Iterative} but where $G=(A,B)$ is a bipartite $r$-uniform (multi)-hypergraph, and for all $D_A$ with $D\log D \ge D_A \ge D$, $\gamma \ge 16\cdot \frac{D_A}{D} \cdot rg\left(5r\cdot \frac{D_A}{D}+2w+w^2\right)\varepsilon$ and (G1) is replaced with the following conditions
\begin{enumerate}
\item[(G1A)] every vertex in $A$ has degree at least $D_A$, and 
\item[(G1B)] every vertex in $B$ has degree at most $D$,
\end{enumerate}
then the outcome of Lemma~\ref{lem:Iterative} holds except with outcome (2) removed and outcome (3) replaced with the following, where we let
$$D_A' = D_A \cdot e^{-\varepsilon(r-1+w+\gamma)},$$
and
$$D':= D \cdot e^{-\varepsilon\left(r-2 + \frac{D_A}{D} + w - \gamma\right)}:$$
\begin{itemize}
\item[(3a)] $\forall a\in A$, $d_{G'}(a) \ge D_A'$, and
\item[(3b)] $\forall b\in B$, $d_{G'}(b) \le D'$.
\end{itemize}
\end{lem}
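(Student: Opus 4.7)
The plan is to rerun the nibble procedure from the proof of Lemma~\ref{lem:Iterative} essentially verbatim, tracking the $A$- and $B$-side degrees separately. We still sample each edge of $G$ independently with probability $p = \varepsilon/D$ --- using the smaller $B$-side bound $D$ in the denominator so that $p\cdot d_G(b)\le \varepsilon$ for every $b\in B$ --- let $E'$ consist of the sampled edges that neither collide with another sampled edge nor complete a configuration of $H$, and form $G'$ on the unmatched vertices while deleting any edge lying in a nearly-completed configuration, followed by the same equalizing coinflip that makes each edge lost to configuration pruning with probability exactly $\varepsilon w$. The configuration hypergraph $H'$ is defined as before; outcomes (1) and (4) transfer verbatim, and outcome (2) is simply dropped since $A$-vertices in general disappear strictly faster than $B$-vertices and their cumulative loss is no longer a uniform $\Theta(\varepsilon)v(G)$.

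The expected-value computations only change on the $A$-side. For $a\in A$, an edge $e\ni a$ survives into $G'$ iff each of its $r-1$ vertices in $B$ avoids being matched (probability $\approx e^{-\varepsilon}$ each) and $e$ survives configuration pruning (factor $e^{-\varepsilon w}$), giving $\mathbb{E}[d_{G'}(a)] \approx d_G(a)\cdot e^{-\varepsilon(r-1+w)} \ge D_A \cdot e^{-\varepsilon(r-1+w)}$; the extra $-\varepsilon\gamma$ slack in the exponent of $D_A'$ absorbs the Talagrand deviation. For $b\in B$, an edge $e\ni b$ has exactly one endpoint $a\in A$ of degree at least $D_A$, so $a$ is matched with probability $\approx 1 - e^{-\varepsilon D_A/D}$, and the remaining $r-2$ $B$-vertices contribute the usual $e^{-\varepsilon(r-2)}$ factor, yielding $\mathbb{E}[d_{G'}(b)] \approx D\cdot e^{-\varepsilon(r-2 + D_A/D + w)}$, exactly the $D'$ of the statement (absorbing the $+\varepsilon\gamma$ into deviation). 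The $D'$-weighted-degree calculation for $H'$ is unchanged from Section~\ref{ss:IterOverview} because it sees only $p$ and the probability an edge of $G$ survives, both of which are already baked in.

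The main obstacle --- and the source of the enlarged hypothesis $\gamma \ge 16(D_A/D)\cdot rg(5r(D_A/D)+2w+w^2)\varepsilon$ --- is concentration. Because $d_G(a)$ can be as large as $D_A \le D\log D$, every Talagrand certificate bound for $d_{G'}(a)$ inflates relative to Lemma~\ref{lem:Iterative} by a factor $D_A/D$, and the second-order (variance-type) contribution from pairs of edges meeting at an $A$-vertex inflates by $(D_A/D)^2$; the enlarged $\gamma$ is engineered exactly to dominate these blown-up error terms so that deviation probabilities remain sub-polynomial in $D$ for $\varepsilon \ge D^{-1/6r}$. The remaining steps --- bounded Talagrand dependency via the unchanged codegree hypotheses (G2) and (G+H), almost-independence of configuration-completion events from individual edge-selection events via the girth-five assumption on $H$, and the Lov\'asz Local Lemma combination of all the per-vertex and per-edge deviation events --- transfer line by line from the proof of Lemma~\ref{lem:Iterative}. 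I expect the only genuinely new bookkeeping to be this Talagrand step for $d_{G'}(a)$ together with propagating the $\gamma$ slack into the exponents of $D_A'$ and $D'$ with opposite signs, which is precisely what ensures the $A$-versus-$B$ degree ratio $D_A'/D'$ widens rather than closes across iterations and sets up the finishing lemma to complete the matching to an $A$-perfect one.
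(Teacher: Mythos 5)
Your high-level plan --- rerun the same nibble with $p=\varepsilon/D$, track $A$- and $B$-side degrees separately, and derive the asymmetric exponents from the fact that an $A$-vertex is covered with probability $\approx 1-e^{-\varepsilon D_A/D}$ while a $B$-vertex is covered with probability $\approx 1-e^{-\varepsilon}$ --- matches the paper's. However, there is a concrete gap in your $B$-side upper bound that the paper closes with a separate \emph{regularization} step (its Lemma~\ref{lem:reg}), and without it your argument for outcome~(3b) does not go through.

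For the upper bound $d_{G'}(b)\le D'$ you assert that for an edge $e\ni b$ the remaining $r-2$ $B$-vertices ``contribute the usual $e^{-\varepsilon(r-2)}$ factor.'' But that factor requires those $B$-vertices to be covered with probability $\approx 1-e^{-\varepsilon}$, hence to have degree close to $D$; hypothesis (G1B) only gives degree \emph{at most} $D$. If a $B$-vertex $u\in e$ has very low degree, $u$ is almost never covered, so $u$ survives into $V'$ with probability $\approx 1$, and the corresponding factor for $u$ disappears entirely from $\Prob{e\in E(G')}$. Quantitatively, if $b$ has $\Theta(D)$ incident edges whose other $B$-endpoints are all low-degree, then $\Expect{d_{G'}(b)}\approx D\,e^{-\varepsilon(D_A/D+w)}$, which exceeds $D'=D\,e^{-\varepsilon(r-2+D_A/D+w-\gamma)}$ by a factor of $e^{\varepsilon(r-2-\gamma)}>1$ whenever $r\ge 3$ (since $\gamma\le\tfrac{1}{24rg}<1$); no choice of Talagrand tolerance can fix an expectation that is already on the wrong side. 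The paper avoids this by first trimming to exact $A$-degree $\lceil D_A\rceil$ and then invoking a regularization lemma that embeds $G$ as an induced subhypergraph of a bipartite hypergraph in which every $B$-vertex has degree exactly $\lceil D\rceil$, so that the lower bound $|B_e|\ge (r-2)D+D_A-O(\log^2 D)$ is available for the $B$-side calculation. Your write-up should add that reduction (and check it is harmless for $H$, girth, and the codegree hypotheses, which it is because all of these only improve under deletion/induced embedding).

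A smaller inaccuracy: the enlarged hypothesis $\gamma\ge 16\cdot\frac{D_A}{D}\cdot rg\bigl(5r\cdot\frac{D_A}{D}+2w+w^2\bigr)\varepsilon$ is not primarily needed to inflate Talagrand \emph{certificates} (the observability parameters in the paper are essentially unchanged); it is needed in the first-moment estimates, specifically to keep $\Prob{v\in V_2}\le \varepsilon^2\cdot\frac{D_A}{D}\bigl(r\cdot\frac{D_A}{D}+w\bigr)$ and hence $\Expect{|D_{2,v}|}$ of order $\gamma\varepsilon D'$, since both $\Prob{e\in E_2}$ and $d_G(v)$ for $v\in A$ now scale with $D_A$ rather than $D$. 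Once the expectations are controlled this way, the concentration step carries over with only cosmetic changes, as you suspected.
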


%We now provide a rough overview of why Lemma~\ref{lem:IterativeBip} holds...

We prove Lemma~\ref{lem:IterativeBip} in Section~\ref{s:Split} by explaining how to tweak the proof of Lemma~\ref{lem:Iterative}. First we can assume every vertex in $A$ has degree exactly $D_A$ by deleting edges, and then that every vertex in $B$ has degree exactly $D$ via a regularization lemma, Lemma~\ref{lem:reg}. Then we use the same procedure as for Lemma~\ref{lem:Iterative}. The main difference is that for $b\in B$,  $\Prob{b\in V(G')} \approx e^{-\varepsilon}$ while for $a\in A$, $\Prob{a\in V(G')} \approx e^{-\varepsilon \cdot \frac{D_A}{D}}$. This difference then means that $\Expect{d_{G'}(b)} \approx D\cdot e^{-\varepsilon(r-2 + (D_A/D) + w)}\approx D'$ while $\Expect{d_{G'}(a)} \approx D_A \cdot e^{-\varepsilon(r-1+w)} \approx D_A'$.

To prove Theorem~\ref{thm:GirthFiveBipartite}, we also need a finishing lemma as follows.

\begin{lem}\label{lem:Finish}
For every integer $r\ge 1$, there exists $D_0$ such that for all integers $D\ge D_0$ the following holds: 

Let $G=(A,B)$ be an $r$-uniform bipartite hypergraph such that $d_G(a) \ge 300rD$ for each $a\in A$ and $d_G(b)\le D$ for each $b\in B$. If $H$ is a configuration hypergraph of $G$ with $w_D(H) \le 1$, then there exists an $H$-avoiding $A$-perfect matching of $G$.
\end{lem}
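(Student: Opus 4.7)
\textit{Proof plan.} The strategy is to apply the lopsided Lov\'asz Local Lemma (Erd\H{o}s--Spencer) to a random edge-selection procedure. First, by deleting edges at each $a\in A$, we may assume $d_G(a) = D_A := 8rD$ exactly for all $a\in A$; this restriction preserves $d_G(b)\le D$ and $w_D(H)\le 1$ since both quantities can only decrease under edge-deletion. Then, for each $a \in A$ independently, pick $e_a$ uniformly at random among the $D_A$ edges incident to $a$, and set $M := \{e_a : a\in A\}$. The goal is to show positive probability that $M$ is an $H$-avoiding $A$-perfect matching.

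Define two families of bad events: $B_{e,e'}$ for each unordered pair $\{e,e'\}\subseteq E(G)$ with $a_e\ne a_{e'}$ and $e\cap e'\cap B\ne\emptyset$, occurring when $e_{a_e}=e$ and $e_{a_{e'}}=e'$ (a matching conflict in $B$); and $C_S$ for each $S\in E(H)$, occurring when $e_{a_f}=f$ for all $f\in S$ (a configuration violation). By independence of choices across distinct $A$-vertices, $\Prob{B_{e,e'}} = D_A^{-2}$ and $\Prob{C_S} = D_A^{-|S|}$. The crucial observation is that two events demanding \emph{different} values of $e_a$ at some shared variable $a$ are mutually exclusive, and hence do not contribute any lopsided dependency. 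Consequently two $B$-events are lopsided-dependent only when they share an actual edge of $G$; $B_{e,e'}$ and $C_S$ are lopsided-dependent only when $e\in S$ or $e'\in S$; and $C_S,C_{S'}$ are lopsided-dependent only when $S\cap S'\ne\emptyset$ as subsets of $E(G)$.

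With weights $x_B := 2\,\Prob{B}$ and $x_S := 2\,\Prob{C_S}$, the bound $d_{H,i}(e)\le D^{i-1}/(i-1)$ (equivalent to $w_D(H)\le 1$) combined with the sparsified dependency structure bounds every lopsided-dependency sum by $O(1/D)$: the $B$-type contribution is $O((r-1)D\cdot D_A^{-2})=O(1/(rD))$, while the $C$-type contribution telescopes into a geometric-type series of the form $\sum_{i\ge 2}(8r)^{1-i}/D$. For $D$ larger than a constant depending only on $r$, all these sums lie safely below $\tfrac{1}{2}$, so the lopsided LLL inequalities $\Prob{E}\le x_E\prod_{E'\text{ lop-dep on }E}(1-x_{E'})$ hold simultaneously, yielding positive probability that no bad event occurs and therefore the desired $H$-avoiding $A$-perfect matching. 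The principal technical hurdle is correctly identifying the lopsided dependency structure---in particular, recognizing that matching-conflict events sharing only an $A$-vertex (but not an actual edge) are mutually exclusive rather than positively correlated; once this observation is made, the $8r$ multiplicative slack in $d_G(a)\ge 8rD$ makes the LLL verification routine.
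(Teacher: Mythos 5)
The random procedure you propose is exactly the one used in the paper's proof of Lemma~\ref{lem:Finish}: delete edges at $A$-vertices to equalize their degree, independently pick $e_a$ uniformly among the edges at $a$, and define bad events for matching conflicts in $B$ and for configuration violations. Unfortunately the crucial step of your argument---that two bad events sharing an $A$-vertex but demanding different values of $e_a$ are mutually exclusive and therefore ``do not contribute any lopsided dependency''---is not correct, and the lopsided Local Lemma cannot be applied with the sparse dependency graph you describe.

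The lopsided LLL requires a \emph{negative} dependency graph: for each bad event $A_i$ and every set $J$ of non-neighbours, one needs $\Prob{A_i \mid \bigcap_{j\in J}\overline{A_j}} \le \Prob{A_i}$. For mutually exclusive $A_i,A_j$ with $\Prob{A_j}>0$ one has $\Prob{A_i \mid \overline{A_j}} = \Prob{A_i}/\Prob{\overline{A_j}} > \Prob{A_i}$, so the inequality goes the \emph{wrong} way. Concretely: let $X,Y,Z$ be independent uniform $\{0,1\}$ variables, $A_1 = \{X=0, Y=0\}$, $A_2=\{X=1, Z=0\}$; these are disjoint, yet $\Prob{A_1\mid\overline{A_2}} = 1/3 > 1/4 = \Prob{A_1}$. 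Translated to the present setting (with $X$ playing the role of $e_a$ for a shared $a\in A$), this shows that two matching-conflict events which collide only on a shared $a\in A$ (and demand different $e_a$) \emph{must} be neighbours in any negative dependency graph. The genuine lopsided gain in the Erd\H{o}s--Spencer/Lu--Sz\'ekely framework comes from the random \emph{permutation} (or injection) model, where canonical events with disjoint supports are negatively correlated because of the injectivity constraint; here the $e_a$ are independent, so no such saving exists, and the right dependency structure is the ordinary one (share a random trial, i.e.\ share an $A$-vertex).

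This matters quantitatively, not just formally. With the correct dependency graph, the relevant sums are of order $\Theta(|S|)$ rather than the $O(1/D)$ you compute: for each $a\in V(S)\cap A$, the $8rD$ edges through $a$ each sit in roughly $w_D(H)D^{i-1}$ configurations of size $i$, and the $8rD$ multiplicity wipes out the $1/D$ savings. In particular, with your proposed weights $x_S := 2\Prob{C_S} = 2(8rD)^{-|S|}$, the products $\prod(1-x_{S'})$ over the true dependency neighbourhood are bounded away from $1$ by a constant independent of $D$, and the LLL inequality $\Prob{C_S}\le x_S\prod(1-x_{S'})$ is not even close. The paper circumvents this by taking $x_S := \frac{1}{32r^2 D^{|S|}}$, which decays at the slower rate $D^{-|S|}$ rather than $(8rD)^{-|S|}$, so that $\sum_{S'\sim S} x_{S'} \le |S|/4$ while $x_S$ still dominates $\Prob{C_S}$ by a factor $(8r/\sqrt{e})^{|S|}/(32r^2)$; the $8r$ factor in the hypothesis $d_G(a)\ge 8rD$ is precisely what makes this ratio exceed $1$ for $|S|\ge 2$. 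So the constant $8r$ is not ``slack'' that makes a routine verification comfortable; it is what makes the asymmetric-weight choice succeed against a dependency count that is of order one. Your write-up needs to switch to the standard LLL with the denser dependency graph and rebalance the weights accordingly.
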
 

We also prove Lemma~\ref{lem:Finish} in Section~\ref{s:Split}. Assuming Lemmas~\ref{lem:IterativeBip} and~\ref{lem:Finish}, we are now prepared to prove Theorem~\ref{thm:GirthFiveBipartite} as follows.

\begin{proof}[Proof of Theorem~\ref{thm:GirthFiveBipartite}]
Recall $D\ge D_{r,g}$ by assumption. We choose $D_{r,g} \ge \max\{ D_{r,g}')^2, D_0^2\}$ large enough, where $D_{r,g}'$ is the value of $D_{r,g}$ in Lemma~\ref{lem:IterativeBip} and $D_0$ is the value in Lemma~\ref{lem:Finish}.

We may assume without loss of generality that $G$ is $r$-uniform by adding dummy vertices to edges as necessary. 

Let $\varepsilon := D_0^{-(7/6)\alpha}$ and $\gamma:= \frac{D_0^{-\alpha}}{4}$. Let $D_0:= D$, $D_{A,0}:= D(1+D^{-\alpha})$ and $w_0 := \frac{\alpha\log D_0}{2}$. Choose $T$ such that $$\left(1+\frac{\varepsilon}{2}\right)^{T} = 8r\cdot D^{\alpha}.$$

As $\Delta_i(H) \le \frac{\alpha}{g^2} \cdot D^{i-1}\log D$ for all $i\in \{2,\ldots, g\}$ by assumption, we find that $w_D(H) \le \frac{\alpha \log D_0}{2} = w_0$. Note that $w_0\ge 1 \ge D_0^{-1/6}$ since $D$ is large enough. 

For each integer $i$ with $1\le i\le T+1$, define the following parameters:
$$D_{A,i} = D_{A,0} \cdot e^{-\varepsilon\left(i(r-1-\gamma)+w_0\cdot \sum_{j=0}^{i-1} \left(1-\frac{\varepsilon}{2}\right)^j\right)},$$
%this was a typo in the submitted version, namely had $D_0$ instead of $D_{A,0}$
let $D_i$ satisfy
$$\frac{D_{A,i}}{D_i}-1 = \left(\frac{D_{A,0}}{D_{0}}-1\right) \cdot \left(1+\frac{\varepsilon}{2}\right)^i,$$
%or equivalently (solving for $D_i$)
%$$D_i := D_{A,i} \cdot \frac{1}{1 + \frac{\varepsilon}{2} \cdot \left(\frac{D_{A,i-1}}{D_{i-1}}-1\right)}$$
and
$$w_i := w_0 \cdot \left(1-\frac{\varepsilon}{2}\right)^i.$$

\begin{claim}\label{claim:iterativebip}
For each integer $0\le i \le T+1$, there exists an $H$-avoiding matching $E_i$ of $G$, a subhypergraph $G_i=(A_i,B_i)$ of $G$ with $V(G_i)=V(G)\setminus V(E_i)$ and a configuration hypergraph $H_i$ of $G_i$ of girth at least five where $E(H_i):= \{S\setminus E_i: S\in E(H),~S\setminus E_i \subseteq E(G_i)\}$ that satisfy all of the following:
\begin{enumerate}
\item[(1)] if $M$ is an $H_i$-avoiding matching of $G_i$, then $M_i\cup E_i$ is an $H$-avoiding matching of $G$, 
\item[(2)] every vertex in $A_i$ has degree at least $D_{A,i}$ in $G$,
\item[(3)] every vertex in $B_i$ has degree at most $D_i$ in $G_i$,
\item[(4)] $w_{D_i}(H_i)\le w_i$.
\end{enumerate}
\end{claim}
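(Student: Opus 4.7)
The plan is to proceed by induction on $i$, in complete analogy with the proof of Claim~\ref{claim:iterative} but invoking Lemma~\ref{lem:IterativeBip} in place of Lemma~\ref{lem:Iterative} at each step. For the base case $i = 0$, set $E_0 := \emptyset$, $G_0 := G$, $H_0 := H$: condition (1) is vacuous, (2) and (3) follow from the degree hypotheses of Theorem~\ref{thm:GirthFiveBipartite} with $D_{A,0} = D(1 + D^{-\alpha})$ and $D_0 = D$, and the assumption $\Delta_i(H) \le \tfrac{\alpha}{g^2} D^{i-1}\log D$ for $2 \le i \le g$ immediately yields $w_D(H) \le w_0$, giving (4).

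For the inductive step, assume $G_{i-1} = (A_{i-1}, B_{i-1})$ and $H_{i-1}$ satisfy (1)--(4). The central task is to verify that $G_{i-1}, H_{i-1}$ together with the parameters $D_{i-1}, D_{A,i-1}, \gamma, w_{i-1}$ and $\varepsilon$ fulfill the hypotheses of Lemma~\ref{lem:IterativeBip}. The conditions involving only $\varepsilon, \gamma, w_{i-1}$ and $D_{i-1}$, namely $\varepsilon \ge D_{i-1}^{-1/(6r)}$, $\gamma \le \tfrac{1}{24rg}$, $w_{i-1} \ge D_{i-1}^{-1/6}$, and the two codegree bounds, can be verified exactly as in the proof of Claim~\ref{claim:iterative}: since $i - 1 \le T$ and $T\varepsilon = O(\alpha \log D_0)$, we have $D_{i-1} \ge D_0^{1 - o(1)}$, the bound $w_{i-1} \ge w_0 e^{-T\varepsilon} \ge D_0^{-\alpha} \ge D_{i-1}^{-1/6}$ follows from $(1-\varepsilon/2) \ge e^{-\varepsilon}$, and the codegree bounds are inherited from $G$ and $H$ using $G_i \subseteq G_{i-1}$ and $\log^2 D_0 \le 4 \log^2 D_{i-1}$.

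The genuinely new condition, absent from Claim~\ref{claim:iterative}, is the strengthened lower bound
\[
\gamma \ge 16 \cdot \frac{D_{A,i-1}}{D_{i-1}} \cdot rg\left(5r \cdot \frac{D_{A,i-1}}{D_{i-1}} + 2w_{i-1} + w_{i-1}^2\right)\varepsilon,
\]
together with $D\log D \ge D_{A,i-1} \ge D_{i-1}$. The key observation is that the defining recurrence $\tfrac{D_{A,i}}{D_i} - 1 = D^{-\alpha}(1+\varepsilon/2)^i$ combined with the stopping rule $(1+\varepsilon/2)^T = 8r D^{\alpha}$ guarantees $1 \le \tfrac{D_{A,i-1}}{D_{i-1}} \le 1 + 8r$ for all $i \le T+1$, so the ratio is bounded by a constant depending only on $r$. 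Combined with $w_{i-1} \le w_0 = \tfrac{\alpha \log D_0}{2}$, the right-hand side of the displayed inequality is at most $O(r^2 g \log^2 D_0)\cdot \varepsilon$, which is much smaller than $\gamma = D_0^{-\alpha}/4$ for $\varepsilon = D_0^{-(7/6)\alpha}$ and $D_0$ sufficiently large; the bounds $D\log D \ge D_{A,i-1}$ and $D_{A,i-1} \ge D_{i-1}$ follow immediately from the same control on the ratio.

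With the hypotheses verified, apply Lemma~\ref{lem:IterativeBip} to $G_{i-1}, H_{i-1}$ to obtain $E_i', G_i, H_i$ satisfying conclusions (1), (3a), (3b), (4) of that lemma, and set $E_i := E_{i-1} \cup E_i'$; conclusion (1) of the claim then follows from combining induction with (1) of the lemma, while (2), (3), (4) of the claim follow respectively from (3a), (3b), (4) of the lemma. I expect the main obstacle to be the bookkeeping needed to show that the multiplicative updates $D_{A,i}/D_{A,i-1}$ and $D_i/D_{i-1}$ produced by Lemma~\ref{lem:IterativeBip} are consistent with the formulas in the claim: verifying that the prescribed implicit recurrence $D_{A,i}/D_i - 1 = (D_{A,0}/D_0 - 1)(1+\varepsilon/2)^i$ is preserved requires expanding the two exponential updates to first order in $\varepsilon$, factoring out $D_{A,i-1}/D_{i-1}$ from the numerator, and carefully tracking the surviving $\varepsilon/2$-factor growth of the ratio, with the $\gamma$-terms cancelling appropriately.
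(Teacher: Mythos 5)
Your overall strategy is exactly the paper's: induct on $i$, verify the hypotheses of Lemma~\ref{lem:IterativeBip} (including the new ratio condition, which you correctly control via $\frac{D_{A,i-1}}{D_{i-1}}-1\le 8r$ from the stopping rule), apply the lemma, and set $E_i:=E_{i-1}\cup E_i'$. The hypothesis-verification portion matches the paper's computations closely.

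However, the step you defer as "bookkeeping" is the one substantive piece of the inductive step, and your description of it is slightly off target, so I would count it as a gap. Lemma~\ref{lem:IterativeBip} outputs degree and weight bounds relative to $D_{A,i-1}':=D_{A,i-1}e^{-\varepsilon(r-1+w_{i-1}+\gamma)}$ and $D_{i-1}':=D_{i-1}e^{-\varepsilon(r-2+D_{A,i-1}/D_{i-1}+w_{i-1}-\gamma)}$; the claim's conclusions (3) and (4) are stated relative to $D_i$, which is defined implicitly by $\frac{D_{A,i}}{D_i}-1=\bigl(\frac{D_{A,0}}{D_0}-1\bigr)(1+\varepsilon/2)^i$. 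There is nothing to "preserve" and no $\gamma$-cancellation to track: what one must prove is the one-sided inequality $D_{i-1}'\le D_i$. The paper does this by writing $D_{i-1}'=D_{A,i}\,e^{-\varepsilon(D_{A,i-1}/D_{i-1}-1-2\gamma)}$ and checking
$$\Bigl(1+\tfrac{\varepsilon}{2}\bigl(\tfrac{D_{A,i-1}}{D_{i-1}}-1\bigr)\Bigr)e^{2\gamma\varepsilon}\le e^{\varepsilon\left(\frac{D_{A,i-1}}{D_{i-1}}-1\right)},$$
which holds precisely because the ratio gap satisfies $\frac{D_{A,i-1}}{D_{i-1}}-1\ge\frac{D_A}{D}-1=4\gamma$, i.e.\ the $\gamma$-terms are absorbed by the slack in the exponent rather than cancelling. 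Having $D_{i-1}'\le D_i$, conclusion (3) is immediate, but conclusion (4) needs one more observation you omit: the lemma gives $w_{D_{i-1}'}(H_i)\le w_{i-1}(1-\varepsilon/2)=w_i$, and since $w_D(H)$ is decreasing in $D$, $D_{i-1}'\le D_i$ yields $w_{D_i}(H_i)\le w_{D_{i-1}'}(H_i)\le w_i$. Your proposal would be complete once these two points are carried out.
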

\begin{proof}
We proceed by induction on $i$. When $i=0$, then $G_0:=G$ and $H_0:= H$ satisfy the conclusion as desired. So we may assume that $i \ge 1$. By induction, there exists $G_{i-1}=(A_{i-1},B_{i-1})$ and $H_{i-1}$ satisfying (1)-(4).

Let 
$$D_{i-1}' := D_{i-1} \cdot e^{-\varepsilon\left(r-2 + \frac{D_{A,i-1}}{D_{i-1}} + w_{i-1} - \gamma\right)}.$$
Note that
$$D_{i-1}' = D_{A,i} \cdot e^{-\varepsilon\left(\frac{D_{A,i-1}}{D_{i-1}}-1 - 2\gamma\right)}.$$
Yet 
$$\left(1 + \frac{\varepsilon}{2} \cdot \left(\frac{D_{A,i-1}}{D_{i-1}}-1\right) \right) \cdot e^{2\gamma \varepsilon} \le e^{\varepsilon\left(\frac{D_{A,i-1}}{D_{i-1}}-1\right)},$$
since $\frac{D_{A,i-1}}{D_{i-1}}-1 \ge \frac{D_A}{D} - 1 \ge 4\gamma$. Thus
we find that
$$D_{i-1}' \le D_i.$$

Note that $$D_{A,i} := D_{A,i-1}\cdot e^{-\varepsilon(r-1+w_{i-1}+\gamma)},$$
$$\frac{D_{A,i}}{D_i}-1 = \left(\frac{D_{A,i-1}}{D_{i-1}}-1\right) \cdot \left(1+\frac{\varepsilon}{2}\right),$$
and 
$$w_i = w_{i-1} \cdot \left(1-\frac{\varepsilon}{2}\right).$$ 
In addition, 
$$\gamma = \frac{D_0^{-\alpha}}{4} \le \frac{1}{24rg}$$ 
since $\alpha > 0$ and $D$ is large enough.

Thus, we find that
$$\frac{D_{A,i-1}}{D_{i-1}} - 1 \le \left( \frac{D_{A,0}}{D_0} -1 \right) \left(1+\frac{\varepsilon}{2}\right)^{i-1} \le \left( \frac{D_{A,0}}{D_0} -1 \right) \left(1+\frac{\varepsilon}{2}\right)^{T} \le D^{-\alpha} \cdot (8r \cdot D^{\alpha}) = 8r,$$
and hence
$$D_{i-1} \ge \frac{D_{A,i-1}}{8r+1} \ge \frac{D_{A,i-1}}{9r}.$$
Also note that 
\begin{align*}
D_{A,i-1} &\ge D_{A,0} \cdot e^{-(i-1)(r-1+\gamma)\varepsilon-2w_0} \ge D_{A,0} \cdot e^{-Tr\varepsilon-2w_0} \\
&\ge  D_{A,0} \cdot (8r)^{-4r} \cdot D_0^{-(4r+1)\alpha} \ge (9r) D_{A,0}^{1-(4r+2)\alpha} \ge (9r) \cdot D_{A,0}^{2/3}
\end{align*}
where we used that $T\varepsilon = 4\log(D_0^{\alpha}\cdot 8r)$, $\gamma \le 1$, $w_0 = \frac{\alpha \log D_0}{2}$, $D_{A,0}\ge D_0$, $r\ge 2$, $\alpha \le \frac{1}{15r}$ and that $D_{A,0}^{\alpha} \ge (8r)^{4r}\cdot 9r$ since $D$ is large enough.
Thus
$$\frac{D_{A,i-1}}{D_{i-1}} \le 9r \le \log D_{i-1}$$
since $D_{i-1} \ge \frac{D_{A,i-1}}{9r} \ge \frac{D_{A,0}^{2/3}}{9r}$ and $D$ is large enough. In addition,
$$D_{i-1} \ge \frac{D_{A,i-1}}{9r} \ge D_{A,0}^{2/3} \ge D_0^{2/3}.$$
Hence 
$$\varepsilon \ge D_0^{-(7/6)\alpha} \ge D_{i-1}^{ -(7/4)\alpha} \ge (D_{i-1})^{-1/6r}$$ 
since $\alpha \le \frac{2}{21r}$.

Since $w_{i-1}\le w_0$, we also have that 
$$\gamma = \frac{D_0^{-\alpha}}{4} \ge 16\cdot \frac{D_{A,i-1}}{D_{i-1}} \cdot rg\left(5r\cdot \frac{D_{A,i-1}}{D_{i-1}}+2w_{i-1}+w_{i-1}^2\right)\varepsilon,$$
since $\frac{D_{A,i-1}}{D_{i-1}} \le \log D_{i-1} \le \log D_0$, $w_{i-1}\le w_0 \le \log D_0$, $\gamma \ge D_0^{\alpha/6}\varepsilon$ and $D$ is large enough.
Moreover, using that $1-x \ge e^{-2x}$ for all $x\in [0,1]$, we find that
$$w_{i-1} = w_0 \cdot \left(1-\frac{\varepsilon}{2}\right)^{i-1} \ge e^{-T\varepsilon} = (8r)^{-4} D_0^{-4\alpha} \ge D_{i-1}^{-1/6},$$
where we used that $w_0\ge 1$, $i-1\le T$ and $D_{i-1} \ge D_0^{2/3} \ge \left(8r \cdot D_0^{\alpha}\right)^{24}$ as $\alpha \le \frac{1}{20r} \le \frac{1}{40}$ by assumption since $r\ge 2$ where we used that $D$ is large enough.

The codegree of pairs of vertices of $G_{i-1}$ is at most the codegree of pairs of vertices of $G_0$ which is at most $\log^2 D_0 \le 4\log^2 D_{i-1}$ since $D_{i-1} \ge D_0^{1/2}$.  Similarly the codegree of $G_i$ with $H_i$ is at most the codegree of $G_0$ with $H_0$ which is at most $\log^{2(g-1)} D_0 \le 2^{2(g-1)}\cdot \log^{2(g-1)} D_{i-1}$ since $D_{i-1} \ge D_0^{1/2}$. 

 Thus we have that $G_{i-1}, H_{i-1}, D_{i-1}, D_{A,i-1}, w_{i-1}, \gamma$ and $\varepsilon$ satisfy the the conditions of Lemma~\ref{lem:IterativeBip}. Hence by Lemma~\ref{lem:IterativeBip} applied to $G_{i-1}$ and $H_{i-1}$, there exists $G_i=(A_i,B_i)$, $H_i$ and $E_i'$ satisfying (1), (3a), (3b) and (4) of Lemma~\ref{lem:IterativeBip}.
 
 Now Lemma~\ref{lem:IterativeBip}(3b) yields that $d_{G_i}(b) \le D_{i-1}'$ for every $b\in B_i$ and hence $d_{G_i}(b) \le D_i$ for every $b\in B_i$, that is (3) holds. Similarly Lemma~\ref{lem:IterativeBip}(4) yields that $w_{D_{i-1}'}(H_i)\le w_{i-1} \cdot \left(1-\frac{\varepsilon}{2}\right) = w_i$. Since $D_{i-1}' \le D_i$, then by the definition of $w_D(H)$, we find that $w_{D_i}(H_i) \le w_{D_{i-1}'}(H_i)$ and hence $w_{D_i}(H_i) \le w_i$, that is (4) holds. Hence $G_i$ and $D_i$ also satisfy (1)-(4) of this claim as desired where we let $E_i = E_i'\cup E_{i-1}$.
\end{proof}

Let $T^* = \lfloor T \rfloor + 1$. By Claim~\ref{claim:iterativebip}, there exists $G_{T^*}, H_{T^*}$ satisfying (1)-(4) of Claim~\ref{claim:iterativebip}. Hence $E_{T^*}$ is an $H_{T^*}$-avoiding matching of $G_{T^*}$. We note that if $A_i=\emptyset$ for any $i\le T+1$, then $E_{T^*}$ is $A$-perfect and hence $E_{T^*}$ is as desired. So we may assume that $A_{T^*}\ne \emptyset$.

Recall that by definition of $T$, we have
$$\left(1+\frac{\varepsilon}{2}\right)^T = 8r \cdot D^{\alpha}.$$
Thus, we find that
$$\frac{D_{A,T^*}}{D_{T^*}} - 1 \ge \frac{D_{A,0}}{D_0} \left(1+\frac{\varepsilon}{2}\right)^T \ge D^{-\alpha} \cdot (8r \cdot D^{\alpha}) = 8r,$$
and hence
$$D_{A,T^*} \ge 8r \cdot D_{T^*}.$$
Meanwhile,
$$w_{T+1} = w_0 \cdot \left(1-\frac{\varepsilon}{2}\right)^T \le w_0 \cdot e^{-\varepsilon T /2} = w_0 \cdot D^{-2\alpha} \cdot \frac{1}{(8r)^2} \le 1$$
where for the last inequality we used that $w_0 \le \log D_0$ and $D$ is large enough.

Thus by Lemma~\ref{lem:Finish}, there exists an $H_{T^*}$-avoiding $A_{T^*}$-perfect matching $E_{T^*}'$ of $G_{T^*}$. But then $E_{T^*}\cup E_{T^*}'$ is an $H$-avoiding $A$-perfect matching of $G$ as desired.
\end{proof}

\subsection{Overview of Small Codegree Result}

To prove Theorem~\ref{thm:SmallCodegree}, we use random sparsification; namely, we choose each edge of $G$ independently with probability $p=D^{\beta/4g -1}$ and show the resulting graph satisfies the conditions of Theorem~\ref{thm:GirthFive} (or Theorem~\ref{thm:GirthFiveBipartite} as needed) with positive probability. Here is a lemma that encapsulates this.

\begin{lem}\label{lem:Sparsifying}
For all integer $r, g\ge 2$ and real  $\beta \in (0,1)$, there exists an integer $D_{\beta}$ such that the following holds for all $D\ge D_{\beta}$:

Let $G$ be an $r$-bounded hypergraph and $H$ be a $g$-bounded configuration hypergraph of $G$ satisfying the conditions of Theorem~\ref{thm:SmallCodegree} except that we require $G$ has maximum degree at most $2D$.

Then letting $p=D^{\beta/4g -1}$, there exists a subhypergraph $G'$ of $G$ with $V(G')=V(G)$ and a configuration hypergraph $H' = H[E(G')]$ such that all of the following hold:
\begin{itemize}
    \item[(1)] for each $v\in V(G')$, $d_{G'}(v) = p\cdot d_G(v)(1 \pm D^{-\beta/18g}) \pm D^{\beta/6g}$,
    \item[(2)] the codegree of every pair of vertices of $G'$ is at most $\log^2(pD)$,
    \item[(3)] $w_{pD}(H') \le w_D(H)+1$,
    \item[(4)] the maximum codegree of $G'$ with $H'$ is at most $\log^{2(g-1)}(pD)$,
    \item[(5)] $H'$ has girth at least five.
\end{itemize}
\end{lem}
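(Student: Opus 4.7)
The plan is to perform single-stage random sparsification followed by a deterministic cleanup, with the Lov\'asz Local Lemma enforcing all concentration and boundedness properties simultaneously. Independently retain each edge $e \in E(G)$ with probability $p = D^{\beta/4g-1}$, obtaining a random subhypergraph $G_0 \subseteq G$. Let $B \subseteq E(G_0)$ be the set of edges which either (a) lie on some $i$-cycle of $H[E(G_0)]$ for $2 \le i \le 4$, (b) witness a pair-codegree in $G_0$ exceeding $\log^2(pD)$, or (c) contribute to an $i$-codegree of $G_0$ with $H[E(G_0)]$ exceeding $\log^{2(g-1)}(pD)$. Set $G' := G_0 \setminus B$ and $H' := H[E(G')]$. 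Then (5) is immediate, and (2) and (4) follow because deletion only reduces codegrees.

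The core computation is that each contribution to $B$ has negligible expectation. For example, a 2-cycle of $H[E(G_0)]$ through a fixed $e\in V(H)$ corresponds to distinct edges $e_1,e_2$ of $H$ with $|e_1\cap e_2|\ge 2$ and $e\in e_1\cap e_2$; using $d_{H,k_1}(e)\le\alpha D^{k_1-1}\log D$ and $\Delta_{k_2,2}(H)\le D^{k_2-2-\beta}$, the number of such candidate pairs is $O(g^2\alpha\log D)\cdot D^{k_1+k_2-3-\beta}$, and each survives in $H[E(G_0)]$ with conditional probability $p^{k_1+k_2-3}$, yielding expected count at most $O((pD)^{2g-3}\cdot D^{-\beta}\log D)=O(D^{-\beta/2}\log D)$. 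Analogous estimates using $\Delta_{k,\ell}(H)\le D^{k-\ell-\beta}$ dispose of 3- and 4-cycles, and the codegree bounds $D^{1-\beta}$ and $D^{i-1-\beta}$ give the same kind of estimate for the violations in (b) and (c). In every case the expected number of bad edges incident to any fixed $v$ is $o(D^{\beta/6g})$.

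For (1), Chernoff applied to $d_{G_0}(v)\sim\mathrm{Bin}(d_G(v),p)$ gives deviation at most $(D^{-\beta/18g}/2)\cdot pd_G(v)+D^{\beta/6g}/2$, and a further Chernoff bound on the much smaller count of bad edges at $v$ absorbs the remaining $D^{\beta/6g}/2$ slack. For (2) and (4), binomial concentration on variables of tiny mean $D^{-\Omega(\beta)}$ gives the logarithmic bounds. The delicate item is (3); here one verifies
\[
\Expect{w_{pD}(H', e) \mid e \in E(G')} \le \sum_{i \ge 3} \frac{i-1}{(pD)^{i-1}} \cdot p^{i-1}\, d_{H,i}(e) = w_D(H, e) \le w_D(H),
\]
and one must show the deviation from this mean is at most $1$ with very high probability. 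Because $w_{pD}(H', e)$ is a weighted sum of products of independent Bernoullis, rather than a Lipschitz function of them, the standard Talagrand inequality is inapplicable; instead one invokes the exceptional-outcomes version of the linear Talagrand inequality (Theorem~\ref{exceptional talagrand's observations}), absorbing the rare outcomes in which too many configurations simultaneously survive into the exceptional event and showing sharp concentration on its complement.

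All failure events are local, each depending on $\mathrm{poly}(D)$ edges and influencing $\mathrm{poly}(D)$ other events, while each failure probability is $D^{-\Omega(1)}$ or smaller. A direct application of the Lov\'asz Local Lemma then yields a realization of $G_0$ for which (1)--(5) hold simultaneously for $G'$ and $H'$. The principal obstacle is the concentration in (3): since $w_D(H)$ can itself be as large as $\Theta(\log D)$ while the allowed additive slack is only $1$, the concentration must be essentially optimal, and this is precisely where the layering of exceptional-outcomes Talagrand applications becomes delicate.
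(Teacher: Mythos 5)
Your high-level plan is the same as the paper's: sparsify independently at rate $p$, clean up by deleting edges that lie in short cycles of the induced configuration hypergraph, and run the Lov\'asz Local Lemma to certify all events at once. You correctly identify the decisive technical tool, the exceptional-outcomes version of linear Talagrand, and correctly flag item (3) as the delicate one. However, two specific steps in your write-up would not survive scrutiny.

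First, you claim that ``a further Chernoff bound on the much smaller count of bad edges at $v$ absorbs the remaining $D^{\beta/6g}/2$ slack.'' This does not work. The number of edges incident to $v$ that lie in a short $H[E_0]$-cycle is not a sum of independent Bernoulli variables: whether a given $e\ni v$ with $e\in E_0$ is deleted depends on whether the (possibly many) other edges completing a $2$-, $3$- or $4$-cycle of $H$ through $e$ also landed in $E_0$. Concentrating this count is precisely the purpose of the paper's events $A_{v,T}$ and their treatment by Talagrand with exceptional outcomes (Claims~\ref{claim:Exceptional}--\ref{claim:Exceptional3}), after first proving a counting lemma (Lemma~\ref{lem:NumberOfCycles}) that bounds $\Delta_T(H,S)$ for good $2$-cycles, loose triangles and loose $4$-cycles. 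In short, you assigned Chernoff to a variable that is not a sum of independent indicators, and without a certifiable/observable structure there is no concentration of the deleted edge count at $v$, which in turn breaks the degree estimate in (1).

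Second, your $2$-cycle estimate is not rigorous as stated: you fix the intersection size at two when estimating both the count (via $\Delta_{k_2,2}$) and the survival probability ($p^{k_1+k_2-3}$), but for a pair $e_1,e_2$ with $|e_1\cap e_2|=j\ge 3$ the survival probability is $p^{k_1+k_2-j-1}$, which is much larger than $p^{k_1+k_2-3}$. The correct estimate must sum over $j$ and use $\Delta_{k_2,j}(H)\le D^{k_2-j-\beta}$ for each $j$, which is exactly what the paper's induction in Lemma~\ref{lem:NumberOfCycles} does. In addition, the paper must handle ``bad'' $2$-cycles ($|e_1\setminus e_2|=|e_2\setminus e_1|=1$) separately from the good ones, because the inductive argument in Lemma~\ref{lem:NumberOfCycles} fails for them; the paper uses a direct bound via $\Delta_{k,k-1}(H)\le D^{1-\beta}$ in that case. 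Your sketch conflates these cases. Finally, the cleanup set $B$ need not include (b) and (c): if the LLL succeeds, no pair-codegree or $(e,v)$-codegree violation occurs in $G_0$ at all, so there is nothing to delete for those items; only the girth condition (5) forces deletions.
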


The proof of Lemma~\ref{lem:Sparsifying} is quite technical and utilizes layering many applications of our exceptional outcomes version of Talagrand's inequality. The proof can be found in Section~\ref{s:Generalizations}.

Assuming Lemma~\ref{lem:Sparsifying}, we are now prepared to prove Theorem~\ref{thm:SmallCodegree} as follows.

\begin{proof}[Proof of Theorem~\ref{thm:SmallCodegree}]
Apply Lemma~\ref{lem:Sparsifying} and then apply Theorem~\ref{thm:GirthFiveBipartiteMany} to the resulting hypergraph and configuration hypergraph. \end{proof}

\section{Linear Talagrand's Inequality}\label{s:Talagrand}

To prove that variables are concentrated around their expectations with high probability, we need a version of Talagrand's inequality~\cite{T95} where the Lipschitz constant is linear. Surprisingly, this is possible by a simple arithmetic trick; one that is commonly used in the literature when applying Freedman's inequality on martingales. However, martingale inequalities tend to only be useful in the `dense' case whereas Talagrand's is often needed for sparser applications such as coloring bounded degree graphs. So here we introduce this new linear version of Talagrand's. We think it will have numerous applications beyond its use here; we also think that many of the proofs using Freedman's could probably be proved instead with this version of Talagrand's. 

To prove our new version, we require a non-uniform generalization of a version of Talagrand's inequality with exceptional events by Kelly and the second author (Theorem 6.3 in~\cite{KP20}). First a definition.

\begin{definition}
  Let $((\Omega_i, \Sigma_i, \mathbb P_i))_{i=1}^n$ be probability spaces, let $(\Omega, \Sigma, \mathbb P)$ be their product space, let $\Omega^* \subseteq \Omega$ be a set of \textit{exceptional outcomes}, and let $X : \Omega \rightarrow \mathbb R_{\geq0}$ be a non-negative random variable.  Let $b > 0$.
  \begin{itemize}
  \item If $\omega = (\omega_1, \dots, \omega_n) \in \Omega$ and $s > 0$, a \textit{$b$-certificate} for $X, \omega, s$, and $\Omega^*$ is an index set $I\subseteq\{1, \dots, n\}$ and a vector $(c_i: i\in I)$ with $\sum_{i\in I} c_i^2 \le bs$ such that for all $I'\subseteq I$, we have that
  \begin{equation*}
    X(\omega') \geq s - \sum_{i\in I'} c_i,
  \end{equation*}
  for all $\omega' = (\omega'_1, \dots, \omega'_n)\in\Omega\setminus\Omega^*$  such that $\omega_i=\omega_i'$ for all $i\in I\setminus I'$.
  \item If for every $s > 0$ and $\omega\in\Omega\setminus\Omega^*$ such that $X(\omega) \geq s$, there exists an $b$-certificate for $X, \omega, s$, and $\Omega^*$, then $X$ is \textit{$b$-certifiable} with respect to $\Omega^*$.
  \end{itemize}
\end{definition}

\begin{thm}\label{exceptional talagrand's}
  Let $((\Omega_i, \Sigma_i, \mathbb P_i))_{i=1}^n$ be probability spaces, let $(\Omega, \Sigma, \mathbb P)$ be their product space, let $\Omega^* \subseteq \Omega$ be a set of exceptional outcomes, and let $X : \Omega \rightarrow \mathbb R_{\geq0}$ be a non-negative random variable.  Let $b > 0$.
  
  If $X$ is $b$-certifiable with respect to $\Omega^*$, 
then for any $t > 96\sqrt{b\Expect{X}} +  128b + 8\Prob{\Omega^*}(\sup X),$
\begin{equation*}
\Prob{|X - \Expect{X}| > t} \leq 4\exp\left({\frac{-t^2}{8b(4\Expect{X} + t)}}\right) + 4\Prob{\Omega^*}.
\end{equation*}
\end{thm}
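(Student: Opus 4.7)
The plan is to follow the classical proof of Talagrand's inequality via the convex distance inequality, with three modifications: using per-coordinate weights from the $b$-certificate in place of a uniform Lipschitz constant; excising the exceptional set $\Omega^*$ at the right moments; and converting the resulting median estimate into an expectation estimate. Essentially this is a non-uniform generalization of the Kelly--Postle argument in \cite{KP20}.

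The core step is the upper tail bound. Fix a threshold $s > 0$ (eventually the median $M$ of $X$) and let $A := \{X \le s\} \setminus \Omega^*$. For any $\omega \in \Omega \setminus \Omega^*$ with $X(\omega) \ge s + u$, invoke the $b$-certificate at value $s + u$ to obtain an index set $I$ and weights $(c_i)_{i \in I}$ with $\sum_{i \in I} c_i^2 \le b(s+u)$. For any $\omega' \in A$, setting $I' := \{i \in I : \omega'_i \ne \omega_i\}$, the certificate applied with this $I'$---valid since $\omega' \notin \Omega^*$---gives $X(\omega') \ge s + u - \sum_{i \in I'} c_i$; since $X(\omega') \le s$, this forces $\sum_{i \in I'} c_i \ge u$. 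Taking the weight vector $\alpha = (c_i/\sqrt{\sum_j c_j^2})_{i \in I}$, Cauchy--Schwarz bounds the weighted Talagrand convex distance from $\omega$ to $A$ by $u/\sqrt{b(s+u)}$. Applying Talagrand's convex distance inequality on $\{X \ge s+u\} \setminus \Omega^*$ yields
\[
\mathbb P[A]\cdot \mathbb P\bigl[\{X \ge s + u\} \setminus \Omega^*\bigr] \le \exp\!\left(-\frac{u^2}{4b(s+u)}\right).
\]
Taking $s = M$ and $u = t$, and noting we may assume $\mathbb P[\Omega^*] \le 1/4$ (otherwise $4\mathbb P[\Omega^*] \ge 1$ and the conclusion is trivial), we have $\mathbb P[A] \ge 1/4$, so
\[
\mathbb P[X \ge M + t] \le 4\exp\!\left(-\frac{t^2}{4b(M+t)}\right) + \mathbb P[\Omega^*].
\]
A symmetric argument, following the lower-tail portion of the Kelly--Postle proof, gives the corresponding bound on $\mathbb P[X \le M - t]$. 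Integrating the tail bounds produces the median-mean gap $|M - \mathbb E[X]| = O\bigl(\sqrt{b\,\mathbb E[X]} + b + \mathbb P[\Omega^*]\sup X\bigr)$, so the hypothesis $t > 96\sqrt{b\,\mathbb E[X]} + 128b + 8\mathbb P[\Omega^*]\sup X$ ensures $t$ dominates this gap, and inflating the denominator from $4b(M+t)$ to $8b(4\mathbb E[X] + t)$ absorbs the resulting shift.

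The main obstacle is arranging the first step correctly: the certificate only provides $X(\omega') \ge s + u - \sum_{i \in I'} c_i$ for $\omega' \notin \Omega^*$, so the Cauchy--Schwarz estimate on the weighted convex distance is valid only when $A$ itself excludes $\Omega^*$. This forces the definition $A = \{X \le s\} \setminus \Omega^*$ and makes the additive $4\mathbb P[\Omega^*]$ term in the conclusion intrinsic to the proof. A secondary delicacy is the lower tail: there the certificate varies with each $\omega \in A$, so one cannot directly pick a single weight vector $\alpha$; however, invoking the dual (minimax) characterization of Talagrand's convex distance still recovers the requisite bound, as in \cite{KP20}. Tracking constants through the median-to-mean conversion is routine but tedious, and accounts for the specific numerical constants $96, 128, 8$ in the hypothesis and the factors $4$ and $8$ appearing in the conclusion.
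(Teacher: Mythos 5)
Your proposal follows the approach the paper intends: the paper itself gives no proof of this theorem beyond the remark that ``the proof is the straightforward generalization of the proof of Theorem 6.3 in~\cite{KP20},'' and your sketch is a correct reconstruction of that Kelly--Postle argument with the uniform Lipschitz constant replaced by the certificate weights $(c_i)$ and with the bound $\sum_{i\in I} c_i^2 \le b(s+u)$ feeding into the Cauchy--Schwarz estimate on the weighted convex distance. One correction to your ``secondary delicacy'' remark: the lower tail does not require a dual or minimax characterization of the convex distance. Set $A := \{X \le M - t\}\setminus\Omega^*$ and argue that every $\omega \in \{X \ge M\}\setminus\Omega^*$ satisfies $d_T(\omega, A) \ge t/\sqrt{bM}$, using the certificate at threshold $s = M$ for that $\omega$; this is exactly symmetric to the upper tail. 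In both directions the certificate, and hence the witnessing weight vector $\alpha$, varies with $\omega$ in the ``far'' set, which poses no difficulty since $d_T(\omega, A)$ is already defined as a supremum over $\alpha$ for each fixed $\omega$. The remainder of your plan — excising $\Omega^*$ from $A$ before invoking the convex distance inequality (which is where the additive $4\Prob{\Omega^*}$ term comes from), integrating the tails over $[0, \sup X]$ to bound $|M-\Expect{X}|$ (using $M\le 2\Expect{X}$ to pass from median to mean under the square root), and absorbing the median--mean shift via the hypothesis on $t$ together with the inflated denominator $8b(4\Expect{X}+t)$ — is the right structure, and the numerical constants are consistent with what this accounting produces.
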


We note that the proof is the straightforward generalization of the proof of Theorem 6.3 in~\cite{KP20} but include a proof in Appendix~\ref{s:Appendix} for completeness. Our Linear Talagrand's Inequality will be a corollary of Theorem~\ref{exceptional talagrand's}. But first some definitions.

\begin{definition}
  Let $((\Omega_i, \Sigma_i, \mathbb P_i))_{i=1}^n$ be probability spaces, let $(\Omega, \Sigma, \mathbb P)$ be their product space, let $\Omega^* \subseteq \Omega$ be a set of \textit{exceptional outcomes}, and let $X : \Omega \rightarrow \mathbb R_{\geq0}$ be a non-negative random variable.  Let $r,d> 0$.
  \begin{itemize}
  \item If $\omega = (\omega_1, \dots, \omega_n) \in \Omega$ and $s \ge 0$, an \textit{$(r,d)$-certificate} for $X, \omega, s$, and $\Omega^*$ is an index set $I\subseteq\{1, \dots, n\}$ and a vector $(c_i: i\in I)$ with $\sum_{i\in I} c_i \le rs$ and $\max_{i\in I} c_i \le d$ such that for all $I'\subseteq I$, we have that
  \begin{equation*}
    X(\omega') \geq s - \sum_{i\in I'} c_i,
  \end{equation*}
  for all $\omega' = (\omega'_1, \dots, \omega'_n)\in\Omega\setminus\Omega^*$  such that $\omega_i=\omega_i'$ for all $i\in I\setminus I'$.
  \item If for every $\omega\in\Omega\setminus\Omega^*$, there exists an $(r,d)$-certificate for $X, \omega, s:=X(\omega)$, and $\Omega^*$, then $X$ is \textit{$(r,d)$-observable} with respect to $\Omega^*$.
  \end{itemize}
\end{definition}

We are now ready to state our Linear Talagrand's Inequality.

\begin{thm}[Linear Talagrand's Inequality]\label{exceptional talagrand's observations}
  Let $((\Omega_i, \Sigma_i, \mathbb P_i))_{i=1}^n$ be probability spaces, let $(\Omega, \Sigma, \mathbb P)$ be their product space, let $\Omega^* \subseteq \Omega$ be a set of exceptional outcomes, and let $X : \Omega \rightarrow \mathbb R_{\geq0}$ be a non-negative random variable.  Let $r,d \geq 0$.
  
 If $X$ is $(r,d)$-observable with respect to $\Omega^*$, 
then for any $t > 96\sqrt{rd\Expect{X}} +  128rd + 8\Prob{\Omega^*}(\sup X),$
\begin{equation*}
\Prob{|X - \Expect{X}| > t} \leq 4\exp\left({\frac{-t^2}{8rd(4\Expect{X} + t)}}\right) + 4\Prob{\Omega^*}.
\end{equation*}
\end{thm}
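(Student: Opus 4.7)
The plan is to deduce Theorem~\ref{exceptional talagrand's observations} directly from Theorem~\ref{exceptional talagrand's} by a one-line arithmetic observation that converts the linear Lipschitz data into the quadratic form required by the standard exceptional-outcomes Talagrand inequality. Given an $(r,d)$-certificate $(I, (c_i : i \in I))$ for $X$, $\omega$, $s$, and $\Omega^*$, we have by definition $\sum_{i\in I} c_i \le rs$ and $\max_{i\in I} c_i \le d$. Assuming, as is standard for certificates, that the $c_i$ are non-negative (any negative $c_i$ can be replaced by $0$ without weakening the lower bound $X(\omega') \ge s - \sum_{i\in I'} c_i$), one obtains
$$\sum_{i\in I} c_i^2 \;\le\; \left(\max_{i\in I} c_i\right) \sum_{i\in I} c_i \;\le\; d \cdot rs \;=\; (rd)\,s.$$
The pointwise inequality $X(\omega') \ge s - \sum_{i\in I'} c_i$ defining a certificate is identical in the two notions, so the same pair $(I, (c_i))$ is automatically a $b$-certificate with $b = rd$.

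Thus $(r,d)$-observability of $X$ with respect to $\Omega^*$ implies $b$-certifiability of $X$ with respect to $\Omega^*$ for $b = rd$. The second step of the plan is simply to invoke Theorem~\ref{exceptional talagrand's} with this choice of $b$; substituting $b = rd$ into its threshold $96\sqrt{b\Expect{X}} + 128 b + 8\Prob{\Omega^*}(\sup X)$ and into its tail bound $4\exp(-t^2/(8b(4\Expect{X}+t))) + 4\Prob{\Omega^*}$ reproduces the statement of Theorem~\ref{exceptional talagrand's observations} verbatim.

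There is essentially no obstacle in this deduction: the real content lies in Theorem~\ref{exceptional talagrand's}, which the paper treats as a routine non-uniform extension of Theorem~6.3 of~\cite{KP20}. The ``arithmetic trick'' foreshadowed in the paragraph preceding the theorem is precisely the bound $\sum c_i^2 \le (\max c_i)(\sum c_i)$, and that is the only additional ingredient this corollary requires.
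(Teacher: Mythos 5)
Your proof is correct and is essentially identical to the paper's: both establish that an $(r,d)$-certificate is a $b$-certificate with $b=rd$ via the bound $\sum_{i\in I} c_i^2 \le (\max_{i\in I} c_i)\sum_{i\in I} c_i \le d\cdot rs$, and then invoke Theorem~\ref{exceptional talagrand's}. Your aside about non-negativity of the $c_i$ is a reasonable point of hygiene (the paper implicitly assumes it), but otherwise the two arguments coincide.
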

\begin{proof}
By Theorem~\ref{exceptional talagrand's}, it suffices to prove that $X$ is $b$-certifiable with respect to $\Omega^*$ where $b=rd$.

Let $s > 0$ and $\omega\in\Omega\setminus\Omega^*$ such that $X(\omega) \geq s$. By definition of $(r,d)$-observable, there exists an $(r,d)$-certificate for $X, \omega, s$, and $\Omega^*$. Recall that such an $(r,d)$-certificate is an index set $I\subseteq\{1, \dots, n\}$ and a vector $(c_i: i\in I)$ with $\sum_{i\in I} c_i \le rs$ and $\max_{i\in I} c_i \le d$ such that for all $I'\subseteq I$, we have that
  \begin{equation*}
    X(\omega') \geq s - \sum_{i\in I'} c_i,
  \end{equation*}
  for all $\omega' = (\omega'_1, \dots, \omega'_n)\in\Omega\setminus\Omega^*$  such that $\omega_i=\omega_i'$ for all $i\in I\setminus I'$. But then
\begin{align*}
\sum_{i\in I} c_i^2 &\le \sum_{i\in I}c_i \cdot \max_{i\in I} \{c_i\} \\
&= \left(\max_{i\in I} \{c_i\}\right) \cdot \sum_{i\in I}c_i\\
&\le (d)\cdot (rs) = (rd) \cdot s = b\cdot s.
\end{align*}

\noindent Hence $I,(c_i: i\in I)$ is a $b$-certificate for $X, \omega, s$, and $\Omega^*$. Thus $X$ is $b$-certifiable with respect to $\Omega^*$ as desired.
\end{proof}

We note Theorem~\ref{exceptional talagrand's observations} is quite user-friendly; for many applications, we interpret a random variable $X$ as being the sum of $\{0,1\}$ random variables each of which can be verified to equal $1$ by observing a set of $r$ random trials with the promise that each trial is used by at most $d$ observations (or in the exceptional outcomes version, by at most $d$ observations assuming a non-exceptional outcome). We refer to this $d$ as \emph{observation degree}.

For many applications, the variable $X$ is not itself $(r,d)$-observable but rather can be written as the difference of a number of $(r,d)$-observable variables; this setup also admits concentration provided these variables have bounded expectation as follows. 

\begin{thm}[Linear Talagrand's Difference Inequality]\label{exceptional talagrand's difference}
  Let $((\Omega_i, \Sigma_i, \mathbb P_i))_{i=1}^n$ be probability spaces, let $(\Omega, \Sigma, \mathbb P)$ be their product space, let $\Omega^* \subseteq \Omega$ be a set of exceptional outcomes.  Let $r,d \geq 0$.
  
 Suppose that $X=\sum_{i=1}^m \varepsilon_i X_i$ where for each $i\in [m]$,  $\varepsilon_i \in \{-1,1\}$ and $X_i: \Omega \rightarrow \mathbb R_{\geq0}$ is a non-negative random variable that is $(r,d)$-observable with respect to $\Omega^*$ such that $\Expect{X_i} \le M$, 
then for any $t > m(96\sqrt{rdM} +  128rd + 8\Prob{\Omega^*}(\max_{i\in [m]}\sup X_i)),$
\begin{equation*}
\Prob{|X - \Expect{X}| > t} \leq 4\exp\left({\frac{-t^2}{8rdm^2(4M + t)}}\right) + 4m\Prob{\Omega^*}.
\end{equation*}
\end{thm}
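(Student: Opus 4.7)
The plan is to apply Theorem~\ref{exceptional talagrand's observations} to each $X_i$ individually with a shared deviation threshold $t/m$, and then combine via the triangle inequality and a union bound. Specifically, if $|X_i - \Expect{X_i}| \le t/m$ holds for every $i \in [m]$, then by the triangle inequality
\begin{equation*}
|X - \Expect{X}| = \left|\sum_{i=1}^m \varepsilon_i (X_i - \Expect{X_i})\right| \le \sum_{i=1}^m |X_i - \Expect{X_i}| \le m \cdot \frac{t}{m} = t.
\end{equation*}
Contrapositively, the event $\{|X-\Expect{X}| > t\}$ is contained in the union $\bigcup_{i=1}^m \{|X_i - \Expect{X_i}| > t/m\}$, so a union bound will give the claimed inequality once each individual deviation has been bounded.

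To bound the individual deviations, I would verify that $t/m$ satisfies the hypothesis of Theorem~\ref{exceptional talagrand's observations} for each $X_i$. Since $\Expect{X_i} \le M$ and $\sup X_i \le \max_{j \in [m]}\sup X_j$, the assumption $t > m\bigl(96\sqrt{rdM} + 128rd + 8\Prob{\Omega^*}(\max_{j} \sup X_j)\bigr)$ implies
\begin{equation*}
t/m > 96\sqrt{rd\,\Expect{X_i}} + 128rd + 8\Prob{\Omega^*}(\sup X_i),
\end{equation*}
as required. Since each $X_i$ is $(r,d)$-observable with respect to $\Omega^*$ by assumption, Theorem~\ref{exceptional talagrand's observations} then applies.

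The remaining step is a routine monotonicity check on the exponent: using $\Expect{X_i} \le M$ and $m \ge 1$ (hence $t/m \le t$), we have $4\Expect{X_i} + t/m \le 4M + t$, so
\begin{equation*}
\frac{(t/m)^2}{8rd(4\Expect{X_i} + t/m)} \ge \frac{t^2}{8rd m^2 (4M + t)}.
\end{equation*}
Therefore Theorem~\ref{exceptional talagrand's observations} yields
\begin{equation*}
\Prob{|X_i - \Expect{X_i}| > t/m} \le 4\exp\!\left(\frac{-t^2}{8rdm^2(4M+t)}\right) + 4\Prob{\Omega^*},
\end{equation*}
and summing over the $m$ events in the union bound produces the stated bound (with the natural $4m$ prefactor on the exponential term absorbed into the statement's constants).

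No real obstacle is anticipated: the whole content is a triangle-inequality splitting followed by an invocation of the single-variable concentration result, and the only care needed is to confirm that the threshold and exponent bounds transfer cleanly from $\Expect{X_i}$ to the uniform upper bound $M$. The mild subtlety is bookkeeping the $1/m$ loss in the deviation per summand, which is the source of the $m^2$ in the denominator of the exponent; this is unavoidable for a generic sum because individual deviations can align in sign.
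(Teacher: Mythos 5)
Your proposal is correct and takes exactly the same route as the paper's (one-line) proof: apply Theorem~\ref{exceptional talagrand's observations} to each $X_i$ separately with threshold $t/m$, use $\Expect{X_i}\le M$ to transfer the hypotheses and the exponent bound, and combine via the triangle inequality and a union bound. The $4m$ versus $4$ prefactor on the exponential term that you flag at the end is a genuine (but harmless) imprecision in the stated bound rather than a gap in your argument; the paper's own proof has the same issue.
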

\begin{proof}
Apply Theorem~\ref{exceptional talagrand's observations} separately to each $X_i$ with $t_i := t/m$ and use the fact that $\Expect{X_i} \le M$.
\end{proof}
 
Here is a useful proposition showing how scaling a random variable scales its observability.

\begin{proposition}\label{prop:scaling}
 Let $((\Omega_i, \Sigma_i, \mathbb P_i))_{i=1}^n$ be probability spaces, let $(\Omega, \Sigma, \mathbb P)$ be their product space, let $\Omega^* \subseteq \Omega$ be a set of exceptional outcomes.  Let $r,d \geq 0$.
 
 If $a>0$ is a real number and $X$ is $(r,d)$-observable with respect to $\Omega^*$, then $aX$ is $(r,ad)$-observable with respect to $\Omega^*$.
\end{proposition}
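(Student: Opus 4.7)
The plan is to prove Proposition~\ref{prop:scaling} by a direct rescaling argument, translating a certificate for $X$ at threshold $s/a$ into one for $aX$ at threshold $s$.

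First I would unpack the definition. To establish that $aX$ is $(r, ad)$-observable with respect to $\Omega^*$, I need to show: for every $s > 0$ and every $\omega \in \Omega \setminus \Omega^*$ with $aX(\omega) \geq s$, there exists an index set $I \subseteq \{1, \dots, n\}$ and a vector $(c_i' : i \in I)$ satisfying $\sum_{i \in I} c_i' \leq rs$, $\max_{i \in I} c_i' \leq ad$, and the certification property $aX(\omega') \geq s - \sum_{i \in I'} c_i'$ for every $I' \subseteq I$ and every $\omega' \in \Omega \setminus \Omega^*$ agreeing with $\omega$ outside $I'$.

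The core step is the rescaling. Given such $\omega$ and $s$, set $s_0 := s/a$, so that $X(\omega) \geq s_0$. By hypothesis $X$ is $(r,d)$-observable with respect to $\Omega^*$, so there exists an index set $I$ and a vector $(c_i : i \in I)$ certifying $X$ at $\omega, s_0$, meaning $\sum_{i \in I} c_i \leq r s_0$, $\max_{i \in I} c_i \leq d$, and $X(\omega') \geq s_0 - \sum_{i \in I'} c_i$ for every valid $\omega', I'$. Define $c_i' := a c_i$ for $i \in I$. Then $\sum_{i \in I} c_i' = a \sum_{i \in I} c_i \leq a \cdot r s_0 = r s$ and $\max_{i \in I} c_i' = a \max_{i \in I} c_i \leq a d$, so the size constraints are met. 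Multiplying the certificate inequality by $a$ yields $aX(\omega') \geq a s_0 - \sum_{i \in I'} a c_i = s - \sum_{i \in I'} c_i'$ for every $I' \subseteq I$ and every $\omega' \in \Omega \setminus \Omega^*$ agreeing with $\omega$ outside $I'$, giving exactly the required certification.

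There is no real obstacle here; the statement is essentially a linearity observation about the definition of $(r,d)$-observable. The only thing to be slightly careful about is that $a > 0$ (so that inequalities survive multiplication) and that the same index set $I$ and modified constants $(c_i')$ suffice without having to touch $\Omega^*$ at all.
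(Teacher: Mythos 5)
Your proposal is correct and follows essentially the same argument as the paper: set $s_0 = s/a$, take an $(r,d)$-certificate for $X$ at $\omega, s_0$, and scale its entries by $a$ to obtain an $(r,ad)$-certificate for $aX$ at $\omega, s$. No gaps.
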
 
\begin{proof}
Let $s > 0$ and $\omega\in\Omega\setminus\Omega^*$ such that $aX(\omega) \geq s$. Hence $X(\omega) > s/a > 0$. Since $X$ is $(r,d)$-observable, we have by definition that there exists an $(r,d)$-certificate for $X,\omega, s/a, \Omega^*$, that is there exists an index set $I\subseteq\{1, \dots, n\}$ and a vector $(c_i: i\in I)$ with $\sum_{i\in I} c_i \le r(s/a)$ and $\max_{i\in I} c_i \le d$ such that for all $I'\subseteq I$, we have that
  \begin{equation*}
    X(\omega') \geq (s/a) - \sum_{i\in I'} c_i,
  \end{equation*}
  for all $\omega' = (\omega'_1, \dots, \omega'_n)\in\Omega\setminus\Omega^*$  such that $\omega_i=\omega_i'$ for all $i\in I\setminus I'$.
For each $i\in I$, let $c_i' = a \cdot c_i$. Then $\sum_{i\in I} c_i' = a \sum_{i\in I} c_i \le rs$ and $\max{i\in I} c_i' = a \max_{i\in I} c_i = ad$ while for all $I'\subseteq I$, we have that
  \begin{equation*}
    aX(\omega') \geq s - \sum_{i\in I'} c_i',
  \end{equation*}
  for all $\omega' = (\omega'_1, \dots, \omega'_n)\in\Omega\setminus\Omega^*$  such that $\omega_i=\omega_i'$ for all $i\in I\setminus I'$.
Hence $I$ and the vector $(c_i':i\in I)$ are an $(r,ad)$-certificate for $aX, \omega, s, \Omega^*$ and hence $aX$ is $(r,ad)$-observable with respect to $\Omega^*$.
\end{proof}
 
\section{Proof of Key Iterative Lemma}\label{s:ProofIter}

In this section, we prove Lemma~\ref{lem:Iterative}. In Section~\ref{ss:Procedure}, we define our random procedure as well as the matching $E'$, hypergraph $G'$ and configuration hypergraph $H'$ in such a way that (1) from Lemma~\ref{lem:Iterative} holds. In Section~\ref{ss:Bad}, we define our bad events related to variables for proving that (2), (3) and (4) hold. In Section~\ref{ss:Exp}, we calculate the expectations of these variables via various auxiliary variables. In Section~\ref{ss:Conc}, we define exceptional outcomes and then show these variables are concentrated via the exceptional outcomes version of Talagrand's inequality. In Section~\ref{ss:LocalLemma}, we use the results of the previous subsections and the Lov\'asz Local Lemma to finish the proof.

\subsection{The Procedure}\label{ss:Procedure}

Let $E_0$ be obtained by choosing each edge of $G$ independently with probability $p:= \varepsilon / D$. 

Note if $e\in E(G)$ and $S\in E(H)$ such that $e\in S$, then
$$\Prob{S\setminus e\subseteq E_0} = p^{|S|-1} = \left(\frac{\varepsilon}{ D}\right)^{|S|-1}.$$
By the union bound
$$\Prob{\exists~S \in E(H): S\setminus e \subseteq E_0  } \le \sum_{i\ge 2} d_i(H,e) \left(\frac{\varepsilon}{ D}\right)^{i-1} \le \varepsilon \cdot \sum_{i\ge 2} \frac{d_i(H,e)}{D^{i-1}} \le \varepsilon \cdot w,$$
where we used that $\varepsilon \le 1$ by assumption. Recall that $\varepsilon w \le \gamma \le 1$ by assumption.

For each edge $e$ of $G$, perform an equalizing coin flip, specifically a $\{0,1\}$-variable $\gamma_e$ where $\gamma_e = 1$ with probability
$$\frac{1-\varepsilon \cdot w}{1-\Prob{\exists~S \in N_H'(e): S\setminus e \subseteq E_0  } }.$$
Note that this quantity is at most $1$ and hence is a valid probability.

Let $F_0$ be the set of edges $e$ of $G$ such that at least one of the following hold: 
\begin{enumerate}
\item[(i)] there exists $S\in E(H)$ such that $S\setminus e$ is a subset of $E_0$, or
\item[(ii)] $\gamma_e = 0$.
\end{enumerate} 
Note then by definition of $\gamma_e$ that $\Prob{e\in F_0} = \varepsilon\cdot w$ for every edge $e\in E(G)$. 

Also note that $\Prob{\nexists~S \in N_H'(e): S\setminus e \subseteq E_0} = \prod_{S\in N_H'(e)} \left(1-p^{|S|-1}\right)$ since $H$ has girth at least three, and that $\Prob{e\not\in F_0} = \Prob{\gamma_e=1} \cdot \Prob{\nexists~S \in N_H'(e): S\setminus e \subseteq E_0}$.

Let $E_1$ be the set of edges of $E_0$ that are not incident to another edge of $E_0$ and let $E_2:= E_0\setminus E_1$ be the set of edges of $E_0$ incident with at least one other edge of $E_0$. 

We let 
\begin{itemize}
    \item $E':=E_1\setminus F_0$, 
    \item $V':=V(G)-\bigcup_{e\in E'} e$,
    \item $G':=G[V'] \setminus F_0$.
\end{itemize}

Recall that by definition
$$E(H'):= \{S\setminus E': S\in E(H), (S\setminus E') \subseteq E(G')\}.$$
Since $E(G')\cap F_0=\emptyset$, we have that every edge of $H'$ has size at least two and hence $H'$ is indeed a configuration hypergraph of $G'$. Meanwhile, if $H'$ has an $i$-cycle $e_1,v_1,\ldots, e_i,v_i$ for some $i\ge 2$, then $H$ has an $i$-cycle since each edge $e_j$ in $H'$ extends to an edge $e_j'$ in $H$. Hence since $H$ has girth at least five, we have that $H'$ has girth at least five. 

Since $E'\subseteq E_1$, we have that $E'$ is a matching of $G$. By property (i) of the definition of $F_0$ and since $E'\cap F_0=\emptyset$, it follows that $E'$ is $H$-avoiding. Moreover, it follows from the definition of $H'$ that if $M$ is an $H'$-avoiding matching of $G'$, then $M\cup E'$ is an $H$-avoiding matching of $G$ and hence (1) holds.

\subsection{Bad Events}\label{ss:Bad}

It remains to show that all of (2), (3) and (4) hold simultaneously for $E'$, $G'$ and $H'$ with some positive probability. To that end, we use the Lov\'asz Local Lemma. However, before we define our bad events, we need a lower bound on $n$ in terms of $D$ as follows.

Since $G$ has codegrees at most $4\log^2 D$ by assumption (G2), it follows that $G$ has multiplicity at most $4\log^2 D$. Hence 
$$e(G) \le \binom{n}{r} \cdot \log^2 D \le \frac{n^r}{r} \cdot 4\log^2 D.$$ 
Yet 
$$e(G) \ge \frac{Dn}{r} \cdot (1\pm \gamma) \ge \frac{Dn}{2r}$$ 
and hence, combining the above, it follows that 
$$n \ge \left(\frac{D}{8\log^2 D}\right)^{1/(r-1)}.$$
Since $D_0$ is large enough, we have that
$$n \ge D^{1/r}.$$

\noindent {\bf Bad Events for (2):} Partition $V(G)$ into sets $X_1,X_2,\ldots, X_m$ of size between $D^{1/r}$ and $2\cdot D^{1/r}$; this is possible since $v(G) \ge D^{1/r}$. For each $i\in [m]$, let $A_i$ be the event that $|X_i\setminus V'| < \frac{\varepsilon}{2} \cdot |X_i|$.
\vskip.25in

\noindent {\bf Bad Events for (3):} For each $v\in V(G)$, let 
$$D_v := \{e\in E(G): v\in e, e\not\in F_0, e\setminus v \subseteq V(G')\}.$$
Let $A_v$ be the event that $|D_v| \ne D'(1\pm \gamma(1+6r\varepsilon))$. Note that when $v\in V(G')$, then $d_{G'}(v)=|D_v|$.
\vskip.25in

\noindent {\bf Bad Events for (4):} For each $e\in V(H)$,
let $F_{0,e}$ be the set of edges $f$ of $G$ such that at least one of the following hold: 
\begin{enumerate}
\item[(i)] there exists $S\in E(H)$ with $e\not\in S$ such that $S\setminus f$ is a subset of $E_0$, or
\item[(ii)] $\gamma_f = 0$.
\end{enumerate} 
Note that $F_{0,e} \subseteq F_{0}$. Moreover if $S\in N_H(e)$ and $f\in S\setminus e$, then since $H$ has girth at least three, it follows that
$$\Prob{f\not\in F_{0,e}}\cdot \left(1-p^{|S|-1}\right) = \Prob{f\not\in F_0} = 1- \varepsilon\cdot w.$$
For each $e\in V(H)$, let
$$D_{e,i,j} := \{ T\in N_H'(e): V(T\setminus (E'\cup e))\subseteq V(G'), (T\setminus e)\cap F_{0,e} = \emptyset, |T|=i, |T\setminus (E'\setminus e)|=j\}.$$

Note that if $e\in E(G')$, then
$$d_{H',~j}(e) \le \sum_{i\ge j} |D_{e,i,j}|.$$
Hence if $e\in E(G')$, then
\begin{align*}
w_{D'}(H',e) &\le \sum_{j\ge 2} d_{H',~j}(e) \cdot \frac{j-1}{(D')^{j-1}}\\
&= \sum_{j\ge 2} \sum_{i\ge j} |D_{e,i,j}| \cdot \frac{j-1}{(D')^{j-1}}.
\end{align*}
For each $e\in V(H)$, let 
$$w(e) := \sum_{j\ge 2} \sum_{i\ge j} |D_{e,i,j}| \cdot \frac{j-1}{(D')^{j-1}}$$
and let $A_e$ be the event that 
$$w(e) > w \cdot \left(1-\frac{\varepsilon}{2}\right).$$
\vskip.25in

Let 
$$\mathcal{E} = \bigcup_{i\in [m]} A_i \cup \bigcup_{v\in V(G)} A_v \cup \bigcup_{e\in V(H)} A_e$$
be the set of all bad events. Note that if none of the $A_i$ happen for all $i\in [m]$, then
$$v(G\setminus V') = \sum_{i\in [m]} |X_i\setminus V'| \ge \sum_{i\in [m]} |X_i| \cdot \varepsilon/2 = v(G) \cdot \varepsilon/2,$$
and hence (2) holds as desired. Similarly if none of the $A_v$ happen for all $v\in V(G)$, then (3) holds as desired, and if none of the $A_e$ happen for all $e\in E(G)$, then (4) holds as desired. Hence it remains to show that with positive probability none of the events in $\mathcal{E}$ happen.

\subsection{Expectations}\label{ss:Exp}

We say an edge $e$ is \emph{unearthed} if $e\in E_2 \cup (E_0\cap F_0)$. We say a vertex $v$ is \emph{unearthed} if it is incident with an unearthed edge. Note then that every unearthed vertex is in $V(G')$. We say a vertex $v$ is \emph{uncovered} if $v$ is not incident with an edge of $E_0$ and \emph{covered} otherwise. Hence $V(G')$ consists precisely of the unearthed and uncovered vertices. Let $V_0$ denote the set of uncovered vertices, $V_1$ the set of covered vertices and $V_2$ the set of unearthed vertices. 

\subsubsection{Basic Calculations}

Before proceeding with the expectations and concentrations of the variables in the bad events, we perform some basic calculations. 

For each $v\in V(G)$, using $\gamma \le 1$, we upper bound the probability a vertex is uncovered as
\begin{align*}
\Prob{v\in V_0} &= (1-p)^{d_G(v)} \le e^{-p\cdot d_G(v)} \le e^{-\varepsilon(1-\gamma)} \le 1 - \varepsilon (1-\gamma) + \varepsilon^2(1-\gamma)^2\\
&\le 1 -\varepsilon(1-\gamma)+\varepsilon^2,
\end{align*}
where we used that $p=\frac{\varepsilon}{D}$ and $d_G(v)\ge D(1-\gamma)$. Since $V_1 = V(G)\setminus V_0$, we lower bound the probability a vertex is covered as
$$\Prob{v\in V_1} \ge \varepsilon (1- \gamma) - \varepsilon^2.$$
Meanwhile for each edge $e\in E(G)$,
$$\Prob{e\in E_2} = \Prob{e\in E_0}\cdot \Prob{N(e)\cap E_0\ne \emptyset} \le rD(1+\gamma)p^2 \le  \frac{2r\varepsilon^2}{D},$$
where we used that $\gamma \le 1$, while 
$$\Prob{e\in F_0\cap E_0} = \Prob{e\in E_0}\cdot \Prob{e\in F_0} = \frac{\varepsilon^2w}{D}.$$
Hence we upper bound the probability that an edge is unearthed as
\begin{align*}
\Prob{e\in E_2\cup (E_0\cap F_0)} &\le \Prob{e\in E_2} + \Prob{e\in F_0} \\
&\le \frac{\varepsilon^2}{D} \cdot \left(2r+w\right).
\end{align*}
Thus by the union bound (and using $d_G(v) \le 2D$), we upper bound the probability a vertex is unearthed as
\begin{align*}
\Prob{v\in V_2} &\le  \Prob{N'_G(v)\cap (E_2\cup (E_0\cap F_0)) \ne \emptyset} \\
&\le d_G(v)\cdot \frac{\varepsilon^2}{D} \cdot \left(2r+w\right)
 \\
&\le \varepsilon^2 \cdot \left(4r+2w\right).
\end{align*}
Combining, we lower bound (using $r\ge 1$) the probability that a vertex is covered but not unearthed as (and hence is precisely in $V(G)\setminus V(G')$)
$$\Prob{v\in V_1\setminus V_2} \ge \varepsilon (1-\gamma) - \varepsilon^2  \cdot \left(5r+2w\right).$$

\subsubsection{Expectation of $|X_i\setminus V'|$}

Note that $X_i\setminus V' = X_i \cap (V_1\setminus V_2)$. Hence by linearity of expectation, we have by the calculations above that 
\begin{align*}
\Expect{|X_i\cap (V_1\setminus V_2)|} &\ge  |X_i|\cdot (\varepsilon (1- \gamma) - \varepsilon^2  \cdot \left(5r+2w\right) )\\
&\ge |X_i| \cdot \left(\frac{3}{4} \cdot \varepsilon\right),
\end{align*}
since $\varepsilon(5r+2w) \le \gamma \le \frac{1}{8}$ by assumption.

\subsubsection{Expectation of $d_{G'}(v)$}

Now we relate the variable $D_v$ to other variables that are easier to calculate as follows. Let
\begin{align*}
&D_{0,v} := \{e\in N'_G(v): e\setminus v \subseteq V_0\}, \\
&D_{0,v}' := D_{0,v} \setminus F_0, \\
&D_{2,v} := \{e\in N'_G(v): (e\setminus v)\cap V_2 \ne \emptyset\}.
\end{align*}
Since $D_{0,v}' \subseteq D_v \subseteq D_{0,v}'\cup D_{2,v}$, we find that
$$|D_{0,v}'| \le |D_v| \le |D_{0,v}'| + |D_{2,v}|.$$

Now we proceed to calculate upper (and for some also lower) bounds on the probability that $e\in N'_G(v)$ is in each of $D_{0,v}, D_{0,v}', D_{2,v}$ as follows. 

Fix an edge $e\in N'_G(v)$ and let $B_e := \bigcup_{u\in e\setminus v} N'_G(u)$. Now we have that
$$\Prob{e\in D_{0,v}} = \Prob{B_e \subseteq E_0} = (1-p)^{|B_e|}.$$
Yet using assumptions $(G1)$ and $(G2)$, we find that
\begin{align*}
(r-1)D(1+\gamma) \ge |B_e| &\ge (r-1)D(1-\gamma) - \binom{r-1}{2}(4\log^2 D)\\
&\ge D(r-1- r\gamma),
\end{align*}
since $\gamma \ge 4r^2 \frac{\log^2 D}{D}$ as $\gamma \ge \varepsilon \ge D^{-1/6r}$ by assumption and $D$ is large enough.

Note that the events $e\in F_0$ and $e\in D_{0,v}$ are independent and hence
$$\Prob{e\in D_{0,v}'} = (1-\varepsilon w) \cdot (1-p)^{|B_e|}.$$

Thus for an upper bound, we have
\begin{align*}
\Prob{e\in D_{0,v}'} &\le e^{-\varepsilon w} \cdot e^{-p|B_e|}\\
&\le e^{-\varepsilon(r-1-r\gamma + w)}\\
&= \frac{D'}{D} \cdot e^{r\gamma\varepsilon}.
\end{align*}
Recall that for $x\in [0,1]$, $1-x \ge e^{-x-x^2}$. Thus for a lower bound, we have
\begin{align*}
\Prob{e\in D_{0,v}'} &\ge e^{-\varepsilon w - \varepsilon^2 w^2} \cdot \left(e^{-p- p^2}\right)^{|B_e|} \\
&\ge e^{-\varepsilon((r-1)(1+\gamma)+ w) - \varepsilon^2 (w^2 +(r-1)(1+\gamma)) }\\
&\ge e^{-\varepsilon(r-1+ w) - r\gamma\varepsilon}\\
&= \frac{D'}{D} \cdot e^{-r\gamma\varepsilon},
\end{align*}
where we used that $\gamma \le 1$ and $\varepsilon(w^2+2(r-1)) \le \gamma$ by assumption.

Meanwhile, by the union bound, for each $e$ with $v\in e$, we have that
$$\Prob{e\in D_{2,v}} = \Prob{ (e\setminus v)\cap V_2 \ne \emptyset} \le (r-1) \varepsilon^2 \cdot \left(4r+2w\right) \le \gamma \varepsilon,$$
since $(r-1)(4r+2w)\varepsilon \le \gamma$ by assumption. 

By linearity of expectation, for a lower bound we have
$$\Expect{|D_{0,v}'|} \ge d_G(v)\cdot \frac{D'}{D} \cdot e^{-r\gamma\varepsilon} \ge D'(1-\gamma(1+r\varepsilon)).$$
Similarly by linearity of expectation, for an upper bound we have
\begin{align*}
\Expect{|D_{0,v}'|} &\le d_G(v)\cdot \frac{D'}{D} \cdot e^{r\gamma\varepsilon} \le D'(1+\gamma(1+r\varepsilon(1+r\gamma\varepsilon)) \\
&\le  D'(1+\gamma(1+2r\varepsilon)),
\end{align*}
since $r\gamma \varepsilon \le r \gamma \le 1$ by assumption, where we used that for $x\in [0,1]$, we have $e^{x} \le 1 + x + x^2$.
Similarly, by linearity of expectation, we have
$$\Expect{|D_{2,v}|} \le \gamma \varepsilon\cdot d_G(v) \le \gamma \varepsilon D (1+\gamma) \le 2D' \gamma\varepsilon,$$
since $D(1+\gamma) \le 2D'$ as $\varepsilon(r-1) + w \le \gamma \le 1/8$ by assumption.

Combining, we have that
$$\Expect{|D_v|} \ge \Expect{|D_{0,v}'|} \ge D'(1-\gamma(1+r\varepsilon)),$$
and
$$\Expect{|D_v|} \le \Expect{|D_{0,v}'|} + \Expect{|D_{2,v}|} \le D'(1+\gamma(1+4r\varepsilon)).$$

\subsubsection{Expectation of $w_{D'}(H',e)$}

If $S\in E(H)$, let $V(S):= \bigcup_{e\in S} e$. Now we relate $D_{e,i,j}$  to other variables that are easier to calculate as follows.

For each $2\le j \le i\le g$, let 
\begin{align*}
D_{0,e,i} &:= \left\{ S\in N'_H(e): \left(\bigcup_{u\in V(S)\setminus e} N_G'(u)\setminus (S\setminus e)\right) \cap E_0 = \emptyset, |S|=i \right\},\\
%D_{0,e,i,j} &:= \{ S\in D_{0,e,i}: |(S\setminus e)\cap E_0|=i-j \}.\\
D_{0,e,i}' &:= \{ S\in D_{0,e,i}: (S\setminus e) \cap F_{0,e} = \emptyset \},\\
D_{0,e,i,j}' &:= \{ S\in D_{0,e,i}': |(S\setminus e)\cap E_0|=i-j \}.\\
D_{2,e,i} &:= \{ S\in N'_H(e): (V(S)\setminus e) \cap V_2 \ne \emptyset, |S|=i \},\\
D_{2,e,i,j} &:= \{ S\in D_{2,e,i}: |(S\setminus e)\cap E'|=i-j \}.
%D_{e,i} &:= \{ T\in N_H'(e): T\setminus (E'\cup e) \subseteq E(G[V'])\setminus F_{0,e}, |T|=i \},
\end{align*}
Let
$$w'(e) := \sum_{i\ge 2} \sum_{2\le j\le i} \left(|D_{0,e,i,j}'|+|D_{2,e,i,j}|\right) \cdot \frac{j-1}{(D')^{j-1}}.$$
Note that for all $i$ and $j$ with $2\le j \le i\le g$, we have
$$D_{e,i,j} \subseteq D_{0,e,i,j}'\cup D_{2,e,i,j}.$$ 
and hence
$$w(e)\le w'(e).$$

Fix $S\in E(H)$ with $e\in S$ and $|S|=i$. We now proceed to calculate upper bounds on the probability that $S$ is in each of $D_{0,e,i,j}$, $D_{0,e,i,j}'$ and $D_{2,e,i,j}$ as follows. Now we have that
$$\Prob{S\in D_{0,e,i}} = (1-p)^{\left|\bigcup_{u\in V(S)\setminus e} N_G'(u)\setminus (S\setminus e)\right|}.$$
Yet since $G$ has codegrees at most $4\log^2 D$ by condition (G2), we find that
\begin{align*}
\left|\bigcup_{u\in V(S)\setminus e} N_G'(u)\setminus (S\setminus e)\right| &\ge (i-1)rD(1-\gamma) - \binom{i-1}{2}(4\log^2 D),\\ 
&\ge (i-1)rD - rg\gamma D,
\end{align*}
since $4g^2 \log^2 D \le \gamma D$ as $\gamma \ge \varepsilon \ge D^{-1/6r}$ by assumption and $D$ is large enough. Thus
\begin{align*}
\Prob{S\in D_{0,e,i}} &\le (1-p)^{(i-1)rD -rg\gamma D} \\
&\le e^{-\varepsilon(i-1)r+rg\gamma\varepsilon}.    
\end{align*}

Fix $u\in S\setminus e$. Let 
$$T_u := \{T\in N_H'(u): V(T)\cap (V(S)\setminus (e\cup u)) \ne \emptyset\} \cup \{S\}.$$ Now using condition $(G+H)$ and $|S|\le g$, we find that
$$|T_u| \le 2^{2(g-1)}\cdot rg^2\log^{2(g-1)} D,$$
and hence
$$p\cdot |T_u| \le \frac{\varepsilon}{D}\cdot 2^{2(g-1)}\cdot rg^2 \log^{2(g-1)} D \le \frac{\gamma\varepsilon}{3} \le \frac{1}{3},$$
since $3\cdot 2^{2(g-1)}\cdot rg^2 \log^{2(g-1)} D \le \gamma D$ as $\gamma \ge \varepsilon \ge D^{-1/6r}$ by assumption and $D$ is large enough, and we also used that $\gamma \varepsilon \le 1$ by assumption. Thus
\begin{align*}
\Prob{u\not\in F_{0,e}~|~S\in D_{0,e,i}} &= \Prob{\gamma_u =1} \cdot \Prob{\nexists T\in N_H'(u)\setminus T_u: T\setminus u\subseteq E_0}\\
&= \Prob{\gamma_u =1} \cdot \prod_{T\in N_H'(u)\setminus T_u} \left(1-p^{|T|-1}\right)\\
&= \frac{\Prob{u\not\in F_0}}{\prod_{T\in T_u} \left(1-p^{|T|-1}\right)}\\
&\le \frac{1-\varepsilon w}{1-p\cdot |T_u|}\\
&\le 1 - \varepsilon w + \frac{\gamma\varepsilon/3}{1-(\gamma\varepsilon/3)}\\
&\le  e^{- \varepsilon w} + (\gamma \varepsilon/2)\\
&\le  e^{- \varepsilon w + \gamma\varepsilon},
\end{align*}
where we used that $p\cdot |T_u| \le \gamma\varepsilon/3 \le 1/3$ from above and $e^{-\varepsilon w}\ge 1/2$ since $\varepsilon w \le \gamma \le 1/2$ by assumption.

As $H$ has girth at least four, we have that the events $$( (u\not\in F_{0,e}~|~S\in D_{0,e,i}): u\in S\setminus e)$$ are independent and hence
\begin{align*}
\Prob{S\in D_{0,e,i}'} &= \Prob{S\in D_{0,e,i}} \cdot \prod_{u\in S\setminus e} \Prob{u\not\in F_0~|~S\in D_{0,e,i}} \\
&\le e^{-\varepsilon(i-1)r+rg\gamma\varepsilon} \cdot e^{- \varepsilon(i-1) \cdot w+(g-1)\gamma\varepsilon}\\
&= e^{-\varepsilon(i-1)(r+w)+2rg\gamma\varepsilon}\\
&= \left( \frac{D'}{D} e^{-\varepsilon}\right)^{i-1} \cdot e^{2rg\gamma\varepsilon}. 
\end{align*}

Since the events $(u\in E_0: u\in S\setminus e)$ are independent of each other and the event $S\in D_{0,e,i}'$, we find that
\begin{align*}
\Prob{S\in D_{0,e,i,j}'} &\le \binom{i-1}{i-j} p^{i-j} \cdot (1-p)^j \cdot \Prob{S\in D_{0,e,i}}\\
&\le
\binom{i-1}{i-j} p^{i-j} \cdot\left( \frac{D'}{D} e^{-\varepsilon}\right)^{i-1} \cdot e^{2rg\gamma\varepsilon}. 
\end{align*}

By the union bound,
\begin{align*}
\Prob{S\in D_{2,e,i}} \le \bigcup_{u\in V(S)\setminus e} \Prob{u\in V_2} \le r(i-1)\varepsilon^2 \cdot (4r+2w) \le \gamma\varepsilon,    
\end{align*}
since $rg(4r+2w)\varepsilon \le \gamma$ by assumption.
Note if $f\in (S\setminus e)\cap E'$, then $f\in E_0$ and $f\cap V_2=\emptyset$. Hence the events $f\in (S\setminus e)\cap E'$ and $f\cap V_2\ne \emptyset$ are disjoint (and hence negatively correlated). Thus 
\begin{align*}
\Prob{S\in D_{2,e,i,j}} &\le \Prob{|(S\setminus e)\cap E_0|= i-j} \cdot \Prob{S\in D_{2,e,i}}\\
&\le \binom{i-1}{i-j} p^{i-j}  \cdot \gamma\varepsilon.
\end{align*}

Combining, we have by the union bound that
\begin{align*}
\Prob{S\in D_{0,e,i,j}.\cup D_{2,e,i,j}} &\le \binom{i-1}{i-j} p^{i-j} \cdot \Bigg(\left( \frac{D'}{D} e^{-\varepsilon}\right)^{i-1} \cdot e^{2rg\gamma\varepsilon} +  \gamma\varepsilon\Bigg). \\
&\le \binom{i-1}{i-j} p^{i-j} \cdot \left( \frac{D'}{D} e^{-\varepsilon}\right)^{i-1} \cdot e^{3rg\gamma\varepsilon},
\end{align*}
since $\left( \frac{D'}{D} e^{-\varepsilon}\right)^{i-1} \ge 1/2$ which follows since $g\varepsilon(r+w) \le \gamma \le 1/2$ by assumption and we also used that $g\ge 2$.
Thus by linearity of expectation, we have
\begin{align*}
\Expect{|D_{0,e,i,j}'| + |D_{2,e,i,j}|} &\le d_{H,i}(e) \cdot \binom{i-1}{i-j} p^{i-j} \cdot \left( \frac{D'}{D} e^{-\varepsilon}\right)^{i-1} \cdot e^{3rg\gamma\varepsilon}
\end{align*}
Hence
\begin{align*}
\Expect{|D_{0,e,i,j}'| + |D_{2,e,i,j}|} \cdot \frac{j-1}{(D')^{j-1}} &\le d_{H,i}(e) \cdot \binom{i-1}{i-j} \left(\frac{\varepsilon}{D}\right)^{i-j} \cdot \left( \frac{D'}{D} e^{-\varepsilon}\right)^{i-1} \cdot e^{3rg\gamma\varepsilon} \cdot \frac{j-1}{(D')^{j-1}} \\
&= d_{H,i}(e)\cdot \frac{i-1}{D^{i-1}} \cdot \binom{i-2}{j-2} \left(\varepsilon\cdot \frac{D'}{D}\right)^{i-j} \cdot e^{-(i-1)\varepsilon+3rg\gamma\varepsilon}.
\end{align*}
Thus summing over $j$ and using the Binomial Theorem, we have
\begin{align*}
\sum_{2\le j\le i} \Expect{|D_{0,e,i,j}'| + |D_{2,e,i,j}|} \cdot \frac{j-1}{(D')^{j-1}} &\le d_{H,i}(e) \cdot \frac{i-1}{D^{i-1}} \cdot \left(1+\varepsilon\cdot \frac{D'}{D}\right)^{i-2} \cdot e^{-(i-1)\varepsilon+3rg\gamma\varepsilon}\\
&\le d_{H,i}(e) \cdot \frac{i-1}{D^{i-1}} \cdot e^{(i-2)\varepsilon} \cdot e^{-(i-1)\varepsilon+3rg\gamma\varepsilon} \\
&= d_{H,i}(e) \cdot \frac{i-1}{D^{i-1}} \cdot e^{-\varepsilon+3rg\gamma\varepsilon} \\
&= d_{H,i}(e) \cdot \frac{i-1}{D^{i-1}} \cdot \left(1-\frac{3}{4} \cdot \varepsilon\right),
\end{align*}
where for the last inequality we used that $3rg\gamma \le 1/8$ by assumption and that $e^{-(7/8)\varepsilon} \le 1 - (3/4)\varepsilon$ since $\varepsilon \le \gamma \le 1/8$ by assumption. Hence summing over $i$ and $j$ and using the linearity of expectation, we find
$$\Expect{w'(e)} = \sum_{i\ge 2} \sum_{2\le j\le i} \Expect{|D_{0,e,i,j}'| + |D_{2,e,i,j}|} \cdot \frac{j-1}{(D')^{j-1}} \le w \cdot \left(1-\frac{3}{4} \cdot \varepsilon\right).$$

\subsection{Concentrations}\label{ss:Conc}

For the purposes of applying the Lov\'az Local Lemma as well as for exceptional outcomes used in the various concentrations, it is helpful to define an auxiliary graph for the various events as follows.

Let $J$ be the graph with $V(J) := [m] \cup V(G) \cup E(G)$ and 
\begin{align*}
E(J) := &\{iv: v\in X_i, v\in V(G), i\in [m]\} \cup \{ve: v\in e, v\in V(G), e\in E(G)\} \\
&\cup \{ef: e,f\in E(G), e\cap f\ne \emptyset\} \cup \{ef: e,f\in E(G), \exists S\in E(H) \textrm{ with }e,f\in S\}.    
\end{align*}

Note that 
$$\Delta(J) \le \max \left\{2D^{1/r}, D(1+\gamma)+1, (D(1+\gamma)+1)r + \sum_{i\le g} D^i \log D\right\} \le D^{g+1},$$
where the last inequality follows since $D$ is large enough. Hence for each $x\in V(J)$ and nonnegative integer $d$, we have that
$$|y\in V(J): {\rm dist}_J(x,y)\le d\}| \le \sum_{0\le i \le d} \Delta(J)^i \le D^{(d+1)(g+1)},$$
where the last inequality follows since $D\ge 2$.

\subsubsection{Exceptional Outcomes}

For a vertex $v$ of $G$, let $S_v$ be the exceptional outcome of $v$ being incident with at least $\log^2 D$ edges of $E_0$. 

Note that $|N_G'(v) \cap E_0|$ is $(1,1)$-observable. Yet $$\Expect{|N_G'(v) \cap E_0|} = p|\{e\in E(G): v\in e\}| \le pD(1+\gamma) = \varepsilon(1+\gamma) \le 1,$$
since $\varepsilon \le \gamma \le 1/2$ by assumption. Hence by Talagrand's inequality, 
$$\Prob{S_v} \le 4 e^{-(\log^2 D)/40}.$$

For an edge $e$ of $G$, let $T_e$ be the exceptional outcome of $e$ being in at least $\log^2 D$ otherwise completed configurations, that is
$$|S\in N_H'(e): S\setminus e \subseteq E_0| \ge \log^2 D.$$
Note from the calculations before for the equalizing coin flip, we have that 
$$\Expect{|S\in N_H'(e): S\setminus e \subseteq E_0|} \le \varepsilon w \le 1,$$
since $\varepsilon w \le \gamma \le 1$ by assumption. Since $H$ has girth at least three, it follows that this variable is $(g-1,1)$-observable. Hence by Talagrand's inequality,
$$\Prob{T_e} \le 4 e^{-(\log^2 D)/(40(g-1))}.$$

We will use the exceptional outcomes above to bound the observation degree for an edge verifying $E_2$ and $E_0\cap F_0$; we will separately need the assumptions on the codegree of $G$ with $H$ and that $H$ has girth at least five to help bound the observation degree for $F_0$.

\subsubsection{Concentration of $|X_i\setminus V'|$}

Fix $i\in [m]$. %We prove $\Prob{A_i}$ is exponentially small in $D$ via Linear Talagrand's.

Note that $|X_i\cap V_1|$ is $(1,r)$-observable; namely we can verify a vertex $x\in X_i\cap V_1$ by showing an edge $e$ in $E_0$ incident to $x$ while an edge $e$ has observation degree at most $r$ since it is incident with at most $r$ vertices.

Let $B_i := \{ u\in V(G): {\rm dist}_J(i,u)\le 3\}$ and let $C_i := \{ f\in E(G): {\rm dist}_J(i,f)\le 3\}$. 
Note that 
$$|B_i|+|C_i| \le |\{y\in V(J): {\rm dist}_J(i,y)\le 3\}| \le D^{4(g+1)}.$$
Now we define a set of exceptional outcomes 
$$\Omega_i^* := \bigcup_{u\in B_i} S_u \cup \bigcup_{f\in C_i} T_f.$$ 
Hence 
$$\Prob{\Omega_i^*} \le D^{4(g+1)} \cdot e^{-(\log^2 D)/(40(g-1))}.$$

Now we have that $|X_i \cap V_2|$ is $(g,2r(r+g)\log^2 D)$-observable with respect to $\Omega_i^*$. Namely to verify that $x\in X_i\cap V_2$, we show an edge $e\in E_0$ incident with $x$ and then either an edge $f\in E_0$ incident to $e$, or that $\gamma_e=0$, or the set of at most $g-1$ edges of $G$ in an edge $S$ of $H$ with $e\in S$ and $S\setminus e \subseteq E_0$. While for $\omega \in \Omega\setminus \Omega_i^*$, the observation degree of a trial is at most $r+r^2\log^2 D + rg \log^2 D\le 2r(r+g)\log^2 D$ since $D$ is large enough.

Moreover, note that 
$$|X_i\cap V_2|\le |X_i\cap V_1| \le |X_i| \le 2\cdot D^{1/r}.$$ 
Also recall that 
$$\Expect{|X_i\setminus V'|} \ge |X_i|\cdot \left(\frac{3}{4}\right) \cdot \varepsilon.$$ Since $|X_i\setminus V'| = |X_i\cap V_1| - |X_i \cap V_2|$ and each of $|X_i\cap V_1|, |X_i\cap V_2|$ is $(g,2r(r+g)\log^{2} D)$-observable with respect to $\Omega_i^*$, we have by Theorem~\ref{exceptional talagrand's difference} with $t= |X_i|\cdot \varepsilon/4$, $m=2$ and $M=|X_i|$ that
\begin{align*}
\Prob{A_i} &\le 4 \exp\left( \frac{-\frac{\varepsilon^2}{16}\cdot  |X_i|}{8g(2r(r+g)\log^{2} D)(2)^2(5)}\right) + 4\cdot (2)\cdot \Prob{\Omega_i^*}\\
&\le e^{-\log^{1.5}D},
\end{align*}
where for the last inequality we also used that $\varepsilon \ge D^{-1/6r}$ by assumption, that $|X_i|\ge D^{1/r}$ and that $D$ is large enough.

\subsubsection{Concentration of $|D_v|$}

Let $B_v := \{u\in V(G): {\rm dist}_J(v,u) \le 3\} $ and $C_v := \{f\in E(G): {\rm dist}_J(v,f) \le 3\}$. Note that 
$$|B_v|+|C_v| \le |\{y\in V(J): {\rm dist}_J(v,y)\le 3\}| \le D^{4(g+1)}.$$
Now we define a set of exceptional outcomes
$$\Omega_v^* := \bigcup_{u\in B_v} S_u \cup \bigcup_{f\in C_v} T_f.$$
Hence
$$\Prob{\Omega_v^*} \le D^{4(g+1)} \cdot e^{-\log^2 D/ (40(g-1))}.$$
Let 
$$D_{1,v} := \{e\in E(G):v\in e, (e\setminus v)\cap V_1\ne \emptyset\}.$$
Note that 
$$|D_{0,v}'| = d_G(v)-|D_{1,v}|-|N_G'(v)\cap F_0| + |D_{1,v}\cap F_0|.$$
Now $d_G(v)$ is a constant (not a random variable) and $|D_{1,v}|$ is $(1,1)$-observable. 

Meanwhile, $|N_G'(v)\cap F_0|$ is $(g-1, 2^{2(g-1)}\cdot \log^{2(g-1)} D)$-observable; namely to verify $e\in N_G'(v)$ is in $F_0$, we show either $\gamma_e =0$ or a set of at most $g-1$ edges of $G$ in an edge $S$ of $H$ with $e\in S$ and $S\setminus e \subseteq E_0$. Meanwhile, the observation degree of a trial is at most 1 for coinflips and edges incident with $v$, while for an edge $f$ not incident with $v$ it is at most the codegree of $v$ and $f$ which is at most $2^{2(g-1)}\cdot \log^{2(g-1)} D$ by condition $(G+H)$.

Similarly $|D_{1,v}\cap F_0|$ is $(g, 2^{2(g-1)}\cdot \log^{2(g-1)} D)$-observable. Hence by Theorem~\ref{exceptional talagrand's difference} with $t= \gamma\varepsilon\cdot D'$, $m=3$ and $M=d_G(v)\le 2D \le 4D'$, we have that
\begin{align*}
\Prob{\Big| |D_{0,v}'|- \Expect{|D_{0,v}'|} \Big| \ge t} &\le 4 \exp\left( \frac{-\gamma^2\varepsilon^2 \cdot D'}{8g(2^{2(g-1)}\cdot \log^{2(g-1)} D)(3)^2(17)}\right)\\
&\le \frac{1}{2} \cdot e^{-\log^{1.5}D},
\end{align*}
where for the last inequality we also used that $D' \ge D/2$ since $\varepsilon(r-1+w)\le 1/2$ by assumption, that $\gamma \ge \varepsilon \ge D^{-1/6}$ by assumption and that $D$ is large enough.

On the other hand, $D_{2,v}$ is $(g,\log^5 D)$-observable with respect to $\Omega_v^*$. Namely to verify that for an edge $e\in N'(v)$ we have $(e\setminus v)\cap V_2\ne \emptyset$, we show an edge $f\in E_0$ incident with some $u\in (e\setminus v)$ and then either an edge $f'\in E_0$ incident to $e$, or that $\gamma_e=0$, or the set $F$ of at most $g-1$ edges of $G$ in an edge $S$ of $H$ with $e\in S$ and $S\setminus e \subseteq E_0$. Meanwhile, for $\omega \in \Omega\setminus \Omega_i^*$, the observation degree of a trial is at most 
$$(1+4r\log^2 D) \cdot r(r+g)\log^2 D \le \log^5 D$$
where the inequality follows since $D$ is large enough. This upper bound for the observation degree follows as an edge $f''$ can be used as $f'$ for at most $r^2\log^2 D$ edges $f$ since $\omega\not\in \{S_u:u\in f''\}$ and in $F$ for at most $rg\log^2 D$ edges $f$ at most as $\omega\not\in T_{f''}$; and yet $f$ can be used at most $r\cdot (4\log^2 D)$ by condition (G2). 

Hence by Theorem~\ref{exceptional talagrand's observations} with $t= \gamma\varepsilon\cdot D'$, we have that
\begin{align*}
\Prob{\Big| |D_{2,v}|- \Expect{|D_{2,v}|} \Big| \ge t} &\le 4 \exp\left( \frac{-\gamma^2\varepsilon^2 \cdot D'}{8g(\log^{5} D)(17)}\right) + 4\Prob{\Omega_v^*}\\
&\le \frac{1}{2} \cdot e^{-\log^{1.5}D},
\end{align*}
where for the last inequality we also used that $D' \ge D/2$ since $\varepsilon(r-1+w)\le 1/2$ by assumption, that $\gamma \ge \varepsilon \ge D^{-1/6}$ by assumption and that $D$ is large enough.

%Recall that $\Expect{|D_v|} = D'(1 \pm  4r\gamma \varepsilon).$
It now follows that for $v\in V(G')$
\begin{align*}
\Prob{A_v} &= \Prob{|D_v| \ne D'(1\pm 6r\gamma \varepsilon)}\\
&\le \Prob{ |D_{0,v}'| \ne \Expect{|D_{0,v}'|} \pm D'\gamma\varepsilon} + \Prob{ |D_{2,v} \ne  \Expect{|D_{2,v}|} \pm D'\gamma\varepsilon }\\
&\le e^{-\log^{1.5}D}.
\end{align*}

% Alternate proof:
%Let $a_{i,j}$ be the number of edges $e$ incident with $v$ such that $|(e\setminus v)\cap V_1| \ge i$ and $|(e\setminus v)\cap V_2| \ge j$. Similarly let $b_i := a_{i+1,i}-a_{i,i}$ be the number of edges $e$ incident with $|(e\setminus v)\cap V_1|=|(e\setminus v)\cap V_2)|=i$. Note $a_{i,j}$ is $(ig, \log^{g+2} D)$-observable in $\Omega\setminus \Omega_v^*$. (explain...)

%Thus $|e\in E(G): v\in e, (e\setminus v)\subseteq V(G')| = \sum_{i\ge 0} b_i$.

%Similarly, let $a_{i,j}'$ be the number of edges $e\in F_0$ incident with $v$ such that $|(e\setminus v)\cap V_1| \ge i$ and $|(e\setminus v)\cap V_2| \ge j$. Similarly let $b_i' := a_{i+1,i}'-a_{i,i}'$ be the number of edges $e\in F_0$ incident with $|(e\setminus v)\cap V_1|=|(e\setminus v)\cap V_2)|=i$. Note $a_{i,j}$ is $(ig + g, (\log^{g+2} D) \cdot \delta_{G+H})$-observable in $\Omega\setminus \Omega_v^*$. (explain, uses $\delta{G+H}$ of course!...)

%Thus $|e\in E(G): v\in e, (e\setminus v)\subseteq V(G'), e\in F_0| = \sum_{i\ge 0} b_i'$.

%Hence $D_v = \sum_{i\ge 0} b_i - b_i' = \sum_{i\ge 0} (a_{i+1,i}-a_{i,i}) - (a_{i+1,i}' - a_{i,i}').$ Since $a_{i,j},a_{i,j}' \le D(1+\gamma)$, we find by Theorem~\ref{exceptional talagrand's difference} that $\Pr{B_v}$ is exponentially small in $D$...

\subsubsection{Concentration of $w'(e)$}
Let $B_e := \{u\in V(G): {\rm dist}_J(e,u) \le 3\} $ and $C_e := \{f\in E(G): {\rm dist}_J(e,f) \le 3\}$. Note that 
$$|B_e|+|C_e| \le |\{y\in V(J): {\rm dist}_J(e,y)\le 3\}| \le D^{4(g+1)}.$$
Now we define a set of exceptional outcomes
$$\Omega_e^* := \bigcup_{u\in B_e} S_u \cup \bigcup_{f\in C_e} T_f.$$
Hence
$$\Prob{\Omega_e^*} \le D^{4(g+1)} \cdot e^{-(\log^2 D)/ (40(g-1))}.$$
Let 
$$N'_H(e,i,j) := \{ S\in N'_H(e): |(S\setminus e)\cap E_0|=i-j \},$$
$$D_{1,e,i,j} := \{ S\in N'_H(e): (V(S\setminus E_0)\setminus e)\cap V_1 \ne \emptyset, |S|=i, |(S\setminus e)\cap E_0|=i-j \},$$
and
$$F_{e,i,j}:= \{ S\in N'_H(e): (S\setminus e) \cap F_{0,e} \ne \emptyset, |S|=i, |(S\setminus e)\cap E_0|=i-j \}$$
Note that 
$$|D_{0,e,i,j}'| = |\{ S\in N'_H(e): |(S\setminus e)\cap E_0|=i-j \}|-|D_{1,e,i,j}|-|F_{e,i,j}| + |D_{1,e,i,j}\cap F_{e,i,j}|.$$

Now $|N'_H(e,i,j)|$ is $(g-1,1)$-observable; namely to verify $S\in N_H'(e)$ satisfies $|(S\setminus e)\cap E_0|=i-j$, we show the at most $g-1$ trials consisting of whether $f\in S\setminus e$ is in $E_0$, while the observation degree is at most $1$ since every edge $f\ne e$ is in at most one edge of $N_H'(e)$ since $H$ has girth at least three.

Moreover, $|D_{1,e,i,j}|$ is $\left(g,~2^{2(g-1)}\cdot r\cdot \log^{2(g-1)} D\right)$-observable; namely, to verify $S\in N_H'(e)$ is in $D_{1,e,i,j}$, we show the at most $g-1$ trials consisting of whether $f\in S\setminus e$ is in $E_0$ as well as one edge $f'\in E_0$ with $f\cap (V(S\setminus E_0)\setminus e) \ne \emptyset$. Meanwhile the observation degree is at most $2^{2(g-1)}\cdot r\cdot \log^{2(g-1)} D$ since for every edge $f\ne e$, there are at most $2^{2(g-1)}\cdot r\cdot \log^{2(g-1)} D$ edges in $N_H'(e)$ such that $V(S\setminus e)\cap f \ne \emptyset$ by condition $(G+H)$.

Meanwhile, $|F_{e,i,j}|$ is $(2(g-1), 1)$-observable; namely to verify $S\in N_H'(e)$ is in $F_{e,i,j}$, we show the at most $g-1$ trials consisting of whether $f\in S\setminus e$ is in $E_0$, as well as either an edge $f\in S\setminus e$ with $\gamma_f=0$ or the at most $g-1$ edges of $T\setminus f$ for some $f\in S\setminus e$ where $T\in N_H'(f)\setminus S$ and $T\setminus f \subseteq E_0$. Meanwhile the observation degree of a trial is at most $1$ for an edge $f' \ne e$ as $f'$ is in at most one edge of $N_H'(e) \cup \bigcup_{S\in N_H'(e)} \bigcup_{f\in S\setminus e} N_H'(f)$ since $H$ has girth at least five.

Similarly, it follows that $|D_{1,e,i,j}\cap F_{e,i,j}|$ is $\left(2g-1, 1+2^{2(g-1)}\cdot r\cdot \log^{2(g-1)} D\right)$-observable. 

On the other hand, $D_{2,e,i,j}$ is $(2g-1,\log^{2g+1} D)$-observable with respect to $\Omega_e^*$. Namely to verify $S\in N_H'(e)$ is in $D_{2,e,i,j}$, we show the at most $g-1$ trials consisting of whether $f\in S\setminus e$ is in $E_0$, as well as an edge $f\in E_0$ incident with some $u\in V(S)\setminus e$ and then either an edge $f'\in E_0$ incident to $f$, or that $\gamma_f=0$, or the set $F$ of at most $g-1$ edges of $G$ in an edge $S'$ of $H$ with $f\in S'$ and $S'\setminus f \subseteq E_0$. While for $\omega \in \Omega\setminus \Omega_e^*$, the observation degree of a trial is at most 
$$(1+r\cdot 2^{2(g-1)}\cdot \log^{2(g-1)} D) \cdot (r+g)\log^2 D \le \log^{2g+1} D;$$ 
where the inequality follows since $D$ is large enough. This upper bound for the observation degree follows as an edge $f''$ can be used as $f'$ for at most $r\log^2 D$ edges $f$ since $\omega\not\in \{S_u:u\in f''\}$ and in $F$ for at most $g\log^2 D$ edges $f$ as $\omega\not\in T_{f''}$; and yet $f$ can be used at most $r\cdot 2^{2(g-1)}\cdot \log^{2(g-1)} D$ times by condition $(G+H)$.

Note that if a random variable is $(r,d)$-observable with respect to $\Omega_e^*$ then it is is $(r',d')$-observable with respect to $\Omega_e^*$ for all $r'\ge r, d'\ge d$. Thus for all $i,j$ where $2\le j \le i \le g$, the above variables are $\left(2g-1,\log^{2g+1} D\right)$-observable with respect to $\Omega_e^*$. Hence if any of the above variables are scaled by a factor of $\frac{j-1}{(D')^{j-1}}$ as they are in $w'(e)$, we find using Proposition~\ref{prop:scaling} that the scaled variable is $\left(2g-1,\frac{j-1}{(D')^{j-1}} \cdot \log^{2g+1} D\right)$-observable with respect to $\Omega_e^*$ and hence is $\left(2g-1,\frac{\log^{2g+1} D}{D'}\right)$-observable with respect to $\Omega_e^*$ since $\frac{j-1}{(D')^{j-1}}\le \frac{1}{D}$ for all $j\ge 2$ as $D'\ge 2$.

Thus $w'(e)$ is of the form $\sum_{k=1}^m \varepsilon_k X_k$ where $m\le 5g^2$, $\varepsilon_k \in \{-1,1\}$ for all $k\in [m]$ and each $X_k$ is a random variable that is $\left(2g-1,\frac{\log^{2g+1} D}{D'}\right)$-observable with respect to $\Omega_e^*$.

Finally note that for each $k\in [m]$, we have that 
$$\Expect{X_k} \le \max_{2\le j \le i \le g} \frac{j-1}{(D')^{j-1}} \cdot \Expect{|N_H'(e,i,j)|}.$$ 
Yet $$\Expect{|N_H'(e,i,j)|} = d_{H,i}(e) \binom{i-1}{j-1} p^{i-j}(1-p)^{j-1} \le \frac{D^{i-1}}{i-1} \cdot w \cdot 2^{i} \left(\frac{\varepsilon}{D}\right)^{i-j} \le \frac{D^{j-1}}{i-1} \cdot w \cdot 2^i,$$
where we used that $\varepsilon \le 1$, $0\le p\le 1$, $\binom{i-1}{j-1} \le 2^i$ and $d_{H_i}(e) \le \frac{D^{i-1}}{i-1} \cdot w$. 
Hence for all $2\le j \le i \le g$, we find that
$$ \frac{j-1}{(D')^{j-1}} \cdot \Expect{|N_H'(e,i,j)|} \le \left(\frac{D}{D'}\right)^{j-1} \cdot w \cdot 2^i \le 4^g \cdot w,$$
where we used that $D'\ge D/2$ since $\varepsilon(r-1+w)\le 1/2$ by assumption.
Let $M := 4^g \cdot w$. Thus for all $k\in [m]$, we have that
$$\Expect{X_k} \le M.$$

Hence by Theorem~\ref{exceptional talagrand's difference} with $t=\frac{\varepsilon}{4}\cdot w$, we find that
\begin{align*}
\Prob{\Big| |w'(e)|- \Expect{|w'(e)|} \Big| \ge t} &\le 4 \exp\left( -\frac{ \left(\frac{\varepsilon^2}{4^2} \right)\cdot w}{8(2g-1)\left(\frac{\log^{2g+1} D}{D'}\right)(5g^2)^2(5 \cdot 4^g)}\right) + 4\cdot (5g^2)\cdot \Prob{\Omega_e^*}\\
&\le e^{-\log^{1.5}D},
\end{align*}
where we used that $\varepsilon \ge D^{-1/6}$ and $w\ge D^{-1/6}$ by assumption, that $D'\ge D/2$ since $\varepsilon(r-1+w)\le 1/2$ by assumption, and that $D$ is large enough.

\subsection{Applying the Local Lemma}\label{ss:LocalLemma}

Recall the definition of the graph $J$ from Section~\ref{ss:Conc}.

Note now that the event $A_i$ depends only on trials $(f\in E_0, \gamma_f)$ for edges $f$ of $G$ where ${\rm dist}_{J}(i,f) \le 3$. Similarly the event $A_v$ depends only on trials $(f\in E_0, \gamma_f)$ for edges $f$ of $G$ where ${\rm dist}_{J}(v,f) \le 3$ and the event $A_e$ depends only on trials $(f\in E_0, \gamma_f)$ for edges $f$ of $G$ where ${\rm dist}_{J}(e,f) \le 3$. Hence for each vertex $x$ of $J$, the event $A_x$ depends only on trials $(f\in E_0, \gamma_f)$ for edges $f$ of $G$ where ${\rm dist}_{J}(x,f) \le 3$.

For a vertex $x\in V(J)$, let 
$$I_x := \{A_y: {\rm dist}_J(x,y) \ge 7\}.$$
Hence for each $x\in V(J)$, we have that $A_x$ is mutually independent of the set of events $I_x$. Recall that for each $x\in V(J)$, we have that
$$|y\in V(J): {\rm dist}_J(x,y)\le d\}| \le D^{7(g+1)}.$$

Yet from the concentration section above, we have that for each $A_x\in\mathcal{E}$
$$\Prob{A_x} \le e^{-\log^{1.5} D} \le \frac{1}{4D^{7(g+1)}},$$
since $D$ is large enough. Hence by the Lov\'asz Local Lemma, with positive probability none of the events in $\mathcal{E}$ happen. But in that instance then, all of (2), (3) and (4) hold as desired.

\section{Proof of Iterative Bipartite Lemma and Finishing Lemma}\label{s:Split}

In this section, we prove Lemmas~\ref{lem:IterativeBip} and~\ref{lem:Finish}. To prove Lemma~\ref{lem:IterativeBip}, we need a regularization lemma as follows.

\begin{lem}\label{lem:reg}
If $G=(A,B)$ is an $r$-uniform bipartite hypergraph where every vertex in $A$ has degree exactly $D_A$ and every vertex in $B$ has degree at most $D_B$ (where $D_B$ is an integer), then there exists an $r$-uniform bipartite hypergraph $G'=(A',B')$ where every vertex of $A'$ has degree $D_A$ in $G'$ and every vertex of $B'$ has degree $D_B$ in $G'$ and $G$ is an induced subhypergraph of $G'$ where $A\subseteq A'$ and $B\subseteq B'$.
\end{lem}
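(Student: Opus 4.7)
The plan is to build $G'$ by stacking many disjoint copies of $G$ side by side and then attaching fresh $A$-vertices whose edges absorb all the leftover $B$-degree. First I would compute the $B$-deficits $\delta(b):=D_B-d_G(b)\ge 0$ and the total deficit $T:=\sum_{b\in B}\delta(b)=D_B|B|-(r-1)D_A|A|$, which is nonnegative by the double count $\sum_{b\in B} d_G(b)=(r-1)|E(G)|=(r-1)D_A|A|$. If $T=0$ we simply take $G':=G$; otherwise I pick $N$ to be a common multiple of $r-1$ and $D_A$ that is large enough to satisfy $D_B\le NT/(r-1)$, and take $N$ disjoint copies $G_1,\dots,G_N$ of $G$, identifying $G_1$ with $G$. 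In the disjoint union every $A_i$-vertex already has degree $D_A$, and each copy $b_i$ of $b\in B$ has degree $d_G(b)$, hence deficit $\delta(b)$.

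The heart of the argument is the construction of an auxiliary $(r-1)$-uniform multi-hypergraph $F$ on $B':=B_1\sqcup\cdots\sqcup B_N$ in which each copy $b_i$ has degree exactly $\delta(b)$. The prescribed total degree is $NT$, which is divisible by $r-1$ by the choice of $N$, so $F$ would have exactly $m:=NT/(r-1)$ edges; and by the choice of $N$ we also have $D_A\mid m$. Existence of such an $F$ reduces, via a standard Gale--Ryser / K\"onig-type slot-matching on the bipartite edge--vertex incidence matrix (rows being edges of constant sum $r-1$, columns being the vertex-slots to be filled), to the single condition $\max_b\delta(b)\le m$, which is exactly why I chose $N$ large enough.

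Given $F$, I would partition $E(F)$ arbitrarily into $m/D_A$ blocks of $D_A$ edges, introduce one fresh vertex $a^*$ per block forming a new set $A^*$ of vertices, and extend each $f\in E(F)$ lying in the block of $a^*$ to the $r$-edge $f\cup\{a^*\}$. Setting $A':=A_1\sqcup\cdots\sqcup A_N\sqcup A^*$ and $B':=B_1\sqcup\cdots\sqcup B_N$, a direct check shows each $a^*\in A^*$ has degree $D_A$ by construction, each $A_i$-vertex is untouched by the new edges and keeps degree $D_A$, and each $b_i$ now has degree $d_G(b)+\delta(b)=D_B$. Since every new edge meets $A^*$, which is disjoint from $V(G)=A\cup B$, and for $i\ge 2$ every edge of $G_i$ lies entirely in $A_i\cup B_i$, which is also disjoint from $V(G)$, the only edges of $G'$ contained in $V(G)$ are those of $G_1=G$, so $G$ is the induced sub-hypergraph of $G'$ on $A\cup B$.

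The only substantive step is the existence of $F$, and the only real obstacle is securing the inequality $\max_b\delta(b)\le m$; once the lower bound on $N$ is imposed, this is immediate and the rest of the argument is bookkeeping. Intuitively, taking many disjoint copies simultaneously dilutes the largest deficit (whose ceiling stays at $D_B$) while growing the number of available edge-slots linearly in $N$, which is what makes the degree-realization condition trivial to verify.
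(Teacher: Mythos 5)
Your proof is correct in spirit but takes a genuinely different route from the paper's, and it has one small arithmetic slip.

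\textbf{Comparison with the paper's approach.} The paper proves the lemma by induction on $D_B-\delta_G(B)$: at each step it forms $D_A(r-1)$ disjoint copies of the current hypergraph, introduces one fresh $A$-vertex $a_i$ per minimum-degree $B$-vertex $x_i$, and adds the $D_A$ explicit edges $\{a_i\}\cup\{(x_i,(k-1)(r-1)+j):j\in[r-1]\}$, $k\in[D_A]$. Each new edge consumes $r-1$ consecutive copies of the single vertex $x_i$, so every copy of $x_i$ gains degree exactly one, and the minimum $B$-degree increases by one. Repeating finishes the job. No degree-realization theorem is needed; every new edge is written down explicitly, and the construction visibly preserves simplicity when $G$ is simple. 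You instead do it in one shot: take $N$ disjoint copies, realize the entire deficit sequence by an auxiliary $(r-1)$-uniform multihypergraph $F$ on $B'$, partition $E(F)$ into blocks of size $D_A$, and attach one fresh $A$-vertex per block. This is conceptually cleaner (no induction, a single construction) but outsources the hard part to a Gale--Ryser-type realization theorem, and it may produce repeated edges in $G'$ even when $G$ is simple (two copies of the same $(r-1)$-set of $F$ in the same block yield a repeated edge of $G'$). That is harmless for the paper's applications, which operate on multi-hypergraphs, but it is a feature the paper's proof does not have to worry about. Your verification of the Gale--Ryser condition is correct: with all row sums equal to $r-1$, the dominance conditions collapse to $\max_b\delta(b)\le m$, which you ensure by taking $N$ large.

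\textbf{One small error.} You claim that $D_A\mid m:=NT/(r-1)$ follows from taking $N$ to be ``a common multiple of $r-1$ and $D_A$.'' That is not quite enough. For example, $D_A=4$, $r-1=2$, $N=4$ is a common multiple, but $m=2T$ need not be divisible by $4$ when $T$ is odd. You should instead take $N$ to be a multiple of $D_A(r-1)$ (still easy to make large enough), so that $N/(r-1)$ is a multiple of $D_A$ and hence $D_A\mid m$. With that fix the argument goes through.
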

\begin{proof}
Let $\delta_G(B)$ denote the minimum degree in $G$ of vertices in $B$. We proceed by induction on $D_B-\delta_G(B)$. If $D_B=\delta_G(B)$, then $G$ is as desired. So we assume that $\delta_G(B) < D_B$.

Let $X=\{x_1,\ldots, x_m\}$ be the set of vertices of $B$ with degree exactly $\delta_G(B)$. Let $G_0=(A_0,B_0)$ where 
$$A_0:= (A\times [D_A\cdot (r-1)])\cup \{a_i:~i\in[m]\},~~B_0:=B\times [D_A\cdot (r-1)]$$ 
and 
\begin{align*}
E(G_0) =&\bigcup_{e\in E(G)} \Big\{ \{(v,i): v\in e\}: i\in[D_A\cdot (r-1)]\Big\}~\cup~\\
&\bigcup_{i\in [m]} \Big\{\{a_i\}\cup \{(x_i,(k-1)(r-1)+j):j\in [r-1]\}:  k\in[D_A] \Big\}.    
\end{align*}
That is, $G_0$ is obtained from the disjoint union of $D_A\cdot (r-1)$ copies of $G$ together with a set of new vertices $\{a_1,\ldots,a_{|X|}\}$ and a set of new edges on those vertices and the copies of $X$. Note now that every vertex in $A_0$ has degree exactly $D_A$ in $G_0$ by construction. Moreover, every vertex of $X\times [D_A\cdot (r-1)]$ has degree $\delta_G(B)+1$ in $G_0$ and hence every vertex in $B_0$ has degree at least $\delta_G(B)+1$ in $G_0$. That is $\delta_{G_0}(B_0) = \delta_G(B)+1$.

Thus $G_0=(A_0,B_0)$ is an $r$-uniform bipartite hypergraph where every vertex in $A_0$ has degree exactly $D_A$ and $\delta_{G_0}(B_0) > \delta_G(B)$. So by induction, there exists an $r$-uniform bipartite hypergraph $G'=(A',B')$ where every vertex of $A'$ has degree $D_A$ in $G'$ and every vertex of $B'$ has degree $D_B$ in $G'$ and $G_0$ is an induced subhypergraph of $G'$ where $A_0\subseteq A'$ and $B_0\subseteq B'$. But then $G'$ is as desired.
\end{proof}

We now explain how to modify the proof of Lemma~\ref{lem:Iterative} to prove Lemma~\ref{lem:IterativeBip}.

\begin{proof}[Proof of Lemma~\ref{lem:IterativeBip}]
We may assume without loss of generality (by deleting edges as necessary) that every vertex in $A$ has degree exactly $\lceil D_A \rceil$. %(Ceiling since not integer or just don't care?!) 
Then by Lemma~\ref{lem:reg}, we may assume without loss of generality that every vertex in $B$ has degree exactly $\lceil D \rceil$. %(Same Problem!!)
Note for the remainder of the proof, we drop the ceilings for $D$ and $D_A$ since the resulting error terms do not substantially affect the proofs.

We then make the following changes to the proof of Lemma~\ref{lem:Iterative}:

\vskip.25in
\noindent {\bf Bad Events for (3a):} For each $a\in A$, let $A_a$ be the event that $|D_a| < D_A'$. Note that when $a\in A'$, then $d_{G'}(a) = |D_a|$.
\vskip.25in

\noindent {\bf Bad Events for (3b):} For each $b\in B$, let $A_b$ be the event that $|D_b| > D'$. Note that when $b\in B'$, then $d_{G'}(b) = |D_b|$.
\vskip.25in

\noindent {\bf Basic Calculations:}
Since we have removed outcome (2), it is no longer necessary to calculate $\Prob{v\in V_0}$ or $\Prob{v\in V_1}$ or $\Prob{v\in V_1\setminus V_2}$.

For the calculation of $\Prob{v\in V_2}$, there is a slight difference as follows: For each edge $e\in E(G)$,
$$\Prob{e\in E_2} \le ((r-1)D+D_A)\cdot p^2 \le \frac{r \cdot D_A \cdot \varepsilon^2}{D},$$
and hence
$$\Prob{e\in E_2\cup (E_0\cap F_0)} \le \frac{\varepsilon^2}{D} \cdot \left(r \cdot \frac{D_A}{D} + w\right).$$
Thus by the union bound (and using $d_G(v) \le D_A$)
\begin{align*}
\Prob{v\in V_2} &\le  \Prob{N'_G(v)\cap (E_2\cup (E_0\cap F_0)) \ne \emptyset} \\
&\le d_G(v)\cdot \frac{\varepsilon^2}{D} \cdot \left(r \cdot \frac{D_A}{D}+w\right)
 \\
&\le \varepsilon^2 \cdot \frac{D_A}{D} \cdot \left(r\cdot \frac{D_A}{D}+w\right).
\end{align*}

\vskip.15in

\noindent {\bf Expectation of $d_{G'}(v)$ for $v\in A$:}
\vskip.15in

Since we only desire a lower bound on $d_{G'}(v)$ here, we only calculate a lower bound on $D_{0,v}$ and $D_{0,v}'$ as follows. Here 
$$|B_e|\le (r-1)D.$$
Thus
\begin{align*}
\Prob{e\in D_{0,v}'} &= (1-\varepsilon w) \cdot (1-p)^{|B_e|}\\
&\ge e^{-\varepsilon w - \varepsilon^2 w^2} \cdot e^{-p|B_e| - p^2 |B_e|} \\
&\ge e^{-\varepsilon(r-1+ w) - \varepsilon^2 (w^2 +r-1)}\\
&\ge \frac{D_A'}{D_A} \cdot e^{\gamma\varepsilon/2}.
\end{align*}
where we used that $\varepsilon(w^2+2(r-1)) \le \gamma/2$ by assumption. Hence by linearity of expectation, we have that
$$\Expect{|D_{0,v}'|} \ge D_A' \cdot e^{\gamma\varepsilon/2}.$$

\vskip.15in

\noindent {\bf Expectation of $d_{G'}(v)$ for $v\in B$:}
\vskip.15in
Since we only desire an upper bound on $d_{G'}(v)$ here, we only calculate an upper bound on $D_{0,v}$, $D_{0,v}'$ and $D_{2,v}$ here as follows. Here 
$$|B_e|\ge (r-2)D + D_A - \binom{r-1}{2}(4\log^2 D).$$
Thus
\begin{align*}
\Prob{e\in D_{0,v}'} &= (1-\varepsilon w) \cdot (1-p)^{|B_e|}\\
&\le e^{-\varepsilon w} \cdot e^{-p|B_e|}\\
&\le e^{-\varepsilon\left(r-2 + \frac{D_A}{D} + w - \frac{\gamma}{2}\right)}\\
&= \frac{D'}{D} \cdot e^{-\gamma\varepsilon/2},
\end{align*}
where for the second to last inequality we used that $\binom{r-1}{2}\cdot \frac{4\log^2 D}{D} \le \gamma/2$ since $\gamma \ge \varepsilon \ge D^{-1/6r}$ by assumption and $D$ is large enough.
Hence by linearity of expectation, we have that
$$\Expect{|D_{0,v}'|} \le D' \cdot e^{-\gamma\varepsilon/2}.$$

Meanwhile
$$\Prob{e\in D_{2,v}} = \Prob{ (e\setminus v)\cap V_2 \ne \emptyset} \le (r-1) \varepsilon^2 \cdot \frac{D_A}{D} \cdot \left(r\cdot \frac{D_A}{D}+w\right) \le \frac{\gamma \varepsilon}{16},$$
since $\frac{\gamma}{16} \ge (r-1)\cdot \frac{D_A}{D} \cdot \left(r\cdot \frac{D_A}{D}+w\right) \cdot \varepsilon$ by assumption. By linearity of expectation, we have
$$\Expect{|D_{2,v}|} \le d_G(v) \cdot \frac{\gamma \varepsilon}{16} \le D \cdot \frac{\gamma \varepsilon}{16} \le D' \cdot \frac{\gamma\varepsilon}{8},$$
since $D' \ge D/2$ as $\varepsilon\left(r-2 + \frac{D_A}{D}+w\right)\le 1/2$ by assumption.

Combining, we have that
$$\Expect{|D_v|} \le \Expect{|D_{0,v}'|} + \Expect{|D_{2,v}|} \le D'\cdot e^{-\gamma\varepsilon/4},$$
where we used that $e^{-\gamma\varepsilon/2} \ge 1/2$ since $\gamma \varepsilon \le \gamma \le 1$ by assumption, and also that $1+\gamma\varepsilon/4 \le e^{\gamma\varepsilon/4}$.

\vskip.15in

\noindent {\bf Expectation of $w'(e)$, Concentrations, Applying the Local Lemma:}
\vskip.15in

In these sections, there are a few small discrepancies in the calculations where a $D_A$ should appear instead of a $D$, or a $D_A'$ should be appear instead of a $D'$. However, these changes do not substantially affect the proof and so are left to the reader. We also note that for concentrating $|D_v|$, we should use $t=\frac{\gamma\varepsilon}{2} \cdot D_A'$ for $v\in A$ and $t=\frac{\gamma\varepsilon}{4} \cdot D'$ for $v\in B$.
\end{proof}

We now prove Lemma~\ref{lem:Finish}.

\begin{proof}[Proof of Lemma~\ref{lem:Finish}]
We may assume without loss of generality (by deleting edges as necessary) that every vertex of $A$ has degree exactly $300rD$.

For each vertex $a$ in $A$, independently choose an edge $e_a$ incident with $a$ uniformly at random. Hence for each $f\in E(G)$ with $a\in f$, we have $\Prob{e_a=f} = \frac{1}{300rD}$.

Let $\mathcal{T}$ be the set of unordered pairs $f_1,f_2 \in E(G)$ where $f_1\cap f_2 \ne \emptyset$ and $f_1\cap A \ne f_2\cap A$. For each $T\in \mathcal{T}$ where $T=\{f_1,f_2\}$, let $C_{T}$ be the bad event that $e_{f_1\cap A}=f_1 $ and $e_{f_2\cap A}=f_2$. For each edge $S \in E(H)$, let $C_S$ be the bad event that for each edge $e\in S$ and $\{a\}=e\cap A$, we have $e_{a}=e$.
 
Hence for all $T\in \mathcal{T}$, we have $$\Prob{C_{T}} = \frac{1}{(300rD)^2}.$$
Similarly, for each $S$ in $E(H)$,
$$\Prob{C_S} = \frac{1}{(300rD)^{|S|}}.$$

Let $J=(A,\mathcal{T}\cup E(H))$ be the bipartite graph 
$$E(J) := \{aS: a\in A, S\in \mathcal{T}\cup E(H), a\in V(S)\}.$$ 
Define $\mathcal{S}_i := \{S\in \mathcal{T}\cup E(H), |S|=i\}$. Let $\mathcal{E}$ be the set of all bad events.

For each $S\in \mathcal{T}\cup E(H)$, let $\mathcal{E}_{S} :=   \{C_{S'}: d_J(S, S') \le 2\}$ and let $\mathcal{E}_{S,i} := \{C_{S'}: d_J(S,S')\le 2, |S'|=i\}$. Note that  $C_{S}$ is mutually independent of $\mathcal{E} \setminus \mathcal{E}_{S}$.

Note that for each $S\in \mathcal{T}\cup E(H)$, we have $d_J(S) = |S|$. Meanwhile for each $a\in A$ and $i\in \{2,\ldots, g\}$, we have that 
\begin{align*}
|N_J(a)\cap \mathcal{S}_i| &\le 300rD \cdot \left(rD+w_D(H)\cdot D^{i-1}\right)\\
&\le 600\cdot r^2\cdot D^i,
\end{align*}
where for the last inequality we used that $w_D(H)\le 1$. Hence for each $S\in \mathcal{T}\cup E(H)$, we find that
$$|\mathcal{E}_{S,i}| \le |S|\cdot 600\cdot r^2\cdot D^i.$$

For each positive integer $i$, define
$$w(i):= \frac{1}{1200\cdot r^2\cdot i^2 \cdot D^{i}} $$
Then for each $S\in \mathcal{T}\cup E(H)$, define $x_S := w(|S|)$.
Recall that for $x\in [0,1]$, $1-x \ge e^{-x-x^2}\ge e^{-2x}$. Then for all $S\in \mathcal{T}\cup E(H)$, we have that
\begin{align*}
&x_S \cdot \prod_{d_J(S,S')\le 2} (1-x_{S'}) \ge x_S \cdot \prod_{d_J(S,S')\le 2} e^{-2x_{S'}} \ge x_S \cdot e^{- \sum_{d_J(S,S')\le 2} 2x_{S'}} \ge x_S \cdot e^{-\sum_{i\ge 2} 2\cdot |\mathcal{E}_{S,i}| \cdot w(i)}\\
&\ge x_S \cdot e^{-\sum_{i\ge 2} 2\cdot |S|\cdot 600\cdot r^2\cdot D^i\cdot w(i)} \ge x_S \cdot e^{-|S|\cdot \sum_{i\ge 2} \frac{1}{i^2}} \ge x_s\cdot e^{-|S|\cdot \left(\frac{\pi^2}{6}-1\right)} \ge  x_S \cdot \frac{1}{2^{|S|}} \\
&\ge \frac{1}{1200\cdot r^2\cdot |S|^2 \cdot D^{|S|}}\cdot \frac{1}{2^{|S|}} \ge \frac{1}{(300\cdot r\cdot D)^{|S|}} \ge \Prob{C_S}, 
\end{align*}
where we used that $\sum_{i\ge 2} \frac{1}{i^2} \le \frac{\pi^2}{6}-1$, that $\left(\frac{\pi^2}{6}-1\right) \cdot \frac{1}{\ln 2} \le .94 \le 1$, and for the last line we used that $|S|\ge 2$, that $|S|^2 \le 4^{|S|}$ for all $|S|\ge 2$, and that $1200 \le (300/8)^2$. Now by the general version of the Lov\'asz Local Lemma, it follows that none of the events $\mathcal{E}$ happen with positive probability, that is, $\{e_a: a\in A\}$ is an $H$-avoiding $A$-perfect matching of $G$ as desired.
\end{proof}

\section{Proof of Codegree Sparsification Lemma}\label{s:Generalizations}

In this section, we prove Theorem~\ref{lem:Sparsifying}. First we need some definitions and a preliminary lemma as follows.

\begin{definition}
Let $H$ be a hypergraph. Let $T = e_1v_1e_2v_2$ be a $2$-cycle of $H$ such that $e_1\setminus e_2, e_2\setminus e_1 \ne \emptyset$. We say $T$ is \emph{good} if $|e_1\setminus e_2|\ge 2$ or $|e_2\setminus e_1|\ge 2$, and otherwise we say $T$ is \emph{bad}.   
\end{definition}

\begin{definition}
Let $H$ and $T$ be hypergraphs and $S\subseteq V(H)$. We let $N_T(H,S)$ denote the number of copies of $T$ in $H$ that contain $S$, and we let $\Delta_T(H,S) := |N_T(H,S)|$. 
\end{definition}

Recall that an $i$-cycle $e_1,v_1,\ldots, e_i,v_i$ in a hypergraph is \emph{loose} if $e_j\cap e_{(j+1)\mod i} = \{v_j\}$ and $e_j\cap e_k = \emptyset$ for all $j\in [i]$ and $k-j \not\equiv -1,0,1 \mod i$.

[Need to do the loose $4$-cycle specially!! Namely, with only using common $2$-degree for disjoint pairs of vertices of $H$: it's no longer true for all $S$ but is still true when $S$ is one vertex... Better actually to force colinearity of $G$ with $H$ (or use the small colinearity?) - namely consider the triples, $e, f_1,f_2$ where $ef_1, ef_2\in E(H)$ and $v\in f_1, f_2$...]
\begin{lem}\label{lem:NumberOfCycles}
Let $\beta > 0$ be a real. If $H$ is a $g$-bounded hypergraph where every edge has size at least $2$,  $\Delta_{k,\ell}(H) \le D^{\ell-k-\beta}$ for each $2\le k < \ell < g$, the maximum common $2$-degree of $H$ is at most $D^{1-\beta}$ and $w_D(H)\le \log D$, then for each good $2$-cycle, loose triangle or loose $4$-cycle $T$ and each nonempty $S\subseteq V(H)$ with $|S|\le v(T)-1$, we have 
\begin{align*}
\Delta_{T}(H,S) \le  (4g)^{v(T)}\cdot D^{v(T)-|S|-\beta} \cdot \log^{3} D.
\end{align*}
\end{lem}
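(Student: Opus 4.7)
The plan is to bound $\Delta_T(H,S)$ by the number of embeddings $\phi\colon V(T)\hookrightarrow V(H)$ (with $\phi(e)\in E(H)$ for every $e\in E(T)$) whose image contains $S$, and to count these embeddings by first enumerating the injection $\psi := \phi^{-1}|_S \colon S \hookrightarrow V(T)$ and then extending edge by edge. There are at most $v(T)^{|S|}$ choices for $\psi$, so fix one. Order the edges $e_1, \ldots, e_t$ of $T$ (with $t \in \{2,3,4\}$) so that $e_1$ meets $\psi(S)$ (possible because $S \neq \emptyset$ and $V(T) = \bigcup_i e_i$) and, for $i \geq 2$, the edge $e_i$ meets $e_1 \cup \cdots \cup e_{i-1}$; for the loose triangle and loose 4-cycle this is the cyclic order starting at $e_1$, and for the good 2-cycle the choice is trivial. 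Let $\ell_i$ count the vertices of $e_i$ either pinned by $\psi$ or determined by intersection with some previously chosen $\phi(e_j)$. The number of choices for $\phi(e_i) \in E(H)$ is then at most $\Delta_{k_i, \ell_i}(H) \leq D^{k_i - \ell_i - \beta}$ when $\ell_i \geq 2$, and at most $\Delta_{k_i}(H) \leq w_D(H) \cdot D^{k_i - 1} \leq D^{k_i - 1} \log D$ when $\ell_i = 1$; pinning the remaining bijection $e_i \to \phi(e_i)$ contributes an additional factor of at most $(k_i - \ell_i)! \leq g!$.

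The key telescoping identity is $\sum_i \ell_i = |S| + |V_\cap(T)|$, where $V_\cap(T)$ denotes the set of vertices of $T$ lying in more than one edge of $T$. This follows from double-counting: a pinned vertex $v$ contributes $n(v)$ (the number of edges of $T$ containing $v$) to $\sum_i \ell_i$, while an unpinned vertex contributes $n(v) - 1$ (missing its first appearance); summing yields $\sum_v n(v) - v(T) + |S| = \sum_i k_i - v(T) + |S| = |V_\cap(T)| + |S|$. Consequently $\sum_i (k_i - \ell_i) = v(T) - |S|$. The ordering further guarantees $\ell_t \geq 2$: for the good 2-cycle the second edge shares all $m \geq 2$ intersection vertices with the first, and for the loose triangle and 4-cycle the last edge in cyclic order meets two previously processed edges at distinct intersection vertices. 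Hence at least one edge invokes the codegree bound and yields an extra factor $D^{-\beta}$ in the exponent, while the remaining at most $t - 1 \leq 3$ edges with $\ell_i = 1$ each contribute a factor of $\log D$, giving at most $\log^3 D$ overall.

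Combining these contributions---the $v(T)^{|S|}$ choices of $\psi$, the per-edge degree/codegree bounds, the per-edge bijection factors, and the minor combinatorial overhead for assigning positions of pinned and intersection vertices---and absorbing all $g$-dependent constants into $(4g)^{v(T)}$ gives the claimed $\Delta_T(H, S) \leq (4g)^{v(T)} \cdot D^{v(T) - |S| - \beta} \log^3 D$. The main delicate point is ensuring $\ell_1 \geq 1$ for the initial edge, which is precisely why $e_1$ is chosen to meet $\psi(S)$; the telescoping identity then automatically distributes the remaining savings among the subsequent edges.
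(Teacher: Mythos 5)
Your argument has a genuine gap in the step claiming that $\ell_t\geq 2$ forces a codegree bound with a $D^{-\beta}$ saving. The codegree hypothesis only bounds $\Delta_{k,\ell}(H)$ for $\ell<k$; when the final edge is \emph{fully determined} (i.e.\ $\ell_t=k_t$, which can happen whenever $|S|$ is close to $v(T)$), the correct per-edge bound is $\Delta_{k_t,k_t}(H)\leq 1$, which is strictly larger than $D^{-\beta}$, not smaller. So the claimed factor $D^{-\beta}$ never materializes, and your arithmetic substitutes an invalid inequality.

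Here is a concrete failure of your ordering. Take a loose triangle with $e_1=\{a,b,c\}$, $e_2=\{c,d,e\}$, $e_3=\{e,f,a\}$ (so $v(T)=6$, all edges of size $3$), and take $\psi(S)=\{b,d,e,f\}$ (so $|S|=4\leq v(T)-1$). The edge $e_1$ meets $\psi(S)$, so your prescription permits the ordering $e_1,e_2,e_3$. Then $\ell_1=1$, and after placing $e_1$ every vertex is determined, so $\ell_2=\ell_3=3=k_2=k_3$: no edge satisfies $2\leq\ell_i<k_i$. The true per-step bounds give $D^{2}\log D\cdot 1\cdot 1 = D^{2}\log D$, which exceeds the target $D^{v(T)-|S|-\beta}\log^3 D = D^{2-\beta}\log^3 D$ for large $D$. (The lemma itself is still true here --- starting instead from $e_2$ yields $\ell_1=2$, $\ell_2=2$, $\ell_3=3$ and a bound $D^{2-2\beta}$ --- but your proof, as stated, allows the bad ordering.)

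The fix requires choosing the ordering adaptively rather than merely "some $e_1$ meeting $\psi(S)$ followed by the cyclic order." The paper avoids this pitfall by always processing the edge $e$ with $e\setminus S'\neq\emptyset$ (not fully determined) and $e\cap S'\neq\emptyset$, minimizing $|e\setminus S'|$ and then maximizing $|e\cap S'|$; it then separately shows (using the structural hypotheses on $T$: good $2$-cycle, loose triangle, or loose $4$-cycle) that the final step cannot have $|e\cap S'|=1$, so the codegree saving genuinely occurs. Your approach would work with a similarly careful selection rule (e.g.\ begin with an edge maximizing $|\psi(S)\cap e_1|$ subject to $|\psi(S)\cap e_1|<|e_1|$, and argue such an edge exists whenever no codegree saving would otherwise arise), but as written the step "$\ell_t\geq 2$ hence an extra $D^{-\beta}$" is not sound.
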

\begin{proof}
Fix a mapping $\phi: S\rightarrow V(T)$ and let $S'=\phi(S)$. We prove the stronger statement that the number of copies of $T$ in $H$ (i.e.~homomorphisms $\phi'$ of $T$ into $V(H)$) that respect $\phi$ (i.e.~where $\phi'(\phi(s))=s$ for all $s\in S$) is at most
$$(4g)^{v(T)-|S|}\cdot D^{v(T)-|S|-\beta} \cdot \log^{e(T)-e(T[S'])-1} D.$$
To prove this stronger statement, we proceed by induction on $v(T)-|S|$.

Now choose $e\in E(T)$ such that $e\setminus S'\ne \emptyset$ and $e\cap S'\ne \emptyset$ and subject to that $|e\setminus S'|$ is minimized, and subject to that $|e\cap S'|$ is maximized. Note that such $e$ exists since $S'\ne \emptyset$, $S'\subsetneq V(T)$ and $T$ is connected. Let $S_0' := S'\cup e$ and $\ell := |e|$.

First suppose that $S_0' \ne V(T)$. Since $w_D(H)\le \log D$, we have that $\Delta_{\ell}(H) \le D^{\ell-1}\cdot \log D$. Thus there exists at most $D^{\ell-1}$ edges $f$ of $H$ containing $\phi^{-1}(S'\cap e)$. For each such $f$, let $S_f = S\cup f$. By induction, for each mapping $\phi_f: S_f\rightarrow S_0'$, there exists at most $$(4g)^{v(T)-|S_f|}\cdot D^{v(T)-|S_f|-\beta} \cdot \log^{e(T)-e(T[S_f])-1} D$$
copies of $T$ in $H$ that respect $\phi_f$. Moreover, the number of choices of $\phi_f$ that respect $\phi$ is at most $v(T)^{|S_f|-|S|} \le (4g)^{|S_f|-|S|}$ since $v(T)\le 4g$. Since $e(T[S_f]) \ge e(T[S])+1$, the formula for $S$ now follows as desired.

So we may assume that $S_0' = V(T)$. Next suppose that $|e\cap S'|\ge 2$. Since $e\setminus S' \ne \emptyset$, this implies that $\ell=|e|\ge 3$. Let $k= |e\cap S'|$. Since $\Delta_{k,\ell}(H)\le D^{\ell-k-\beta}$, we have that there exists at most $D^{\ell-k-\beta}$ edges $f$ of $H$ containing $\phi^{-1}(S'\cap e)$. For each such $f$, the number of copies of $T$ in $H$ that respect $\phi$ is at most $v(T)^{v(T)-|S|} \le (4g)^{v(T)-|S|}$ since $v(T)\le 4g$. Since $\ell-k = v(T)-|S|$, the formula for $S$ now follows as desired.

So we may assume that $|e\cap S'|=1$. Since $T$ is a cycle, we have that there exists $v\in e\setminus S'$ and $e' \in E(T)\setminus \{e\}$ such that $v\in e'$. 

First suppose that $T$ is a loose triangle or loose $4$-cycle. Since $T$ is loose, it follows that $e'\setminus v \subseteq e'\setminus e$. Since $V(T)=S'\cup e$, we have that  $e'\setminus e \subseteq S'$. Thus $e'\setminus v \subseteq S'$. Hence $e' \setminus S' = \{v\}$. By the choice of $e$, it follows that $|e\setminus S'| \le |e'\setminus S'| = 1$. Hence $e\setminus S' = \{v\}$. Since $|e\cap S'|=1$, we find that $|e|=2$. By the choice of $e$, it follows that $|e'\cap S'|=1$ and hence $|e'|=2$. 

Since the maximum common $2$-degree of $H$ is at most $D^{1-\beta}$, we have that there exists at most $D^{1-\beta}$ vertices $w$ of $H$ such that there exists edges $f_1,f_2$ of $H$ of size two where $f_1=\{w,\phi^{-1}(S'\cap e)\}$ and $f_2=\{w,\phi^{-1}(S'\cap e')\}$. For each such $w$, the number of copies of $T$ in $H$ that respect $\phi$ is at most $1\le v(T)^{v(T)-|S|} \le (4g)^{v(T)-|S|}$ since $v(T)\le 4g$. Since $v(T)-|S|=1$, the formula for $S$ now follows as desired.

So we may assume that $T$ is a good $2$-cycle. 
 Since $V(T)=S'\cup e$, we have that  $e'\setminus e \subseteq S'$. Since $v\in e'\setminus S'$, we have that $e'\setminus S'\ne \emptyset$. By the choice of $e$, we find that $|e'\setminus S'| \ge |e\setminus S'|$. It follows that $e\setminus e' \subseteq S'$. Hence $V(T)\setminus S' \subseteq (e\cap e')\setminus S'$ and thus $|e\setminus S'|= |e'\setminus S'|$. Then by the choice of $e$,  we have that $|e\cap S'| \ge |e'\cap S'|$.  Hence $1=|e\cap S'|\ge |e'\cap S'|$. But then $T$ is not a good $2$-cycle, a contradiction. This proves the stronger statement.

The lemma now follows since there are at most $v(T)^{|S|}\le (4g)^{|S|}$ choices for $\phi$ and \\ $e(T)-e(T[S'])-1 \le e(T)-1\le 3$.
\end{proof}

We are now prepared to prove Lemma~\ref{lem:Sparsifying}.

\begin{proof}[Proof of Lemma~\ref{lem:Sparsifying}]
We may assume without loss of generality that there does not distinct $S,T\in E(H)$ where $S\subseteq T$.

Note that since $G$ has maximum degree at most $2D$ by assumption and that $\Delta_{k,\ell}(H)\le D^{k-\ell-\beta}$ for all $2\le \ell < k \le g$, it follows that, for all $i$ with $3\le i \le g$, the maximum $i$-codegree of $G$ with $H$ is at most $2D\cdot D^{i-2-\beta} = 2\cdot D^{i-1-\beta}$. By assumption, the maximum $2$-codegree of $G$ with $H$ is at most $D^{1-\beta}$ and hence for all $i$ with $2\le i \le g$, the maximum $i$-codegree of $G$ with $H$ is at most $2\cdot D^{i-1-\beta}$.

Let $p=D^{\beta/4g -1}$. Choose each edge of $G$ independently with probability $p$. Let $E_0$ be the set of chosen edges. Let $G_0$ be the hypergraph with $V(G_0)=V(G)$ and $E(G_0)=E_0$ and let $H_0$ be the configuration hypergraph of $G_0$ where $V(H_0)=E(G_0)$ and $E(H_0) = \{S\in E(H): S\subseteq E_0\}$.

We now define bad events for each of the desired outcomes and calculate their probabilities as follows.

\vskip.15in
\noindent{\bf Bad Events for (1):} For each $v\in V(G)$, define a bad event $A_v$ where
$$d_{G_0}(v) \ne p\cdot d_G(v) \pm \frac{1}{2} \left(p\cdot d_G(v) \cdot D^{-\beta/18g} + D^{\beta/6g}\right).$$
Note that $\Expect{d_{G_0}(v)} = p\cdot d_G(v)$ while $d_{G_0}(v)$ is $(1,1)$-observable. Let $t=\frac{1}{2} \left(p\cdot d_G(v) \cdot D^{-\beta/18g} + D^{\beta/6g}\right)$. Thus by Talagrand's inequality with $t$ as above,  %that is (pD)^{2/3}
we have that 
$$\Prob{A_v} \le 4 e^{D^{\beta/18g} /40} \le e^{-\log^2 D},$$ 
where we used the fact that $D$ is large enough.

\vskip.15in
\noindent{\bf Bad Events for (2):} For each $u,v\in V(G)$ such that $N_G'(u)\cap N_G'(v)\ne 0$, define a bad event $A_{u,v}$ where 
$$|\{ e\in E(G_0): u,v\in e\}| > \log^2(pD).$$
Note
$$\Expect{|\{ e\in E(G_0): u,v\in e\}|} = p \cdot |N_G'(u)\cap N_G'(v)|\le p\cdot D^{1-\beta} \le 1$$ while $|\{ e\in E(G_0): u,v\in e\}|$ is $(1,1)$-observable. Thus by Talagrand's inequality with $t= \log^2(pD)-1 \ge \log^2(pD)/2$,
$$\Prob{A_{u,v}} \le 4e^{-\log^2(pD)/160} \le e^{-\log^{1.5} D},$$
where we used the fact that $D$ is large enough.

\vskip.15in
\noindent{\bf Exceptional outcomes for (3) and (4):} For each $S\subseteq V(H)$ with $|S|\ge 1$ and $|S|< i\le g$, let $F_{S,i} := \{T\in E(H):~S\subseteq T,~|T|=i,~T\setminus S \subseteq E(G_0)\}$. For each $S$ with $|S|\ge 2$ and $F_{S,i}\ne 0$, define an exceptional outcome $B_{S,i}$ where 
$$|F_{S,i}| > \log^{2(i-|S|)}(pD).$$
Since $\Delta_i(H,S) \le \Delta_{i,|S|}(H) \le D^{i-|S|-\beta}$ for all $S$ and $i$ with $2\le |S| < i \le g$ by assumption, we find that for all $S$ and $i$ with $2\le |S| < i \le g$
$$\Expect{|F_{S,i}|} = p^{i-|S|}\cdot \Delta_i(H,S) \le (pD)^{i-|S|} \cdot D^{-\beta}\le  1,$$
where we used that $(pD)^{i-|S|} \le (pD)^g \le D^{\beta/4}$.

For each $S\subseteq V(H)$ with $|S|\ge 1$ and $|S|< i\le g$, define a set of exceptional outcomes 
$$\Omega_{S,i}^* = \bigcup_{T\supsetneq S} B_{T,i}.$$

%this is ok since configuration hypergraphs are not allowed repeated edges!
\begin{claim}\label{claim:Exceptional}
For each $S\subseteq V(H)$ with $|S|\ge 1$ and $|S|< i\le g$, $|F_{S,i}|$ is $\left(i-|S|,~\log^{2(i-|S|-1)} (pD)\right)$-observable with respect to $\Omega_{S,i}^*$ and $$\Prob{B_{S,i}} \le e^{-\log^{1.75} D} + 4\Prob{\Omega_{S,i}^*} \le e^{-\log^{1.6} D}.$$
\end{claim}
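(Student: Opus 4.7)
The plan is to establish both conclusions by strong induction on $i - |S|$, with base case $|S| = i - 1$. For observability, the argument is uniform across all $|S| \ge 1$; the tail bound on $\Prob{B_{S,i}}$ requires $|S| \ge 2$ (since that is where $B_{S,i}$ is defined). In the base case, the only strict supersets $T \supsetneq S$ with $F_{T,i} \ne \emptyset$ satisfy $|T| = i$; for such $T$ we have $F_{T,i} \subseteq \{T\}$, so $|F_{T,i}| \le 1$ and $B_{T,i}$ can never occur, giving $\Prob{\Omega_{S,i}^*} = 0$ and reducing the tail bound to a single application of Theorem~\ref{exceptional talagrand's observations}.

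For the observability statement, to certify $T \in F_{S,i}$ one exposes the $i - |S|$ trials registering whether each edge $f \in T \setminus S$ lies in $E_0$, so the certificate has size $r = i - |S|$. A trial corresponding to an edge $f \notin S$ is used to certify some $T \in F_{S,i}$ only when $f \in T$, i.e., only for $T \in F_{S \cup \{f\}, i}$. Since $S \cup \{f\} \supsetneq S$, the event $B_{S \cup \{f\}, i}$ lies in $\Omega_{S,i}^*$ by construction; outside the exceptional set we therefore have $|F_{S \cup \{f\}, i}| \le \log^{2(i - |S| - 1)}(pD)$, which bounds the observation degree $d = \log^{2(i - |S| - 1)}(pD)$ as claimed.

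For the first tail inequality I would compute $\Expect{|F_{S,i}|} = p^{i - |S|} \Delta_i(H, S)$; using $p = D^{\beta/4g - 1}$ together with the codegree hypothesis $\Delta_{i, |S|}(H) \le D^{i - |S| - \beta}$ (valid since $|S| \ge 2$), this yields $\Expect{|F_{S,i}|} \le (pD)^{i-|S|} D^{-\beta} \le D^{-3\beta/4} \le 1$. Applying Theorem~\ref{exceptional talagrand's observations} with $t \approx \log^{2(i-|S|)}(pD)$ and using $rd = O(\log^{2(i-|S|)-1}(pD))$ gives an exponent of order $\log^2(pD) = \Omega(\log^2 D)$, so the Talagrand term is bounded by $e^{-\log^{1.75} D}$ for $D$ large enough, with the $4\Prob{\Omega_{S,i}^*}$ slack term inherited directly from the theorem.

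The second inequality follows by inductive bookkeeping: there are at most $2^g \Delta_i(H, S) = D^{O(g)}$ strict supersets $T \supsetneq S$ with $F_{T, i} \ne \emptyset$, so a union bound gives $\Prob{\Omega_{S,i}^*} \le D^{O(g)} \cdot \max_T \Prob{B_{T, i}}$. Unfolding the induction over at most $g$ levels, the accumulated polynomial factor $D^{O(g^2)}$ is absorbed by the slack between the exponents $\log^{1.75} D$ and $\log^{1.6} D$ for $D$ sufficiently large, closing the recursion. The main obstacle is establishing observability crisply by verifying that $\Omega_{S,i}^*$ is exactly the right set of exceptional outcomes to control the observation-degree bookkeeping; once this is in place, both the Talagrand application and the inductive absorption of the exceptional-outcome probabilities are routine.
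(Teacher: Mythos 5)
Your proposal is correct and follows essentially the same route as the paper: induction on $i - |S|$, a base case where $\Omega_{S,i}^*$ is empty and standard Talagrand's applies, an inductive step where the observation degree is controlled by excluding the events $B_{S\cup\{f\},i}\subseteq\Omega_{S,i}^*$, and an application of the linear Talagrand inequality with $t\approx\log^{2(i-|S|)}(pD)$. The only difference is cosmetic — you spell out the union-bound unfolding over $\le g$ levels and the absorption of the $D^{O(g^2)}$ factor into the $\log^{1.75}D\to\log^{1.6}D$ slack, which the paper compresses into ``since $D$ is large enough.''
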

\begin{proof}
We proceed by induction on $i-|S|$. First suppose that $i=|S|+1$. Since $H$ has no repeated edges, $|F_{S,i}|$ is $(1,1)$-observable as desired. 

Since $\Expect{|F_{S,i}|}\le 1$, by Talagrand's inequality with $t= \log^2(pD)-1 \ge \log^2(pD)/2$, we find that
$$\Prob{B_{S,i}} \le 4\cdot e^{-\log^2(pD)/160} \le e^{-\log^{1.75} D},$$
as desired, where we used the fact that $D$ is large enough.

So we may assume that $i > |S|+1$. Let $\omega \in \Omega\setminus \Omega_{S,i}^*$. For any edge $e$ of $G$ not in $S$, we have that $B_{S\cup \{e\},i} \subseteq \Omega_{S,i}^*$. Hence in $\omega$, we have that $|F_{S\cup \{e\},i}| \le \log^{2(i-|S|-1)}(pD)$. It follows that $|F_{S,i}|$ is $\left(i-|S|,~\log^{2(i-|S|-1)} (pD)\right)$-observable with respect to $\Omega_{S,i}^*$.

Since $\Expect{|F_{S,i}|}\le 1$, by Linear Talagrand's inequality with $t= \log^{2(i-|S|)}(pD)-1 \ge \log^{2(i-|S|)}(pD)/2$, we find that
$$\Prob{B_{S,i}} \le 4\cdot {\rm exp}\left(-\frac{\log^{2(i-|S|)}(pD)}{160g\cdot \log^{2(i-|S|-1)}(pD)}\right) + 4\Prob{\Omega_{S,i}^*}\le 4\cdot e^{-\log^{1.75} D}+ 4\Prob{\Omega_{S,i}^*}  \le e^{-\log^{1.6} D},$$
%(explain about $\Prob{\Omega_{S,i}^*}$??)
as desired, where we used the fact that $D$ is large enough.
\end{proof}

\vskip.15in
\noindent{\bf Bad Events for (3):} For each $e\in V(H)$, let
$$w(e):= \sum_{2\le i \le g} \frac{i-1}{(pD)^{i-1}} \cdot |F_{\{e\},i}|$$ and then
define a bad event $A_e$ where 
$$w(e) > w_{D}(H)+1.$$
Note that when $e\in E_0$, then $w_{pD}(H_0,e) = w(e)$.

Note $\Expect{w(e)} = w_D(H,e)$. Define a set of exceptional outcomes
$$\Omega_e^* = \bigcup_{2\le i\le g} \Omega_{\{e\},i}^*.$$
By Claim~\ref{claim:Exceptional}, for all $2\le i \le g$, we find that $|F_{\{e\},i}|$ is $\left(i-1,~\log^{2(i-2)}(pD)\right)$-observable with respect to $\Omega_e^*$. By Proposition~\ref{prop:scaling}, it follows that $\frac{i-1}{(pD)^{i-1}} \cdot |F_{\{e\},i}|$ is $\left(i-1,~ \frac{i-1}{(pD)^{i-1}} \cdot \log^{2(i-2)}(pD)\right)$-observable with respect to $\Omega_e^*$. Since $pD = D^{\beta/4g} \ge 2$, we have that $\frac{i-1}{(pD)^{i-1}} \cdot |F_{\{e\},i}|$ is $\left(g-1,~\frac{\log^{2(g-2)}(pD)}{D^{-\beta/4g}}\right)$-observable.

Thus by Theorem~\ref{exceptional talagrand's difference} with $t= 1$, $m=g-1$, and $M=w_D(H)$, we find that 
$$\Prob{A_{e}} \le 4 \cdot {\rm exp}\left(-\frac{1}{8(g-1)\cdot \frac{\log^{2(g-2)}(pD)}{D^{-\beta/4g}}\cdot (g-1)^2 \cdot (4\cdot w_D(H)+1)}\right) + 4(g-1)\cdot \Prob{\Omega_e^*} \le e^{-\log^{1.5} D},$$
where we used the fact that $D$ is large enough and Claim~\ref{claim:Exceptional} to upper bound $\Prob{\ \Omega_{e}^*}$.

\vskip.15in
\noindent{\bf Bad Events for (4):} For each $e\in V(H)$ and $v\in V(G)$ where there exists $S\in E(H)$ with $e\in S$ and $v\in V(S\setminus e)$, let for all $i$ with $2\le i \le g$, 
$$D_{e,v,i}:= |\{S\in N_{H}(e): v\in V(S), (S\setminus e)\subseteq E_0, |S|=i\}|,$$ and
then define a bad event $A_{e,v}$ where 
$$\sum_{2\le i \le g} |D_{e,v,i}| > \frac{1}{g} \cdot \log^{2(g-1)}(pD).$$
When $e\in E(G_0)$, then the codegree of $e$ with $v$ is equal to $\sum_{2\le i \le g} |D_{e,v,i}|$. Since the $i$-codegree of $v$ with $e$ is at most $2\cdot D^{i-1-\beta}$ for all $i\in \{2,\ldots,g\}$ and using the fact that $(pD)^g \le D^{\beta}$, we have that
$$\Expect{|D_{e,v,i}|} \le p^{i-1}\cdot (2\cdot D^{i-1-\beta}) \le 2.$$ 

Define a set of exceptional outcomes
$$\Omega_e^* = \bigcup_{2\le i\le g} \Omega_{\{e\},i}^*.$$

For any edge $f\ne e$ of $G$ with $N_H'(e)\cap N_H'(f)\ne \emptyset$, we have that $B_{\{e,f\},i} \subseteq  \Omega_{\{e\},i}^* \subseteq \Omega_{e}^*$. Hence in $\omega$, we have that $|F_{\{e,f\},i}| \le \log^{2(i-2)}(pD)$. It follows that $|D_{e,v,i}|$ is $\left(i-1,~\log^{2(i-2)}(pD)\right)$-observable with respect to $\Omega_{e}^*$.

Thus by Theorem~\ref{exceptional talagrand's difference} with $t=\log^{2(g-1)}(pD)-1$, $m=g-1$, and $M=2$, we find that 
$$\Prob{A_{e,v}} \le 4 \cdot {\rm exp}\left(-\frac{\log^{2(g-1)}(pD) - 1}{8(g-1)\cdot \log^{2(g-2)}(pD)\cdot (g-1)^2(9)}\right) + 4(g-2)\cdot \Prob{\Omega_e^*} \le e^{-\log^{1.55} D},$$
where we used the fact that $D$ is large enough and Claim~\ref{claim:Exceptional} to upper bound $\Prob{\Omega_{e}^*}$.

\vskip.15in
\noindent{\bf Exceptional outcomes for (5):} Let $\mathcal{T}_{{\rm good}}$ be the set of good $2$-cycles, loose triangles and loose $4$-cycles whose edges have size at least $2$ and at most $g$.

For each $T\in\mathcal{T}_{{\rm good}}$ and each nonempty $S\subseteq V(H)$ and each $|S|< v(T)$ with $|N_T(H,S)|\ne 0$, let
$$F_{S,T} := \{T'\in N_T(H,S): T'\setminus S \subseteq E_0\},$$
and define an exceptional outcome $B_{S,T}$ where 
$$|F_{S,T}| > \log^{2(v(T)-|S|)}(pD).$$
By Lemma~\ref{lem:NumberOfCycles}, we find that $|N_T(H,S)| \le (4g)^{4g} \cdot D^{v(T)-|S|-\beta}\cdot \log^3 D$. Thus we find by linearity of expectation that
$$\Expect{|F_{S,T}|} = p^{v(T)-|S|}\cdot |N_T(H,S)| \le (4g)^{4g} \cdot (pD)^{v(T)-|S|} \cdot D^{-\beta}\cdot \log^3 D\le  1,$$
since $v(T) \le 4g-4$ and $D$ is large enough.

For each $T\in\mathcal{T}_{{\rm good}}$ and each nonempty $S\subseteq V(H)$ and each $|S|< v(T)$ with $|N_T(H,S)|\ne 0$, define a set of exceptional outcomes 
$$\Omega_{S,T}^* = \bigcup_{S'\supsetneq S} B_{S',T}.$$

\begin{claim}\label{claim:Exceptional2}
For each $T\in\mathcal{T}_{{\rm good}}$ and each nonempty $S\subseteq V(H)$ and each $|S|< v(T)$, $|F_{S,T}|$ is $\left(v(T)-|S|,~\log^{2(v(T)-|S|-1)}(pD)\right)$-observable with respect to $\Omega_{S,T}^*$ and 
$$\Prob{B_{S,T}} \le 4\cdot e^{-\log^{1.75} D} + 4\Prob{\Omega_{S,T}^*} \le e^{-\log^{1.6} D}.$$
\end{claim}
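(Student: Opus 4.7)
The plan is to prove Claim~\ref{claim:Exceptional2} by induction on $k := v(T) - |S|$, in direct parallel with the proof of Claim~\ref{claim:Exceptional}. The two main inputs are the expectation bound $\Expect{|F_{S,T}|} \le 1$ established just before the claim (via Lemma~\ref{lem:NumberOfCycles}) and the nested structure of the exceptional outcomes: $\Omega_{S',T}^* \subseteq \Omega_{S,T}^*$ whenever $S \subseteq S'$, so that bounds on $|F_{S',T}|$ for $S \subsetneq S' \subseteq V(T)$ may be assumed when working on $\Omega \setminus \Omega_{S,T}^*$.

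For the base case $k = 1$, each $T' \in F_{S,T}$ has a unique vertex $f_{T'} \in V(T') \setminus S$, and we certify $T' \in F_{S,T}$ by the single trial $f_{T'} \in E_0$; the vertex set $S \cup \{f\}$ determines at most $|\mathrm{Aut}(T)| \le (4g)!$ distinct copies of $T$, which is an absolute constant depending only on $g$ and may be absorbed into the Talagrand constants. Thus $|F_{S,T}|$ is essentially $(1,1)$-observable, and applying Talagrand's inequality with $t = \log^2(pD)/2$ and $\Expect{|F_{S,T}|} \le 1$ gives
$$\Prob{B_{S,T}} \le 4 \exp\bigl(-\Omega(\log^2(pD))\bigr) \le e^{-\log^{1.75} D}$$
since $pD = D^{\beta/4g}$ and $D$ is large enough.

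For the inductive step $k \ge 2$, fix $\omega \in \Omega \setminus \Omega_{S,T}^*$. For each $f \in V(H) \setminus S$, we have $B_{S\cup \{f\},T} \subseteq \Omega_{S,T}^*$, so $|F_{S\cup \{f\},T}| \le \log^{2(k-1)}(pD)$ at $\omega$. Certifying $T' \in F_{S,T}$ by revealing the $k$ trials $\{f' \in E_0 : f' \in V(T')\setminus S\}$, the observation degree at a trial $f$ is bounded by $|F_{S\cup \{f\},T}| \le \log^{2(k-1)}(pD)$. Hence $|F_{S,T}|$ is $\bigl(k,\, \log^{2(k-1)}(pD)\bigr)$-observable with respect to $\Omega_{S,T}^*$. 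Applying Theorem~\ref{exceptional talagrand's observations} with $t = \log^{2k}(pD)/2$ and $\Expect{|F_{S,T}|} \le 1$ yields a Gaussian tail bounded by $4\exp(-\log^2(pD)/O(g)) \le e^{-\log^{1.75}D}$, while the exceptional term $4\Prob{\Omega_{S,T}^*}$ is controlled inductively by a union bound over the at most $2^{4g}$ strict supersets of $S$ in $V(T)$, each contributing at most $e^{-\log^{1.6} D}$.

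The main obstacle — and the reason a separate claim is needed — is that copies of the cycle $T$ are not uniquely determined by their vertex sets, unlike edges of the antichain hypergraph $H$ in Claim~\ref{claim:Exceptional}. However, this multiplicity is bounded by $|\mathrm{Aut}(T)| = O_g(1)$, a constant depending only on $g$, and so it slips harmlessly into the Talagrand constants so long as the base case accounting is done carefully; the rest of the induction then closes in a routine manner identical to Claim~\ref{claim:Exceptional}.
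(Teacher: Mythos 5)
Your proposal is correct and follows essentially the same route as the paper: induction on $v(T)-|S|$, with the base case reduced to $(1,1)$-observability (your extra care about the constant number of copies of $T$ sharing a vertex set is harmless and arguably tidier than the paper, which just invokes that $H$ has no repeated edges), the inductive step using $B_{S\cup\{f\},T}\subseteq\Omega_{S,T}^*$ to get $\left(v(T)-|S|,\ \log^{2(v(T)-|S|-1)}(pD)\right)$-observability, and Linear Talagrand's inequality with $\Expect{|F_{S,T}|}\le 1$. One small correction: $\Omega_{S,T}^*$ is a union over \emph{all} $S'\supsetneq S$ in $V(H)$ with $N_T(H,S')\ne\emptyset$ (not just the $O_g(1)$ supersets inside one copy of $T$), so the union bound is over polynomially many in $D$ events; since each contributes at most $e^{-\log^{1.75}D}$ by induction, the final bound $e^{-\log^{1.6}D}$ still holds comfortably, exactly as in the paper.
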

\begin{proof}
We proceed by induction on $v(T)-|S|$. First suppose that $v(T)=|S|+1$. Since $H$ has no repeated edges, it follows that $|F_{S,T}|$ is $(1,1)$-observable as desired. 

Since $\Expect{|F_{S,T}|}\le 1$, by Talagrand's inequality with $t= \log^2(pD)-1 \ge \log^2(pD)/2$, we find that
$$\Prob{B_{S,T}} \le 4e^{-\log^2(pD)/160} \le e^{-\log^{1.75} D},$$
as desired, where we used the fact that $D$ is large enough.

So we may assume that $v(T) > |S|+1$. Let $\omega \in \Omega\setminus \Omega_{S,T}^*$. For any edge $e$ of $G$ not in $S$, we have that $B_{S\cup \{e\},T} \subseteq \Omega_{S,T}^*$. Hence in $\omega$, we have that $|F_{S\cup \{e\},T}| \le \log^{2(v(T)-|S|-1)}(pD)$. It follows that $|F_{S,T}|$ is $\left(v(T)-|S|,~\log^{2(v(T)-|S|-1)}(pD)\right)$-observable with respect to $\Omega_{S,T}^*$.

Since $\Expect{|F_{S,T}|}\le 1$, by Linear Talagrand's inequality with $t= \log^{2(v(T)-|S|)}(pD)-1 \ge \log^{2(v(T)-|S|)}(pD)/2$, we find that
$$\Prob{B_{S,T}} \le 4\cdot {\rm exp}\left(-\frac{\log^{2(v(T)-|S|)}(pD)}{160(4g)\cdot \log^{2(v(T)-|S|-1)}(pD)}\right) + 4\Prob{\Omega_{S,T}^*}\le 4\cdot e^{-\log^{1.75} D}+ 4\Prob{\Omega_{S,T}^*}  \le e^{-\log^{1.6} D},$$
%(explain about $\Prob{\Omega_{S,T}^*}$?!)
as desired, where we used the fact that $D$ is large enough and induction to upper bound $\Prob{\Omega_{S,T}^*}$. 
\end{proof}

\vskip.15in
\noindent{\bf Bad Events for (5):} Let $\mathcal{T}_{{\rm bad}}$ be the set of bad $2$-cycles whose edges have size at least $2$ and size at most $g$. Note that since there does not distinct $S,T\in E(H)$ where $S\subseteq T$, it follows that all the edges in the $2$-cycles in $\mathcal{T}_{{\rm bad}}$ have size at least $3$. Let $\mathcal{T} = \mathcal{T}_{{\rm good}}\cup \mathcal{T}_{{\rm bad}}$.

For each $v\in V(G)$ and $T\in\mathcal{T}$, 
let 
$$T_v:= \{f\in N_{G_0}'(v): N_T(H_0,\{f\})\ne \emptyset\}.$$
Let
$$E_{v,T} := \{e\in E(G): \exists f\in N_{G}'(v) \textrm{ such that } N_T(H,\{e,f\})\ne\emptyset \}.$$
For each $e\in E_{v,T}$, let
$$T_{v,e}:= \{f\in N_{G_0}'(v): \exists T'\in N_T(H,\{e,f\}) \textrm{ with } T'\setminus \{e\} \subseteq E_0\}.$$

Define a bad event $A_{v,T}$ where 
$$|T_v| > p\cdot d_G(v) \cdot D^{-\beta/6g} + D^{\beta/12g}.$$

When $T\in\mathcal{T}_{{\rm good}}$, we have by Lemma~\ref{lem:NumberOfCycles} that for each $e\in E(G)$, $|N_T(H,\{e\})| \le (4g)^{v(T)} \cdot D^{v(T)-1-\beta} \cdot \log^3 D$. On the other hand when $T\in\mathcal{T}_{{ bad}}$, we have that for each $e\in E(G)$, $|N_T(H,\{e\})|\le g \cdot D^{v(T)-1-\beta} \cdot \log D$ (since there are at most $D^{|S_1|-1}\cdot \log D$ choices for an edge $S_1$ of $T$ containing $e$ and then at most $g\cdot D^{|S_2|-\beta}$ choices of the edge $S_2$ of $T$ that contains all but one vertex of $S_1$). Either way, we find that for each $e\in E(G)$, we have that
$$|N_T(H,\{e\})| \le (4g)^{4g} \cdot D^{v(T)-1-\beta} \cdot \log^3 D.$$

But then by the union bound
\begin{align*}
\Prob{ e\in E(G_0) \cap N_T(H_0,e)\ne 0} &\le p^{v(T)} \cdot (4g)^{4g} \cdot D^{v(T)-1-\beta} \cdot \log^3 D \\
&\le (4g)^{4g} \cdot \log^3(D) \cdot p (pD)^{v(T)-1}\cdot D^{-\beta}.
\end{align*}
Thus by linearity of expectation, we find that
$$\Expect{|T_v|} \le d_G(v) \cdot (4g)^{4g} \cdot \log^3(D) \cdot p \cdot (pD)^{v(T)-1}\cdot D^{-\beta} \le p\cdot d_G(v) \cdot D^{-\beta/4g}$$
since $v(T)\le 4g-4$ and $D$ is large enough.

For each $e\in E_{v,T}$, define an exceptional outcome $B_{e,v,T}$
where
$$|T_{v,e}| > \log^{8g-2}(pD)$$
and then define a set of exceptional outcomes
$$\Omega_{v,T}^* = \bigcup_{e\in E_{v,T}} B_{e,v,T}.$$

\begin{claim}\label{claim:Exceptional3}
For each $e\in E_{v,T}$, $\Prob{B_{e,v,T}} \le 2\cdot e^{-\log^{1.55} D}$.
\end{claim}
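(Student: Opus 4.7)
The plan is to write $|T_{v,e}| = \sum_{f\in N_G'(v)} \mathbf{1}[f\in T_{v,e}]$, compute its expectation, and then apply the Linear Talagrand's Inequality (Theorem~\ref{exceptional talagrand's observations}) with a suitable set of exceptional outcomes coming from Claim~\ref{claim:Exceptional2}.

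First I would bound $\Expect{|T_{v,e}|}$. For each $f\in N_G'(v)$, the event $f\in T_{v,e}$ requires $f\in E_0$ together with some $T'\in N_T(H,\{e,f\})$ having $T'\setminus\{e,f\}\subseteq E_0$, so its probability is at most $p^{v(T)-1}\cdot |N_T(H,\{e,f\})|$. For good $T$, Lemma~\ref{lem:NumberOfCycles} applied with $S=\{e,f\}$ gives $|N_T(H,\{e,f\})|\le (4g)^{4g}\cdot D^{v(T)-2-\beta}\log^3 D$; for bad $2$-cycles the same type of bound follows directly from the hypothesis $\Delta_{k,\ell}(H)\le D^{k-\ell-\beta}$. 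Summing over the at most $2D$ choices of $f$ and using $pD = D^{\beta/4g}$ and $v(T)\le 4g-4$, one obtains $\Expect{|T_{v,e}|}\le \log^{3.1} D =: M$ for $D$ large enough.

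Next I would define the exceptional outcome set
$$\Omega^* := \bigcup_{f'\in E(G):\, N_T(H,\{e,f'\})\ne\emptyset} B_{\{e,f'\},T}.$$
The number of such $f'$ is at most $(v(T)-1)\cdot |N_T(H,\{e\})|$, which by Lemma~\ref{lem:NumberOfCycles} (and the analogous codegree bound for bad $2$-cycles) is polynomial in $D$. Combined with the bound $\Prob{B_{\{e,f'\},T}}\le e^{-\log^{1.6}D}$ from Claim~\ref{claim:Exceptional2}, a union bound yields $\Prob{\Omega^*}\le \tfrac{1}{2}e^{-\log^{1.55}D}$.

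The main step, and the principal obstacle, is verifying that $|T_{v,e}|$ is $\bigl(v(T)-1,~v(T)\cdot \log^{2(v(T)-2)}(pD)\bigr)$-observable with respect to $\Omega^*$. To certify $f\in T_{v,e}$ one exhibits the trial $f\in E_0$ together with the at most $v(T)-2$ additional trials $f''\in E_0$ for $f''\in T'\setminus\{e,f\}$ where $T'$ witnesses membership. For $\omega\notin\Omega^*$, any single trial $f'\in E_0$ can certify membership for at most $v(T)\cdot \log^{2(v(T)-2)}(pD)$ distinct $f$: the event $B_{\{e,f'\},T}$ not holding means there are at most $\log^{2(v(T)-2)}(pD)$ copies $T'\in N_T(H,\{e,f'\})$ with $T'\setminus\{e\}\subseteq E_0$, and each such $T'$ contributes at most $v(T)$ candidate edges $f$ (those edges of $T'$ that are incident with $v$ in $G$). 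The delicate point is ensuring this observation-degree bound also works for bad $2$-cycles, where Lemma~\ref{lem:NumberOfCycles} is unavailable and one must invoke the $\Delta_{k,\ell}$ hypothesis on $H$ directly.

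Finally I would apply Theorem~\ref{exceptional talagrand's observations} with $t=\log^{8g-2}(pD)-M$, $r\le 4g$, and $d\le 4g\log^{8g-8}(pD)$. Since $t = \Theta(\log^{8g-2}(pD))$ and $M\ll t$, the exponent $t^2/(8rd(4M+t))$ is of order $\log^{6}D$, so the Talagrand term is bounded by $e^{-\log^{1.55}D}$; adding $4\Prob{\Omega^*}\le 2e^{-\log^{1.55}D}$ and absorbing gives the claimed $2e^{-\log^{1.55}D}$ bound on $\Prob{B_{e,v,T}}$.
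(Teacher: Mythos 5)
Your approach works for $T\in\mathcal{T}_{{\rm good}}$, though there it is more work than necessary: since $|T_{v,e}|\le |F_{\{e\},T}|$ and $\log^{8g-2}(pD)\ge \log^{2(v(T)-1)}(pD)$, the event $B_{e,v,T}$ is contained in $B_{\{e\},T}$ and the bound follows from Claim~\ref{claim:Exceptional2} with no new Talagrand application. The genuine gap is the case $T\in\mathcal{T}_{{\rm bad}}$, which you correctly flag as ``the delicate point'' but do not resolve, and the fix you suggest does not work. For a bad $2$-cycle whose witnessing edges $S_1,S_2$ of $H$ satisfy $S_1\setminus S_2=\{e\}$ and $S_2\setminus S_1=\{f\}$, the quantity $|N_T(H,\{e,f\})|$ is controlled only by the number of edges of $H$ of size $k$ through $e$, which can be of order $D^{k-1}\log D=D^{v(T)-2}\log D$; the hypothesis $\Delta_{k,\ell}(H)\le D^{k-\ell-\beta}$ gives no $D^{-\beta}$ saving here because $\{e,f\}$ meets each of $S_1,S_2$ in only one vertex. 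Consequently $\Expect{|F_{\{e,f\},T}|}$ can be of order $(pD)^{v(T)-2}\log D$, i.e.\ polynomial in $D$, so neither the exceptional outcomes $B_{\{e,f'\},T}$ nor the induction of Claim~\ref{claim:Exceptional2} extends to bad $2$-cycles, and your observation-degree bound has no justification in this case. (Your expectation bound also silently uses the false per-pair estimate $|N_T(H,\{e,f\})|\le (4g)^{4g}D^{v(T)-2-\beta}\log^3 D$ for bad $T$; the expectation itself is still fine, but only if one sums over copies of $T$ containing $e$ rather than over the $2D$ choices of $f$.)

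The paper handles $\mathcal{T}_{{\rm bad}}$ by a different decomposition: it splits $T_{v,e}$ into $T_1$ (the witnessing copy has $e$ and $f$ in a common edge of $H$) and $T_2$ ($e$ and $f$ in different edges). The set $T_1$ is contained in $\bigcup_i D_{e,v,i}$, so it is controlled by the already-established bad event $A_{e,v}$ for outcome (4); and $|T_2|$ is concentrated by a fresh Talagrand application whose exceptional outcomes are $\bigcup_{S\in N_H'(e)}\bigcup_{f\in S}A_{f,v}$ --- that is, the codegree-of-$G$-with-$H$ events, not the cycle-counting events $B_{S,T}$. Some such substitute for $B_{\{e,f'\},T}$ is indispensable; without it your argument does not go through for bad $2$-cycles.
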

\begin{proof}
Note $|T_{v,e}| \le |F_{\{e\},T}|$. Hence if $T\in \mathcal{T}_{{\rm good}}$, then $\Prob{B_{e,v,T}} \le \Prob{B_{\{e\},T}}$ which is at most $e^{-\log^{1.6} D}$ by Claim~\ref{claim:Exceptional}.

So we may assume that $T\in \mathcal{T}_{{\rm bad}}$. Let $T_1\subseteq T_{v,e}$ be where $e$ and $f$ are in the same edge of some copy of $T$. Let $T_2\subseteq T_{v,e}$ be where $e$ and $f$ are in different edges of some copy of $T$. Note that $T_{e,v}\subseteq T_1\cup T_2$.

Note that $T_1 \subseteq \bigcup_{3\le i \le g} D_{e,v,i}$. Yet $\Prob{A_{e,v}} \le e^{-\log^{1.55}} D$ and hence $\Prob{|T_1|\ge \log^{2(g-1)}(pD)} \le e^{-\log^{1.55} D}$.

Meanwhile, $\Expect{|T_2|} \le |N_T(H,\{e\})|\cdot p^{v(T)-1} \le 1$. Let $\Omega_{e,v,T}^* = \bigcup_{S\in N_H'(e)} \bigcup_{f\in S} A_{f,v}$. Thus $|T_2|$ is $\left(2g,~ \log^{2(g-1)}(pD)\right)$-observable with respect to $\Omega_{e,v,T}^*$. By Talagrand's Inequality with $t=\log^{2g}(pD)-1 \ge \log^{2g}(pD)/2$, we find that
$$\Prob{|T_2|\ge \log^{2g}(pD)} \le 4\cdot {\rm exp}\left(-\frac{\log^{2g}(pD)}{160(2(g-1))\cdot \log^{2(g-1)}(pD)}\right) + 4\Prob{\Omega_{e,v,T}^*}\le e^{-\log^{1.55} D},$$
since $D$ is large enough.

Thus $\Prob{B_{v,e,T}}\le \Prob{|T_1|+|T_2| \ge \log^{8g}(pD)} \le \Prob{|T_1|\ge \log^{2(g-1)}(pD)} + \Prob{|T_2|\ge \log^{2g}(pD)} \le 2\cdot e^{-\log^{1.55} D}$.

\end{proof}

Note that $|T_v|$ is $\left(4g,\log^{8g-2}(pD)\right)$-observable with respect to $\Omega_{v,T}^*$ by definition. 

Recall that $\Expect{|T_v|}\le p\cdot d_G(v) \cdot D^{-\beta/4g}$. By Talagrand's Inequality with $t= p\cdot d_G(v) \cdot D^{-\beta/4g} + D^{\beta/12g}$, we find that
$$\Prob{A_{v,T}} \le 4\cdot {\rm exp}\left(-\frac{p\cdot d_G(v) \cdot D^{-\beta/4g} + D^{\beta/12g}}{160(4g)\cdot \log^{8g-2}(pD)}\right) + 4\Prob{\Omega_{v,T}^*}\le e^{-\log^{1.5} D},$$
where we used that $D$ is large enough and Claim~\ref{claim:Exceptional3} to upper bound $\Prob{\Omega_{v,T}^*}$.

\vskip.15in
\noindent{\bf Applying the Local Lemma:}
Let $J$ be the graph with $V(J) := V(G) \cup E(G)$ and 
\begin{align*}
E(J) := &\{ve: v\in e, v\in V(G), e\in E(G)\} \\
&\cup \{ef: e,f\in E(G), e\cap f\ne \emptyset\} \cup \{ef: e,f\in E(G), \exists S\in E(H) \textrm{ with }e,f\in S\}.    
\end{align*}

Note now that the event $A_v$ depends only on trials $(f\in E_0)$ for edges $f$ of $G$ where ${\rm dist}_{J}(v,f) = 1$. Similarly the event $A_{u,v}$ (where ${\rm dist}_J(u,v)=2$) depends only on trials $(f\in E_0)$ for edges $f$ of $G$ where ${\rm dist}_{J}(v,f) = 1$; the event $A_e$ depends only on trials $(f\in E_0)$ for edges $f$ of $G$ where ${\rm dist}_{J}(e,f) \le 1$. The event $A_{e,v}$ (where ${\rm dist}_J(e,v)\le 2$) depends only on trials $(f\in E_0)$ for edges $f$ of $G$ where ${\rm dist}_{J}(v,f) \le 2$; the event $C_v$ depends only on trials $(f\in E_0)$ for edges $f$ of $G$ where ${\rm dist}_{J}(v,f) \le 3$.

Note that since $G$ has maximum degree at most $2D$, we find that 
$$\Delta(J) \le \max \left\{2D+1,~2Dr + \sum_{i\le g} D^i \log D\right\} \le D^{g+1},$$
where the last inequality follows since $D$ is large enough.

Yet for each event $A_x\in \mathcal{E}$, we have $$\Prob{A_x} \le e^{-\log^{1.5} D} \le \frac{1}{4D^{7(g+1)}},$$
since $D$ is large enough. Hence by the Lov\'asz Local Lemma, with positive probability none of the events in $\mathcal{E}$ happen. This implies that all of the outcomes (2)-(4) hold for $G_0$ and $H_0$ as desired. Furthermore, $d_{G_0}(v) =  p\cdot d_{G}(v) \pm \frac{1}{2} \left(p\cdot d_G(v) \cdot D^{-\beta/18g} + D^{\beta/6g}\right)$ for each $v\in V(G)$.

Finally define $G'$ to be the graph obtained from $G_0$ by deleting all edges in cycles in $H_0$ in $\mathcal{T}$. Let $H'$ be the configuration hypergraph of $G'$ where $H' = H_0\setminus (E(G_0)\setminus E(G'))$. Since outcomes (2)-(4) held for $G_0$ and $H_0$, it follows they also hold for $G'$ and $H'$. Moreover, (5) now holds for $G'$ and $H'$ by construction. 

Finally, for each $v\in V(G)$ and $T\in \mathcal{T}$, since $A_{v,T}$ does not hold, it follows that $|T_v|\le p\cdot d_G(v) \cdot D^{-\beta/6g} + D^{\beta/12g}$. Meanwhile, $|\mathcal{T}|\le (g-2)^3 + (g-2)^4 \le g^4$. Hence 
for each $v\in V(G)$,
\begin{align*}
d_{G'}(v) &= d_{G_0}(v) \pm g^4 \left(p\cdot d_G(v) \cdot D^{-\beta/6g} + D^{\beta/12g}\right)\\
&= p\cdot d_{G}(v) \pm \frac{1}{2} \left(p\cdot d_G(v) \cdot D^{-\beta/18g} + D^{\beta/6g}\right) \pm g^4 \left(p\cdot d_G(v) \cdot D^{-\beta/6g} + D^{\beta/12g}\right)\\
&= p\cdot d_G(v) ( 1 \pm D^{-\beta/18g}) \pm D^{\beta/6g},    
\end{align*}  
since $D$ is large enough, and hence (1) holds for $G'$.

Hence all of outcomes (1)-(5) hold for $G'$ and $H'$ as desired.
\end{proof}

\section{Rainbow Matching Proofs}\label{s:Rainbow}

In this section, we prove Theorems~\ref{thm:SparseGrinblat},~\ref{thm:CKAverage} and~\ref{thm:CKBipartite}, but first a proposition.

\begin{proposition}\label{prop:RandomDecrease}
For all integers $r\ge 2$, there exists an integer $D_0$ such that the following holds for all $D\ge D_0$:

Let $G$ be an $r$-bounded hypergraph such that $d_G(v) \le D$ for all $v\in V(G)$. If $w: E(G)\rightarrow [0,1]$ is a weight function on the edges of $G$, then there exists a subhypergraph $G'$ of $G$ with $V(G')=V(G)$ where for all $v\in V(G')$,
$$d_{G'}(v) = \left(\sum_{e\in N'_G(v)} w(e)\right) \pm D^{2/3}.$$
\end{proposition}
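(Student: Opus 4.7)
The plan is a straightforward random selection combined with concentration and the Lov\'asz Local Lemma. I would form $G'$ by setting $V(G') := V(G)$ and including each edge $e \in E(G)$ in $E(G')$ independently with probability $w(e)$. Then for every $v \in V(G)$,
\[
\Expect{d_{G'}(v)} \;=\; \sum_{e \in N'_G(v)} w(e),
\]
so it suffices to show that, with positive probability, $d_{G'}(v)$ deviates from this mean by at most $D^{2/3}$ simultaneously for all $v$.

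For each $v \in V(G)$, let $A_v$ be the bad event that $|d_{G'}(v) - \Expect{d_{G'}(v)}| > D^{2/3}$. Since $d_{G'}(v)$ is a sum of independent $\{0,1\}$ random variables with mean at most $D$, a standard Chernoff (or equivalently Talagrand) bound gives
\[
\Prob{A_v} \;\le\; 2\exp\!\left(-\tfrac{D^{1/3}}{4}\right),
\]
which is $o(1/D^{r+1})$ once $D$ is sufficiently large. Alternatively, one can invoke Talagrand's inequality as in Section~\ref{s:Talagrand}: $d_{G'}(v)$ is $(1,1)$-observable with mean at most $D$, so Theorem~\ref{exceptional talagrand's observations} with $t = D^{2/3}$ yields the same kind of bound.

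For the Local Lemma step, note that $A_v$ depends only on the inclusion trials for edges of $N'_G(v)$, of which there are at most $D$, and each such edge contains at most $r-1$ other vertices. Thus $A_v$ is mutually independent of all events $A_u$ with $u$ not sharing an edge with $v$, and the number of $u$ sharing an edge with $v$ is at most $rD$. Since $e \cdot (rD+1) \cdot \Prob{A_v} \le e \cdot (rD+1) \cdot 2\exp(-D^{1/3}/4) < 1$ for $D \ge D_0$ large enough, the symmetric Lov\'asz Local Lemma yields that with positive probability no $A_v$ occurs, producing a $G'$ as required.

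The only potentially delicate point is verifying the dependency structure cleanly, but since each $A_v$ is determined entirely by the inclusion indicators of the edges through $v$, the dependency graph has maximum degree $\le rD$, and the probability bound above easily beats this. I do not expect any genuine obstacle; the proposition is a standard concentration-plus-LLL exercise whose only role is to discretize a fractional weighting.
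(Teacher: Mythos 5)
Your proposal is correct and follows essentially the same route as the paper: include each edge independently with probability $w(e)$, concentrate $d_{G'}(v)$ around its mean via Talagrand (the paper uses the $(1,1)$-observable formulation with $t=D^{2/3}$), and finish with the Lov\'asz Local Lemma using a dependency degree of at most $rD$. No gaps.
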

\begin{proof}
Let $G'$ be the subhypergraph of $G$ with $V(G')=V(G)$ obtained by choosing each edge of $G$ independently with probability $w(e)$. For each $v\in V(G)$, let $A_v$ be the bad event that $d_{G'}(v) \ne \sum_{e\in N'_G(v)} w(e) \pm D^{2/3}$. By linearity of expectation, we have that $\Expect{d_{G'}(v)} = \sum_{e\in N_G'(v)} w(e)$. Note that $d_{G'}(v)$ is $(1,1)$-observable.  Hence by Talagrand's inequality with $t = D^{2/3}$, we have that
$$\Prob{A_v} \le 4 e^{-D^{1/3}/40}.$$

Let $\mathcal{E}$ be the set of all bad events. For each $v\in V(G)$, let $\mathcal{E}_v := \{A_u: u\in V(G),~d_G(u,v) \le 1\}$. Note that $A_v$ is mutually independent of $\mathcal{E}\setminus \mathcal{E}_v$. Since $G$ is $r$-bounded, we have that $|\mathcal{E}_v| \le rD$. Hence by the Lov\'asz Local Lemma, we have that none of the events in $\mathcal{E}$ happen with positive probability, in which case $G'$ is as desired.
\end{proof}

We are now ready to prove Theorem~\ref{thm:SparseGrinblat}.

\begin{proof}[Proof of Theorem~\ref{thm:SparseGrinblat}]
Let $\phi$ be the edge coloring of $G$. Let $C$ denote the set of colors in $\phi$. Let $H= {\rm Rainbow}(G,\phi)$.

We may assume without loss of generality that each color $c\in C$ is the disjoint union of cliques whose number of vertices is at least $r$ and at most $2r-1$ (by partitioning larger cliques into smaller cliques and deleting edges between parts of the partition). We may also assume without loss of generality that each color spans at most $(r+1)D$ vertices. 

Note that every vertex has degree in $G$ (and hence in $H$) at most $D\cdot \binom{2r-2}{r-1}$. Hence $H$ has degree at most $D':= 2^{2r-2}\cdot D$.

For each $q\in \{r,\ldots, 2r-1\}$, let $E_q$ denote the set of edges $e$ of $G$ where the clique of the color of $e$ containing $e$ contains exactly $q$ vertices. For each, $e\in E_q$, let 
$$w(e) := \frac{1}{\binom{q-1}{r-1}}.$$
By Proposition~\ref{prop:RandomDecrease} applied to $H$ with $w$, it follows that there exists a subhypergraph $H'$ of $H$ with $V(H')=V(H)$ and for all $v\in V(H')$,
$$d_{H'}(v) = \left(\sum_{e\in N'_H(v)} w(e)\right) \pm D^{3/4},$$
where we used that $(D')^{2/3} \le D^{3/4}$ as $D$ is large enough.

For each $c\in C$, the number of vertices spanned by $C$ is precisely
$$\sum_{q=r}^{2r-1} |E_q\cap N_H'(c)|\cdot \frac{q}{\binom{q}{r}},$$
which is at least $rD(1+D^{-\alpha})$ by assumption. Hence,
$$\sum_{e\in N'_H(c)} w(e) = \sum_{q=r}^{2r-1} |E_q\cap N_H'(c)|\cdot \frac{1}{\binom{q-1}{r-1}} \ge D(1+D^{-\alpha}).$$

On the other hand, for each $v\in V(G)$, since each vertex is incident with exactly $\binom{q-1}{r-1}$ edges of a color when contained in a clique of $q$ vertices of that color, we have that 
$$\sum_{q=r}^{2r-1} |E_q\cap N_H'(v)|\cdot \frac{1}{\binom{q-1}{r-1}} \le D.$$
Hence
$$\sum_{e\in N'_H(v)} w(e) = \sum_{q=r}^{2r-1} |E_q\cap N_H'(v)|\cdot \frac{1}{\binom{q-1}{r-1}} \le D.$$

By choosing $\alpha$ small enough, it now follows from Theorem~\ref{thm:KahnBipartite} applied to $H'$ that there exists a full rainbow matching of $G$ and indeed even a set of $D$ disjoint full rainbow matchings of $G$.

%Short proof by blowing up, but doesn't work for many disjoint matchings:
%Let $F$ denote the set of edges $e$ of $G$ such that $e$ is not incident with another edge colored the same as $e$. Let $G'$ be obtained from $G$ by doubling every edge of $F$, that is for each $e\in F$, we add a parallel copy $e'$ of $e$ also colored with the same color as $e$.

%Now every color $c\in C$ appears in $G$ a number of times equal to the number of vertices it spans, which is at least $2D(1+D^{-\alpha})$ by assumption. On the other hand for every vertex $v$ in $G'$ and $c\in C$, we have that $v$ is incident with at most two edges colored $c$. Since every vertex in $G'$ is incident with at most $D$ colors, we have that $\Delta(G') \le 2D$. Moreover, $G'$ has multiplicity at most $2D^{1-\beta}$.  

%It now follows from Theorem~\ref{thm:KahnBipartiteRainbow} applied to $G'$ that there exists a full rainbow matching of $G'$ (and hence of $G$).
\end{proof}

We now prove Theorem~\ref{thm:CKAverage}.

\begin{proof}[Proof of Theorem~\ref{thm:CKAverage}]
We proceed by induction on $r$. Let $\phi$ be the edge coloring of $G$ and let $C$ be the set of colors used in $\phi$. We may assume without loss of generality that $\varepsilon = \frac{1}{2^k}$ for some positive integer $k$. Note then that $|C|=(2+\varepsilon)q$ and hence $(2+\varepsilon)q$ is integral. We may assume without loss of generality (by deleting edges as necessary) that $e(G)= (2+\varepsilon)qr$.  

Note that $\Delta(G)\le (2+\varepsilon)q$. Let $X$ be the set of vertices of $G$ with degree at least $\left(2+\frac{\varepsilon}{2}\right)q$. Let $G'=G(X,V(G)\setminus X)$, that is the bipartite subgraph of $G$ consisting of edges of $G$ with exactly one end in $X$.
Now let $\alpha > 0$ be the real given by Theorem~\ref{thm:KahnBipartite} for $r=3$ and $\beta = 1/2$ say. 

%{\bf Case 1:}
First suppose that $e(G\setminus X) \ge 3\cdot (2^{k+2}+1)\cdot rq^{1-\alpha}$. Let
$$a_1 := 2^{k+2}+1,~~a_2:=2^{k+2}+2.$$
Let $G_0$ be obtained from $G$ by repeating every edge of $G$ incident with at least one vertex of $X$ exactly $a_1$ times and every other edge of $G$ exactly $a_2$ times. %Note that $a_1 = 2^{k+1} (2+\varepsilon/2)$ and $a_2 = 2^{k+1}\cdot (2+\varepsilon)$.
Let 
$$D:=(2^{k+2}+1)\cdot (2+\varepsilon)q.$$ 
Then every vertex in $X$ has degree in $G_0$ at most 
$$a_1 \cdot (2+\varepsilon)q = (2^{k+2}+1) \cdot (2+\varepsilon)q=D$$ 
while every vertex in $V(G)\setminus X$ has degree in $G_0$ at most 
$$a_2 \cdot (2+\varepsilon/2)q = (2^{k+2}+2) \cdot (2+\varepsilon/2)q = D.$$
Thus $\Delta(G_0) \le D$. 

Note that $G_0$ has multiplicity at most $(2^{k+2}+2)\le D^{1-\beta}$ since $q$ is large enough. Meanwhile, 
$$e(G_0) = (2^{k+2}+1)\cdot e(G) + e(G\setminus X).$$ 
Let $H:={\rm Rainbow}(G_0,\phi)$. Since $G$ is simple and $\phi$ is a proper edge coloring of $G$, it follows that $H$ has codegrees at most $D^{1-\beta}$. 

Thus by Theorem~\ref{thm:KahnKahn} applied to $H$, we have that $\chi(H)\le D(1+D^{-\alpha})$ and hence there exists a matching of $H$ of size at least $\frac{e(H)}{D(1+D^{-\alpha})}$. Hence $G_0$ (and hence also $G$) has a rainbow matching $M$ of size $\frac{e(G_0)}{D(1+D^{-\alpha})}$. Note that
$$(2^{k+2}+1)\cdot e(G) = (2^{k+2}+1)\cdot (2+\varepsilon)qr = D\cdot r,$$
and yet 
$$e(G\setminus X) \ge 3\cdot (2^{k+2}+1)\cdot rq^{1-\alpha} \ge D^{1-\alpha}\cdot r.$$
Thus we find that 
$$e(M) \ge \frac{e(G_0)}{D(1+D^{-\alpha})} \ge r,$$ 
as desired.

%{\bf Case 2:} 
So we may assume that $e(G\setminus X) \le 3\cdot (2^{k+2}+1) \cdot rq^{1-\alpha}.$ Now by induction, there exists a matching $M$ in $G\setminus X$ of size at least $\frac{e(G\setminus X)}{(2+\varepsilon)q}$. Let $G'$ be obtained from $G\setminus V(M)$ by deleting all edges colored with colors of $M$. 

Since $e(M)\ge \frac{e(G\setminus X)}{(2+\varepsilon)q}$, we have that 
$$e(G)-e(G\setminus X) \ge (r-e(M))\cdot (2+\varepsilon)q.$$ 
Since $X$ has maximum degree $(2+\varepsilon)q$, it follows that 
$$e(G)-e(G\setminus X)\le |X|\cdot (2+\varepsilon)q$$ and hence 
$$|X| \ge r-e(M).$$ 
Note that $e(M) \le e(G\setminus X) \le 3\cdot (2^{k+2}+1) \cdot rq^{1-\alpha} \le (\varepsilon/12) \cdot q$ since $q$ is large enough. Thus every vertex in $X$ still has at least
$$(2+\varepsilon/2)\cdot q - 3\cdot e(M) \ge (2+\varepsilon/4)\cdot q$$ 
neighbors in $G'$.

Now let $X'$ be a subset of $X$ of size exactly $r-e(M)$. Let $G''=G'(X',V(G')\setminus X')$ and let $H':= {\rm Rainbow}(G'',\phi)$. Since $|X'| \le r-e(M) \le r \le q$, it follows that every vertex of $X'$ has degree at least $(1+\varepsilon/4)q$ in $G''$. Note that every vertex of $C$ has degree at most $q$ in $H'$ since no color appears more than $q$ times in $\phi$. Thus by Theorem~\ref{thm:KahnBipartite} applied to $H'=(X', (V(G')\setminus X') \cup C)$, it follows that there exists an $X'$-perfect matching in $H'$ and hence a rainbow matching $M'$ of $G''$ that saturates $X'$. But then $M\cup M'$ is a rainbow matching of $G$ of size $|M|+|X'| = e(M) + (r-e(M)) = r$ as desired.

\end{proof}

We now prove Theorem~\ref{thm:CKBipartite}.

\begin{proof}[Proof of Theorem~\ref{thm:CKBipartite}]
We proceed by induction on $r$. Let $\phi$ be the edge coloring of $G$ and let $C$ be the set of colors used in $\phi$. We may assume without loss of generality that $\varepsilon = \frac{1}{2^k}$ for some positive integer $k$. Note then that $|C|=(1+\varepsilon)q$ and hence $(1+\varepsilon)q$ is integral.  We may assume without loss of generality (by deleting edges as necessary) that $e(G)= (1+\varepsilon)qr$. 

Note that $\Delta(G)\le (1+\varepsilon)q$. Let $X$ be the set of vertices of $G$ with degree at least $(1+3\varepsilon/4)q$. Let $Y$ be the set of vertices in $X$ that have at most $(\varepsilon/8)q$ neighbors in $X$. Let $G'=G(Y,V(G)\setminus Y)$, that is the bipartite subgraph of $G$ consisting of edges with exactly one end in $Y$. Now let $\alpha$ be the real given by Theorem~\ref{thm:KahnBipartite} for $r=3$ and $\beta = 1/2$ say. 

%Now cases: 

%{\bf Case 1:} 

First suppose that $e(G\setminus X) \ge 2^{k+4}\cdot rq^{1-\alpha}$. Let
$$a_1 := 2^{k+2}+3,~~a_2:=2^{k+2}+4.$$
Let $G_0$ be obtained from $G$ by repeating every edge of $G$ incident with at least one vertex of $X$ exactly $a_1$ times and every other edge of $G$ exactly $a_2$ times. Note that $a_1 = 2^{k+2} \cdot (1+3\varepsilon/4)$ and $a_2 = 2^{k+2}\cdot (1+\varepsilon)$. Let 
$$D:=(2^{k+2}+3)\cdot (1+\varepsilon)q.$$ 
Then every vertex in $X$ has degree in $G_0$ at most 
$$ a_1 \cdot (1+\varepsilon)q = (2^{k+2}+3)\cdot (1+\varepsilon)q = D$$ 
while every vertex in $V(G)\setminus X$ has degree in $G_0$ at most $$a_2 \cdot (1+3\varepsilon/4)q = (2^{k+2}+4)\cdot (1+3\varepsilon/4)q = D.$$ 
Thus $\Delta(G_0) \le D$. 

Note that $G_0$ has multiplicity at most $(2^{k+2}+4)\le D^{1-\beta}$ since $q$ is large enough. Meanwhile, 
$$e(G_0) = (2^{k+2}+3)\cdot e(G) + e(G\setminus X).$$ 
Let $H:= {\rm Rainbow}(G_0,\phi)$. Since $G$ is simple and $\phi$ is a proper edge coloring of $G$, it follows that $H$ has codegrees at most $D^{1-\beta}$.

Thus by Theorem~\ref{thm:KahnKahn} applied to $H$, we have that $\chi(H)\le D(1+D^{-\alpha})$ and hence there exists a matching of $H$ of size at least $\frac{e(H)}{D(1+D^{-\alpha})}$. Hence $G_0$ (and hence also $G$) has a rainbow matching $M$ of size at least $\frac{e(G_0)}{D(1+D^{-\alpha})}$.  Note that
$$(2^{k+2}+3)\cdot e(G) = (2^{k+2}+3)\cdot (1+\varepsilon)qr = D\cdot r,$$
and yet 
$$e(G\setminus X) \ge2^{k+4}\cdot rq^{1-\alpha} \ge D^{1-\alpha} \cdot r.$$
Thus we find that 
$$e(M) \ge \frac{e(G_0)}{D(1+D^{-\alpha})} \ge r,$$ 
as desired.

%Case 2
So we may assume that $e(G\setminus X) \le 2^{k+4}\cdot rq^{1-\alpha}$. Next suppose that $|X| \ge r + 2^{2k+5}\cdot rq^{-\alpha}$. Let $A_1,A_2$ be the parts of the bipartition of $G$. Since there are $(1+\varepsilon)q$ colors each appearing at most $q$ times, we find that $e(G)\le (1+\varepsilon)q^2$. Hence for each $i\in\{1,2\}$, we have that 
$$|X\cap A_i| \le \frac{e(G)}{(1+3\varepsilon/4)q}  \le \frac{(1+\varepsilon)q}{1+3\varepsilon/4} \le (1+\varepsilon/4)\cdot q.$$
For each vertex $x$ of $X$, let $S_x$ be a subset of $N_G(x)\setminus X$ of size exactly $(\varepsilon/2)q$. Note this is possible since $d_G(x) \ge (1+3\varepsilon/4)q$ and $|N_G(x)\cap X| \le (1+\varepsilon/4)q$ from above.  Let $S:=\bigcup_{x\in X} \{ ux: u\in S_x\}$. Note that $|S| = |X|\cdot (\varepsilon/2)q$ since the $S_x$ are pairwise disjoint. 

Let
$$a_3 := 2^{k+2}+1,~~a_4:=2^{k+2}+2.$$
Let $G_1$ be obtained from $G$ by repeating every edge in $E(G)\setminus (S\cup E(G\setminus X))$ exactly $a_3$ times and every edge in $S\cup E(G\setminus X)$ exactly $a_4$ times. Note that $a_3 = 2^{k+2} (1+\varepsilon/4)$ and $a_4 = 2^{k+2}(1+\varepsilon/2)$. Let 
$$D_1:=(2^{k+2}+2)\cdot (1+3\varepsilon/4)q = \frac{(2^{k+2}+2)(2^{k+2}+3)}{2^{k+2}} \cdot q.$$ 
Then every vertex of $X$ has degree in $G_1$ at most $$a_3\cdot (1+\varepsilon/2)q + a_4 \cdot (\varepsilon/2)q = (2^{k+2}+1)\cdot (1+\varepsilon/2)q + (2^{k+2}+2)\cdot (\varepsilon/2)q = D_1.$$ 
On the other hand every vertex in $V(G)\setminus X$ has degree in $G_1$ at most $$a_4 \cdot (1+3\varepsilon/4)q = (2^{k+2}+2)\cdot (1+3\varepsilon/4)q = D_1.$$ Thus $\Delta(G_1) \le D_1$. 

Note that $G_1$ has multiplicity at most $(2^{k+2}+2)\le D_1^{1-\beta}$ since $q$ is large enough. Meanwhile, 
$$e(G_1) = (2^{k+2}+1)\cdot e(G) + |S| + e(G\setminus X).$$ 
Let $H_1:= {\rm Rainbow}(G_1,\phi)$. Since $G$ is simple and $\phi$ is a proper edge coloring of $G$, it follows that $H_1$ has codegrees at most $D_1^{1-\beta}$.

Thus by Theorem~\ref{thm:KahnKahn} applied to $H_1$, we have that $\chi(H_1)\le D_1(1+D_1^{-\alpha})$ and hence there exists a matching of $H$ of size at least $\frac{e(H)}{D_1(1+D_1^{-\alpha})}$. Hence $G_1$ (and hence also $G$) has a rainbow matching $M$ of size at least $\frac{e(G_1)}{D_1(1+D_1^{-\alpha})}$. Note that
\begin{align*}
(2^{k+2}+1)\cdot e(G) &= (2^{k+2}+1)\cdot (1+\varepsilon)qr = \frac{(2^{k+2}+1)(2^{k+2}+4)}{2^{k+2}} \cdot qr\\
&= \left(D_1-\frac{2}{2^{k+2}}\cdot q\right)\cdot r = \left(D_1-\frac{\varepsilon}{2}\cdot q\right) \cdot r,
\end{align*}
and yet since $|X| \ge r + 2^{2k+5} \cdot rq^{-\alpha} \ge r + \frac{4(2^{k+2}+2)}{\varepsilon}\cdot rq^{-\alpha}$, we have that
$$|S| = \frac{\varepsilon}{2} \cdot q\cdot |X| \ge \frac{\varepsilon}{2} \cdot qr +  2(2^{k+2}+2)\cdot rq^{1-\alpha} \ge \frac{\varepsilon}{2} \cdot qr + D_1^{1-\alpha} \cdot r.$$
Thus we find that 
$$e(M) \ge \frac{e(G_1)}{D_1(1+D_1^{-\alpha})} \ge r,$$ 
as desired.

% Case 3
So we may assume that $|X| \le r + 2^{2k+5}\cdot rq^{-\alpha}$. Note that 
$$e(G)-e(G\setminus X) + e(G[X]) = \sum_{x\in X} d_G(x) \le |X| \cdot (1+\varepsilon)q.$$ 
Hence
\begin{align*}
e(G[X]) &\le e(G\setminus X) - e(G) + |X| \cdot (1+\varepsilon)q\\
&\le 2^{k+4}\cdot rq^{1-\alpha} - (1+\varepsilon)rq + \left( r + 2^{2k+5}\cdot rq^{-\alpha}\right) \cdot (1+\varepsilon)q\\
&\le 2^{2k+6} \cdot rq^{1-\alpha}.
\end{align*}
 Since $X\setminus Y$ is the set of vertices of $X$ with at least $(\varepsilon/8) q$ neighbors in $X$, we find that
$$ |X\setminus Y| \le \frac{e(G[X])}{(\varepsilon/8)q} \le 2^{3k+9} \cdot rq^{-\alpha}.$$
Hence 
\begin{align*}
e(G\setminus Y) &\le e(G\setminus X) + |X\setminus Y| \cdot (1+\varepsilon)q\\
&\le 2^{k+4}\cdot rq^{1-\alpha} + 2^{3k+9} rq^{-\alpha} \cdot 2q \\
&\le 2^{3k+10}\cdot rq^{1-\alpha}\\
&\le \frac{\varepsilon}{8} \cdot q,
\end{align*}
where for the last inequality we used that $q$ is large enough.

Now by induction, there exists a matching $M$ in $G\setminus Y$ of size at least $\frac{e(G\setminus Y)}{(1+\varepsilon)q}$. Let $G'$ be obtained from $G\setminus V(M)$ by deleting all edges colored with colors of $M$. Note that 
$$e(M) \le e(G\setminus Y) \le  \frac{\varepsilon}{8} \cdot q.$$
Since $e(M)\ge \frac{e(G\setminus Y)}{(1+\varepsilon)q}$, we have that $$e(G)-e(G\setminus Y) \ge (r-e(M))\cdot (1+\varepsilon)q.$$ 
Since $Y$ has maximum degree $(1+\varepsilon)q$, it follows that $$e(G)-e(G\setminus Y)\le |Y|\cdot (1+\varepsilon)q$$ and hence 
$$|Y| \ge r-e(M).$$

Let $G''=G'(Y,V(G')\setminus Y)$. Recall by the definition of $Y$ that every vertex in $Y$ has at most $(\varepsilon/8)q$ neighbors in $X$. Thus every vertex in $Y$ still has 
$$(1+3\varepsilon/4)q - 3e(M) - (\varepsilon/8)q \ge (1+\varepsilon/4)\cdot q$$ neighbors in $G''$.

Let $H':= {\rm Rainbow}(G'',\phi)$. Note that every vertex of $C$ has degree at most $q$ in $H'$ since no color appears more than $q$ times in $\phi$. Thus by Theorem~\ref{thm:KahnBipartite} applied to $H'=(Y,(V(G')\setminus Y)\cup C)$, it follows that there exists a $Y$-perfect matching in $H'$ and hence a rainbow matching $M'$ of $G''$ that saturates $Y$. But then $M\cup M'$ is a rainbow matching of $G$ of size $|M|+|Y| = e(M) + (r-e(M)) = r$ as desired.
\end{proof}

We now derive Theorem~\ref{thm:MCCBipartite} from Theorem~\ref{thm:CKBipartite} as follows.

\begin{proof}[Proof of Theorem~\ref{thm:MCCBipartite}]
Let $\varepsilon' = \frac{\varepsilon}{2}$. Let $q_0$ be the value of Theorem~\ref{thm:CKBipartite} for $\varepsilon'$. We set $d_0 = q_0$.

Let $G=(X,Y)$. We may assume without loss of generality that $|X|\ge |Y|$ and hence that $$|X|\ge \frac{v(G)}{2} \ge \left(1+\frac{\varepsilon}{2}\right)d.$$ 
Let $C$ be the set of colors used in the proper edge coloring $\phi$ of $G$. Define a bipartite graph $H = (C,Y)$ with 
$$E(H) = \{ \{c,y\}:~c\in C,~y\in Y,~\exists~xy\in E(G) \text{ with }\phi(xy)=c\}.$$
Note that since $\phi$ is a proper coloring, $H$ is a simple graph. Now we let $\phi'$ be the edge coloring of $H$ defined by letting $\phi'(cy)=x$ for all $cy\in E(H)$. Thus the set of colors used in $\phi'$ is $X$. Since $G$ is a simple graph and $\phi$ is proper, it follows that $\phi'$ is a proper edge coloring of $H$. 

Let $X'$ be a subset of $X$ of size $\left(1+\frac{\varepsilon}{2}\right)d$. For each $x\in X'$, let $E_x$ be a subset of $E(H)$ colored by $x$ of size exactly $d$. Note that $E_x$ exists for all $x\in X'$ since $G$ has minimum degree $d$. Let $H' = \bigcup_{x\in X'} E_x$. Note that $H'$ is a subgraph of $H$ and thus $\phi'$ is also a proper edge coloring of $H'$ with $|X'|= (1+\varepsilon')d$ colors. Moreover, $H'$ has exactly $|X'|\cdot d = (1+\varepsilon')d^2$ edges.

By Theorem~\ref{thm:CKBipartite} with $q=r=d$, we have that $H'$ has rainbow matching $M$ of size $d$. Let 
$$M' := \{ xy \in E(G):~cy\in M,~y\in Y,~x=\phi'(cy)\}.$$ 
Hence $M$ is a rainbow matching of $G$ of size $d$ as desired.
\end{proof}

\section*{Acknowledgments}
The authors would like to thank David Munh\'{a} Correia for pointing out Theorem~\ref{thm:MCCBipartite} to us and how it follows from our Theorem~\ref{thm:CKBipartite} as a corollary. 

\bibliographystyle{plain}
\bibliography{mdelcourt, ref}

\appendix
\section{Proof of Theorem \ref{exceptional talagrand's}}\label{s:Appendix}

In this appendix, we prove Theorem~\ref{exceptional talagrand's}. First we need the following lemma.

\begin{lem}\label{lem:CertScaling}
If $X$ is $b$-certifiable, then for every $s \ge 0$ and $\omega\in\Omega\setminus\Omega^*$ such that $X(\omega) \geq s$, there exists a $b$-certificate for $X, \omega, s$, and $\Omega^*$,
\end{lem}
\begin{proof}
Since $X$ is $b$-certifiable, there exists a $b$-certificate for $X,\omega, s':=X(\omega)$ and $\Omega^*$ by definition, that is there exists $I\subseteq \{1,\ldots,n\}$ and a vector $(c_i': i\in I)$ with $\sum_{i\in I} (c_{i}')^2 \le bs'$ such that for all $I'\subseteq I$, we have that
  \begin{equation*}
    X(\omega') \geq s' - \sum_{i\in I'} c_i',
  \end{equation*}
  for all $\omega' = (\omega'_1, \dots, \omega'_n)\in\Omega\setminus\Omega^*$  such that $\omega_i=\omega_i'$ for all $i\in I\setminus I'$.

Note that $s'=X(\omega)\ge s$ by assumption. Now for all $i\in I$, we define $c_i := c_i' \cdot \frac{s}{s'}$. We claim that $I$ and $(c_i:i\in I)$ is a $b$-certificate for $X,\omega,s$ and $\Omega^*$ as desired. To see this, note that
$$\sum_{i\in I} c_i^2 = \left(\frac{s'}{s}\right)^2 \sum_{i\in I} (c_i')^2 \le \left(\frac{s}{s'}\right)^2 bs' \le bs,$$
where the last inequality follows since $s\le s'$. Furthermore, for all $\omega' = (\omega'_1, \dots, \omega'_n)\in\Omega\setminus\Omega^*$  such that $\omega_i=\omega_i'$ for all $i\in I\setminus I'$, we have that
$$X(\omega') \ge s' - \sum_{i\in I'} c_i' = \frac{s'}{s} \left( s- \sum_{i\in I'} c_i \right) \ge s- \sum_{i\in I'} c_i,$$
where the last inequality follows since $s'\ge s$. This completes the claim and hence the proof of the lemma.
\end{proof}

In order to prove Theorem~\ref{exceptional talagrand's}, we prove the following theorem which yields concentration around the median under the same conditions.
\begin{thm}\label{exceptional talagrand's with median}
  If $X$ is $b$-certifiable with respect to $\Omega^*$, then for any $t > 0$,
  \begin{equation*}
    \Prob{|X - \Med(X)| > t} \leq 4\exp\left({-\frac{t^2}{4b(\Med(X) + t)}}\right) + 4\Prob{\Omega^*}
  \end{equation*}
\end{thm}

We then prove that the expectation and median are close as in the following lemma.
\begin{lem}\label{expectation close to median}
If $X$ is $b$-certifiable with respect to $\Omega^*$ and $M = \sup X$, then
\begin{equation*}
|\Expect{X} - \Med(X)| \leq 48\sqrt{b\Expect{X}} +  64b + 4M\Prob{\Omega^*}.
\end{equation*}
\end{lem}
\begin{proof}
Let $Y = X + \Expect{X}$.  Note that $\Expect{Y} - \Med(Y) = \Expect{X} - \Med(X)$, $\Med(Y) \geq \Expect{X} > 0$, and $\Expect{Y} \leq 2\Expect{X}$.
Note also that
\begin{equation*}
|\Expect{Y} - \Med(Y)| \leq \Expect{|Y - \Med(Y)|}.
\end{equation*}

Let $L = \lfloor M/(\sqrt{b\Med(Y)})\rfloor$, and note that $|Y - \Med(Y)| \leq  (L + 1)\sqrt{b\Med(Y)}$.  By partitioning the possible values of $|Y - \Med(Y)|$ into intervals of length $\sqrt{b\Med(Y)}$, we get

\begin{align*}
\Expect{|Y - \Med(Y)|} &\leq \begin{aligned}[t]
	\sum_{\ell=0}^L \sqrt{b\Med(Y)}(\ell + 1) &\left(\Prob{|Y - \Med(Y)| \geq \ell \sqrt{b\Med(Y)}}\right.\\
	&\left. - \Prob{|Y - \Med(Y)| \geq (\ell + 1) \sqrt{b\Med(Y)}}\right).\end{aligned}\\
&= \sum_{\ell=0}^L \sqrt{b\Med(Y)}\left(\Prob{|Y - \Med(Y)| \geq \ell \sqrt{b\Med(Y)}}\right).
\end{align*}
By applying Theorem~\ref{exceptional talagrand's with median} with $t=\ell \change\sqrt{r\Med(Y)}$ to every summand,
\begin{equation*}
  \Expect{|Y - \Med(Y)|} \leq 4\sqrt{b\Med(Y)}\sum_{\ell=0}^L \left(\exp\left({-\frac{\ell^2b\Med(Y)}{4b(\Med(Y) + \ell \sqrt{b\Med(Y)})}}\right) + \Prob{\Omega^*}\right).
\end{equation*}
Note that for each $\ell\in\{0, \dots, L\}$,
\begin{multline*}
  \exp\left({\frac{\ell^2b\Med(Y)}{4b(\Med(Y) + \ell \sqrt{b\Med(Y)})}}\right) \leq \exp\left({\frac{\ell^2b\Med(Y)}{8b\max\{\Med(Y), \ell \sqrt{b\Med(Y)}\}}}\right)\\
  \leq \exp\left({\frac{\ell^2b\Med(Y)}{8b\Med(Y)}}\right) + \exp\left({\frac{\ell^2b\Med(Y)}{8b\ell\sqrt{b\Med(Y)}\}}}\right) = \exp\left(\ell^2/8\right) + \exp\left(\frac{\ell\sqrt{\Med(Y)}}{8\sqrt{b}}\right).
\end{multline*}
Note also that
\begin{equation*}
  4\sqrt{b\Med(Y)}\sum_{\ell=0}^L\Prob{\Omega^*} \leq 4M\Prob{\Omega^*}.
\end{equation*}
Therefore
\begin{equation*}
  \Expect{|Y - \Med(Y)|} \leq 4\sqrt{b\Med(Y)}\sum_{\ell=0}^\infty \left(\exp\left({-\ell^2/8}\right) + \exp\left({-\frac{\ell\sqrt{\Med(Y)}}{8\sqrt{b}}}\right)\right) + 4M\Prob{\Omega^*}.
\end{equation*}
Note that $\sum_{\ell=0}^\infty e^{-\ell x} = \frac{1}{1 - e^{-x}}$.  Note also that $\frac{x}{2} \leq 1 - e^{-x}$ if $x < \frac{3}{2}$.   Since $\frac{1}{1 - e^{-x}} < 2$ when $x\geq \frac{3}{2}$, $\frac{1}{1 - e^{-x}} \leq \max\{2, \frac{2}{x}\}$.  Therefore
\begin{equation*}
  \sum_{\ell=0}^\infty \exp\left(-\frac{\ell\sqrt{\Med(Y)}}{8\sqrt{b}}\right) \leq \max\left\{2, \frac{16\sqrt{b}}{\sqrt{\Med(Y)}}\right\}.
\end{equation*}

Note that $\sum_{\ell=0}^\infty e^{-\ell^2/ 8} < 4$.  Therefore
\begin{equation*}
  \Expect{|Y - \Med(Y)|} \leq 4\sqrt{b\Med(Y)}\left(4 + \max\left\{2, \frac{16\sqrt{b}}{\sqrt{\Med(Y)}}\right\}\right) + 4M\Prob{\Omega^*}.  
\end{equation*}
Since the maximum of two numbers is at most their sum,
\begin{equation*}
  \Expect{|Y - \Med(Y)|} \leq 24\sqrt{b\Med(Y)} + 64b + 4M\Prob{\Omega^*}.
\end{equation*}
Since $\Med(Y) \leq 2\Expect{Y} \leq 4\Expect{X}$,
\begin{equation*}
\Expect{Y - \Med(Y)|} \leq 48\sqrt{b\Expect{X}} +  64b + 4M\Prob{\Omega^*},
\end{equation*}
as desired.
\end{proof}

Now we can prove Theorem~\ref{exceptional talagrand's} assuming Theorem~\ref{exceptional talagrand's with median}.
\begin{proof}[Proof of Theorem \ref{exceptional talagrand's}]
Since $t > 96\sqrt{b\Expect{X}} +  128b + 8M\Prob{\Omega^*},$
\begin{equation}\label{t over 2 bound}
\frac{t}{2} > 48\sqrt{b\Expect{X}} +  64b + 4M\Prob{\Omega^*}.
\end{equation}
By applying Lemma \ref{expectation close to median} and then \eqref{t over 2 bound},
\begin{equation*}
\Prob{|X - \Expect{X}| > t} \leq \Prob{|X - \Med(X)| > \frac{t}{2}}.
\end{equation*}
Since $\Med(X) \leq 2\Expect{X}$, Theorem~\ref{exceptional talagrand's with median} implies that
\begin{align*}
\Prob{|X - \Med(X)| > \frac{t}{2}} &\leq 4\exp\left({-\frac{(t/2)^2}{4b(2\Expect{X} + (t/2))}}\right) + 4\Prob{\Omega^*},\\
 &=4\exp\left({-\frac{t^2}{8b(4\Expect{X} + t)}}\right) + 4\Prob{\Omega^*}
\end{align*}
as desired.
\end{proof}

It remains to prove Theorem~\ref{exceptional talagrand's with median}.

Let $((\Omega_i, \Sigma_i, \mathbb P_i))_{i=1}^n$ be probability spaces and $(\Omega, \Sigma, \mathbb P)$ their product space.  For a set $A\subseteq \Omega$ and event $\omega\in\Omega$, let
\begin{equation}
d(\omega, A) = \sup_{||\alpha||=1}\left\{\tau : \sum_{i:\omega_i\neq\omega'_i}\alpha_i\geq\tau \rm{\ for\ all\ }\omega'\in A\right\}.
\end{equation}

We use the original version of Talagrand's Inequality.
\begin{thm}[Talagrand's Inequality \cite{T95}]\label{OG Tala}
If $A,B\subseteq \Omega$ are measurable sets such that for all $\omega\in B$, $d(\omega, A) \geq \tau$, then
$$\Prob{A}\Prob{B}\leq e^{\frac{-\tau^2}{4}}.$$
\end{thm}

We can now prove Theorem~\ref{exceptional talagrand's with median}.
\begin{proof}[Proof of Theorem~\ref{exceptional talagrand's with median}]
It suffices to show that
\begin{equation}\label{one sided estimation}
\Prob{X\leq \Med(X) - t} \leq 2\exp\left({-\frac{t^2}{8b(\Med(X) + t)}}\right) + 2\Prob{\Omega^*}
\end{equation}
and
\begin{equation}\label{other one sided estimation}
\Prob{X\geq \Med(X) + t} \leq 2\exp\left({-\frac{t^2}{8b(\Med(X) + t)}}\right) + 2\Prob{\Omega^*}.
\end{equation}
Let 
\begin{align*}
&A = \{\omega\in \Omega\backslash \Omega^* : X(\omega) \geq \Med(X) + t\}, \text{ and}\\
&B = \{\omega\in \Omega\backslash \Omega^* : X(\omega) \leq \Med(X) \}.
\end{align*}

We need to show the following.
\begin{claim}\label{A and B satisfy OG Tala}
For all $\omega\in A$, $d(\omega, B) \geq \frac{t}{\sqrt{b(\Med(X) + t)}}$.
\end{claim}
\begin{proofclaim}
Since $X$ is $b$-certifiable, we have by Lemma~\ref{lem:CertScaling} that there exists a $b$-certificate for $X, \omega, \Med(X) + t,$ and $\Omega^*$, that is a subset $I\subseteq\{1,\ldots,n\}$ and a vector $(c_i:i\in I)$. 

Let $\omega'\in B$.  Let $I' = \{i\in I:  \omega_i \ne \omega'_i\}$. By definition of $b$-certificate since $X(\omega') \leq X(\omega)-t$, we have that 
$$\sum_{i\in I'} c_i \ge t.$$
Let $r = \sqrt{\sum_{i\in I}c_i^2}$. By the definition of $b$-certificate, we have that $\sum_{i\in I}c_i^2 \le b(\Med(X)+t)$ and hence $r\le \sqrt{b(\Med(X)+t)}$.

Now we set $\alpha = (\alpha_i: i\in[n])$ where $\alpha_i = c_i/ r$ if $i\in I$ and $\alpha_i = 0$ otherwise. Thus 
$$\sum_{i\in I'} c_i/r \ge t/r \ge \frac{t}{\sqrt{b(\Med(X) + t)}}.$$
Hence $d(\omega,B) \ge \frac{t}{\sqrt{b(\Med(X) + t)}}$ as desired.
\end{proofclaim}

Now \eqref{other one sided estimation} follows from Claim \ref{A and B satisfy OG Tala} and Theorem~\ref{OG Tala}.  The proof of~\eqref{one sided estimation} is similar, so we omit it.
\end{proof}

\end{document}